\newcommand{\Z}{\mathbb{Z}}
\newcommand{\N}{\mathbb{N}}
\newcommand{\R}{\mathbb{R}}
\newcommand{\BB}{{\mathcal B}}
\newcommand{\Bvuw}{{\mathcal B}_v^{u,w}}
\newcommand{\Lv}{\mathcal{L}_v}
\newcommand{\LL}{\mathcal{L}}
\newcommand{\DD}{\mathcal D}
\newcommand{\OO}{\mathcal O}
\newcommand{\QQ}{\mathcal Q}
\newcommand{\VV}{\mathcal V}
\newcommand{\ti}{{t^{-1}}}
\newcommand{\ai}{{a^{-1}}}
\newcommand{\ba}{{\bf a}}
\newcommand{\bb}{{\bf b}}
\newcommand{\x}{{\bf x}}
\newcommand{\xx}{{\bf x}}
\def\r{{\bf r}}
\newcommand{\wi}{{\bf w}^{(i)}}
\newcommand{\wf}[1]{{\bf w}^{(#1)}}
\newcommand{\wwi}{w^{(i)}}
\newcommand{\z}{{\bf z}}
\newcommand{\y}{{\bf y}}
\newcommand{\sgn}{\ensuremath{\textnormal{sign}}}
\newcommand{\fn}{\left\lfloor\frac{n}{2}\right\rfloor}
\newcommand{\fne}{\frac{n}{2}}
\newcommand{\ord}{\le_{u,w}}
\newcommand{\ords}{<_{u,w}}
\newcommand{\kx}{{k_{\xx}}}
\newcommand{\ky}{{k_{\y}}}
\newcommand{\K}[1]{{k_{#1}}}
\newcommand{\wt}{\ensuremath{\textnormal{weight}}}
\newcommand{\s}{{\bf s}}
\newcommand{\p}{{\bf p}}
\newcommand\OOn{\{|\OO_n(N)|\}_{N \in \N}}
\newtheorem{theorem}{Theorem}[section]
\newtheorem{corollary}[theorem]{Corollary}
\newtheorem{proposition}[theorem]{Proposition}
\newtheorem{lemma}[theorem]{Lemma}
\newtheorem{definition}{Definition}
\theoremstyle{definition}
\newtheorem{ex}[theorem]{Example}
\theoremstyle{remark}
\newtheorem{remark}[definition]{Remark}
\title{Conjugation Curvature in Solvable Baumslag-Solitar Groups}
\author[Jennifer Taback] {Jennifer Taback}
\address{Department of Mathematics, Bowdoin College, Brunswick, ME 04011}
\email{jtaback@bowdoin.edu}
\author[Alden Walker] {Alden Walker}
\address{Center for Communications Research, La Jolla, CA 92121}
\email{akwalke@ccrwest.org}
\thanks{The first author acknowledges support from Simons Foundation
grant 31736 to Bowdoin College.  Both authors thank Moon Duchin, Rob Kropholler and Murray Elder for insightful conversations.}
\date{\today}
\begin{document}

\begin{abstract}
For an element in $BS(1,n) = \langle t,a | ta\ti = a^n \rangle$ written in the normal form $t^{-u}a^vt^w$ with $u,w \geq 0$ and $v \in \Z$, we exhibit a geodesic word representing the element and give a formula for its word length with respect to the generating set $\{t,a\}$.  Using this word length formula, we prove that there are sets of elements of positive density of positive, negative and zero conjugation curvature, as defined by Bar Natan, Duchin and Kropholler. 
\end{abstract}

\maketitle

\setcounter{tocdepth}{1}
\tableofcontents

\section{Introduction}
The notion of discrete Ricci curvature for Cayley graphs of finitely generated groups was introduced by Bar-Natan, Duchin and Kropholler in \cite{BDK} as {\em conjugation curvature}.  Their work is based on that of Ollivier on metric Ricci curvature for graphs and non-manifold geometries \cite{YO1,YO2,YO3,YO4}.  One considers whether on average, ``corresponding points" on spheres of the same radius are closer together or farther apart than the centers of the spheres.  Negative conjugation curvature occurs when, on average, such points are farther apart the centers of the spheres, and positive curvature when they are closer together.

In the context of the Cayley graph of a finitely generated group, there is a natural interpretation of corresponding points; if $g_1,g_2 \in G=\langle S | R \rangle$ are the centers of two spheres of the same radius, then we consider the distance between $g_1w$ and $g_2w$ for $w \in G$.  Without loss of generality, we use the isometric action of a group on its Cayley graph to translate $g_1$ to the identity, and then $d_S(w,hw) = d_S(e,w^{-1}hw)=d_S(e,h^w)$ where $h=g_1^{-1}g_2$.  The conjugation curvature $\kappa_r(h)$ is then defined to be
\[
\kappa_r(h) = \frac{l(h) - \frac{1}{|n(r)|} \sum_{w \in S_n(r)} l(h^w)}{l(h)}
\]
that is, the difference between the word length of $h$ with respect to $S$ and the average word length of the conjugates of $h$ by all $w$ in the sphere $S_n(r)$ of radius $r$ centered at the identity in the Cayley graph $\Gamma(G,S)$,
scaled by the word length of $h$.

Bar-Natan, Duchin and Kropholler prove a variety of results using this definition of the conjugation curvature of a Cayley graph when $r=1$.  In particular, it is always zero at central elements, and any finite group has identically zero conjugation curvature when $S=G$.  This definition depends heavily on the generating set, and most groups considered in \cite{BDK} are viewed with respect to a natural generating set.  For some specific groups, they obtain strong conclusions.  For example, if $G$ is a right angled Artin group with the standard generating set, they obtain the dichotomy that for all $g$ in the group, $\kappa(g) = 0$ if and only if $g$ is central, otherwise $\kappa(g)<0$.  Additionally, if every element of a group has zero conjugation curvature, then the group is virtually abelian. For the Heisenberg group they show that there is a set of elements of positive density with each type of conjugation curvature: positive, negative and zero.

In this paper we show that the solvable Baumslag-Solitar groups
\[
BS(1,n)= \langle t,a | tat^{-1} = a^n \rangle
\]
for $n>1$ have a positive density of elements of positive, negative and zero conjugation curvature for $r$ in a bounded set of values.  To prove this, we require a detailed understanding of the shape of geodesics in the Cayley graph of $BS(1,n)$ with respect to the standard generating set $\{t,a\}$. Multiple people have studied geodesics in $BS(1,n)$, and while all reach similar conclusions, in each case the motivating questions frame the results in a unique way.

We begin with the standard normal form on $BS(1,n)$ and express each element uniquely as $g=t^{-u}a^vt^w$ for $u,v,w \in \Z$ with $u,w \geq 0$ where $n | v$ implies that $uw=0$.
We describe a deterministic algorithm which takes as input the triple $u,v,w$ and
produces a geodesic representative of the element.  
These geodesics come in four basic ``shapes''.  Our algorithm is lattice-based, and yields a succinct formula for
the word length of $g$ with respect to the generating set $\{t,a\}$.

In \cite{EH}, Elder and Hermiller produce a rubric for geodesics in $BS(1,n)$ and show that each $g \in BS(1,n)$ can be represented by a geodesic path which has one of their specified forms.  This exhaustive and detailed work is illuminating, but it does not link a given element of $BS(1,n)$ of the form $g=t^{-u}a^vt^w$ with a particular geodesic, which is what we require.

Elder in \cite{Elder} takes the first constructive approach to producing a geodesic for a given $g \in BS(1,n)$, exhibiting an algorithm to do so which runs in linear time and $O(n \log n)$ space, where $n$ is the length of an initial string of group generators.  Elder is motivated by the complexity of this algorithm, as it allows him to conclude that the bounded geodesic length problem can be solved in linear time for $BS(1,n)$.  This problem asks whether given a word of length $n$ in the generators of $G=\langle S | R \rangle$ and a nonnegative integer $k$, one can decide whether the geodesic length of the word is at most $k$.  This problem is NP-complete for free metabelian groups in their standard generating set \cite{MRUV} and hence the problem of finding an explicit geodesic representative for a given string of generators is NP-hard in the general case. While Elder produces the same geodesic paths that we find below, the fact that we are unconcerned with the complexity of the process streamlines our exposition, and we extend these common ideas by producing a word length formula at the conclusion of the algorithm.

Diekert and Laun in \cite{DL} exhibit an algorithm which produces geodesic paths for elements of $BS(m,n) = \langle t,a | ta^m\ti = a^n \rangle$ when $m |n$.  When $m=1$ they produce identical geodesic paths to those in \cite{Elder} and below.  They are also mainly concerned with the complexity of their algorithm; in general it is quadratic in the length of the initial string of generators and simplifies to linear when $m=1$.  However their methods are quite different from those of Elder in \cite{Elder}.  Neither \cite{DL} nor \cite{Elder} draw conclusions about word length in $BS(1,n)$.  Burillo and Elder in \cite{BE} prove metric estimates for word length in $BS(m,n)$ and use them to compute a lower bound on the growth rate of $BS(m,n)$.

Our method for producing a geodesic representative for $g = t^{-u}a^vt^w$ in $BS(1,n)$ allows us to investigate the growth rate of $BS(1,n)$ in \cite{TW_growth}.  
We use our techniques to show that the set of paths describing one shape of geodesics forms a regular language, and we exhibit a finite state automaton which accepts it.  It is sufficient to understand this set of geodesic paths, as it has the same growth rate as the entire group.  As an immediate
consequence, $BS(1,n)$ has rational growth, and we are able to obtain a simple expression for its growth rate, which was first computed by Collins, Edjvet and Gill in \cite{CEG}.

This paper is organized as follows.  Section~\ref{section:background} presents a brief introduction to the solvable Baumslag-Solitar groups. 
Section~\ref{section:min_rep} introduces our lattice-based methods and constructs a geodesic path for each $g = t^{-u}a^vt^w$ in $BS(1,n)$.  The results in this section provide numerical conditions on recognizing when certain paths are geodesic; these conditions are used in \cite{TW_growth} to compute the growth rate of $BS(1,n)$.
Section~\ref{sec:growth} contains some introductory remarks on growth, as well as an overview of the results of \cite{TW_growth}, where it is shown that a certain set of geodesics forms a regular language whose growth rate is identical to the growth rate of $BS(1,n)$.  
Section~\ref{sec:curvature} includes explicit descriptions of three infinite families of elements which have, respectively, positive, zero and negative conjugation curvature, when $n \geq 3$. Using our results
about growth rate from \cite{TW_growth}, we prove that these families have positive density in $BS(1,n)$.
Section~\ref{sec:n=2_curvature} contains analogous results for $n=2$.
In Section~\ref{sec:technical_lemmas} we prove several technical lemmas stated in Section~\ref{section:min_rep}.

\section{A geometric model for solvable Baumslag-Solitar groups}
\label{section:background}

For $n \in \N$ with $n > 1$, the solvable Baumslag-Solitar group $BS(1,n)$ has presentation $$BS(1,n) = \langle a,t | tat^{-1} = a^n \rangle.$$  
We consider elements of $BS(1,n)$ in the standard normal form, namely each $g \in BS(1,n)$ can be written uniquely as $t^{-u}a^vt^w$ where  $u,v,w \in \Z$ and $u,w \geq 0$, with the additional requirement that if $n|v$ then $uw=0$. 
If $n|v$ but $uw \neq 0$ then the group relator can be applied to simplify the normal form expression.  When we write $g=t^{-u}a^vt^w$ we assume that these conditions are satisfied.

The group $BS(1,n)$ for $n > 1$ acts property discontinuously and cocompactly by isometries on a metric 2-complex $X_n$ which is well described in the literature; see, for example, \cite{FM1} or \cite{freden}.  Topologically, this complex is the product $T_n \times \R$ where $T_n$ is a regular tree of valence $n+1$.  We equip $T_n$ with a height function $h:T_n \rightarrow \R$ so that vertices which differ by a single edge map to adjacent integers; this is well defined after an initial choice of vertex at height $0$.
Metrically, for any line $l \subset T_n$ where the heights of the vertices along $l$ map bijectively to $\Z$, the plane $l \times \R \subset T_n \times \R$ is a combinatorial model of the hyperbolic plane.  The 1-skeleton of this plane is tiled with the ``horobrick'' labeled by the group relator, depicted in Figure~\ref{fig:horobrick}; a part of this plane when $n=2$ is depicted in Figure~\ref{fig:plane}.

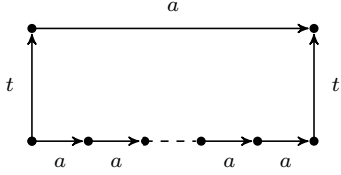
\begin{figure}[ht!]
\begin{tikzpicture}[>=stealth',shorten >=2pt,auto, node distance=5cm, semithick,scale=.75]
\tikzstyle{every node}=[draw,shape=circle]
\draw [->] (0,0) node[circle,fill,inner sep=1pt](a){} - - (1,0) node[draw=none,fill=none,font=\scriptsize,midway,below] {$a$};
\draw [->] (1,0) node[circle,fill,inner sep=1pt](b){} - - (2,0) node[draw=none,fill=none,font=\scriptsize,midway,below] {$a$};
\draw [dashed] (2,0) node[circle,fill,inner sep=1pt](b){} - - (3,0) node[draw=none,fill=none,font=\scriptsize,midway,below] {};
\draw [->] (3,0) node[circle,fill,inner sep=1pt](b){} - - (4,0) node[draw=none,fill=none,font=\scriptsize,midway,below] {$a$};
\draw [->] (4,0) node[circle,fill,inner sep=1pt](b){} - - (5,0) node[draw=none,fill=none,font=\scriptsize,midway,below] {$a$};
\draw [->] (0,0) node[circle,fill,inner sep=1pt](b){} - - (0,2) node[draw=none,fill=none,font=\scriptsize,midway,left] {$t$};
\draw [->] (5,0) node[circle,fill,inner sep=1pt](b){} - - (5,2) node[draw=none,fill=none,font=\scriptsize,midway,right] {$t$};
\draw [->] (0,2) node[circle,fill,inner sep=1pt](a){} - - (5,2) node[draw=none,fill=none,font=\scriptsize,midway,above] {$a$}
node[circle,fill,inner sep=1pt](c){};
\end{tikzpicture}
\caption{The ``horobrick'' which tiles the 1-skeleton of $X_n$; its boundary is
labeled by the relator $tat^{-1}a^{-n}$.}
\label{fig:horobrick}
\end{figure}

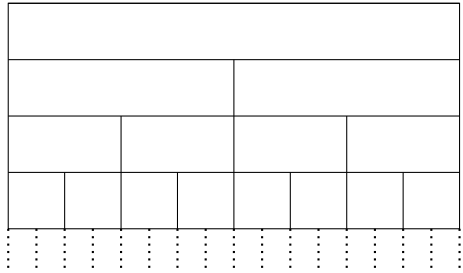
\begin{figure}[ht!]
\begin{tikzpicture}[scale=.75]
\draw (0,0) -- (0,4) -- (8,4) -- (8,0) -- (0,0);
\draw (0,1) -- (8,1);
\draw (0,2) -- (8,2);
\draw (0,3) -- (8,3);
\draw (0,4) -- (8,4); 
15

\draw (4,3) -- (4,0);
\draw (2,2) -- (2,0);
\draw (6,2) -- (6,0);
\draw(1,1) -- (1,0);
\draw(3,1) -- (3,0);
\draw(5,1) -- (5,0);
\draw(7,1) -- (7,0);
\draw [thick, dotted] (0,0) --(0,-.75);
\draw [thick, dotted] (1,0) --(1,-.75);
\draw [thick, dotted] (2,0) --(2,-.75);
\draw [thick, dotted] (3,0) --(3,-.75);
\draw [thick, dotted] (4,0) --(4,-.75);
\draw [thick, dotted] (5,0) --(5,-.75);
\draw [thick, dotted] (6,0) --(6,-.75);
\draw [thick, dotted] (7,0) --(7,-.75);
\draw [thick, dotted] (8,0) --(8,-.75);
\draw [thick, dotted] (0.5,0) --(0.5,-.75);
\draw [thick, dotted] (1.5,0) --(1.5,-.75);
\draw [thick, dotted] (2.5,0) --(2.5,-.75);
\draw [thick, dotted] (3.5,0) --(3.5,-.75);
\draw [thick, dotted] (4.5,0) --(4.5,-.75);
\draw [thick, dotted] (5.5,0) --(5.5,-.75);
\draw [thick, dotted] (6.5,0) --(6.5,-.75);
\draw [thick, dotted] (7.5,0) --(7.5,-.75);
\end{tikzpicture}
\caption{Part of a plane in $X_2$.}
\label{fig:plane}
\end{figure}

Using the metric on the topological planes in $X_n$ inherited from $\R^2$, where a single horizontal segment with label $a$ at height $0$ is defined to have length $1$, it follows that a single horizontal
segment at height $i$ in $X_n$ has length $n^i$.
Thus, if $g=t^{-u}a^vt^w$, the sum of the lengths of the horizontal edges in a path representing $g$ is $n^{-u}v$.  Since any two topological planes in $X_n$ agree beneath a horocycle, that is, in our model a line of the form $y = c$ for $c \in \Z$, each $g = t^{-u}a^vt^w$ has a well defined $x$-coordinate given by $n^{-u}v$, as does the terminal point of any path in the generators $\{a^{\pm 1},t^{\pm 1}\}$.
Let
\begin{equation}
\label{eqn:eta}
\eta=t^{e_0}a^{f_0} t^{e_1}a^{f_1}\cdots t^{e_k} a^{f_k}
\end{equation}
be a path in $X_n$ from the identity to $g = t^{-u}a^vt^w$.
Then each instance of the generator $a^{\pm 1}$ corresponds to a horizontal edge with length $n^h$ where $h$ is the height of the edge in $X_n$.  The endpoint of $\eta$ then has Euclidean $x$-coordinate given by
\[
x=n^{-u}v = \sum_{i=0}^k f_i n^{\sum_{j=0}^ie_j}.
\]

Our approach to finding geodesic words builds on \cite{EH},
where it is shown that any geodesic must have one of
several possible forms.  To each of these forms, we associate
a list of ``digits'' similar to the $f_i$ in Equation~\eqref{eqn:eta} and
related to the horizontal distance traversed by the path.
By carefully considering these
digits, we give conditions which certify that a path of this form
has minimal length.

\section{Representations of integers and geodesic paths}
\label{section:min_rep}

\subsection{The digit lattice}

In order to produce a geodesic representative of a given group
element \hbox{$g = t^{-u}a^vt^w$}, we must understand how to
efficiently represent the integer $v$ as a sequence of signed
digits with bounded absolute value and how to translate this sequence into
a geodesic word.  We formalize the concept of digit sequences
using the direct sum $\bigoplus_{i\in\N} \Z$, where we take the convention that
$0 \in \N$.  Given a vector
$\xx = (x_0, x_1, \dots) \in \bigoplus_{i\in\N}\Z$, 
define the function
$\Sigma:\bigoplus_{i\in\N}\Z \rightarrow \R$ by 
\[
\Sigma(\xx)=\sum_{i \in \N} x_in^{i}.
\]
For any $v \in \Z$, let $\Lv = \Sigma^{-1}(v)$ be the
set of vectors $\xx \in \bigoplus_{i\in\N}\Z$ with
$\Sigma(\xx) = v$.  For any vector $\xx$, we will always write
the coordinates with a matching non-bold letter, e.g. $x_i$, and
we will define
$\kx$ as the index of the final nonzero coordinate, so $\kx+1$ is the ``length'' of
$\x$.
We will refer to the coordinates of vectors as either ``coordinates''
or ``digits''depending on what makes the most intuitive sense
in context. 
For clarity, we will often write
vectors in $\bigoplus_{i\in\N}\Z$ or $\Lv$ as finite
sequences $\xx = (x_0, \dots, x_{\kx})$, assuming that $x_i = 0$
for $i > \kx$ and $x_{\kx} \neq 0$.

Define a set
of vectors $\{ \wi \}_{i \in \N}$ whose coordinates $\wwi_j$ are given by
\[
\wwi_j = \left\{ \begin{array}{ll} 1  & \textnormal{if $j=i+1$} \\
                                   -n & \textnormal{if $j=i$} \\
                                   0  & \textnormal{otherwise}
                 \end{array}\right.
\]
That is,
\[
\wi = (0, \ldots\, , 0\, ,\, \wwi_i, \wwi_{i+1}, 0, \ldots) =
(0, \underset{\ldots}{\ldots}, \underset{i-1}{0}, \underset{i}{-n}, \underset{i+1}{1}, \underset{i+2}{0},\, \underset{\ldots}{\ldots})
\]
where we indicate the index of each entry in the second expression.

\begin{lemma}\label{lemma:L_span}
The set $\mathcal{L}_0$ is a lattice spanned by $\{ \wi \}_{i \in \N}$.
\end{lemma}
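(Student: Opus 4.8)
The plan is to recognize $\mathcal{L}_0 = \Sigma^{-1}(0)$ as the kernel of a group homomorphism and to show that $\{\wi\}_{i\in\N}$ is in fact a basis of it. First I would observe that $\Sigma$ is $\Z$-linear and takes integer values, so it is a homomorphism $\OZ \to \Z$; hence $\mathcal{L}_0$ is a subgroup of the free abelian group $\OZ$, and this is what entitles us to call it a lattice. A one-line computation gives $\Sigma(\wi) = -n\cdot n^{i} + 1\cdot n^{i+1} = 0$, so each $\wi \in \mathcal{L}_0$; because $\mathcal{L}_0$ is a subgroup, the entire $\Z$-span of $\{\wi\}_{i\in\N}$ is contained in $\mathcal{L}_0$. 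This is the easy containment.

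The substance is the reverse containment: every $\x \in \mathcal{L}_0$ is a finite $\Z$-linear combination of the $\wi$. I would prove this by strong induction on $\kx$, the index of the top nonzero digit, clearing digits from the top downward. The guiding observation is that adding a unit of $\wi$ to a vector performs exactly a base-$n$ ``carry'', changing coordinate $i$ by $-n$ and coordinate $i+1$ by $+1$. Concretely, given a nonzero $\x \in \mathcal{L}_0$ with top index $k = \kx \geq 1$, I would set $\x' = \x - x_k\wf{k-1}$; since $\wf{k-1}$ is supported on the indices $k-1$ and $k$, this replaces $x_k$ by $x_k - x_k = 0$ and alters only coordinate $k-1$, so $\x' \in \mathcal{L}_0$ has top index strictly less than $k$. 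By induction $\x'$ lies in the span, whence so does $\x = \x' + x_k\wf{k-1}$. For the base case one checks that a nonzero element of $\mathcal{L}_0$ cannot have $\kx = 0$, since $\Sigma(\x) = x_0$ would then force $x_0 = 0$; the zero vector is the empty combination.

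Finally, to upgrade ``spanning set'' to ``basis'' and thereby fully justify the word lattice, I would verify linear independence: in $\sum_i c_i\wi$ the $j$-th coordinate equals $c_{j-1} - n c_j$ (with $c_{-1}=0$), so if $N$ is the largest index with $c_N \neq 0$ then coordinate $N+1$ equals $c_N \neq 0$, forcing the combination to be nonzero. The main obstacle is simply packaging the top-down reduction so that the inductive measure $\kx$ provably strictly decreases at each step; once the carry interpretation of $\wi$ is in hand this is immediate, because subtracting $x_k\wf{k-1}$ kills the top digit and cannot create a higher one, so termination is automatic and no separate bound on the number of carries is needed.
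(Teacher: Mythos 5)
Your proposal is correct and follows essentially the same route as the paper: both clear the top nonzero digit $x_j$ by subtracting $x_j\wf{j-1}$ and induct downward, with the paper closing the argument by noting $y_0 = \Sigma(\y) = 0$ where you instead rule out a nonzero vector with $\kx = 0$ as the base case --- the same observation in different clothing. Your additional remarks (the containment $\mathrm{span}\{\wi\} \subseteq \mathcal{L}_0$ via $\Sigma(\wi)=0$, and the linear independence check) are correct and slightly more complete than the paper's write-up, but do not change the substance of the argument.
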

\begin{proof}
As $\mathcal{L}_0$ is discrete and clearly closed under addition
and negation, it is a lattice, so we must show
that the $\wi$ span.
Given $\xx \in \mathcal{L}_0$, we will produce a finite linear
combination of the $\wi$ such that $\xx + \sum_{i \in I}\alpha_i\wi$ is
the vector consisting entirely of zeros, which
proves the claim.  

Suppose that $j>0$ is the
maximum index such that $x_j \ne 0$.  Then the
$j^{\textnormal{th}}$ coordinate of $\xx - x_j{\bf w}^{(j-1)}$ will
be $0$.  By induction, we conclude that there is some linear combination
of the $\wi$ such that ${\bf y} = \xx + \sum_i \alpha_i \wi$
has $y_i = 0$ for $i>0$.  However, as
\[
0 = \Sigma({\bf y}) = \Sigma(\xx) = \Sigma(\xx + \sum_i \alpha_i \wi) = y_0
\]
we conclude
that $y_0 = 0$ as well.
\end{proof}

\begin{lemma}\label{lemma:L_addition}
For any $\xx \in \mathcal{L}_v$, we have
$\mathcal{L}_{v+w} = \xx + \mathcal{L}_w$.  In particular,
$\mathcal{L}_v = \xx + \mathcal{L}_0$, so $\mathcal{L}_v$
is an affine lattice.
\end{lemma}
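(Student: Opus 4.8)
The plan is to exploit that $\Sigma$ is a group homomorphism from $\bigoplus_{i\in\N}\Z$ to $\R$, so that each $\mathcal{L}_v = \Sigma^{-1}(v)$ is a coset of the kernel $\mathcal{L}_0 = \Sigma^{-1}(0)$; the coset structure then yields the additive behavior claimed, and Lemma~\ref{lemma:L_span} upgrades the conclusion to ``affine lattice.'' First I would record the only computation needed, namely that $\Sigma$ is additive: for $\xx,\y \in \bigoplus_{i\in\N}\Z$ we have $\Sigma(\xx+\y) = \sum_{i} (x_i+y_i)n^i = \Sigma(\xx)+\Sigma(\y)$, with no convergence issues since all coordinate sequences are finitely supported.

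Next I would fix $\xx \in \mathcal{L}_v$ (such an $\xx$ exists, e.g. $(v,0,0,\dots)$) and prove the set equality $\mathcal{L}_{v+w} = \xx + \mathcal{L}_w$ by a pair of inclusions. For $\supseteq$, if $\y \in \mathcal{L}_w$ then $\Sigma(\xx+\y) = \Sigma(\xx)+\Sigma(\y) = v+w$, so $\xx+\y \in \mathcal{L}_{v+w}$. For $\subseteq$, if $\z \in \mathcal{L}_{v+w}$ then additivity gives $\Sigma(\z-\xx) = (v+w) - v = w$, whence $\z - \xx \in \mathcal{L}_w$ and $\z = \xx + (\z-\xx) \in \xx + \mathcal{L}_w$. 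Specializing to $w=0$ gives $\mathcal{L}_v = \xx + \mathcal{L}_0$; since $\mathcal{L}_0$ is a lattice by Lemma~\ref{lemma:L_span}, the translate $\mathcal{L}_v$ is an affine lattice.

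There is no real obstacle here, as the statement is the standard coset picture for the fibers of a homomorphism. The only points I would be careful to flag are that we genuinely work in the direct sum (finitely supported sequences) so that $\Sigma$ is well defined and honestly additive, and that $\mathcal{L}_v$ is nonempty so the chosen base point $\xx$ actually exists; both are immediate but worth stating so the reduction to Lemma~\ref{lemma:L_span} is clean.
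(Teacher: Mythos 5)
Your proof is correct and follows essentially the same route as the paper: both arguments use the additivity of $\Sigma$ to establish the two inclusions $\xx + \mathcal{L}_w \subseteq \mathcal{L}_{v+w}$ and $\mathcal{L}_{v+w} \subseteq \xx + \mathcal{L}_w$, and then invoke Lemma~\ref{lemma:L_span} for the affine-lattice conclusion. Your explicit remarks about finite support and nonemptiness of $\mathcal{L}_v$ are harmless additions that the paper leaves implicit.
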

\begin{proof}
It is immediate from the linearity of the function $\Sigma$
that given $\xx \in \mathcal{L}_v$ and $\y \in \mathcal{L}_w$ we have
$\xx + \y \in \mathcal{L}_{v+w}$, so $\mathcal{L}_v + \mathcal{L}_w
\subseteq \mathcal{L}_{v+w}$. 
Given any $\x \in \mathcal{L}_v$ and $\z \in \mathcal{L}_{v+w}$, define $\y$ by
$y_i = z_i - x_i$.  By the linearity of $\Sigma$, we have
$\y \in \mathcal{L}_w$ and $\z = \xx + \y$.  We conclude that for any
$v,w$ we have $\mathcal{L}_{v+w} \subseteq \xx + \mathcal{L}_w$, completing the proof.
\end{proof}

\begin{ex}\label{example:L_v}
As an example, let $n=3$, $v=7$, and
\[
\xx = (\underset{x_0}{7}, \underset{x_1}{0},\, \underset{\ldots}{\ldots}),
\qquad
\y = (\underset{y_0}{1}, \underset{y_1}{2}, \underset{y_2}{0},\, \underset{\ldots}{\ldots}) ,
\qquad
\z = (\underset{z_0}{1}, \underset{z_1}{-1}, \underset{z_2}{1},
\underset{z_3}{0},\, \underset{\ldots}{\ldots}).
\]
Then $\xx,\y,\z \in \Lv$.  Note that $\y = \xx + 2{\bf w}^{(0)}$
and $\z = \y + {\bf w}^{(1)}$.
\end{ex}

As one might expect from the spanning set $\{\wi\}$, the digits of
the vectors in $\LL_0$ have an interesting ordinal relationship.  If the
most significant digit is large in absolute value, there must be a less significant digit with greater absolute value.  This is intuitive: to balance a high power of $n$ with a sum of smaller powers of $n$, we require many smaller powers of $n$.
We formalize this in Lemma~\ref{lemma:L0_big_digits}, which compares
the coefficients of the $\wi$ to the resulting digits that must be
present in their vector sum.

\begin{lemma}\label{lemma:L0_big_digits}
Let $\xx \in \LL_0$ with
$\xx = \sum_i \alpha_i \wi$.
For any $j$ such that $|\alpha_j| \ge m$,
there is $i\le j$ with $|x_i| > m(n-1)$.
\end{lemma}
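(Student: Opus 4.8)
The plan is to work in the coordinates of $\mathbf{x}$ and prove the contrapositive: if $|x_i| \le m(n-1)$ for every $i \le j$, then $|\alpha_j| < m$. First I would read off from the definition of $\wi$ exactly how each digit $x_i$ depends on the coefficients. Since $\wi$ contributes $-n$ in position $i$ and $+1$ in position $i+1$, expanding $\mathbf{x} = \sum_i \alpha_i \wi$ coordinatewise gives the single relation
\[
x_i = \alpha_{i-1} - n\alpha_i \qquad (i \ge 0),
\]
with the boundary convention $\alpha_{-1}=0$ (there is no $\mathbf{w}^{(-1)}$). This relation is the only structural input needed.

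The key step is to invert it so that the high-index coefficient $\alpha_j$ is expressed through the \emph{low}-index digits $x_0,\dots,x_j$. Multiplying the relation by $n^i$ and summing telescopes: writing $x_i n^i = \alpha_{i-1}n^i - \alpha_i n^{i+1}$ and summing over $0 \le i \le j$ collapses to
\[
\sum_{i=0}^{j} x_i n^i = -\,\alpha_j\, n^{j+1},
\]
since every interior term cancels and the bottom term vanishes by $\alpha_{-1}=0$. Thus $\alpha_j = -n^{-(j+1)}\sum_{i=0}^{j} x_i n^i$. (Equivalently, $\sum_{i=0}^{j} x_i n^i$ is the truncated $\Sigma$-sum of $\mathbf{x}$, which is forced to equal $-\alpha_j n^{j+1}$.)

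Finally I would estimate. Assuming $|x_i| \le m(n-1)$ for all $i \le j$, the triangle inequality together with the finite geometric sum $\sum_{i=0}^{j} n^i = (n^{j+1}-1)/(n-1)$ yields
\[
|\alpha_j| \le \frac{m(n-1)}{n^{j+1}}\cdot\frac{n^{j+1}-1}{n-1} = m\Big(1 - \tfrac{1}{n^{j+1}}\Big) < m.
\]
This contradicts $|\alpha_j| \ge m$, so some $i \le j$ must satisfy $|x_i| > m(n-1)$, proving the lemma.

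The computation is short, so the real ``obstacle'' is bookkeeping: getting the telescoping boundary term right (this is where $\alpha_{-1}=0$ is used) and ensuring the geometric-series bound is strict. The factor $(n-1)$ in the statement is not arbitrary—it is exactly the denominator of the geometric series with ratio $n$, which is what makes the final bound land precisely at $m$ while the leftover $-1/n^{j+1}$ forces the strict inequality. An induction on $j$ would work equally well in place of the closed form, but the telescoping identity makes the dependence on the low-order digits transparent in a single line.
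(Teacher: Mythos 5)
Your proof is correct, but it takes a genuinely different route from the paper's. The paper argues locally: it reduces to the \emph{minimal} index $j'$ with $|\alpha_{j'}| \ge m$, so that $|\alpha_{j'-1}| < m$, and then reads off from the single relation $x_{j'} = \alpha_{j'-1} - n\alpha_{j'}$ that $|x_{j'}| > m(n-1)$ at that one index. You instead prove the contrapositive globally, via the telescoping identity $\sum_{i=0}^{j} x_i n^i = -\alpha_j n^{j+1}$ and a geometric-series estimate. Both arguments are sound (your telescoping and the boundary convention $\alpha_{-1}=0$ check out, and the bound $m(1 - n^{-(j+1)}) < m$ is indeed strict). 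The trade-off: your closed form $\alpha_j = -n^{-(j+1)}\sum_{i=0}^{j} x_i n^i$ makes the dependence of the coefficients on the low-order digits completely explicit and explains why the constant $m(n-1)$ is sharp, whereas the paper's argument is shorter and, by working at the minimal offending index, actually exhibits a specific digit that is large (and moreover pins down its sign, $x_{j'} < -m(n-1)$ when $\alpha_{j'}>0$, which is the form used in the proof of Lemma~\ref{lemma:odd_box} and elsewhere); your contrapositive only yields existence.
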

\begin{proof}
The proofs are symmetric depending on the sign of $\alpha_j$,
so we assume without loss of generality that $\alpha_j > 0$
and hence $\alpha_j \geq m$.  It follows that
$x_j = \alpha_{j-1} - \alpha_j n  \leq \alpha_{j-1} - mn$,
where if $j=0$, we define $\alpha_{j-1}=0$.
It suffices to prove the lemma for the minimal $j$ such that $|\alpha_j| \ge m$,
so we will assume that that inequality holds.
Then $|\alpha_{j-1}|<m$,
so  $x_j +mn  \leq  \alpha_{j-1}<m$.  It follows that
$x_j < -m(n-1)$, as required.
\end{proof}

The following corollary is immediate.

\begin{corollary}\label{corollary:L0_at_least_n}
Let $\xx \in \LL_0$.  If there is an $i$ so that $x_i \ne 0$,
then there is a $j\le i$ with $x_j \ge n$.
\end{corollary}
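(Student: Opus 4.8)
The plan is to read this off directly from Lemma~\ref{lemma:L0_big_digits} with the single parameter value $m=1$, while tracking the sign of the digit that the lemma produces. First I would use Lemma~\ref{lemma:L_span} to write $\xx = \sum_i \alpha_i \wi$. The hypothesis that $x_i \ne 0$ for some $i$ guarantees $\xx \ne \mathbf{0}$, so at least one coefficient is nonzero; choosing such a coefficient gives an index $j$ with $|\alpha_j| \ge 1 = m$. Lemma~\ref{lemma:L0_big_digits} then immediately supplies an index $i \le j$ whose digit satisfies $|x_i| > (n-1)$, that is, $|x_i| \ge n$. This is the whole content up to sign, and it is genuinely immediate.

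The work, and the main obstacle, lies in the sign: the corollary asserts a digit that is at least $n$ as a signed integer, not merely one of absolute value at least $n$. The proof of Lemma~\ref{lemma:L0_big_digits} is exactly the tool that controls this, since it shows the large digit has sign opposite to the coefficient that produced it. A coefficient with $\alpha_j \ge m$ forces $x_i < -m(n-1)$, while the symmetric case shows $\alpha_j \le -m$ forces $x_i > m(n-1)$. Hence to conclude $x_j \ge n$ rather than $x_j \le -n$, I must apply the lemma at a coefficient that is strictly \emph{negative}, and the crux is to exhibit such a negative coefficient in the expansion of $\xx$.

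To handle this last step I would use the symmetry of the lattice: since $\Sigma$ is linear, $\LL_0$ is closed under negation, and negating $\xx$ negates every coefficient $\alpha_i$ and every digit $x_i$ simultaneously. Thus for a nonzero vector, one of $\xx$ and $-\xx$ has its first nonzero coefficient negative, and applying the sign-sensitive half of Lemma~\ref{lemma:L0_big_digits} at that minimal index produces a digit that is $> n-1$, hence $\ge n$. I expect the delicate point to be exactly this sign normalization: ensuring that the vector carrying the positive large digit is the one the statement is about, and that the resulting digit sits at an index $j \le i$ as claimed. The magnitude bound itself is free from the lemma, so all the genuine difficulty is concentrated in pinning down this sign.
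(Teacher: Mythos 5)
Your first paragraph is, essentially, the paper's entire proof: the paper writes $\xx=\sum_i\alpha_i\wi$ and applies Lemma~\ref{lemma:L0_big_digits} with $m=1$, exactly as you do. But note one difference in how the nonzero coefficient is chosen. The paper observes that $x_i=\alpha_{i-1}-\alpha_i n\ne 0$ forces $\alpha_i\ne 0$ or $\alpha_{i-1}\ne 0$, i.e.\ it produces a nonzero coefficient at an index $\le i$, and only then does Lemma~\ref{lemma:L0_big_digits} place the large digit at an index $\le i$. You instead take an arbitrary nonzero coefficient, whose index may exceed $i$, and the lemma's statement then only locates the large digit below \emph{that} index. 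This is repairable (all digits below the minimal nonzero coefficient vanish, so that minimal index is automatically $\le i$), but you flag the point at the end rather than fix it, and the fix requires looking inside the lemma's proof rather than citing its statement.

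The larger problem is that the sign program you describe as ``the whole difficulty'' is a misreading, and the gap you flag in it cannot be closed. Read as a signed inequality, the corollary is false: $\wf{0}=(-n,1,0,\dots)\in\LL_0$ has nonzero digits but no digit $\ge n$ at all. The intended (and actually used) statement is $|x_j|\ge n$: Lemma~\ref{lemma:L0_big_digits} concludes $|x_i|>m(n-1)$ in absolute value, and the paper's sole invocation of this corollary, in the proof of Lemma~\ref{lemma:odd_box}, uses it to contradict $|z_j|\ge n$. So ``$x_j\ge n$'' in the statement is an abuse of notation for $|x_j|\ge n$, and the paper's one-line proof is complete as it stands. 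Moreover, even on its own terms your negation trick does not deliver what you need: it shows that \emph{one} of $\xx$, $-\xx$ has a large positive digit, but when the first nonzero coefficient of $\xx$ is positive, the positive digit lands in $-\xx$, i.e.\ $\xx$ itself carries a digit $\le -n$ — and $\wf{0}$ above is precisely this situation, showing the ``delicate point'' you defer is not a normalization issue but an actual counterexample to the signed claim.
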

\begin{proof}
Write $\xx = \sum_i \alpha_i \wi$.
Since $x_i \ne 0$, we must have $\alpha_i \ne 0$ or $\alpha_{i-1} \ne 0$.
Then apply Lemma~\ref{lemma:L0_big_digits} with $m=1$.
\end{proof}

Lemma~\ref{lemma:L0_nonunique_digit} shows that there is no vector in $\LL_0$
with a single nonzero digit.

\begin{lemma}\label{lemma:L0_nonunique_digit}
If $\xx \in \LL_0$ has some nonzero digit, then there must be at least
two nonzero digits in $\xx$.
\end{lemma}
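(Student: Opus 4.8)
The plan is to argue by contradiction, supposing that $\xx \in \LL_0$ has exactly one nonzero digit. The cleanest route is entirely direct: if the single nonzero digit sits in coordinate $p$, so that $x_p \ne 0$ and $x_i = 0$ for all $i \ne p$, then by definition $\Sigma(\xx) = \sum_{i} x_i n^i = x_p n^p$. Since $n > 1$ we have $n^p \ge 1 > 0$, so $x_p n^p \ne 0$, contradicting the hypothesis $\Sigma(\xx) = 0$ that defines membership in $\LL_0$. This rules out a single nonzero digit, and since $\xx$ is assumed to have at least one nonzero digit, it must have at least two.

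An alternative that fits the machinery already developed uses Corollary~\ref{corollary:L0_at_least_n} together with the fact, noted in the proof of Lemma~\ref{lemma:L_span}, that $\LL_0$ is closed under negation. Suppose again that the only nonzero digit is $x_p$. Applying Corollary~\ref{corollary:L0_at_least_n} to $\xx$, the existence of the nonzero coordinate $x_p$ forces some coordinate with index at most $p$ to be at least $n$; as $x_p$ is the lone nonzero digit, this means $x_p \ge n$. The same corollary applied to $-\xx \in \LL_0$, whose only nonzero digit is $-x_p$, yields $-x_p \ge n$, i.e.\ $x_p \le -n$. These two inequalities are incompatible, providing the contradiction.

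I expect no genuine obstacle here: the statement is an immediate consequence of the definition of $\Sigma$, and the only point requiring care is the degenerate case $p = 0$, which the direct computation handles transparently. I would present the direct argument as the proof, perhaps remarking that it also follows from Corollary~\ref{corollary:L0_at_least_n}, since that connection reinforces the balancing intuition described in the discussion preceding Lemma~\ref{lemma:L0_big_digits}.
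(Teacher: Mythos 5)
Your direct argument is correct and is exactly the paper's proof: a single nonzero digit $x_p$ would force $0 = \Sigma(\xx) = x_p n^p \neq 0$. The alternative via Corollary~\ref{corollary:L0_at_least_n} is a fine cross-check but unnecessary; the direct computation is what the paper uses.
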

\begin{proof}
Suppose there is $j$ so $x_j$ is the only nonzero digit of $\x$.
Then by the definition of $\LL_0$, we have
\[
0 = \Sigma(\x) = \sum_i x_in^i = x_jn^j.
\]
This contradicts the assumption that $x_j \ne 0$.
\end{proof}

\subsection{Geodesics from digit sequences}
Given a group element  $g = t^{-u}a^vt^w$, we define a map
$\eta_{u,v,w}:\Lv \to \{a^{\pm},t^{\pm}\}^*$ which takes
a vector $\xx = (x_0, \dots, x_\kx) \in \Lv$ to a word
$\eta_{u,v,w}(\xx)$
representing $g$ in the following way:
\[
\eta_{u,v,w}(\xx) = 
\left\{
\begin{array}{lll}
t^{-u} a^{x_0} t a^{x_{1}} \cdots t a^{x_{\kx}}t^{w-\kx}
& \textnormal{if $\kx \leq w$} & \textnormal{(shape 1)} \\
t^{\kx-u} a^{x_{\kx}} \ti a^{x_{\kx-1}} \cdots \ti a^{x_0} t^w
& \textnormal{if $w < \kx \leq u$} & \textnormal{(shape 2)} \\
t^{-u}a^{x_0}ta^{x_{1}} \cdots ta^{x_{\kx}} t^{w-\kx}
& \textnormal{if $u \leq w<\kx$} & \textnormal{(shape 3)} \\
t^{\kx-u}a^{x_{\kx}}\ti a^{x_{\kx-1}} \cdots  \ti a^{x_{0}} t^{w}
& \textnormal{if $w<u<\kx$} & \textnormal{(shape 4).} 
\end{array}\right .
\]
Shapes 1 and 3 and shapes 2 and 4 have identical expressions up to the signs of certain exponents.

\begin{remark}
We only consider triples $(u,v,w)$ which correspond to elements $g = t^{-u}a^vt^w$ in normal form.  In particular, we only allow $n|v$ if $uw=0$, which places restrictions on these triples.
This has implications about which vectors $\x \in \Lv$ we consider.
If the first digit of $\x$ is $0$, then $n|v$, and for any choice of $u,w \geq 0$ we require that either $u=0$ or $w=0$.
\end{remark}

We now show that the length of each path above is given by one
of two expressions.  Here, $| \cdot |$ denotes the actual length of the
given path, not the word length with respect to the generating set $\{a^{\pm 1},t^{\pm 1} \}$ in the group of the element it represents.

\begin{lemma}\label{lemma:length_formula}
For $\xx = (x_0, \dots, x_\kx) \in \Lv$ and $u,w \geq 0$ we have
\[
|\eta_{u,v,w}(\xx)| = 
\left\{
\begin{array}{lll}
\Vert \xx \Vert_1 + u + w & \textnormal{if $\kx \leq \max(u,w)$} & \textnormal{(shapes 1 and 2)} \\ 
\Vert \xx \Vert_1 + 2\kx - |u-w| & \textnormal{otherwise} & \textnormal{(shapes 3 and 4)}
\end{array}\right. .
\]
\end{lemma}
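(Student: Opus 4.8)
The plan is to read the length of each word $\eta_{u,v,w}(\xx)$ directly off its defining expression, tallying separately the contributions of the $a$-exponents and the $t$-exponents. The four cases are really two symmetric pairs, and the only genuine issue is keeping track of the sign of the exponent on each boundary power of $t$, since the length contributed by a factor $t^m$ is $|m|$.

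First I would observe that in all four shapes the symbol $a^{\pm 1}$ occurs only inside the blocks $a^{x_0}, a^{x_1}, \dots, a^{x_\kx}$, so the number of $a^{\pm 1}$ symbols is $\sum_{i=0}^{\kx} |x_i| = \Vert \xx \Vert_1$ in every case. This reduces the lemma to counting the $t^{\pm 1}$ symbols and checking that this count is $u + w$ in shapes 1 and 2 and $2\kx - |u-w|$ in shapes 3 and 4. In each word there are exactly $\kx$ internal powers of $t^{\pm 1}$, namely the single separators between the $\kx + 1$ consecutive $a$-blocks, together with two boundary powers of $t$. I would then evaluate the two boundary powers shape by shape, using the governing inequality to fix the sign of each exponent: in shape 1 ($\kx \le w$) the boundary powers are $t^{-u}$ and $t^{w-\kx}$ with $w - \kx \ge 0$, giving $t$-count $u + \kx + (w - \kx) = u + w$; in shape 2 ($w < \kx \le u$) they are $t^{\kx-u}$ with $\kx - u \le 0$ and $t^w$, giving $(u - \kx) + \kx + w = u + w$; in shape 3 ($u \le w < \kx$) the factor $t^{w-\kx}$ now satisfies $w - \kx < 0$, giving $u + \kx + (\kx - w) = u + 2\kx - w$, which equals $2\kx - |u-w|$ since $u \le w$; and in shape 4 ($w < u < \kx$) the factor $t^{\kx-u}$ satisfies $\kx - u > 0$, giving $(\kx - u) + \kx + w = 2\kx - u + w = 2\kx - |u-w|$ since $w < u$. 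Adding $\Vert \xx \Vert_1$ to each $t$-count yields the stated formula.

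The computation is entirely elementary, so I do not expect a serious obstacle; the one place a misstep could occur is the sign bookkeeping on the two boundary powers of $t$. This is exactly the point that separates the $u + w$ regime (shapes 1 and 2, where $\kx \le \max(u,w)$) from the $2\kx - |u-w|$ regime (shapes 3 and 4, where $\kx > \max(u,w)$): crossing $\kx = \max(u,w)$ flips the sign of one boundary exponent and thereby changes which quantity the path length $|t^m| = |m|$ produces. I would therefore state the sign of each boundary exponent explicitly, as above, so that the resolution of the absolute value is transparent in each of the four cases.
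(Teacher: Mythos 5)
Your proposal is correct and follows essentially the same route as the paper's proof: sum the absolute values of the exponents, note that the $a$-blocks contribute $\Vert \xx \Vert_1$ in every shape, and resolve the signs of the boundary powers of $t$ case by case to combine shapes 1--2 into $u+w$ and shapes 3--4 into $2\kx - |u-w|$. Your version simply spells out the sign bookkeeping more explicitly than the paper does.
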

\begin{proof}
To prove the lemma, we add the absolute values of the exponents in the above expressions for $\eta_{u,v,w}(\xx)$.  Accounting for the signs of the expressions, the first formula follows immediately.

For the second case, we compute the length of a path of shape 3:
\[
|\eta_{u,v,w}(\xx)| = \Vert \xx \Vert_1 + u + \kx + \kx - w
\]
and for shape 4:
\[
|\eta_{u,v,w}(\xx)| = \Vert \xx \Vert_1 + (\kx-u) +  \kx + w.
\]
Considering the relative magnitudes of $u,w$ and $\kx$, we see that the two expressions combine into the second formula of the lemma.
\end{proof}

In Lemma~\ref{lemma:length_formula}, the two cases divide according
to whether $\kx \le \max(u,w)$ or $\kx > \max(u,w)$.  We remark that
the formulas agree in the boundary case when
$\kx = \max(u,w)$.  To see this, note that when $\kx=\max(u,w)$,
\begin{align*}
2\kx - |u-w| & = 2\max(u,w) - |u-w| \\
           & = 2\max(u,w) - \max(u,w) + \min(u,w) \\
           & = u + w
\end{align*}

If we have a vector $\x \in \Lv$ and add $\z \in \LL_0$, both the
$\ell^1$ norm and the vector length may change.  
Choose $u,w \geq 0$.  If $\kx$ and $k_{\x+\z}$ have different ordinal relationships to $\max(u,w)$,
computing $|\eta_{u,v,w}(\xx)|$ and $|\eta_{u,v,w}(\xx+\x)|$ may require different formulas from Lemma~\ref{lemma:length_formula}.
The next lemma explicitly computes this change.

\begin{lemma}\label{lemma:length_formula_change}
Let $\x \in \Lv$ and $\z \in \LL_0$ with $u,w \geq 0$. If $k_{\x + \z} > \kx$, then
\[
|\eta_{u,v,w}(\x+\z)| - |\eta_{u,v,w}(\x)| =
\Vert \x + \z \Vert_1 - \Vert \x \Vert_1 + 2\max(0, k_{\x+\z} - \max(\kx,u,w)).
\]
If $k_{\x + \z} < \kx$, then
\[
|\eta_{u,v,w}(\x+\z)| - |\eta_{u,v,w}(\x)| =
\Vert \x + \z \Vert_1 - \Vert \x \Vert_1 - 2\max(0, \kx - \max(k_{\x+\z},u,w)).
\]
\end{lemma}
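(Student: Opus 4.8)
The plan is to reduce the statement to Lemma~\ref{lemma:length_formula} by isolating the part of the length that depends only on the vector's length $\kx$. First I would observe that since $\z \in \LL_0$ and $\x \in \Lv$, Lemma~\ref{lemma:L_addition} gives $\x + \z \in \Lv$, so both words $\eta_{u,v,w}(\x)$ and $\eta_{u,v,w}(\x+\z)$ are defined and the statement makes sense. I would then introduce, for fixed $u,w \geq 0$ and any integer $k \geq 0$, the function
\[
L(k) = \begin{cases} u + w & \textnormal{if } k \leq \max(u,w), \\ 2k - |u-w| & \textnormal{if } k > \max(u,w), \end{cases}
\]
so that Lemma~\ref{lemma:length_formula} reads $|\eta_{u,v,w}(\x)| = \Vert \x \Vert_1 + L(\kx)$ uniformly across all four shapes. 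With this notation,
\[
|\eta_{u,v,w}(\x+\z)| - |\eta_{u,v,w}(\x)| = \big(\Vert \x + \z \Vert_1 - \Vert \x \Vert_1\big) + \big(L(\K{\x+\z}) - L(\kx)\big),
\]
so the $\ell^1$ contribution appears verbatim and the whole problem collapses to computing the increment $L(\K{\x+\z}) - L(\kx)$ of the length term.

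The second step is a short case analysis. Writing $M = \max(u,w)$ and noting $\max(\kx,u,w) = \max(\kx,M)$, I would compute $L(k') - L(k)$ for $k' > k$ by splitting according to the three positions $M$ can occupy relative to $k < k'$: either $k' \leq M$, or $k \leq M < k'$, or $M < k$. In the first case both values equal $u+w$ and the increment is $0$; in the third both are of the form $2(\cdot) - |u-w|$ and the increment is $2(k'-k)$; the middle case uses the identity $(u+w)+|u-w| = 2\max(u,w) = 2M$ (the same identity that makes the two formulas of Lemma~\ref{lemma:length_formula} agree at $k=M$) to yield increment $2(k'-M)$. In all three cases the common value is exactly $2\max(0,\, k' - \max(k,M))$, which is the first displayed formula upon setting $k = \kx$ and $k' = \K{\x+\z}$.

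For the case $\K{\x+\z} < \kx$ I would simply invoke symmetry: the identical computation with the two indices exchanged gives $L(\kx) - L(\K{\x+\z}) = 2\max(0,\, \kx - \max(\K{\x+\z}, M))$, and negating produces the second formula. The only genuine obstacle here is bookkeeping — keeping the three positions of $\max(u,w)$ straight and correctly collapsing them into a single $\max(0,\cdot)$ expression — rather than any substantive difficulty; the sole algebraic input is the identity $(u+w)+|u-w| = 2\max(u,w)$.
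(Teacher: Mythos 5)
Your proposal is correct and follows essentially the same route as the paper: both reduce to Lemma~\ref{lemma:length_formula} and perform a case analysis on the ordinal position of $\max(u,w)$ relative to $\kx$ and $\K{\x+\z}$, using the identity $(u+w)+|u-w|=2\max(u,w)$ to handle the mixed case. Your packaging of the length-dependent term as a single function $L(k)$ is a clean reorganization of the paper's four cases, not a different argument.
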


\begin{proof}
If both $\kx \leq \max(u,w)$ and $k_{\x + \z} \leq \max(u,w)$, or both $\kx > \max(u,w)$ and $k_{\x + \z} > \max(u,w)$, then we use the same
length formula from Lemma~\ref{lemma:length_formula} to compute both $|\eta_{u,v,w}(\x+\z)|$ and $|\eta_{u,v,w}(\x)|$.
In the first case, the formulas in the lemma follow from the fact that $\max(0, \kx - \max(k_{\x+\z},u,w)) = 0$.
In the second case, the maximum is either $\kx -k_{\x+\z}$ or its negative, again giving rise to the two formulas in the lemma.

We consider the remaining cases.  
Suppose that $\kx \le  \max(u,w) \le k_{\x + \z}$ and one of the inequalities is strict.
Thus we use the first length formula from Lemma~\ref{lemma:length_formula} to compute $|\eta_{u,v,w}(\x)|$, and the second length formula from Lemma~\ref{lemma:length_formula} to compute $|\eta_{u,v,w}(\x+\z)|$.
It follows that
\begin{align*}
|\eta_{u,v,w}(\x+\z)| - |\eta_{u,v,w}(\x)| &= \Vert \x + \z \Vert_1 - \Vert \x \Vert_1 
+2k_{\x + \z} -|u-w| - u - w\\
&= \Vert \x + \z \Vert_1 - \Vert \x \Vert_1 
+2k_{\x + \z} -2\max(u,w). \\
\end{align*}
Since $\kx \le  \max(u,w)$, it follows that $\max(u,w) = \max(\kx,u,w)$, we have the desired formula in this case.

Suppose that $k_{\x + \z} \le  \max(u,w) \le \kx$ and one of the inequalities is strict. 
Using the appropriate length formula from Lemma~\ref{lemma:length_formula}, we then compute
\begin{align*}
|\eta_{u,v,w}(\x+\z)| - |\eta_{u,v,w}(\x)| &= \Vert \x + \z \Vert_1 - \Vert \x \Vert_1 
+u+w -2\kx + |u-w|\\
&= \Vert \x + \z \Vert_1 - \Vert \x \Vert_1 
-2\kx +2\max(u,w) \\
\end{align*}
and again we have the desired formula, completing the proof.
\end{proof}

\begin{lemma}\label{lemma:minimal_is_geodesic}
If $\xx \in \Lv$ is such that $|\eta_{u,v,w}(\xx)|$ is minimal,
then $\eta_{u,v,w}(\xx)$ is a geodesic representing the
group element $g = t^{-u}a^vt^w$.
\end{lemma}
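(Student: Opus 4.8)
The plan is to compare the word length $\ell(g)$ of $g=t^{-u}a^vt^w$ with the quantity $\ell_{\min}:=\min_{\y\in\Lv}|\eta_{u,v,w}(\y)|$ and show they are equal; then any $\xx$ attaining the minimum gives a word $\eta_{u,v,w}(\xx)$ of length exactly $\ell(g)$ that represents $g$, i.e.\ a geodesic. First I would note that the minimum is attained: by Lemma~\ref{lemma:length_formula} we have $|\eta_{u,v,w}(\y)|\ge\|\y\|_1$, and when $\ky>\max(u,w)$ also $|\eta_{u,v,w}(\y)|\ge 2\ky-|u-w|$, so the value tends to infinity as $\|\y\|_1+\ky\to\infty$ and its sublevel sets in $\Lv$ are finite. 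One inequality then requires no minimality: for every $\y\in\Lv$ the word $\eta_{u,v,w}(\y)$ represents $g$ by the defining property of $\eta_{u,v,w}$ (a direct computation with $tat^{-1}=a^n$; note that the net $t$-exponent is $w-u$ and the terminal $x$-coordinate is $n^{-u}\Sigma(\y)=n^{-u}v$, matching $g$). Hence $\ell(g)\le\ell_{\min}$.

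For the reverse inequality I would invoke the classification of geodesics from \cite{EH}: the element $g$ admits a geodesic representative $\gamma$ whose letters occur in one of the prescribed forms. The crucial step is to recognize each such form as $\eta_{u,v,w}(\y)$ for a suitable $\y\in\Lv$. Reading the exponents of the maximal $a$-runs of $\gamma$ off at the successive heights they occupy assembles a vector $\y\in\bigoplus_{i\in\N}\Z$; since $\gamma$ represents $g$, comparing $x$-coordinates forces $\Sigma(\y)=v$, so $\y\in\Lv$. The intervening $t$-powers of $\gamma$ then realize exactly the shape dictated by the position of $\ky$ relative to $u$ and $w$, so that $\gamma=\eta_{u,v,w}(\y)$ verbatim. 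Consequently $\ell(g)=|\gamma|=|\eta_{u,v,w}(\y)|\ge\ell_{\min}$, and combining with the previous paragraph gives $\ell(g)=\ell_{\min}$, which is precisely the assertion of the lemma.

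I expect the matching step to be the main obstacle: one must check that the forms produced by \cite{EH} are precisely the four shapes of $\eta_{u,v,w}$, with the correct initial and terminal $t$-powers, equivalently that some geodesic descends exactly to height $-u$ and no lower and places its horizontal runs at consecutive heights $-u,-u+1,\dots$. Care is needed in the degenerate and boundary cases: the normal-form hypothesis that $n\mid v$ forces $uw=0$, so a leading or trailing $t$-power vanishes and the first digit may be zero, and the boundary $\ky=\max(u,w)$, where adjacent shapes and their length formulas coincide, is covered by the computation following Lemma~\ref{lemma:length_formula}. Once the forms are matched the argument is formal. A more self-contained alternative avoids \cite{EH}: starting from an arbitrary word $t^{e_0}a^{f_0}\cdots t^{e_k}a^{f_k}$ for $g$, one collects the $a$-exponents occurring at each height into a vector $\y\in\Lv$ (so $\|\y\|_1\le\sum_i|f_i|$ by the triangle inequality) and bounds the total $t$-length $\sum_i|e_i|$ below by the $t$-contribution to $|\eta_{u,v,w}(\y)|$ recorded in Lemma~\ref{lemma:length_formula}, via a total-variation estimate on the height sequence $0,e_0,e_0+e_1,\dots,w-u$; this reproves the lower bound directly but is considerably more laborious, which is presumably why the citation route is preferred.
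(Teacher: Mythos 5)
Your proposal is correct and follows essentially the same route as the paper: both arguments rest on citing the classification of geodesic forms from \cite{EH} (Proposition~2.3), recognizing the guaranteed geodesic as $\eta_{u,v,w}(\y)$ for some $\y\in\Lv$, and then comparing with the minimizing vector $\xx$ to conclude $|\eta_{u,v,w}(\xx)|=\ell(g)$. Your write-up is somewhat more explicit than the paper's (attainment of the minimum, the verification that every $\eta_{u,v,w}(\y)$ represents $g$, and the form-matching step), but the underlying argument is the same.
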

\begin{proof}
This is a corollary of~\cite{EH}, Proposition~2.3, where it is shown that
there must be a geodesic representing $g$ which has one of shapes $1$--$4$.
For the geodesic $\xi$ guaranteed by~\cite{EH}, there is a vector ${\bf y} \in \Lv$ so that $\eta({\bf y}) = \xi$.

For a given $\xx \in \Lv$, one can form two possible induced paths which are potentially geodesic: one in shape $1$ or $3$, depending on whether $w<\kx$, and one in shape $2$ or $4$.  The word length formulas in Lemma~\ref{lemma:length_formula} allow us to choose the shorter of these paths as the output of $\eta_{u,v,w}$, and hence $\eta_{u,v,w}$ describes a geodesic path to $g$.
\end{proof}

If we are given $g = t^{-u}a^vt^w$ and want to find a geodesic
for $g$, then by Lemma~\ref{lemma:minimal_is_geodesic}, it suffices
to find a vector $\xx \in \Lv$ such that 
$|\eta_{u,v,w}(\xx)|$ is minimal; we will refer to such an
$\xx$ as a minimal vector.  By Lemma~\ref{lemma:L_addition},
this is equivalent to minimizing 
$|\eta_{u,v,w}(\xx+\z)|$, where $\xx \in \Lv$ is any
vector and $\z \in \mathcal{L}_0$.  Lemma~\ref{lemma:reduce_to_box} shows that some vectors $\xx$ are easily altered in this way to reduce $|\eta_{u,v,w}(\xx)|$.  We will refer to the change from $\x$ to $\x+\z$ in this way as reducing $\x$.

For $n \geq 3$, let $\Bvuw \subseteq \Lv$ be defined to be the set of 
$\xx = (x_0, \dots, x_\kx) \in \Lv$ satisfying the following conditions.

\begin{enumerate}
\item If $i<\kx$, then $|x_i| \le \fn$.
\item If $i=\kx < \max(u,w)$, then $|x_i| \le \fn$.
\item If $i=\kx \ge \max(u,w)$, then $|x_i| \le \fn + 1$.
\end{enumerate}
When $n=2$, we define $\Bvuw$ as above, replacing the third inequality with $|x_i| \le \fn + 2$.

The ``box'' $\Bvuw$ contains all digit sequences whose digits are
uniformly bounded as described above; most digits are bounded by $\fn$, but when $\kx \geq \max(u,v)$ we allow the most significant digit to be slightly larger.
For context, our plan is to find minimal vectors in $\Bvuw$, and the
modified bound on the final digit results from shortening $|\eta_{u,v,w}(\x)|$
in certain cases where it is more efficient to have one larger digit than
two smaller digits at the end of the vector.

In Lemma~\ref{lemma:reduce_to_box} below we show that given $\x \in 
\Lv$, we can find a vector $\y \in \Bvuw \subseteq \Lv$ so that
$|\eta_{u,v,w}(\y)| \le |\eta_{u,v,w}(\xx)|$.
Since $\eta_{u,v,w}(\xx)$ and $\eta_{u,v,w}(\y)$
represent the same group element, this implies that finding a geodesic for a
group element is equivalent to searching for a minimal vector within $\Bvuw$.  For such $\x$ and $\y$, we will
write $\y \ord \x$ to mean
that $|\eta_{u,v,w}(\y)| \le |\eta_{u,v,w}(\xx)|$.  Note that
although $\ord$ is transitive, it is not a partial order because it is
not antisymmetric.  However, it still makes sense to refer to vectors
as being minimal with respect to the relation.

\begin{lemma}\label{lemma:reduce_to_box}
If $\x \in \Lv$ but $\xx  \notin \BB_v^{u,w}$, then there exists
$\z \in \mathcal{L}_0$ so that $\xx + \z \in \BB_v^{u,w}$
and 
\[
\xx + \z \ord \xx.
\]
Consequently, if $\xx \in \Bvuw$ is minimal, then $\eta_{u,v,w}(\xx)$
is geodesic.
\end{lemma}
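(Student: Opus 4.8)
The plan is to reduce $\x$ into $\Bvuw$ by \textbf{balanced base-$n$ carrying}. Adding a multiple $\alpha\wi$ to $\x$ replaces the digit $x_i$ by $x_i-\alpha n$ and the digit $x_{i+1}$ by $x_{i+1}+\alpha$, which is exactly carrying in base $n$; by Lemma~\ref{lemma:L_span} each $\wi$ lies in $\LL_0$, so by Lemma~\ref{lemma:L_addition} the result stays in $\Lv$, and moreover every reduction we might want is realizable inside $\LL_0$. I would first run a single sweep over the interior positions $i=0,1,\dots,\kx-1$, at each step choosing $\alpha$ so that the offending digit lands in $[-\fn,\fn]$, and then separately treat the terminal digit. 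Throughout, I track the effect on $|\eta_{u,v,w}|$ using Lemmas~\ref{lemma:length_formula} and~\ref{lemma:length_formula_change}, the goal being that every individual carry satisfies $\x+\z\ord\x$.

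The interior sweep is the easy part. Fixing a digit $x_i$ with $i<\kx$ only touches positions $i$ and $i+1\le\kx$, so it never raises the top index $\kx$; it leaves $\kx$ unchanged, or lowers it if the carry cancels the top digit. Carrying a violating digit weakly decreases $\Vert\cdot\Vert_1$, and because $k$ does not increase, the penalty term $2\max(0,\kx-\max(k_{\x+\z},u,w))$ of Lemma~\ref{lemma:length_formula_change} only helps; hence the change in $|\eta_{u,v,w}|$ equals the nonpositive change in $\Vert\cdot\Vert_1$. Since a left-to-right sweep never revisits a corrected digit, it terminates with all digits below the top in $[-\fn,\fn]$. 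Raising the top index while the new index remains at or below $\max(u,w)$ is likewise free, because then the penalty $2\max(0,k_{\x+\z}-\max(\kx,u,w))$ vanishes; this is exactly why condition~(2) requires the terminal digit to be bounded by $\fn$ precisely when $\kx<\max(u,w)$.

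The delicate case, and the step I expect to be the main obstacle, is the terminal digit once $\kx\ge\max(u,w)$, where a carry raises $k$ by one and incurs the fixed penalty $+2$ from Lemma~\ref{lemma:length_formula_change}. Here I would record the exact trade-off against the $\ell^1$ savings: reducing a box-violating terminal digit (absolute value at least $\fn+2$, or at least $\fn+3$ when $n=2$) lowers $\Vert\cdot\Vert_1$ by enough to absorb the $+2$ penalty, so the carry is weakly beneficial, while the same computation shows that carrying a digit of absolute value exactly $\fn+1$ (respectively $\fn+2$ when $n=2$) would strictly increase $|\eta_{u,v,w}|$ by $2$. This is precisely the content of condition~(3) and its $n=2$ modification: the relaxed terminal bound marks the threshold beyond which a carry no longer pays for the height it adds. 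Honestly matching the parities, $\fn=\tfrac{n-1}{2}$ versus $\tfrac{n}{2}$, and allowing a carry of size $\alpha>1$ (which can be forced, and can cascade upward, when $n=2$) is where the routine calculations must be carried out; each such step remains weakly beneficial and the cascade terminates, leaving $\y=\x+\z\in\Bvuw$ with $\y\ord\x$.

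Finally, the stated consequence is formal. Given the first part, for an arbitrary $\x'\in\Lv$ there is a box vector $\y\in\Bvuw$ with $|\eta_{u,v,w}(\y)|\le|\eta_{u,v,w}(\x')|$. If $\x\in\Bvuw$ is minimal with respect to $\ord$ among box vectors, then $|\eta_{u,v,w}(\x)|\le|\eta_{u,v,w}(\y)|\le|\eta_{u,v,w}(\x')|$, so $\x$ in fact minimizes $|\eta_{u,v,w}|$ over all of $\Lv$, and Lemma~\ref{lemma:minimal_is_geodesic} then certifies that $\eta_{u,v,w}(\x)$ is a geodesic representing $g=t^{-u}a^vt^w$.
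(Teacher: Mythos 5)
Your proposal is correct and follows essentially the same route as the paper: reduce offending digits by adding multiples of the $\wi$, check that each interior carry weakly decreases $\Vert\cdot\Vert_1$ without raising $\kx$, and isolate the terminal digit when $\kx\ge\max(u,w)$, where the $\ell^1$ savings must absorb the $+2$ (plus $+1$ for the new leading digit) cost of lengthening the vector --- exactly the trade-off that explains the relaxed bound in condition~(3) and its $n=2$ variant. The paper organizes this as an induction on the minimal violating index using single carries $\pm\wi$ (and $2\wf{\kx}$ for $n=2$) rather than a one-pass sweep with balanced multi-carries, but this is a cosmetic difference; your identification of the threshold computation at the terminal digit matches Lemma~\ref{lemma:reduce_to_box}'s proof and the final deduction via Lemma~\ref{lemma:minimal_is_geodesic} is the same.
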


\begin{proof}
Let $\x \in \Lv$.  There are two conditions which might be violated to imply
$\xx \notin \Bvuw$.  In both cases, we will examine the minimal $i$
such that $|x_i|$ contradicts a defining condition of $\Bvuw$ and decrease this coordinate without affecting any $x_j$ with
$j < i$.  Combined with control over the length of
$\eta_{u,v,w}(\xx)$ as we do this, the lemma the follows
by induction on $i$.

First suppose conditions (1) or (2) of membership in $\Bvuw$
are violated for some minimal index $i$.  That is, $|x_i| > \fn$.
Without loss of generality, assume that $x_i>0$.
Let $\z = \wi$ and consider $\y = \xx+\z = \xx + \wi$.  Then $y_i = x_i -n$, so
\[
|y_i| \le |x_i| - 1,
\]
and $y_{i+1} = x_{i+1} + 1$, so
\[
|y_{i+1}| \le |x_{i+1}| + 1.
\]
Thus $\Vert \y \Vert_1 \le \Vert \x \Vert_1$.
The assumption that $\kx < \max(u,w)$ implies
that both $|\eta_{u,v,w}(\y)|$ and $|\eta_{u,v,w}(\xx)|$ are
computed using the first formula in Lemma~\ref{lemma:length_formula}, that is, for shapes 1 and 2, so
$|\eta_{u,v,w}(\y)| \le |\eta_{u,v,w}(\xx)|$, that is $\y \ord \x$.

Now suppose $n \geq 3$ and the third condition of membership in $\Bvuw$ is violated, so $|x_i| \ge \fn + 2$
where $i = \kx \ge \max(u,w)$.  Assume without loss of
generality that $x_i > 0$.
Let $\z = \wi$ and consider $\y = \xx+\z=\xx + \wi$.  Since $i=\kx$,
we have $x_{\kx+1}=0$ and $y_{\kx+1} = 1$, so $k_\y = k_\xx + 1$, and
we use the length formula from Lemma~\ref{lemma:length_formula} for shapes 3 and 4 to compute
\begin{align*}
|\eta_{u,v,w}(\y)| &= |\eta_{u,v,w}(\xx)| +
                     (\Vert \z \Vert_1 - \Vert \xx \Vert_1) + 2(\kx-{\bf k}_y) \\
                     &=|\eta_{u,v,w}(\xx)| +
                     |y_{\kx}| - |x_{\kx}|+ |y_{\kx +1}| + 2(\kx-{\bf k}_y) \\
                   &= |\eta_{u,v,w}(\xx)| +
                     |y_{\kx}| - |x_{\kx}| + 3.
\end{align*}
We know that $y_{\kx} = x_{\kx} -n$.  If $x_{\kx} \ge n$, then
$|y_{\kx}| = |x_{\kx}| - n$. Otherwise, 
$|y_{\kx}| \le |x_{\kx}| - 4$ if $n$ is even and
$|y_{\kx}| \le |x_{\kx}| - 3$ if $n$ is odd.  In both cases,
$|\eta_{u,v,w}(\y)| \le |\eta_{u,v,w}(\xx)|$, that is $\y \ord \x$.
Note that $k_{\y} = \kx+1$, but as $\kx$ is the maximal index in $\x$ 
we know that all digits of $\y$ now satisfy the bounds for $\Bvuw$, and the induction stops.  

There remains the special case of violating the third condition of membership in $\Bvuw$
when $n=2$.  If $|x_{\kx}| \ge \fn + 3 = 4$, let $\z =  2 {\bf w}^{(\kx)}$ consider
$\y = \xx+\z = \xx + 2 {\bf w}^{(\kx)}$.  
Again, $\y \in \Bvuw$ and $k_\y = \kx+1$, so the induction stops.
An analogous calculation  shows that
$|\eta_{u,v,w}(\y)| \le |\eta_{u,v,w}(\xx)|$, that is $\y \ord \x$.
\end{proof}

\begin{ex}\label{example:BS12_digit_bound}
The definition of $\Bvuw$ has a special case for $n=2$.  Here we give
an example to show that it is necessary.  Let $u=w=0$ and $v=7$.  Define
\[
\xx = (\underset{x_0}{1}, \underset{x_1}{3}),
\qquad \textnormal{ and } \qquad
\y = (\underset{y_0}{1}, \underset{y_1}{1}, \underset{y_2}{1}) = \xx + {\bf w}^{(1)}.
\]
Note that $\xx,\y \in \LL_7$, and we can compute
$|\eta_{0,7,0}(\xx)| = 6$ and
$|\eta_{0,7,0}(\y)| = 7$.  That is, $\x \ord \y$.  Using the
digit bound of $\fn+1$ in the definition of $\BB_7^{0,0}$ for
$n=2$ therefore cannot be correct.  By enumerating all the vectors
in $\BB_7^{0,0}$, we could check that $\x$ is actually minimal in
$\BB_7^{0,0}$ and it would follow from 
Lemma~\ref{lemma:reduce_to_box} that $\eta_{0,7,0}(\x)$ is geodesic.
In Section~\ref{section:geodesics_n_even_2}, we will give
some simple conditions to certify that $\x$ is minimal without requiring
this enumeration.
\end{ex}

If we consider two vectors $\x,\y \in \Bvuw$ which differ by a sum of $\LL_0$ basis vectors, we retain some control over the lengths of $\x$ and $\y$.

\begin{lemma}\label{lemma:no_L0_parts}
Let $\x,\y \in \Bvuw$ with $\ky \ge \kx$ and $\y-\x = \sum_{i=j}^\ell \alpha_i\wi$,
where $\alpha_j \ne 0$ and $\alpha_\ell \ne 0$.
Then $\kx \ge j$ and $\ky > \ell$.
\end{lemma}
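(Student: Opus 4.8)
The plan is to compute the coordinates of $\z = \y - \x = \sum_{i=j}^\ell \alpha_i\wf{i}$ directly from the definition of the basis vectors, read off its two extreme nonzero coordinates, and play these against the digit bounds defining $\Bvuw$. Adopting the convention that $\alpha_i = 0$ for $i < j$ and for $i > \ell$, the shape of $\wf{i}$ (a $-n$ in position $i$ and a $1$ in position $i+1$) gives the single formula $z_k = \alpha_{k-1} - n\alpha_k$ for every index $k$. The coordinates I care about are the two endpoints: $z_j = -n\alpha_j$, which is nonzero with $|z_j| = n|\alpha_j| \ge n$ since $\alpha_j \ne 0$, and $z_{\ell+1} = \alpha_\ell$, which is nonzero since $\alpha_\ell \ne 0$. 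Moreover $z_k = 0$ for all $k < j$ and all $k > \ell+1$.

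I would establish $\ky > \ell$ first, since it is needed for the other inequality and it sidesteps the relaxed top-digit bound. Split on the size of $\kx$. If $\kx \le \ell$, then $x_{\ell+1} = 0$ because $\ell+1$ exceeds the last nonzero index of $\x$, so $y_{\ell+1} = z_{\ell+1} = \alpha_\ell \ne 0$ and hence $\ky \ge \ell+1 > \ell$. If instead $\kx \ge \ell+1$, then the hypothesis $\ky \ge \kx$ gives $\ky \ge \ell+1 > \ell$ at once. Either way $\ky > \ell$.

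For $\kx \ge j$ I would argue by contradiction: suppose $\kx < j$. Then $x_j = 0$, as $j$ exceeds the last nonzero index of $\x$, so $y_j = z_j$ and $|y_j| \ge n$. But the previous paragraph gives $\ky > \ell \ge j$, so $j$ is strictly interior to $\y$, and the first defining condition of $\Bvuw$ forces $|y_j| \le \fn$. Since $\fn < n$ for every $n \ge 2$, this contradicts $|y_j| \ge n$, and therefore $\kx \ge j$.

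The main obstacle — really the only place where care is required — is that $\Bvuw$ relaxes the bound on the \emph{final} digit (to $\fn+1$, or $\fn+2$ when $n=2$). If $j$ could coincide with $\ky$, the tight bound $|y_j| \le \fn$ would fail and the contradiction in the third paragraph would collapse. Proving $\ky > \ell \ge j$ before examining $z_j$ is exactly what guarantees $j < \ky$ and rules this out, so the ordering of the two parts of the argument is the one genuinely structural point; everything else is the routine coordinate bookkeeping set up in the first paragraph.
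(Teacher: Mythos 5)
Your proof is correct and follows essentially the same route as the paper's: establish $\ky > \ell$ first (via the coordinate $z_{\ell+1}=\alpha_\ell$ together with $\ky \ge \kx$), then use $j < \ky$ to invoke the strict interior bound $|y_j| \le \fn < n$ against $|z_j| = n|\alpha_j| \ge n$ to force $x_j \ne 0$. Your explicit remark about why the relaxed top-digit bound makes the ordering of the two steps essential is a nice clarification, but the underlying argument is the same.
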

\begin{proof}
We first prove that $\ky > \ell$.  Because $y_{\ell+1} - x_{\ell+1} = \alpha_{\ell}$,
at least one of $y_{\ell+1}$, $x_{\ell+1}$ must be nonzero.  Combined with the
hypothesis that $\ky \ge \kx$, we have $\ky > \ell$.

To prove that $\kx \ge j$, observe that because $\ky > \ell \ge j$, we know $|y_j| \le \fn$.
Since $x_j - y_j = -\alpha_j n$, we see that $x_j \ne 0$.  Therefore $\kx \ge j$.
\end{proof}

For the remainder of this paper, when we write $\y = \x+ \sum_{i=j}^{\ell} \alpha_i \wi$ we will always assume that $\alpha_j \neq 0$ and $\alpha_\ell \neq 0$.

\subsection{Minimal vectors for $n$ odd}
\label{section:geodesics_n_odd}

Let $g = t^{-u}a^vt^w \in BS(1,n)$ for $n$ odd.
Lemma~\ref{lemma:reduce_to_box} shows that $\Bvuw$ is nonempty
and that if $\xx \in \Lv$ is a
minimal vector in $\Bvuw$, then $\eta_{u,v,w}(\xx)$ is geodesic for $g$.
We now show that when $n$ is odd, the set $\Bvuw$ contains at most two vectors.

\begin{lemma}\label{lemma:odd_box}
Let $n \geq 3$ be odd and $\xx \in \Bvuw$.
\begin{enumerate}[itemsep=5pt]
\item If $k_\xx < \max(u,w)$, then $|\Bvuw| = 1$.
\item If $k_\xx \ge \max(u,w)$, then $|\Bvuw| \le 2$.
If $|\Bvuw| = 2$, then $\Bvuw$ has the form
\[
\Bvuw = \{\xx, \xx + \epsilon{\bf w}^{(k_\xx)}\},
\]
where $\epsilon \in \{-1,1\}$.  Moreover, $\y\in\Bvuw$ is not minimal
if and only if $\ky > \max(u,w)$ and the final digits of $\y$ are
$(\delta\fn, -\delta)$, 
where $\delta \in \{\pm 1\}$.
\end{enumerate}
\end{lemma}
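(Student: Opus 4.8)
The plan is to build everything on one consequence of Corollary~\ref{corollary:L0_at_least_n}: applying that corollary to $\z$ or to $-\z$, every nonzero $\z \in \LL_0$ has a coordinate with $|z_i| \ge n$. Throughout write $M = \max(u,w)$, and recall that $n$ odd gives $\fn = \frac{n-1}{2}$, so $2\fn = n-1$ and $\fn+1 = \frac{n+1}{2} < n$ for $n \ge 3$. For part (1) we are given $\kx < M$, so every coordinate of $\xx$ is bounded by $\fn$. For any $\y \in \Bvuw$ set $\z = \y - \xx \in \LL_0$. If $\ky < M$, then every coordinate of $\y$ is also bounded by $\fn$, whence $|z_i| \le 2\fn = n-1 < n$ for all $i$ and $\z = 0$. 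If instead $\ky \ge M > \kx$, then $z_{\ky} = y_{\ky} \ne 0$ while every interior coordinate of $\z$ satisfies $|z_i| \le 2\fn < n$ and its top coordinate satisfies $|z_{\ky}| = |y_{\ky}| \le \fn+1 < n$, contradicting the existence of a coordinate of size $\ge n$. Hence $\ky < M$ always and $\y = \xx$, giving $|\Bvuw| = 1$.

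For part (2), fix distinct $\xx, \y \in \Bvuw$ and, after relabelling, assume $\ky \ge \kx$. Write $\z = \y - \xx = \sum_{i=j}^{\ell}\alpha_i\wi$ with $\alpha_j, \alpha_\ell \ne 0$; Lemma~\ref{lemma:no_L0_parts} gives $\kx \ge j$ and $\ky > \ell \ge j$, so in particular $j < \ky$ and thus $|y_j| \le \fn$. Since $z_j = -n\alpha_j$ we have $|z_j| \ge n$, forcing $|x_j| \ge n - \fn = \fn+1$. By the box conditions a coordinate of size $\ge \fn+1$ can only occur at the final index when it is at least $M$, so $j = \kx \ge M$ and $|x_{\kx}| = \fn+1$. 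Then $|z_{\kx}| \le (\fn+1)+\fn = n$, so $|z_{\kx}| = n$ and $|\alpha_{\kx}| = 1$; set $\epsilon = \alpha_{\kx} \in \{\pm 1\}$.

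It remains to show $\ell = \kx$, i.e. $\z = \epsilon\wf{\kx}$. For each index $i$ with $\kx < i \le \ell$ we have $\kx < i < \ky$, so $z_i = y_i$ with $|z_i| \le \fn < n$. Reading $z_i = \alpha_{i-1} - n\alpha_i$ upward starting from $i = \kx+1$, where $\alpha_{\kx} = \epsilon$: a nonzero $\alpha_i$ would give $|z_i| \ge n - 1 = 2\fn > \fn$, so $\alpha_i = 0$ throughout this range, contradicting $\alpha_\ell \ne 0$ unless $\ell = \kx$. Thus any two distinct box elements differ by $\epsilon\wf{k}$, where $k$ is the smaller of their two top indices; in particular their top indices are $k$ and $k+1$, hence always distinct and with $k \ge M$. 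A three-element count now forces $|\Bvuw| \le 2$: three distinct elements would have top indices $p_1 < p_2 < p_3$ with $p_2 = p_1+1$, $p_3 = p_2+1$, yet the outer pair must also satisfy $p_3 = p_1+1$, a contradiction.

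Finally, for the minimality statement, the two elements are $\xx$ with $|x_{\kx}| = \fn+1$ and $\xx' = \xx + \epsilon\wf{\kx}$ of top index $\kx+1$. From $z_{\kx} = -n\epsilon = y_{\kx}-x_{\kx}$ one computes $x_{\kx} = \epsilon(\fn+1)$, hence $x'_{\kx} = x_{\kx} - \epsilon n = -\epsilon\fn$ and $x'_{\kx+1} = \epsilon$; writing $\delta = -\epsilon$ these last two coordinates are exactly $(\delta\fn, -\delta)$, and $k_{\xx'} = \kx+1 > M$. Since $k_{\xx'} > \kx \ge M$, Lemma~\ref{lemma:length_formula_change} applies with $\Vert\xx'\Vert_1 - \Vert\xx\Vert_1 = (\fn - (\fn+1)) + 1 = 0$ and $2\max(0, k_{\xx'} - \max(\kx,u,w)) = 2$, giving $|\eta_{u,v,w}(\xx')| = |\eta_{u,v,w}(\xx)| + 2$. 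Thus $\xx'$ is the unique non-minimal element and is precisely the one with $k_\y > M$ and final digits $(\delta\fn, -\delta)$; the converse direction follows by building the shorter partner $\xx = \y + \delta\wf{\ky-1} \in \Bvuw$ and reversing this computation. The main obstacle is the induction pinning down $\z = \epsilon\wf{\kx}$: it requires simultaneously controlling the interior digits of $\y$ and the lattice coefficients $\alpha_i$, and it is exactly where the odd-$n$ bound $\fn+1 < n$ is essential.
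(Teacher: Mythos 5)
Your proof is correct and follows essentially the same route as the paper's: the difference of two box vectors must carry a digit of absolute value at least $n$, which the odd-$n$ digit bounds force to sit at the top index $\kx$ of the shorter vector with $|x_{\kx}| = \fn+1$, after which the higher lattice coefficients vanish and the length gap of $2$ follows from the length formulas. The only differences are organizational (a separate direct argument for part (1), an appeal to Lemma~\ref{lemma:no_L0_parts} in place of the paper's explicit case split on $j$ versus $\kx$, and a three-element counting step to get $|\Bvuw|\le 2$ instead of fixing the minimal-length element at the outset).
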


\begin{proof}
Suppose that $|\Bvuw| \ge 2$. 
Let $\xx \in \Bvuw$ be of minimal length, 
and let $\y \in \Bvuw$ be any other vector.
In particular, $\kx \le \ky$.  Set
$\z = \y - \xx = \sum_i \alpha_i \wi \in {\mathcal L}_0$.
It follows from Lemma~\ref{lemma:L0_big_digits} that there is some minimal $j$ with $|z_j| \geq n$, and that $0 \leq j < \ky$.

If $j < \kx \le \ky$, then $|x_j|,|y_j|\le \fn = \frac{n-1}{2}$, and the maximum difference between $x_j$ and $y_j$ is $2 \fn < n$ because $n$ is odd.
Thus we
cannot have $|z_j| = |x_j - y_j| \ge n$, which contradicts Corollary~\ref{corollary:L0_at_least_n}.
We conclude that $j \ge \kx$.

If $j > \kx$, we have $z_{j'} = y_{j'}$ for $j' \geq j$.  It follows that
$|z_{j}| = |y_{j}| \geq n$; this bound on $|y_j|$ violates the
definition of $\Bvuw$, hence this does not occur.

It remains to consider $j = \kx$.  Note that if $\kx < \max(u,w)$ then
$|x_j|,|y_j| \le \fn$ and it is impossible to have $|x_\kx - y_\kx| \ge n$.
So we must have $\kx \ge \max(u,w)$.  In this case, we find exactly
one additional vector in $\Bvuw$.  As $\xx$ and $\y$ lie in $\Bvuw$,
we know that $|x_{\kx} - y_{\kx}| \leq n$, and without loss
of generality we assume that $x_{\kx} > 0$. The only way that the
inequality $|x_\kx - y_\kx| \ge n$ can be satisfied is if 
$x_{\kx} = \fn+1$ or $y_\kx = -(\fn + 1)$.
\begin{itemize}
    \item If $x_{\kx} = \fn+1$, then $y_{k_x}=-\fn$ and $\alpha_j = 1$.
    Thus, $y_{\kx+1} = 1-\alpha_{\kx+1}n$, so $\alpha_{\kx+1} = 0$.  We similarly
    conclude that $\alpha_{j'} = 0$ for all $j'>\kx$.  So $\y = \x + \wf{\kx}$.
    \item If $y_\kx=-(\fn+1)$, then $\ky = \kx$ and $x_\kx = \fn$.  As in
    the previous bullet, $y_{\kx+1} = 1$.  This is impossible, though, because
    $\ky=\kx < \kx + 1$.
\end{itemize}
Thus we have shown that if $|\Bvuw| \ge 2$, then $\kx \ge \max(u,w)$
and $\Bvuw = \{\x, \x + \wf{\kx}\}$, so $|\Bvuw| = 2$.
We have also shown
that this case only occurs in the first bullet above; in this case the we have $\ky = \kx+1$ and it follows from the second length formula in Lemma~\ref{lemma:length_formula} that
$|\eta_{u,v,w}(\y)| = |\eta_{u,v,w}(\x)| + 2$.
This proves the final statement of the lemma.
\end{proof}

Lemma~\ref{lemma:odd_box} shows that $\Bvuw$ can contain at most
two vectors, and contains two vectors only if $\kx \ge \max(u,w)$
for some $\x \in \Bvuw$.  The statement in Lemma~\ref{lemma:odd_box}
is not an ``if and only if''; it is possible that
$\kx \ge \max(u,w)$ and $|\Bvuw|=1$.

Lemma~\ref{lemma:odd_box} implies that finding a geodesic representative
of $g = t^{-u}a^vt^w$ in $B(1,n)$ when $n$ is odd is actually quite
straightforward: find any vector in $\Lv$, reduce its digits so that it lies in $\Bvuw$, and check its most significant digits to ascertain minimality.

An immediate consequence of Lemma~\ref{lemma:odd_box} is that when $n$ is odd, $\ord$ is a total order on $\Bvuw$.

\subsection{Minimal vectors for $n$ even}
\label{section:geodesics_n_even}

Let $g = t^{-u} a^v t^w \in BS(1,n)$ for $n$ even. In this case, the
set $\Bvuw$ can contain many more vectors, as well as multiple minimal vectors.
In order to choose a unique minimal vector in $\Bvuw$, we redefine $\xx \le_{u,w} \y$
for $n$ even so that it is a total order on $\Bvuw$.

For $n$ even, let $|\xx|$ and $|\y|$ denote
the vectors of the absolute values of the coordinates of $\xx$ and $\y$,
respectively, and define 
$\xx \ord \y$ if and only if
\begin{itemize}[itemsep=5pt]
    \item $|\eta_{u,v,w}(\y)| < |\eta_{u,v,w}(\xx)|$, or
    \item  $|\eta_{u,v,w}(\y)| = |\eta_{u,v,w}(\xx)|$ and
$|\xx| \leq |\y|$ in the lexicographic order that ranks lower
indexed coordinates as more significant.
\end{itemize}
The relation is strict, that is, $\xx \ords\y$ if and only if
$|\eta_{u,v,w}(\y)| < |\eta_{u,v,w}(\xx)|$ or $|\xx| < |\y|$.
Since the relation may depend on the absolute values of the coordinates of $\x$ and $\y$, it is not {\em a priori} the case
that a minimal vector is unique, or even that the relation given
is an order, which necessitates Lemma~\ref{lemma:abs_lex_unique}.
As $n$ is even, we will write
$\fne$ in place of $\fn$ for simplicity for the duration
of Section~\ref{section:geodesics_n_even}.

\begin{lemma}\label{lemma:abs_lex_unique}
Let $n$ be even.  For any $u,w \in \N$ and $v \in \Z$, the relation $\ord$ is a
total order on $\Bvuw$.
\end{lemma}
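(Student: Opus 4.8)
The plan is to recognize $\ord$ as a lexicographic combination of two ingredients: the path length $\ell(\x) := |\eta_{u,v,w}(\x)|$, compared first, and the absolute-value vector $|\x|$, compared by the stated lexicographic order to break ties. Since $\ell(\x)$ depends only on $|\x|$ (by Lemma~\ref{lemma:length_formula}, as both $\Vert\x\Vert_1$ and $\kx$ are determined by $|\x|$), and since the lexicographic order on absolute-value vectors is itself a total order, reflexivity, totality and transitivity are routine verifications of the usual lexicographic bookkeeping: reflexivity and totality follow by splitting into the cases $\ell(\x) \ne \ell(\y)$ and $\ell(\x) = \ell(\y)$, and transitivity follows by chasing the same case split through $\x \ord \y$ and $\y \ord \z$. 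The only axiom requiring real content is antisymmetry: if $\x \ord \y$ and $\y \ord \x$, then neither length inequality can be strict, so $\ell(\x) = \ell(\y)$ and $|\x| \le_{\lex} |\y| \le_{\lex} |\x|$, forcing $|\x| = |\y|$. Thus everything reduces to the injectivity claim that for $\x, \y \in \Bvuw$, the equality $|\x| = |\y|$ forces $\x = \y$.

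To prove this claim I would argue by contradiction, assuming $|\x| = |\y|$ but $\x \ne \y$, and set $\z = \x - \y$. Let $S = \{i : x_i \ne y_i\}$, a nonempty set; for $i \in S$ we have $|x_i| = |y_i|$ with $x_i \ne y_i$, hence $y_i = -x_i$ and $z_i = 2x_i \ne 0$, while $z_i = 0$ for $i \notin S$. Since $\Sigma(\z) = v - v = 0$, we have $\z \in \LL_0$, so we may write $\z = \sum_i \alpha_i\wi$ and recall $z_i = \alpha_{i-1} - n\alpha_i$. The key parity step is this: every $z_i$ is even and $n$ is even, so $\alpha_{i-1} = z_i + n\alpha_i$ is even for every $i$; hence every $\alpha_j$ is even. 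Writing $\alpha_j = 2\beta_j$ produces $\z = 2\z'$ with $\z' = \sum_j \beta_j \wi \in \LL_0$, where $z'_i = x_i$ for $i \in S$ and $z'_i = 0$ otherwise. In other words $\z'$ is the restriction of $\x$ to $S$, and in particular its nonzero coordinates inherit the box bounds on the digits of $\x$.

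The finish applies Corollary~\ref{corollary:L0_at_least_n} to $\z'$: since $\z' \ne 0$, there is an index $j$ with $z'_j = x_j \ge n$. The box bounds of $\x$ give $|x_i| \le \fne < n$ whenever $i < \kx$ or $i = \kx < \max(u,w)$ (conditions (1) and (2)), so such a digit can only occur at $i = \kx \ge \max(u,w)$. For $n \ge 4$ this is already contradictory, because condition (3) bounds $|x_{\kx}| \le \fne + 1 < n$. The genuinely delicate case, and the one I expect to be the main obstacle, is $n = 2$: there condition (3) allows $|x_{\kx}| \le \fne + 2 = 3 \ge n$, so a single application of the corollary does not close the argument. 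I would handle it by applying Corollary~\ref{corollary:L0_at_least_n} to both $\z'$ and $-\z'$, both of which lie in $\LL_0$; the first forces $x_{\kx} \ge 2$ (in particular $\kx \in S$, so $\z'$ and $-\z'$ share top index $\kx$), while the second forces $-x_{\kx} \ge 2$, an outright contradiction. Hence $\z' = 0$, so $S = \emptyset$ and $\x = \y$, establishing injectivity.

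With injectivity in hand, antisymmetry follows immediately from the reduction in the first paragraph, and combining it with the routine reflexivity, totality and transitivity shows that $\ord$ is a total order on $\Bvuw$, as claimed.
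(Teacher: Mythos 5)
Your overall strategy is the same as the paper's: reduce everything to antisymmetry, reduce antisymmetry to the claim that $|\x|=|\y|$ forces $\x=\y$ for $\x,\y\in\Bvuw$, and prove that claim by exploiting the evenness of the digits of $\x-\y\in\LL_0$ together with the digit bounds. Your execution differs in a pleasant way for $n\ge 4$: you show \emph{all} coefficients $\alpha_j$ are even (the paper only needs the top one), divide by $2$, and identify $\z'=(\x-\y)/2$ with the restriction of $\x$ to the disagreement set, so that a single appeal to the $m=1$ digit bound finishes. That part is correct.

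The $n=2$ case, however, has a genuine gap. Your contradiction rests on reading Corollary~\ref{corollary:L0_at_least_n} literally as producing a digit with $x_j\ge n$ (signed), and then applying it to both $\z'$ and $-\z'$ to force $x_{\kx}\ge 2$ and $-x_{\kx}\ge 2$ simultaneously. But the signed statement is false: for $n=2$ the basis vector $\wf{0}=(-2,1)$ lies in $\LL_0$ and has no digit $\ge 2$. What the proof of that corollary (via Lemma~\ref{lemma:L0_big_digits}) actually establishes is only $|x_j|\ge n$, and with that version the applications to $\z'$ and to $-\z'$ yield identical information --- namely $|x_{\kx}|\ge 2$, which is compatible with the $n=2$ box bound $|x_{\kx}|\le 3$ --- so no contradiction results. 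The gap is fixable within your setup: writing $\z'=\sum_j\beta_j\wf{j}$ and letting $m$ be the maximal index with $\beta_m\ne 0$, you have $z'_{m+1}=\beta_m\ne 0$, so the support of $\z'$ reaches index $m+1\le\kx$; applying Lemma~\ref{lemma:L0_big_digits} at the \emph{minimal} nonzero $\beta_j$ then produces an index $i\le j\le m\le\kx-1$ with $|z'_i|=|x_i|\ge n$, contradicting condition (1) of $\Bvuw$ for every even $n$, including $n=2$, with no case split. This localization of the big digit strictly below the top index is exactly how the paper's proof handles $n=2$ (it applies Lemma~\ref{lemma:L0_big_digits} with $m=2$ directly to $\x-\y$ and observes that the offending index $\ell$ satisfies $\ell<j+1\le\min(\kx,\ky)$, so the enlarged bound on the final digit never comes into play).
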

\begin{proof}
Let $\xx,\y \in \Bvuw$ be given.  If
$|\eta_{u,v,w}(\y)| \ne |\eta_{u,v,w}(\xx)|$ then $\xx \ords \y$ or
$\y \ords \xx$ and we are done.  Otherwise, the relation is determined
by the lexicographic order of $|\xx|$ and $|\y|$.  Since
the lexicographic order is a total order, the only way for
$\xx \ord \y$ and $\y \ord \xx$ to both hold is if $|\xx| = |\y|$.

Suppose this is the case, so $|\xx| = |\y|$ and thus 
$|x_i| = |y_i|$ for all $i$.
In particular,
$x_i - y_i$ must be even.
Since $\xx - \y \in \LL_0$,
we can write $\xx - \y = \sum_i \alpha_i\wi$.  Let $j$ be the maximal index
such that $\alpha_j \ne 0$.  Then $x_{j+1} - y_{j+1} = \alpha_j$,
so $\alpha_j$ must be even.  By assumption, $\alpha_j \ne 0$, 
so $|\alpha_j| \ge 2$.  It follows from Lemma~\ref{lemma:L0_big_digits}, 
there is some $\ell \le j$ with $|x_\ell - y_\ell| > 2(n-1) = 2n-2$,
and as $x_\ell - y_\ell$ is even, we have $|x_\ell - y_\ell| \ge 2n$.
The largest possible digits in $|\xx|$ and $|\y|$ are $\fne + 1$ (or
$\fne +2$ if $n=2$), which can only occur at index, respectively, $\kx$ or $\ky$.  
However, $\ell \neq \kx$ and $\ell \neq \ky$ because $x_{j+1} = -y_{j+1} \neq 0$, and $\ell < j+1$.
It follows that $|x_\ell|,|y_\ell| \le \fne$,
so $|x_\ell - y_\ell| \le n$, contradicting our earlier inequality $|x_\ell - y_\ell| > 2n-2$.
We conclude that there is no coordinate
in which $\xx$ and $\y$ differ, so $\xx = \y$.
\end{proof}

\begin{ex}\label{example:absolute_order}
As an example, let $n=4$, $v=26$,
and $u,w \ge 3$
and
\[
\xx = (\underset{x_0}{2}, \underset{x_1}{2}, \underset{x_2}{1}, \underset{x_3}{0},
 \, \underset{\ldots}{\ldots}),
\qquad
\y = (\underset{y_0}{-2}, \underset{y_1}{-1}, \underset{y_2}{2}, \underset{y_3}{0},
\, \underset{\ldots}{\ldots}).
\]
Then
\[
|\eta_{u,v,w}(\xx)| \; = \; \Vert \xx \Vert_1 + u+ w \; = \; 5 + u+ w
\; = \; \Vert \y \Vert_1 + u+ w \; = \; |\eta_{u,v,w}(\y)|,
\]
so the word lengths $|\eta_{u,v,w}(\xx)|$ and $|\eta_{u,v,w}(\y)|$ are equal,
but $\y \ords \xx$ in the absolute lexicographic order.  We will see
in Example~\ref{example:absolute_minimal} that both $\xx$ and $\y$ are minimal,
but $\y$ is the unique lexicographically minimal vector in $\Bvuw$.
\end{ex}

Although the question of minimality is more complicated for even $n$,
there are relatively simple conditions which allow us to determine whether $\x \in \Bvuw$ is minimal, and if not, to find $\y \in \Bvuw$ with $\y \ord \x$.
We now give a brief overview of this strategy, with precise details included in the lemmas below.
Recall that for any vector $\xx \in \Bvuw$, there is $\z \in \LL_0$
such that $\xx + \z \in \Bvuw$ is minimal, and we can write $\z$ as a 
linear combination of $\wi$.  
Disregarding the most significant digit of $\x$, we must have $|x_j| \le \fne$.  
If $\wi$ is the lowest indexed basis vector in $\z$, so $|z_i|  \geq  n$, we must have $|x_i| = \fne$ and $|z_i| = n$ to ensure that $\xx + \z \in \Bvuw$.
That is, potential reductions can only occur
when if $\x$ contains the digit $\pm\fne$.  By examining the digits
of $\x$ which follow this initial $\pm\fne$, we can determine whether the original
vector is minimal.

For the remainder of this section, we consider only $n \geq 4$ and prove analogous results for $n=2$ in  Section~\ref{section:geodesics_n_even_2}.

Let $\x \in \Bvuw$ and define a \emph{run} $\r \subseteq \x$  to be a sequence
of consecutive digits $\r = (x_j, \dots, x_\ell)$ such that
$|x_j| = \fne$ and $|x_i| \in \{\fne-1, \fne, \fne +1\}$
for all $j < i \le \ell$, and the sign $\sgn(x_i)$ is constant
for all $j \le i \le \ell$.  We denote this sign by
$\epsilon_\r =\sgn(x_j)$.  
We retain the indexing of the coordinates of $\r$ from $\x$, that is, the ``first" coordinate of $\r$ is $x_j$ rather than $r_0$ for clarity.
We remark that the digit bounds defining $\Bvuw$ imply that if
$|x_i| = \fne + 1$, then in fact $i=\ell=\kx$. 
The \emph{length} of the run is $\ell - j +1$.
We focus below on understanding possible runs contained in a vector $\x \in \Bvuw$; adding an appropriate linear combination of basis vectors $\wi$ to a run yields a vector $\y$ which may satisfy $\y \ords \x$.

Define the \emph{weight} of a run $\r$ to be
\[
\wt(\r) = 3\#\left\{\fne+1\right\} + \left(\#\left\{\fne\right\}-1\right) 
- \#\left\{\fne -1\right\}.
\]
That is, three times the number of occurrences
of the digit $\epsilon_\r(\fne+1)$ in the run, which is either $0$ or $1$, plus one less than the number
of occurrences of the digit $\epsilon_\r\fne$, minus the number of occurrences of the digit $\epsilon_\r(\fne-1)$ in the run.
This
rather strange formula will capture the change in
$\Vert\x\Vert_1$ which arises from adding a linear combination of $\wi$ to
the run $\r$.

\begin{lemma}\label{lemma:change_formula}
Let $n\ge 4$ be even.
Let $\x \in \Bvuw$ contain a run $\r = (x_j, \dots, x_\ell)$.
Let $\y = \x + \epsilon_\r\sum_{i=j}^\ell\wi$.  Then
$\Vert\y\Vert_1 = \Vert \x \Vert_1 - \wt(\r)
+ |x_{\ell+1} + \epsilon_\r| - |x_{\ell+1}|$.
\end{lemma}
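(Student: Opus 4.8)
The plan is to compute $\Vert\y\Vert_1$ directly, coordinate by coordinate, by first writing the vector $\z := \epsilon_\r\sum_{i=j}^\ell\wi$ explicitly and then tracking how each entry of $\x$ changes. Since each $\wi$ contributes $-n$ at index $i$ and $+1$ at index $i+1$, the sum telescopes, and after multiplying by $\epsilon_\r$ the coordinates of $\z$ are $-\epsilon_\r n$ at index $j$, then $\epsilon_\r(1-n)$ at each index $i$ with $j < i \le \ell$, then $\epsilon_\r$ at index $\ell+1$, and $0$ elsewhere. In particular $\y$ and $\x$ agree outside the indices $j, j+1, \dots, \ell+1$, so it suffices to account for the change $|y_i| - |x_i|$ at those indices.

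The key observation I would record first is that the sign hypothesis defining a run lets me factor out $\epsilon_\r$ and reduce every absolute-value computation to the positive case: writing each run digit as $x_i = \epsilon_\r|x_i|$ with $|x_i| \in \{\fne-1,\fne,\fne+1\}$ and letting $c_i$ denote the $i$-th coordinate of $\sum_{i=j}^\ell\wi$, the modified digit is $y_i = \epsilon_\r(|x_i| + c_i)$, so $|y_i| = \bigl| |x_i| + c_i \bigr|$. At index $j$ we have $x_j = \epsilon_\r\fne$ and $c_j = -n$, so $|x_j| + c_j = \fne - n = -\fne$; hence $|y_j| = |x_j|$ and index $j$ contributes $0$. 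For an index $i$ with $j < i \le \ell$ we have $c_i = 1-n$, and the hypothesis $n \ge 4$ (so $\fne \ge 2$) guarantees $|x_i| + c_i \le 0$: a digit with $|x_i| = \fne - 1$ yields $|y_i| = \fne$ (change $+1$), a digit with $|x_i| = \fne$ yields $|y_i| = \fne - 1$ (change $-1$), and a digit with $|x_i| = \fne + 1$ yields $|y_i| = \fne - 2$ (change $-3$). Finally, index $\ell + 1$ contributes exactly $|x_{\ell+1} + \epsilon_\r| - |x_{\ell+1}|$, which is the last term of the desired formula.

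It then remains to sum the interior contributions and match them to $\wt(\r)$. Because the leading digit $x_j$ is itself a $\fne$ digit counted by $\#\{\fne\}$ but sits at index $j$ (where the change is $0$ rather than $-1$), the number of interior $\fne$ digits is $\#\{\fne\} - 1$, while the interior counts of $\fne - 1$ and $\fne + 1$ digits equal $\#\{\fne-1\}$ and $\#\{\fne+1\}$ respectively, since neither value occurs at index $j$. Adding the interior contributions $+\#\{\fne-1\} - (\#\{\fne\}-1) - 3\#\{\fne+1\}$ gives exactly $-\wt(\r)$, and combining this with the index-$j$ term $0$ and the index-$(\ell+1)$ term yields the stated identity.

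The main obstacle, though a mild one, is the bookkeeping around the digit $\epsilon_\r(\fne+1)$: I would invoke the remark immediately following the definition of a run, which guarantees that $|x_i| = \fne+1$ can only occur at $i = \ell = \kx$. Since $|x_j| = \fne$, this forces $j < \ell$ whenever such a digit appears, so it is genuinely an interior index of the sum and the $-3$ contribution is legitimate. I would also treat the length-one case $j = \ell$ in a sentence: there the only run digit is $x_j = \epsilon_\r\fne$, so $\wt(\r) = 0$, the interior is empty, and only the vanishing index-$j$ term and the index-$(\ell+1)$ term remain, in agreement with the formula.
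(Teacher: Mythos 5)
Your proof is correct and follows essentially the same route as the paper's: an explicit coordinate-by-coordinate computation of $|y_i|-|x_i|$ for $i=j,\dots,\ell+1$, with the index-$j$ term vanishing (which accounts for the $-1$ in the $\#\{\fne\}$ count) and the index-$(\ell+1)$ term giving the final summand. Your version is somewhat more careful than the paper's — you write out the coordinates of $\epsilon_\r\sum_{i=j}^\ell\wi$ explicitly, justify via $n\ge 4$ that the interior absolute values resolve as claimed, and handle the placement of the $\fne+1$ digit and the length-one run — but there is no substantive difference in method.
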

\begin{proof}
The proof is just the computation of the change in absolute
value for each digit $x_j, \dots, x_{\ell+1}$.  
Note that the largest indexed basis vector in the above sum is ${\bf w}^{(\ell)}$, and the thus the digits of $\x$ affected by this sum are $x_j, \cdots ,x_{\ell+1}$.
We have
$|y_j| = |x_j|$, and for $j < i \le \ell$,
$|y_i| = |x_i - \epsilon_\r(n + 1)|$, so
\begin{itemize}
\item If $|x_i| = \fne -1$, then $|y_j| = \fne$.
\item If $|x_i| = \fne$, then $|y_j| = \fne - 1$.
\item If $|x_i| = \fne + 1$, then $|y_j| = \fne -2$.
\end{itemize}
In all cases, this change is accounted for by $\wt(\r)$; counting
one fewer instance of $\fne$ is necessary to account for the fact that
the absolute value of $x_j$ does not change.  All that remains
is to account for the difference between $|y_{\ell+1}|$ and
$|x_{\ell+1}|$, which is the final part of the expression.
\end{proof}

The search for a minimal vector is simplified if we are able to consider adding only linear combinations of basis vectors with no non-zero coefficients to $\x \in \Bvuw$.  The following lemma proves that this is sufficient, and relies on the fact that  our lexicographic order treats lower-index digits as more significant.  
Lemma~\ref{lemma:single_run_from_multiple_runs} shows that if $\y$ is a minimal vector in $\Bvuw$ obtained from $\x$ by adding two sums of $\LL_0$ basis vectors whose index sets are separated from each other, then adding only the sum with smaller indices will produce a vector which precedes $\x$ in the order $\ords$.

\begin{lemma}\label{lemma:single_run_from_multiple_runs}
Let $n\ge 2$ be even.  Let $\x,\y \in \Bvuw$, and let $\y$ be minimal.
Suppose that we have
\[
\y = \x + \sum_{i=j}^\ell \alpha_i\wi + \sum_{i>\ell+1} \alpha_i \wi.
\]
That is, $\y$ is obtained from $\x$ by adding a linear combination of $\wi$,
where $\wf{\ell+1}$ is omitted from the sum.  Then
\[
\x +  \sum_{i=j}^\ell \alpha_i\wi \ords \x.
\]
Furthermore, if $\kx \le \ell+1$, then $\alpha_i = 0$ for $i > \ell$.
\end{lemma}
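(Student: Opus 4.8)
The plan is to isolate the ``low'' modification and show it already improves on $\x$. Write $\x' = \x + \sum_{i=j}^{\ell}\alpha_i\wi$ and $\s = \sum_{i>\ell+1}\alpha_i\wi \in \LL_0$, so that $\y = \x' + \s$; also set $\x'' = \x + \s$. The sum defining $\x'$ alters only the coordinates of index $j,\dots,\ell+1$, whereas $\s$ alters only coordinates of index $\ge \ell+2$. Since these index blocks are disjoint, $\y$ and $\x'$ agree below index $\ell+2$ while $\x'$ and $\x$ agree from index $\ell+2$ on, and the $\ell^1$ norm splits additively over the two blocks. Writing $L_0,L_1$ for the mass of $\x,\x'$ on indices $\le \ell+1$ and $H_0,H_1$ for the mass on indices $\ge \ell+2$ of $\x$ and $\x''$ respectively, one gets $\Vert\x\Vert_1 = L_0+H_0$, $\Vert\x'\Vert_1 = L_1+H_0$, $\Vert\x''\Vert_1 = L_0+H_1$, and $\Vert\y\Vert_1 = L_1+H_1$. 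Throughout I use that $\y \ord \z$ means $|\eta_{u,v,w}(\y)|\le|\eta_{u,v,w}(\z)|$ with ties broken lexicographically, and that by Lemma~\ref{lemma:length_formula} the ``penalty'' $P(\cdot) = |\eta_{u,v,w}(\cdot)| - \Vert\cdot\Vert_1$ depends only on the top index $k_{(\cdot)}$ and is non-decreasing in it.

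First I would treat the case $\kx \ge \ell+2$, which yields the first assertion. Here $\x$ has nonzero mass above index $\ell+1$, which $\x'$ shares, so $k_{\x'}=\kx$ and hence $|\eta_{u,v,w}(\x')| - |\eta_{u,v,w}(\x)| = L_1 - L_0$; symmetrically $k_{\x''}=\ky$ and $|\eta_{u,v,w}(\y)| - |\eta_{u,v,w}(\x'')| = L_1 - L_0$. After checking $\x''\in\Bvuw$ --- its low block is that of $\x$, its high block (including its top digit) is that of $\y$, so the relaxed top bound is inherited --- minimality of $\y$ gives $\y\ord\x''$, hence $L_1\le L_0$ and therefore $|\eta_{u,v,w}(\x')|\le|\eta_{u,v,w}(\x)|$. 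If this is strict, $\x'\ords\x$. If $L_1=L_0$ then $|\eta_{u,v,w}(\y)|=|\eta_{u,v,w}(\x'')|$, so $\y\ord\x''$ forces $|\y|\le|\x''|$ lexicographically; because $\y$ and $\x''$ agree on their high block, this comparison is decided at a low index, where $\y=\x'$ and $\x''=\x$, so it transfers verbatim to $|\x'|<|\x|$, giving $\x'\ords\x$ (that $|\x'|\neq|\x|$ follows as in Lemma~\ref{lemma:abs_lex_unique}).

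Next I would treat $\kx\le\ell+1$, the hypothesis of the second assertion. Now $\x$, hence $\x'$, vanishes above index $\ell+1$, so $\x'$ has empty high block and $k_{\x'}\le\ell+1$; its low block again equals that of $\y$, so $\x'\in\Bvuw$. Suppose $\s\neq 0$. Then $\y$ has nonzero mass above index $\ell+1$, so $H_1>0$ and $\ky>\ell+1\ge k_{\x'}$; using monotonicity of $P$,
\[ |\eta_{u,v,w}(\y)| - |\eta_{u,v,w}(\x')| = H_1 + \bigl(P(\y)-P(\x')\bigr) \ge H_1 > 0, \]
so $\x'$ is strictly shorter than $\y$, i.e.\ $\x'\ords\y$, contradicting minimality of $\y$. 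Hence $\s=0$, that is $\alpha_i=0$ for all $i>\ell$, which is the second assertion; and then $\y=\x'\neq\x$ (as $\alpha_j\neq 0$), so $\y\ord\x$ together with totality of $\ord$ (Lemma~\ref{lemma:abs_lex_unique}) yields $\x'=\y\ords\x$.

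The main obstacle is the bookkeeping rather than any single estimate: at each step one must verify that the auxiliary vectors $\x'$ and $\x''$ actually lie in $\Bvuw$ --- the only delicate point being the relaxed bound $\fne+1$ (or $\fne+2$ when $n=2$) that is permitted solely at the top coordinate --- so that the minimality of $\y$ may legitimately be tested against them, and one must correctly identify, for each of $\x,\x',\x'',\y$, whether its top index sits in the low block or the high block so that $P$ is evaluated by the right branch of Lemma~\ref{lemma:length_formula}. The lexicographic tie-break is handled cleanly by comparing $\y$ to $\x''$ rather than to $\x$, precisely because $\y$ and $\x''$ share their high block and differ only on the low block where $\y=\x'$ and $\x''=\x$; this is exactly why excluding the basis vector $\wf{\ell+1}$, and thereby separating the two blocks at index $\ell+1$, is essential.
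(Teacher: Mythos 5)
Your proof is correct and follows essentially the same route as the paper: both split the modification into the low part $\sum_{i=j}^\ell \alpha_i\wi$ and the high part $\sum_{i>\ell+1}\alpha_i\wi$, use the omitted $\wf{\ell+1}$ to make their effects on the $\ell^1$ norm and on the lexicographic order independent, test the minimality of $\y$ against $\x+\sum_{i>\ell+1}\alpha_i\wi$, and transfer the resulting inequality (and tie-breaking lexicographic reduction) to the comparison of $\x+\sum_{i=j}^\ell\alpha_i\wi$ with $\x$. The one step you wave through is ``symmetrically $k_{\x''}=\ky$'': this requires $\ky\ge\ell+2$ whenever the high sum is nonzero, which is not symmetric to the hypothesis $\kx\ge\ell+2$ but is true (the paper extracts it from Lemma~\ref{lemma:no_L0_parts}), so the argument stands.
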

\begin{proof}
We first prove the last claim of the lemma.  Suppose $\kx \le \ell+1$, and 
$\alpha_j \ne 0$ for some $j > \ell+1$, that is, suppose the right summand is nonzero.
Then $y_j = -\alpha_jn$ and $\ky > j$.  For any value of $n$, this is not possible in $\Bvuw$.

The idea of the proof is that because $\wf{\ell+1}$ does not appear in the linear
combination of vectors, the effects of the two summands
on both the $\ell^1$ vector norm and the lexicographic order are independent.

Let $\ba = \sum_{i=j}^\ell \alpha_i \wi$ and
$\bb = \sum_{i>\ell+1} \alpha_i \wi$, so $\y = \x + \ba + \bb$.
The lemma follows immediately if $\bb=0$, that is, $\bb$ is an empty sum.
Otherwise, $\bb$ is nonzero and it follows from Lemma~\ref{lemma:no_L0_parts} that $\kx,k_{\x+\bb},\ky > \ell+1$.  
Therefore, adding $\ba$ does not affect
the length of $\x$ or $\x + \bb$, although it might be the case that $\kx \ne k_{\x + \bb}$.
As $\ky = k_{\x+\bb}$, we use the same word length formula from Lemma~\ref{lemma:length_formula} to compute the lengths of each pair of geodesics which are compared below.
Therefore we have
\begin{align*}
|\eta_{u,v,w}(\y)| - |\eta_{u,v,w}(\x+\bb)| & = \Vert \y \Vert_1 - \Vert \x + \bb \Vert_1 = \sum_{i=j}^{\ell+1} |x_i + a_i| - |x_i| \\
|\eta_{u,v,w}(\x+\ba)| - |\eta_{u,v,w}(\x)| & = \Vert \x+\ba \Vert_1 - \Vert \x \Vert_1 = \sum_{i=j}^{\ell+1} |x_i + a_i| - |x_i|. \\
\end{align*}
Because $\y$ is minimal, the first difference is at most zero.  
As the rightmost terms in each set of equations above are equal, we conclude that $|\eta_{u,v,w}(\x+\ba)| \le |\eta_{u,v,w}(\x)|$ as well.
If the inequality if strict, it follows immediately that $\x+\ba \ords \x$.  If there is equality,
as Lemma~\ref{lemma:abs_lex_unique} proves that $\ords$ is a total order, there must be
some lexicographic difference between $\x$ and $\x + \ba$.  
An increase in lexicographic order would contradict the minimality of $\y$, as it would follow that
$\x + \bb \ords \x + \ba + \bb = \y$.  
We conclude that
$\x + \ba \ords \x$, completing the proof.
\end{proof}

When the weight of a run is positive, we have additional control over the digits of $\r \subseteq \xx$.
\begin{lemma}\label{lemma:adjacent_digits_from_weight}
Let $n\ge 4$ be even.  If $\r$ is a run in $\x \in \Bvuw$ and
$\wt(\r) > 0$, and
$\r$ does not contain a digit with absolute value $\fne + 1$, then
$\r$ contains a pair of adjacent digits $\fne$.
\end{lemma}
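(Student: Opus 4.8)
The plan is to prove the statement by a direct counting argument, using the weight formula to control the number of digits of each size. Since the hypothesis rules out any digit of absolute value $\fne+1$, every digit of the run $\r$ has absolute value either $\fne$ or $\fne-1$, each occurring with the common sign $\epsilon_\r$. Let $q$ denote the number of digits in $\r$ of absolute value $\fne$ and let $s$ denote the number of digits of absolute value $\fne-1$. Because no digit of absolute value $\fne+1$ occurs, the weight formula collapses to $\wt(\r) = (q-1) - s$, so the hypothesis $\wt(\r) > 0$ becomes $q - 1 - s > 0$. As $q$ and $s$ are integers, this forces $s \le q-2$.

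Next I would argue by contradiction, assuming that $\r$ contains no pair of adjacent digits both equal to $\epsilon_\r\fne$. Regarding the run as a contiguous linear sequence of digits, this assumption says exactly that the $q$ occurrences of the digit $\epsilon_\r\fne$ are pairwise non-adjacent. A standard gap count then applies: between any two consecutive occurrences of $\epsilon_\r\fne$ there must lie at least one other digit, and since no digit of absolute value $\fne+1$ is present, each such intervening digit is $\epsilon_\r(\fne-1)$. There are $q-1$ such gaps, each contributing at least one digit of absolute value $\fne-1$, so $s \ge q-1$. Combining this with the inequality from the first paragraph yields $q-1 \le s \le q-2$, a contradiction. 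Hence the assumption fails and $\r$ must contain two adjacent digits of absolute value $\fne$, as claimed.

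I do not anticipate a genuine obstacle here; the argument is essentially pigeonhole. The only place meriting a little care is the gap-counting step, where one should note that the separation bound $s \ge q-1$ depends only on the run being a contiguous block of same-signed digits with exactly two occurring absolute values, and not on the location of the first $\fne$. In particular, the condition $|x_j| = \fne$ built into the definition of a run is consistent with the count but is not actually needed to derive $s \ge q-1$.
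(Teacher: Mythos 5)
Your proof is correct and follows essentially the same pigeonhole argument as the paper: positive weight forces at least two more digits of absolute value $\fne$ than of $\fne-1$, and any arrangement of such a multiset in a contiguous block must place two $\fne$ digits adjacently. You have simply made explicit the gap-counting step that the paper leaves to the reader.
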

\begin{proof}
By the definition of weight, if $\wt(\r) > 0$, then we must have at least
two more digits with absolute value $\fne$ than $\fne-1$.
If we arrange a set of digits of the form $\fne$ and $\fne-1$ with a surplus of at least two $\fne$ digits,
we are forced to place two digits $\fne$ adjacent to each other.
\end{proof}

If $\x \in \Bvuw$ and $\r= (x_j, \dots, x_\ell)$ is a run in
$\x$, then we say that
$\x$ can be {\em reduced at} $\r$ if 
\[
\y \ords \x \in \Bvuw
\]
where $\y = \x + \epsilon_\r \sum_{i=j}^\ell\wi$.
In order to determine whether a vector $\x$ can be reduced at
$\r$, we use Lemma~\ref{lemma:change_formula} combined with
conditions on
$\wt(\r)$,  the change in absolute value of the
digit $x_{\ell+1}$, the lexicographic change, and, if $\kx < \max(u,w)$, the change in
the length of $\x$.  
Generally, if a vector can be reduced,
it can be reduced at a run.  There is a special case which does not follow this rule, given in the following lemma.

\begin{lemma}\label{lemma:even_reduction_end}
Let $n \ge 4$ be even, and $\xx \in \Bvuw$. If
$k_\xx > \max(u,w)$ and
the final digits of $\x$ are $(\delta (\fne-1), -\delta)$ 
or $(\delta \fne, -\delta)$, where $\delta \in \{-1,1\}$,
then $\x$ is not minimal.
\end{lemma}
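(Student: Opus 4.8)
The plan is to produce an explicit $\y \in \Bvuw$ that strictly precedes $\x$ in the order $\ords$, which by definition shows $\x$ is not minimal. The obstruction to reducing $\x$ ``at a run'' is precisely that the two final digits $x_{\kx-1}$ and $x_{\kx} = -\delta$ have opposite signs, so they cannot lie in a common run. Nonetheless one can kill the small terminal digit by adding the single basis vector $\delta\,\wf{\kx-1}$, which straddles the sign change and also lowers the top index. Concretely I set
\[
\y = \x + \delta\,\wf{\kx-1} \in \Lv,
\]
so that $y_{\kx} = x_{\kx} + \delta = 0$ and $y_{\kx-1} = x_{\kx-1} - \delta n$. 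Using $\fne = n/2$, a one-line computation gives $y_{\kx-1} = -\delta(\fne+1)$ in the first case and $y_{\kx-1} = -\delta\fne$ in the second; in both cases $y_{\kx} = 0$, so (since $n\ge 4$ forces $y_{\kx-1}\ne 0$) we have $k_\y = \kx - 1$.

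First I would check that $\y \in \Bvuw$. Only the coordinates at indices $\kx-1$ and $\kx$ are altered; every coordinate of index below $\kx-1$ is unchanged and remains interior, so it still obeys condition (1). Since $\kx > \max(u,w)$ we have $k_\y = \kx - 1 \ge \max(u,w)$, so the relaxed bound of condition (3) governs the new terminal digit $y_{\kx-1}$, whose absolute value is $\fne+1$ (first case) or $\fne$ (second case); in either case this is permissible, so $\y \in \Bvuw$.

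Next I would compute the change in length. As $k_\y = \kx - 1 < \kx$ and $\kx > \max(u,w)$, both $|\eta_{u,v,w}(\x)|$ and $|\eta_{u,v,w}(\y)|$ are evaluated by the second formula of Lemma~\ref{lemma:length_formula} (the boundary case $k_\y = \max(u,w)$ is harmless, since the two formulas of that lemma agree there), whence
\[
|\eta_{u,v,w}(\y)| - |\eta_{u,v,w}(\x)| = \bigl(\Vert\y\Vert_1 - \Vert\x\Vert_1\bigr) - 2;
\]
equivalently this is the $k_{\x+\z} < \kx$ branch of Lemma~\ref{lemma:length_formula_change} applied with $\z = \delta\,\wf{\kx-1}$. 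The $\ell^1$ change is an immediate digitwise count: $|x_{\kx-1}| + |x_\kx|$ passes from $\fne$ to $\fne+1$ in the first case and from $\fne+1$ to $\fne$ in the second, so $\Vert\y\Vert_1 - \Vert\x\Vert_1$ equals $+1$ and $-1$ respectively. Hence the length difference is $-1$ in the first case and $-3$ in the second; in both the word length strictly drops, so $\y \ords \x$ and $\x$ is not minimal. The one point to watch is the first case, where the new terminal digit attains the extremal value $\fne+1$ and so membership in $\Bvuw$ must be verified carefully; the reason the reduction succeeds at all is that collapsing the top index saves $2$ from the $2\kx$ term, which outweighs the at-most-$+1$ growth of the $\ell^1$ norm.
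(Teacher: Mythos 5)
Your proposal is correct and takes essentially the same approach as the paper: both add the single basis vector $\delta\,\wf{\kx-1}$ to collapse the terminal digit, note that $\ky=\kx-1$, and apply the shape 3/4 length formula to get a strict decrease. Your version just carries out the two cases and the $\Bvuw$ membership check more explicitly, where the paper is content with the bound $\Vert\y\Vert_1\le\Vert\x\Vert_1+1$.
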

\begin{proof}
The cases for $\delta$ are symmetric, so we assume without loss of generality that $\delta = 1$. For either sequence of digits, consider
$\y = \xx + \wf{k_\xx-1}$.
We have $\Vert \y \Vert_1 \le \Vert \xx \Vert_1 + 1$.
Note that $\ky = \kx -1$,
and by the assumption on $\kx$, we use the length formula in Lemma~\ref{lemma:length_formula} for paths of shape 3 and 4 to
compute both $|\eta_{u,v,w}(\x)|$ and $|\eta_{u,v,w}(\y)|$.
Therefore, 
\[
|\eta_{u,v,w}(\y)| \le \Vert \x \Vert_1 +1 +2(\kx-1) - |u-w| = |\eta_{u,v,w}(\x)|-1,
\]
so $\x$ is not minimal.
\end{proof}

In certain cases, sequences of digits with absolute value $\fne$ and the same sign form runs at which $\x$ can be reduced.
\begin{lemma}\label{lemma:even_reduction_adjacent_digits}
Let $n\ge 4$ be even and $\x \in \Bvuw$.  Suppose there is a maximal
sequence $x_j = x_{j+1} = \dots = x_\ell = \pm \fne$ of length at
least $2$ with $\kx > \ell$ and $|x_{\ell+1}| < \fne$.
Then $\x$ can be reduced at the run $\r=(x_j, \dots, x_\ell) \subseteq \x$.
\end{lemma}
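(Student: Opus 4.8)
The plan is to set $\delta = \sgn(x_j) \in \{\pm 1\}$ and to reduce $\x$ using the run itself: observe first that $\r = (x_j,\dots,x_\ell)$ is a genuine run with $\epsilon_\r = \delta$, all of whose digits equal $\delta\fne$, so its weight is
\[
\wt(\r) = \big(\#\{\fne\}-1\big) = (\ell - j + 1) - 1 = \ell - j \ge 1,
\]
the inequality using the hypothesis that the run has length at least $2$. I would then put $\y = \x + \delta\sum_{i=j}^\ell \wf{i}$ and apply Lemma~\ref{lemma:change_formula} to get
\[
\Vert \y \Vert_1 - \Vert \x \Vert_1 = -(\ell-j) + c, \qquad c := |x_{\ell+1}+\delta| - |x_{\ell+1}| \in \{-1,+1\}.
\]
A short digit computation (which I would record but not belabor) gives $y_j = -\delta\fne$, then $|y_i| = \fne - 1$ for $j < i \le \ell$, and $y_{\ell+1} = x_{\ell+1}+\delta$ with $|y_{\ell+1}| \le \fne$ since $|x_{\ell+1}| < \fne$; all other digits are unchanged. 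This confirms $\y \in \Bvuw$. It also shows that no index exceeding $\kx$ is touched (the highest affected index is $\ell+1 \le \kx$), so $k_\y \le \kx$, with $k_\y < \kx$ occurring only when $\kx = \ell+1$ and $x_{\ell+1} = -\delta$, which forces $c = -1$.

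Next I would convert the $\ell^1$ change into a path-length change. When $k_\y = \kx$, both paths are measured by the same formula of Lemma~\ref{lemma:length_formula}, so $|\eta_{u,v,w}(\y)| - |\eta_{u,v,w}(\x)| = \Vert \y \Vert_1 - \Vert \x \Vert_1$; when $k_\y < \kx$, Lemma~\ref{lemma:length_formula_change} yields
\[
|\eta_{u,v,w}(\y)| - |\eta_{u,v,w}(\x)| = \big(\Vert \y \Vert_1 - \Vert \x \Vert_1\big) - 2\max\big(0,\, \kx - \max(k_\y,u,w)\big),
\]
so the length can only drop further. Splitting on the value of $-(\ell-j)+c$: in every case except $\ell - j = 1$ together with $c = +1$, this quantity is at most $-1$, so $|\eta_{u,v,w}(\y)| < |\eta_{u,v,w}(\x)|$ and $\y \ords \x$ holds immediately from the length clause.

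The one remaining, and genuinely delicate, case is $\ell - j = 1$ with $c = +1$: a run of exactly two copies of $\delta\fne$ followed by a digit $x_{\ell+1}$ of the same sign as $\delta$ (or equal to $0$). Here $\Vert \y \Vert_1 = \Vert \x \Vert_1$, and since $c = +1$ forces $y_{\ell+1} = x_{\ell+1}+\delta \ne 0$ whenever $\kx = \ell+1$, we have $k_\y = \kx$ and the two path lengths are \emph{equal}; the length clause gives nothing. The reduction must instead be certified by the lexicographic tie-breaker in $\ords$: the digits of $\x$ and $\y$ agree below index $j$, satisfy $|y_j| = |x_j| = \fne$ at index $j$, and first differ at index $\ell = j+1$, where $|y_\ell| = \fne - 1 < \fne = |x_\ell|$. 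Hence $|\y| < |\x|$ in the absolute lexicographic order and $\y \ords \x$. This boundary case is the main obstacle, and it is exactly the phenomenon of Example~\ref{example:absolute_order} (with $\x = (2,2,1)$ reducing to $\y = (-2,-1,2)$); combining it with the strict-length cases above shows $\x$ can be reduced at $\r$ in all situations.
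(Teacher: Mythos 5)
Your proposal is correct and follows essentially the same route as the paper's proof: both reduce at the run by adding $\epsilon_\r\sum_{i=j}^{\ell}\wf{i}$, use Lemma~\ref{lemma:change_formula} together with $\wt(\r)=\ell-j\ge 1$ to get $\Vert\y\Vert_1\le\Vert\x\Vert_1$, and resolve the single equality case (run of length exactly two with $|x_{\ell+1}+\epsilon_\r|=|x_{\ell+1}|+1$) by the lexicographic drop at index $j+1$. Your explicit tracking of when $\ky<\kx$ via Lemma~\ref{lemma:length_formula_change} is a slightly more careful rendering of the paper's appeal to $\ky\le\kx$, but the substance is identical.
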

\begin{proof}
Let
$\y = \x + \epsilon_\r\sum_{i=j}^\ell\wi$.
The assumptions on $\kx$ and $|x_{\ell+1}|$ guarantee that
$\y \in \Bvuw$ and $\ky \le \kx$.  
The digits in $\r$ ensure that $\wt(\r) \geq 1$.  
To compare $\Vert \x \Vert_1$ and $\Vert \y \Vert_1$ we use the equation given in Lemma~\ref{lemma:change_formula}, namely
\[\Vert\y\Vert_1 = \Vert \x \Vert_1 - \wt(\r)
+ |x_{\ell+1} + \epsilon_\r| - |x_{\ell+1}|.
\]
As it is always true that $|x_{\ell+1} + \epsilon_\r| - |x_{\ell+1}| \in \{\pm 1\}$, we see that $\wt(\r) - (|x_{\ell+1} + \epsilon_\r| - |x_{\ell+1}|) \geq 0$, and thus $\Vert \y \Vert_1 \leq \Vert \x \Vert_1$.

As $\ky \in \{\kx,\kx-1\}$, we use the same length formula from Lemma~\ref{lemma:length_formula} to compute both $|\eta_{u,v,w}(\x)|$ and $|\eta_{u,v,w}(\x)|$.  
If $\Vert \y \Vert_1 < \Vert \x \Vert_1$, if we use the first length formula, the result follows immediately.  
If we use the second length formula, we are also relying on the fact that $\ky \leq \kx$ to conclude that $\y \ords \x$.
If  $\Vert \y \Vert_1 = \Vert \x \Vert_1$, then
$\r = (\epsilon_\r \fne,\epsilon_\r \fne)$ and without loss of generality we assume that $\epsilon_\r=1$.  
Then 
\[
(y_j,y_{j+1},y_{j+2}) = \left(-\fne,-\left(\fne-1\right),x_{j+2}+1\right)
\]
and thus $\y$ precedes $\x$ in the lexicographic order, so $\y \ords \x$ in this case as well.
\end{proof}

We are interested in conditions which are both necessary and sufficient to conclude that $\x \in \Bvuw$ is not minimal.
This stronger statement is contained in  Proposition~\ref{lemma:even_minimal_characterization}.

\begin{proposition}
\label{lemma:even_minimal_characterization}
Let $n\ge 4$ be even, and let $\xx \in \Bvuw$.
Then $\x$ is not minimal if and only if 
\begin{itemize}
    \item there is a run in $\x$ at which $\x$ can be reduced, or
    \item Lemma~\ref{lemma:even_reduction_end} applies to $\x$.
\end{itemize}
\end{proposition}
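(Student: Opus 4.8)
The ($\Leftarrow$) direction is immediate: if $\x$ can be reduced at a run $\r$, then by definition the vector $\y = \x + \epsilon_\r\sum_{i=j}^\ell \wi$ satisfies $\y \ords \x$, so $\x$ is not minimal; and if Lemma~\ref{lemma:even_reduction_end} applies, then $\x$ is not minimal by that lemma. So the plan is to prove the reverse direction. Suppose $\x$ is not minimal, and let $\y$ be the $\ords$-minimal vector of $\Bvuw$; since $\x$ is not minimal, $\y \ne \x$ and $\y \ords \x$. Set $\z = \y - \x = \sum_i \alpha_i \wi \in \LL_0$, which is nonzero. The engine of the proof is to pin down the coefficients $\alpha_i$: using the digit bounds defining $\Bvuw$ together with Lemma~\ref{lemma:L0_big_digits}, I would show that along any maximal contiguous block of indices $[j,\ell]$ on which the $\alpha_i$ are nonzero, every $\alpha_i$ equals a single sign $\epsilon \in \{\pm 1\}$, and that the corresponding digits satisfy $x_i \in \{\epsilon(\fne-1), \epsilon\fne, \epsilon(\fne+1)\}$ with $x_j \in \{\epsilon\fne, \epsilon(\fne+1)\}$ — exactly the shape of a run, provided $x_j = \epsilon\fne$. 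The bound $|\alpha_i| = 1$ follows because $|\alpha_i| \ge 2$ would, by Lemma~\ref{lemma:L0_big_digits} applied to $\z$, force $|z_i| = |y_i - x_i| > 2(n-1)$ for some $i$, which is impossible since $|x_i|, |y_i| \le \fne + 1$ gives $|y_i - x_i| \le n+2 \le 2(n-1)$ for $n \ge 4$; constancy of the sign follows from a short computation with the same bounds.

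I would then split on the relationship between $\ky$ and $\kx$. If $\ky \ge \kx$, Lemma~\ref{lemma:no_L0_parts} guarantees that $\ky$ exceeds the top index of $\z$, so every index of the lowest block lies strictly below $\ky$ and the bound $|y_i| \le \fne$ holds throughout it. The block analysis then gives $x_j \in \{\epsilon\fne, \epsilon(\fne+1)\}$, and I would rule out $x_j = \epsilon(\fne+1)$ by noting that it forces a single-index block at $\kx$ whose isolation via Lemma~\ref{lemma:single_run_from_multiple_runs} strictly increases $|\eta_{u,v,w}|$, contradicting $\y \ords \x$. Hence $x_j = \epsilon\fne$, the lowest block is a run $\r$, and Lemma~\ref{lemma:single_run_from_multiple_runs} (to discard any higher blocks), together with the observation that the resulting isolated vector still lies in $\Bvuw$, shows that $\x$ reduces at $\r$.

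The remaining and genuinely delicate case is $\ky < \kx$, where the run framework can fail and Lemma~\ref{lemma:even_reduction_end} is needed. Here a direct argument shows that the top coefficient of $\z$ sits at index $\kx - 1$ with $\alpha_{\kx-1} = \epsilon' \in \{\pm 1\}$, $\alpha_\kx = 0$, and $x_\kx = -\epsilon'$, so the final digit of $\x$ has absolute value $1$ and sign opposite to $\alpha_{\kx-1}$. If $\z$ has any nonzero coefficient below $\kx-1$, I would isolate the lowest block via Lemma~\ref{lemma:single_run_from_multiple_runs}; the isolated vector has top index $\kx$, so the $\ky \ge \kx$ analysis applies verbatim and produces a run reduction. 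Otherwise $\z = \epsilon'\wf{\kx-1}$, and the $\Bvuw$ bound on $y_{\kx-1} = x_{\kx-1} - n\epsilon'$ forces $x_{\kx-1} \in \{\epsilon'(\fne-1), \epsilon'\fne\}$, so the final digits of $\x$ are $(\delta(\fne-1), -\delta)$ or $(\delta\fne, -\delta)$ with $\delta = \epsilon'$. Comparing $|\eta_{u,v,w}(\y)|$ with $|\eta_{u,v,w}(\x)|$ shows this move shortens $\x$ only when $\kx > \max(u,w)$, so minimality of $\y$ forces $\kx > \max(u,w)$ and Lemma~\ref{lemma:even_reduction_end} applies; when $x_{\kx-1} = \epsilon'\fne$ one may alternatively read this as a reduction at the single-digit run $(\epsilon'\fne)$.

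The main obstacle is exactly this last case: the reduction that shortens the vector length escapes the constant-sign ``run'' bookkeeping, since its top two digits have opposite signs, and so it must be handled by the separate Lemma~\ref{lemma:even_reduction_end}, with the length comparison showing that the shortening is advantageous precisely under the hypothesis $\kx > \max(u,w)$. A secondary, purely technical nuisance is verifying that the vectors obtained after discarding higher blocks via Lemma~\ref{lemma:single_run_from_multiple_runs} still satisfy the digit bounds of $\Bvuw$, and confirming the coefficient bound $|\alpha_i| = 1$ in the tight boundary case $n = 4$.
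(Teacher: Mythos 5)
Your high-level strategy is the same as the paper's: write $\z=\y-\x$ for the minimal $\y$, use the digit bounds on $\Bvuw$ with Lemma~\ref{lemma:L0_big_digits} to force $|\alpha_i|\le 1$, read runs off the nonzero coefficients, and reserve Lemma~\ref{lemma:even_reduction_end} for the end-of-vector anomaly. The ($\Leftarrow$) direction, the exclusion of $|\alpha_i|\ge 2$, the treatment of $x_j=\fne+1$ (the paper's Case~1), and the single-coefficient case $\z=\epsilon'\wf{\kx-1}$ (the paper's Case~3) are all sound.

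The gap is your structural claim that every maximal contiguous block of nonzero $\alpha_i$ is constant-sign with leading digit $x_j\in\{\epsilon\fne,\epsilon(\fne+1)\}$, together with the dichotomy built on it in the $\ky<\kx$ branch. First, the sign of $\alpha_i$ can flip inside a block: if $\alpha_i=1$ and $\alpha_{i+1}=-1$ then $|z_{i+1}|=n+1$, which is attainable exactly when the index $i+1$ equals $\kx$ or $\ky$ and a digit of absolute value $\fne+1$ is permitted there; this is the content of cases (B) and (D) of Lemma~\ref{lemma:lastcase_lemma3.20}, and it requires a separate argument (case (B) is shown to contradict minimality of $\y$; case (D) forces the ending $(-\fne,1)$ and hands off to Lemma~\ref{lemma:even_reduction_end}). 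Second, and more seriously, your $\ky<\kx$ branch only allows ``a lower block separated by a gap'' or ``$\z=\epsilon'\wf{\kx-1}$,'' but a multi-term \emph{contiguous} block reaching index $\kx-1$ is possible and is covered by neither arm, since Lemma~\ref{lemma:single_run_from_multiple_runs} needs an omitted index to split the sum. A concrete instance: $n=4$, $u=w=0$, $\x=(2,2,0,-1)$, with minimal vector $\y=(-2,-1,-3)=\x+\wf{0}+\wf{1}+\wf{2}$, so $\ky=2<\kx=3$ and $\z$ is one contiguous block ending at $\kx-1$. Here the reduction that produces $\y$ is not itself a run reduction (the sign of the digits changes at index $2$), and the paper's case (C) instead bounds $\wt(\r)$ from below and invokes Lemma~\ref{lemma:adjacent_digits_from_weight} and Lemma~\ref{lemma:even_reduction_adjacent_digits} to locate a pair of adjacent digits $\pm\fne$ (here $(2,2)$) forming a \emph{different} run at which $\x$ reduces. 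That argument has no counterpart in your sketch, so the proof as proposed does not close.
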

\begin{proof}
If one of the two conditions in the lemma is satisfied, then $\x$ is
not minimal, so the proof reduces to showing the converse.
Let $\xx, \y \in \Bvuw$ and with $\y$ a minimal vector, and $\z = \y - \xx$.  Let $j$ be the minimal index
such that $x_j \ne y_j$ and write $\z = \sum_{i=j}^\ell \alpha_i \wi$.  It follows from Lemma~\ref{lemma:L0_big_digits} that $|z_j| \ge n$.  To satisfy
the digit bounds on $\Bvuw$, we must then have $|z_j| = n$.  Assume without
loss of generality that $x_j \ge 0$, so $z_j = -n$ and $y_j = x_j - n$.
The proof now reduces to cases corresponding to the possible values of $x_j$.
\begin{enumerate}[itemsep=5pt]
\item Case 1: $x_j = \fne + 1$. This digit can only occur if $j = k_\xx$ and $\kx \geq \max(u,w)$; it follows that $\alpha_j = 1$. 
This means that $y_\kx = -(\fne - 1)$ and $y_{\kx+1} = 1- \alpha_{\kx+1}n$, so we must have
$\alpha_{\kx+1}=0$ and $k_\y = k_\xx + 1 > \max(u,w)$.
We use the second formula from Lemma~\ref{lemma:length_formula}
to compute both $|\eta_{u,v,w}(\x)|$ and $|\eta_{u,v,w}(\y)|$.  We see that
\begin{align*}
|\eta_{u,v,w}(\y)| & = \Vert \xx \Vert_1+ |y_{\kx}| - |x_{\kx}| + |y_{\kx+1}| + 2(\kx +1) - |u-w| \\
                   & = |\eta_{u,v,w}(\xx)| + |y_{\kx}| - |x_{\kx}| + |y_{\kx+1}| + 2 \\
                   & = |\eta_{u,v,w}(\xx)| + 1,
\end{align*}
contradicting our assumption that $\y$ is minimal.  Thus this case does not occur.

\item Case 2: $x_j = \fne$. This is the involved case, which is proven in Section~\ref{sec:technical_lemmas} as Lemma~\ref{lemma:lastcase_lemma3.20}.

\item Case 3: $x_j = \fne -1$. In this case, $y_j = -\fne -1$, so $k_\y = j$. 
Since $|y_j| = \fne+1$, it follows from the definition of $\Bvuw$ that $\ky \geq \max(u,w)$.  
As the length of $\y$ is determined, we must have $x_{j+1} = -1$, and $k_\xx = j+1$.  Then $\kx > \ky \geq \max(u,w)$, and we see that $\x$ satisfies the conditions of Lemma~\ref{lemma:even_reduction_end}.

\item Case 4: $x_j < \fne-1$. Here $y_j < -(\fne +1)$, contradicting the fact that $\y \in \Bvuw$.
\end{enumerate}
These four cases complete the proof of the proposition.
\end{proof}

It can be computationally difficult to check whether $\x$ contains a run at which it can be reduced.
When $\kx < w$, Proposition~\ref{lemma:adjacent_digits} presents straightforward observable conditions which guarantee that $\x$ contains a run at which it can be reduced.
This prompts the following definition; if $g = t^{-u}a^vt^w$ and $\x \in \Bvuw$ with $\kx < \max(u,w)$,
we say that $\eta_{u,v,w}(\x)$ has \emph{strict shape 1}.

We rely on Proposition~\ref{lemma:adjacent_digits} when computing the growth rate of $BS(1,n)$ in \cite{TW_growth}, namely we show in \cite{TW_growth} that the set of geodesics of strict shape 1 forms a regular language whose growth rate is the same as the growth rate of $BS(1,n)$.  We use a corollary of this result below to show that the sets of elements positive, negative and zero conjugation curvature which we exhibit in Sections~\ref{sec:curvature} and ~\ref{sec:n=2_curvature} have positive density in $BS(1,n)$.

\begin{proposition}\label{lemma:adjacent_digits}
Let $n>2$ be even and $\x \in \Bvuw$ with $\kx < \max(u,w)$.
Then $\x$ is not minimal if and only if one of the following holds, for $\delta \in \{\pm 1\}$.
\begin{itemize}[itemsep=5pt]
\item There are two adjacent digits in $\x$
of the form $(\delta\fne,\delta\fne)$.
\item There are two adjacent digits in $\x$
of the form $(\delta\fne,x_i)$ with $\sgn(x_i) = -\sgn(\delta)$.
\end{itemize}
\end{proposition}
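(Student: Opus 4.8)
The plan is to use the hypothesis $\kx<\max(u,w)$ to place us in \emph{strict shape~1}, where by Lemma~\ref{lemma:length_formula} every competitor is scored by $\Vert\cdot\Vert_1+u+w$. Any reduction at a run $\r=(x_j,\dots,x_\ell)\subseteq\x$ modifies only digits of index at most $\ell+1\le\kx+1\le\max(u,w)$, so the resulting $\y$ also has $k_\y\le\max(u,w)$ and is scored by the same formula; hence $\y\ords\x$ is equivalent to $\Vert\y\Vert_1<\Vert\x\Vert_1$, or else $\Vert\y\Vert_1=\Vert\x\Vert_1$ together with $|\y|<|\x|$ in the absolute lexicographic order. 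Two facts then drive everything: writing $b=|x_{\ell+1}+\epsilon_\r|-|x_{\ell+1}|\in\{-1,1\}$, Lemma~\ref{lemma:change_formula} gives $\Vert\y\Vert_1-\Vert\x\Vert_1=-\wt(\r)+b$; and since the digit $\fne+1$ is impossible in strict shape~1, Lemma~\ref{lemma:adjacent_digits_from_weight} shows $\wt(\r)>0$ forces a pair of adjacent $\delta\fne$ inside $\r$, i.e.\ the first pattern.

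For the backward direction I exhibit a beneficial reduction from each pattern. Given the opposite-sign pattern $(x_m,x_{m+1})=(\delta\fne,x_i)$ with $\sgn(x_i)=-\delta$, I reduce at the length-one run $\r=(x_m)$: here $\wt(\r)=0$ and $b=-1$, so $\Vert\y\Vert_1=\Vert\x\Vert_1-1$, and one checks $\y\in\Bvuw$, giving $\y\ords\x$. Given the adjacent pattern, I take the maximal block $\r$ of consecutive $\delta\fne$ and split on $x_{\ell+1}$: if $|x_{\ell+1}|<\fne$ (in particular if $\ell<\kx$), this is exactly Lemma~\ref{lemma:even_reduction_adjacent_digits}; if $x_{\ell+1}=-\delta\fne$, then $(x_\ell,x_{\ell+1})$ is an instance of the opposite-sign pattern already handled; and if $\ell=\kx$, a direct computation gives $\Vert\y\Vert_1-\Vert\x\Vert_1=2-(\ell-j+1)\le 0$, with the boundary length-two case decided by the lexicographic decrease created at index $\kx$.

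For the forward direction I extract a pattern from an arbitrary reduction. Because $\kx<\max(u,w)$, Lemma~\ref{lemma:even_reduction_end} cannot apply, so by Proposition~\ref{lemma:even_minimal_characterization} a non-minimal $\x$ must reduce at some run $\r$, giving $-\wt(\r)+b\le 0$. If $\wt(\r)>0$, Lemma~\ref{lemma:adjacent_digits_from_weight} produces the adjacent pattern. If $\wt(\r)=0$ with a strict norm drop then $b=-1$; unless $\r$ already contains adjacent $\delta\fne$, the absence of adjacent maxima together with $\wt(\r)=0$ forces $\r$ to begin and end in $\delta\fne$ (its $\fne-1$ digits occupying single-digit gaps), so $(x_\ell,x_{\ell+1})$ realizes the opposite-sign pattern. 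The remaining case is equality $\Vert\y\Vert_1=\Vert\x\Vert_1$ won lexicographically, forcing $\wt(\r)=b$; when $\wt(\r)=1$ the first pattern appears via Lemma~\ref{lemma:adjacent_digits_from_weight}, and when $\wt(\r)=-1$ I use the explicit flips along a run, $\delta\fne\mapsto\delta(\fne-1)$ and $\delta(\fne-1)\mapsto\delta\fne$, to conclude that a lexicographic improvement is possible only if the first interior digit $x_{j+1}$ equals $\epsilon_\r\fne$, again the adjacent pattern.

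I expect the genuine difficulty to lie in exactly these borderline cases where $\Vert\x\Vert_1$ is unchanged and minimality is decided by the absolute lexicographic order. There the weight bound alone is insufficient, and one must simultaneously track the sign of $x_{\ell+1}$ and the first index at which $|\y|$ and $|\x|$ diverge, exploiting the $\pm 1$ flips $\fne\leftrightarrow\fne-1$ propagating along the run to pin the digit configuration down to exactly one of the two stated patterns.
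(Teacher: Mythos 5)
Your proposal is correct and follows essentially the same route as the paper's proof: both directions reduce to runs via Proposition~\ref{lemma:even_minimal_characterization} (with Lemma~\ref{lemma:even_reduction_end} ruled out by $\kx<\max(u,w)$), score everything with the first length formula so that only the $\ell^1$ norm and the lexicographic tiebreak matter, and then case-split on $\wt(\r)$ and the sign of the change at $x_{\ell+1}$, using Lemma~\ref{lemma:adjacent_digits_from_weight} to extract the adjacent pattern when the weight is positive. The only differences are cosmetic — you reduce the opposite-sign pattern at a length-one run and invoke Lemma~\ref{lemma:even_reduction_adjacent_digits} where the paper recomputes directly — and your observation that a length-one run has weight $0$ (so the $\wt(\r)=-1$ tiebreak case needs only the interior digit $x_{j+1}=\epsilon_\r\fne$) is in fact slightly cleaner than the paper's enumeration there.
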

\begin{proof}
By applying Proposition~\ref{lemma:even_minimal_characterization} and
observing that Lemma~\ref{lemma:even_reduction_end} does not apply to $\x$,
the proof reduces to showing that there is a run at which $\x$ can
be reduced if and only if one of the above conditions holds.

First observe that in each of the two cases in the lemma there is a run at which $\x$ can be reduced.  
Consider the run which is the maximal sequence of digits $\delta\fne$ containing the digit(s) in the statement of the lemma, that is, $\r = (\delta\fne,\delta\fne, \cdots ,\delta\fne) = (x_j, \cdots ,x_\ell)$ where $j \leq l$. Let 
\[
\y = \x + \sum_{i=j}^\ell \delta \wi.
\]
 and compute $\wt(\r) = l-j \geq 0$.

As $\kx < \max(u,w)$ and $\ky \leq \kx+1$, we have $\ky \leq \max(u,w)$ and thus we use the first length formula in Lemma~\ref{lemma:length_formula} to compute both $|\eta_{u,v,w}(\x)|$ and $|\eta_{u,v,w}(\y)|$.
Note that the two formulas agree when $\ky = \max(u,w)$.  
As this formula does not take into account the length of the vectors, any change in word length results from a change in $\ell^1$ norm between $\x$ and $\y$.

It follows from Lemma~\ref{lemma:change_formula} that 
\[
\Vert \x \Vert_1 - \Vert \y \Vert_1 = \wt(\r) +  |x_{\ell+1}|-|x_{\ell+1}+\delta| .
\]
Suppose that $\wt(\r) = l-j \geq 1$, so there are at least two digits of the form $\delta \fne$.
We know that $|x_{\ell+1}|-|x_{\ell+1}+\delta| \in \{\pm 1\}$
 and hence $\Vert \x \Vert_1 - \Vert \y \Vert_1 \geq 0$.
If the inequality is strict, it follows that $\x$ can be reduced at $\r$, that is, $\x$ is not minimal.
If there is equality, notice that the change from $x_{j+1}$ to $y_{j+1}$ is a lexicographic reduction, as $|x_{j+1}| = \fne$ and $|y_{j+1}| = \fne -1$.
Thus $\y \ords \x$, that is, $\x$ is not minimal.

Suppose that $\wt(\r)=l-j=0$, so we are in the second case of the lemma.  In this case, $ |x_{\ell+1}|-|x_{\ell+1}+\delta|  =1$, so $\Vert \x \Vert_1 - \Vert \y \Vert_1 = 1>0$.  Thus $\y \ords \x$ and we conclude that $\x$ is not minimal.

Now we must show the converse.  That is, if 
$\x$ can be reduced at a run $\r$ then 
one of the conditions in the
statement of the lemma holds. 
Let $\r = (x_j, \dots, x_\ell)$ be such a run,
and $\y = \x + \epsilon_\r\sum_{i=j}^\ell\wi$.

Again note that $\ky \leq \kx+1$, so the assumption that $\kx < \max(u,w)$ means that, as above, we use the first length formula in Lemma~\ref{lemma:length_formula} to compute both $|\eta_{u,v,w}(\x)|$ and $|\eta_{u,v,w}(\y)|$.
Thus any change in word length results from a change in $\ell^1$ norm between $\x$ and $\y$.

As $\y \ords \x$, we know that $|\eta_{u,v,w}(\y)| \le |\eta_{u,v,w}(\x)|$,
so
\begin{align*}
|\eta_{u,v,w}(\x)| - |\eta_{u,v,w}(\y)|
& = \Vert\x\Vert_1 - \Vert \y \Vert_1 \\
& = \wt(\r)  + |x_{\ell+1}| - |x_{\ell+1} + \epsilon_\r| \\
& \ge 0
\end{align*}
Consider $x_{\ell+1}$.
We know that $|x_{\ell+1}| - |x_{\ell+1} + \epsilon_\r| \in  \{\pm 1\}$
\begin{enumerate}[itemsep=5pt]
\item[(a)] If $|x_{\ell+1}| - |x_{\ell+1} + \epsilon_\r| = 1$, then
$\wt(\r) \ge -1$.

\smallskip

\begin{itemize}[itemsep=5pt]
    \item If $\wt(\r)=-1$, then $|\eta_{u,v,w}(\x)| = |\eta_{u,v,w}(\y)|$,
    so in order to have $\y \ords \x$, we must have a lexicographic reduction from $\x$ to $\y$, that is, $\y$ precedes $\x$ in the lexicographic order,
    meaning there must be a decrease in absolute value from $|x_{j+1}|$ to $|y_{j+1}|$.
    There are two ways this can occur: $x_{j+1} = \delta\fne$ and the run
    has length at least 2, or $\sgn(x_{j+1}) = -\sgn(\delta)$ and the
    run has length 1.  In either case, one of the conditions of the lemma
    is satisfied.
    \item If $\wt(\r) = 0$, then $\r$ contains exactly one more digit
    $\delta\fne$ than it does $\delta(\fne-1)$.
    Either the first condition of the lemma is satisfied, or $\r$ ends with $\delta\fne$,
    and because $|x_{\ell+1}| > |x_{\ell+1} + \epsilon_\r|$, we must have $\sgn(x_{\ell+1}) = -\sgn(\delta)$,
    so the second condition of the lemma is satisfied.
    \item If $\wt(\r) > 0$, then $\r$ contains at least two more
    digits $\delta\fne$ than it does digits $\delta(\fne-1)$, so
    the first condition of the lemma is satisfied.
\end{itemize}

\item[(b)] If $|x_{\ell+1}| - |x_{\ell+1} + \epsilon_\r|  = -1$, then
$\wt(\r) \ge  1$, which is the third case above.
\end{enumerate}
In all cases, we have shown that one of the two conditions of the
lemma is satisfied.
\end{proof}

\begin{ex}\label{example:absolute_minimal}
Proposition~\ref{lemma:adjacent_digits} provides a straightforward
way to ensure that a vector corresponding to a geodesic of strict shape 1 is minimal.  We revisit
Example~\ref{example:absolute_order};  recall
$n=4$, $v=26$, and $u,w \ge 3$.
Consider
\[
\xx = (\underset{x_0}{2}, \underset{x_1}{2}, \underset{x_2}{1}, \underset{x_3}{0},
 \, \underset{\ldots}{\ldots})
\qquad
\text{ and }
\qquad
\y = (\underset{y_0}{-2}, \underset{y_1}{-1}, \underset{y_2}{2}, \underset{y_3}{0},
\, \underset{\ldots}{\ldots}).
\]
Note that $\x$ satisfies the first condition of
Proposition~\ref{lemma:adjacent_digits}, so is not
minimal.  Indeed, $\y \ords \x$. However, no
condition of Proposition~\ref{lemma:adjacent_digits} applies to $\y$, so $\y$ is minimal.
\end{ex}

\subsection{Minimal vectors for $n=2$}
\label{section:geodesics_n_even_2}
In this section, we provide statements analogous
to Lemmas~\ref{lemma:even_minimal_characterization}
and~\ref{lemma:adjacent_digits} for
the special case of $n=2$.
The reason the statements and proofs of
Section~\ref{section:geodesics_n_even} do not apply directly is the
fact that when $\kx \geq \max(u,w)$ the absolute value of the most significant digit
of a vector $\x \in \Bvuw$
for $n=2$ is bounded by $\fne + 2$, rather than $\fne+1$.
For the remainder
of this section we assume that $n=2$.

We defer the proofs of the main propositions in this section to Section~\ref{sec:technical_n=2}, as they are similar in structure to the proofs in Section~\ref{section:geodesics_n_even}.  However, we clarify below the slight differences between a run when $n>2$ and $n=2$, as well as a difference which may arise when a vector $\x \in \Bvuw$ can be reduced at a run $\r$.

When $n=2$, define a \emph{run} $\r$ in $\xx \in \Bvuw$ to be a sequence
of consecutive digits $\r = (x_j, \dots, x_\ell)$ of $\x$ such that
$|x_j| = \fne=1$ and for all $i$ with $j \le i \le \ell$ we have
$\sgn(x_i) = \sgn(x_j)$ or $\sgn(x_i) = 0$.   We denote this sign by
$\epsilon_\r =\sgn(x_j)$.  We remark that the digit bounds
defining $\Bvuw$ imply that if
$|x_i| \in \{\fne + 1, \fne + 2\} = \{2,3\}$, then in fact $i=\ell=\kx$ and $\kx \geq \max(u,w)$. 
The \emph{length} of the run is $\ell - j +1$.
Thus we define a run to either
\begin{itemize}
    \item begin with $1$, consist of a word in $\{0,1\}^*$ and possibly conclude with the digit 2 or 3, or
    \item begin with $-1$, consist of a word in $\{0,-1\}^*$ and possibly conclude with the digit -2 or -3.
\end{itemize}
If $\r =  (x_j, \dots, x_\ell) \subseteq \x \in \Bvuw$ is a run, we say that $\x$ can be reduced
at $\r$ if
\[
\x + \epsilon_\r \left[\sum_{i=j}^{\ell-1} \wf{i} + \alpha_{\ell}\wf{\ell}\right] \ords \x,
\]
where $\alpha_{\ell} \in \{1,2\}$.  
The possibility that $\alpha_\ell = 2$ does
not occur when $n>2$.
Thus our conditions for determining the minimality of $\x \in \Bvuw$ are slightly different when $n=2$.
Because the digit bounds in $\Bvuw$ when $n=2$ allow a final digit with absolute
value as large as $3$, one might suppose that we need to consider linear
combinations of $\wi$ with final coefficient as large as $3$.  However, the following lemmas
give us more control over these coefficients.

\begin{lemma}\label{lemma:n=2_unequal_lengths}
Let $n=2$ and $\x,\y \in \Bvuw$.  Let $\y-\x = \sum_{i=j}^\ell \alpha_i \wi$.
If $\alpha_\kx \ne 0$, then $\ky > \kx$.
\end{lemma}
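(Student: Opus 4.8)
The plan is to mimic the coordinate-tracking argument used in Lemma~\ref{lemma:no_L0_parts}: I will exhibit an explicit index strictly above $\kx$ at which $\y$ is nonzero. The driving observation is that the highest basis vector appearing in the sum, $\wf{\ell}$, deposits a single $+1$ in coordinate $\ell+1$ and nothing above it, so that coordinate of $\y-\x$ is forced to equal $\alpha_\ell$.

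First I would pin down where $\kx$ sits relative to the support of the $\alpha_i$. Because $\y - \x = \sum_{i=j}^{\ell}\alpha_i\wi$, the coefficients $\alpha_i$ vanish outside $\{j,\dots,\ell\}$, so the hypothesis $\alpha_\kx\ne 0$ forces $\kx\le\ell$. This is the step I would be most careful to state, since it is precisely what guarantees $x_{\ell+1}=0$: the index $\ell+1$ exceeds $\kx$, the final nonzero coordinate of $\x$.

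Then I would read off coordinate $\ell+1$ of the difference $\y-\x$. By the definition of $\wi$, among the vectors $\wf{j},\dots,\wf{\ell}$ only $\wf{\ell}$ is nonzero in position $\ell+1$, where it contributes $1$; since there is no $\wf{\ell+1}$ term to contribute $-n$, the coordinate equals $\alpha_\ell$. Invoking the standing convention that $\alpha_\ell\ne 0$ together with $x_{\ell+1}=0$ gives $y_{\ell+1}=\alpha_\ell\ne 0$, whence $\ky\ge \ell+1>\ell\ge\kx$, as required. I do not expect a genuine obstacle here: the argument relies only on the upper-triangular shape of the basis vectors and the normalization $\alpha_\ell\ne 0$, and in fact needs neither the digit bounds defining $\Bvuw$ nor the hypothesis $n=2$.
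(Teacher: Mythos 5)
Your argument is correct and is essentially identical to the paper's proof: the paper lets $m$ be the maximal index with $\alpha_m\ne 0$, notes $m\ge\kx$, and reads off $y_{m+1}=\alpha_m\ne 0$, which is exactly your computation at index $\ell+1$ once the standing convention $\alpha_\ell\ne 0$ identifies $\ell$ with that maximal index. Your added remark that neither $n=2$ nor the digit bounds on $\Bvuw$ are needed is accurate but does not change the substance.
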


\begin{proof}
Let $m$ be maximal so that $\alpha_m \ne 0$.  Since $\alpha_\kx \ne 0$, we have
$m \ge \kx$.  Then $y_{m+1} = \alpha_m$, so $\ky = m+1 > \kx$.
\end{proof}

\begin{lemma}
\label{lemma:less_than_6}
Let $n=2$ and $\x,\y \in \Bvuw$ with $\kx \le \ky$.  Let $\y-\x = \sum_{i=j}^\ell \alpha_i \wi$.
Then $|\alpha_i| \le 2$, with $|\alpha_i|=2$
only possible if $i = \kx < \ky$.
\end{lemma}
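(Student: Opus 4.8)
The plan is to solve for the coefficients $\alpha_i$ explicitly in terms of the digits of $\z = \y - \x \in \LL_0$, and then bound them using the constraints that membership in $\Bvuw$ places on $\x$ and $\y$. Writing $z_m = y_m - x_m$ for the coordinates of $\z$, the definition of $\wi$ for $n=2$ gives, for every $m \ge 0$, the recursion $z_m = \alpha_{m-1} - 2\alpha_m$, under the convention $\alpha_{-1} = 0$ (and $\alpha_m = 0$ outside the support of the sum). Everything flows from solving this recursion.

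First I would put it in closed form. Let $S_m = \sum_{k=0}^m z_k\,2^k$. Since $\z \in \LL_0$ we have $\Sigma(\z)=0$, and a direct check shows that $\alpha_m = -2^{-(m+1)} S_m$ satisfies both the recursion and the initial condition. As $\{\wi\}_{i\in\N}$ is a basis of $\LL_0$ by Lemma~\ref{lemma:L_span}, the coefficient sequence is unique, so
\[
\alpha_m = -\frac{1}{2^{m+1}}\sum_{k=0}^m z_k\,2^k \qquad \text{for all } m.
\]
For $m$ at least the top index of $\z$ this equals $-2^{-(m+1)}\Sigma(\z)=0$, consistent with the support being finite.

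Next I would pin down the digits of $\z$ and substitute. By Lemma~\ref{lemma:no_L0_parts} the support $[j,\ell]$ satisfies $j\le\kx$ and $\ell<\ky$, so it suffices to bound $\alpha_m$ for $m<\ky$. Using $\kx\le\ky$ with the digit bounds defining $\Bvuw$ for $n=2$ (every digit bounded by $\fne=1$ except possibly the top digit, bounded by $3$), I get $|z_k|\le 2$ for $k<\kx$; $x_k=0$ and $|y_k|\le 1$, hence $|z_k|\le 1$, for $\kx<k<\ky$; and $|z_\kx|\le|x_\kx|+|y_\kx|\le 3+1=4$ whenever $\kx<\ky$. Feeding these into the closed form and summing the geometric series yields $|\alpha_m|\le 2-2^{-m}<2$ when $m<\kx$; $|\alpha_\kx|\le 3-2^{-\kx}<3$ when $\kx<\ky$; and $|\alpha_m|\le 2^{\kx+1-m}+1-2^{-m}\le 2-2^{-m}<2$ when $\kx<m<\ky$. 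Since each $\alpha_m$ is an integer, the first and third estimates force $|\alpha_m|\le 1$ while the middle forces $|\alpha_\kx|\le 2$; and the value $2$ can occur only at $m=\kx$ with $\kx<\ky$, since $\kx=\ky$ would give $\ell<\kx$ and hence $\alpha_\kx=0$ automatically. This is exactly the assertion.

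The main obstacle is the bookkeeping in the last step: one must correctly isolate the single large digit $z_\kx$ (of size up to $4$, arising from pairing the oversized top digit of $\x$ against a small digit of $\y$) and use Lemma~\ref{lemma:no_L0_parts} to confine the support of the $\alpha_i$ strictly below $\ky$, so that the estimates split cleanly into the cases $m<\kx$, $m=\kx$, and $\kx<m<\ky$. Once the closed form is established, the inequalities themselves are routine geometric-sum bounds.
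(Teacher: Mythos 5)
Your proof is correct and rests on the same two ingredients as the paper's argument: the recursion $z_m = \alpha_{m-1} - 2\alpha_m$ coming from the definition of the $\wi$, the digit bounds defining $\Bvuw$ for $n=2$ (every digit at most $1$ in absolute value except a possible top digit of size at most $3$), and Lemma~\ref{lemma:no_L0_parts} to confine the support of the coefficients strictly below $\ky$. The only difference is execution: the paper runs a short induction on $i$ directly from the recursion (via $2|\alpha_i| \le |y_i - x_i| + |\alpha_{i-1}|$, invoking Lemma~\ref{lemma:n=2_unequal_lengths} at $i=\kx$), whereas you unroll the recursion into the closed form $\alpha_m = -2^{-(m+1)}\sum_{k \le m} z_k 2^k$ and bound the resulting geometric sums; both routes produce the same case split at $i = \kx$ and the same conclusion.
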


\begin{proof}
It follows from Lemma~\ref{lemma:no_L0_parts} that $\ell < \ky$;
we prove the lemma by induction on $i$.  First suppose $i<\kx \leq \ky$, so $|x_i|,|y_i| \leq 1$.
Writing $y_i - x_i = \alpha_{i-1}-2\alpha_i$ and applying the induction assumption that $|\alpha_{i-1}| \le 1$, it follows that $2 |\alpha_i| \leq 3$ and thus $|\alpha_i| \leq 1$.

If $i=\kx$ it follows from Lemma~\ref{lemma:n=2_unequal_lengths} that $\kx < \ky$.  Now we have the bounds $|x_i| \leq 3$ and $|y_i| \leq 1$.
Writing $y_i - x_i = \alpha_{i-1}-2\alpha_i$ and applying the induction assumption that $|\alpha_{i-1}| \le 1$, it follows that $2 |\alpha_i| \leq 5$ and thus $|\alpha_i| \leq 2$.

If $i=\kx+1$, the same analysis with $|\alpha_{i-1}| \leq 2$ shows that $|\alpha_i| \leq 1$.  It then follows from previous arguments that for $\kx < i < \ky$ we have $|\alpha_i| \leq 1.$ 
\end{proof}

The next lemma, analogous to Lemma~\ref{lemma:even_reduction_end}, describes a situation where $\x \in \Bvuw$ is not minimal but does not
necessarily contain a run at which it can be reduced.

\begin{lemma}
\label{lemma:n=2_notminimal}
Let $n=2$ and $\x \in \Bvuw$.  If $\kx > \max(u,w)$ and $\x$ ends in
the digits $(0, \delta)$ for $\delta \in \{\pm 1\}$ then $x$ is not minimal.
\end{lemma}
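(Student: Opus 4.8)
The plan is to reduce $\x$ by a single basis vector that lowers the top index, exactly as in the proof of Lemma~\ref{lemma:even_reduction_end}. The two signs of $\delta$ are symmetric, so I take $\delta = 1$; thus $x_{\kx-1} = 0$ and $x_{\kx} = 1$. Set $\y = \x - \wf{\kx-1}$. Since $\wf{\kx-1}$ has the entry $-n = -2$ in position $\kx-1$ and $+1$ in position $\kx$, the only coordinates that change are $y_{\kx-1} = 0 + 2 = 2$ and $y_{\kx} = 1 - 1 = 0$. Consequently $y_{\kx} = 0$ and $k_\y = \kx - 1$, while every coordinate of index below $\kx - 1$ is inherited unchanged from $\x$.

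The key point is to check that $\y \in \Bvuw$, and this is where the special role of $n=2$ enters. The new top digit satisfies $|y_{k_\y}| = 2 = \fne + 1$, which violates the generic bound $\fne = 1$; it is admissible only because the relaxed third condition in the definition of $\Bvuw$ for $n=2$ permits a final digit of absolute value up to $\fne + 2 = 3$. That relaxed bound applies precisely when $k_\y \ge \max(u,w)$, and here the hypothesis $\kx > \max(u,w)$ is \emph{strict}, so $k_\y = \kx - 1 \ge \max(u,w)$, as needed. The remaining coordinates of $\y$ agree with those of $\x \in \Bvuw$ and so still obey their bounds, giving $\y \in \Bvuw$.

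Finally I would compare lengths using Lemma~\ref{lemma:length_formula}. Both indices meet or exceed $\max(u,w)$: we have $\kx > \max(u,w)$ and $k_\y = \kx - 1 \ge \max(u,w)$, and since the two formulas of Lemma~\ref{lemma:length_formula} agree in the boundary case, the shape $3$/$4$ formula computes both lengths. The norm changes by $\Vert \y \Vert_1 - \Vert \x \Vert_1 = |2| - |1| = 1$, while the top-index contribution drops by $2(\kx - k_\y) = 2$, so
\[
|\eta_{u,v,w}(\y)| = |\eta_{u,v,w}(\x)| + 1 - 2 = |\eta_{u,v,w}(\x)| - 1 < |\eta_{u,v,w}(\x)|.
\]
Hence $\y \ords \x$ and $\x$ is not minimal. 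The computation is routine; the only genuinely delicate step is the membership $\y \in \Bvuw$, which hinges on the strictness of $\kx > \max(u,w)$ together with the enlarged $n=2$ bound $\fne + 2$ on the final digit — the very feature that forces this case to be treated separately from $n \ge 4$.
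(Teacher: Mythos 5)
Your proof is correct and follows essentially the same route as the paper's: add $-\delta\wf{\kx-1}$, observe that $\Vert\cdot\Vert_1$ increases by $1$ while the vector length drops by $1$, and conclude via the shape $3$/$4$ formula of Lemma~\ref{lemma:length_formula} that the path shortens by $1$. Your explicit check that the new vector lies in $\Bvuw$ (using the relaxed $n=2$ bound and the strictness of $\kx>\max(u,w)$) is a detail the paper leaves as ``easily checked,'' and it is verified correctly.
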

\begin{proof}
It is easily checked that adding $- \delta {\bf w}^{(\kx-1)}$ to $\x$
increases $\Vert \x \Vert_1$ by $1$ and reduces the length of the vector by $1$.
Since $\kx > \max(u,w)$, we use the second length formula in
Lemma~\ref{lemma:length_formula} to compute $|\eta_{u,v,w}(\x)|$ and
$|\eta_{u,v,w}(\x- \delta {\bf w}^{(\kx-1)})|$, so
\[
|\eta_{u,v,w}(\x)| - |\eta_{u,v,w}(\x- \delta {\bf w}^{(\kx-1)})| = -1 + 2 = 1,
\]
and hence $\x$ is not minimal.
\end{proof}

In Lemma~\ref{lemma:n=2_110} we identify several digit patterns which imply that a vector $\x \in \Bvuw$ is not minimal.
Moreover, the existence of one of these patterns guarantees that $\x$ contains a run at which it can be reduced.
We defer the proof of Lemma~\ref{lemma:n=2_110} to Section~\ref{sec:technical_lemmas}.

\begin{lemma}
\label{lemma:n=2_110}
Let $n=2$ and suppose that $x \in \Bvuw$ and $\delta \in \{\pm 1\}$.
If any of the following occur, then there is a run at which
$\x$ can be reduced, and hence $\x$ is not minimal.
\begin{enumerate}[itemsep=5pt]
    \item $\x$ contains the digits $(\delta, -\delta\alpha)$ for $\alpha > 0$.
    \item $\kx \ne \max(u,w)$ and $\x$ ends in the digits $(\delta, \delta)$.
    \item $\x$ contains the digits $(\delta, \delta, \alpha)$ for any $\alpha$.
\end{enumerate}
\end{lemma}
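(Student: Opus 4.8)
The plan is to prove each of the three implications by exhibiting, for the stated digit pattern, an explicit run $\r$ together with the prescribed combination $\epsilon_\r\big[\sum_{i=j}^{\ell-1}\wf{i}+\alpha_\ell\wf{\ell}\big]$, and then certifying $\y\ords\x$ by tracking two quantities: the change in the $\ell^1$ norm and the change in length. For $n=2$ the norm bookkeeping is elementary --- adding $\wf{i}$ to a digit equal to $\epsilon_\r$ leaves its absolute value at $1$ while bumping the next digit by $\epsilon_\r$ --- so I can compute $\Vert\y\Vert_1-\Vert\x\Vert_1$ directly, playing the role that Lemma~\ref{lemma:change_formula} plays for $n\ge 4$. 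The length change is then read off from Lemma~\ref{lemma:length_formula_change}, and whenever the norm and length both tie I fall back on the lexicographic tiebreak, using that $\ords$ is a total order by Lemma~\ref{lemma:abs_lex_unique}. By negating every coordinate I assume throughout that $\delta=+1$.

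\textbf{Case (1).} The digit $x_{j+1}=-\delta\alpha$ has sign opposite to $x_j=\delta$, so the relevant run is the length-one run $\r=(x_j)$ and the reduction is $\y=\x+\epsilon_\r\wf{j}$. This leaves $|y_j|=|x_j|=1$ and strictly lowers $|x_{j+1}|$ by one, so $\Vert\y\Vert_1=\Vert\x\Vert_1-1$; since $x_{j+1}\ne 0$ forces $j+1\le\kx$, only coordinates of index $\le\kx$ are touched and $k_\y\le\kx$. Lemma~\ref{lemma:length_formula_change} then gives $|\eta_{u,v,w}(\y)|\le|\eta_{u,v,w}(\x)|-1$, and a quick check confirms $\y\in\Bvuw$, so $\y\ords\x$.

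\textbf{Cases (3) and (2).} For (3), the hypothesis $(\delta,\delta,\alpha)$ places two adjacent digits equal to $\delta$ at interior indices (a digit follows them, so $j+1<\kx$). I reduce at the length-two run $\r=(x_j,x_{j+1})=(\delta,\delta)$ by adding $\epsilon_\r(\wf{j}+\wf{j+1})$, i.e. $\alpha_\ell=1$: this sends $x_{j+1}$ to $0$ and changes $x_{j+2}=\alpha$ by $+\epsilon_\r$, leaving $k_\y\le\kx$. The norm either strictly drops (when $\alpha$ has sign $-\delta$) or stays equal while the second digit falls to $0$, a lexicographic decrease at index $j+1$; this is the $n=2$ counterpart of Lemma~\ref{lemma:even_reduction_adjacent_digits}, and in either outcome Lemma~\ref{lemma:length_formula_change} yields $\y\ords\x$. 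Case (2) splits on the position of $\kx$. When $\kx<\max(u,w)$ the path has strict shape $1$, so by Lemma~\ref{lemma:length_formula} its length depends only on $\Vert\cdot\Vert_1$ and $u,w$; reducing at the run $(\delta,\delta)$ with $\alpha_\ell=1$ leaves the norm, and hence the length, unchanged but zeroes the final digit, producing a lexicographic decrease and thus $\y\ords\x$.

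\textbf{Main obstacle.} The delicate sub-case, and the one I expect to be hardest, is (2) with $\kx>\max(u,w)$. Here the forward run-reduction at $(\delta,\delta)$ actually lengthens the path, since it raises $\kx$ by one in a shape where each unit of length costs $2$; the genuine saving comes instead from merging the two terminal digits into a single digit $3\delta$ via $\y=\x-\epsilon_\r\wf{\kx-1}$, which lowers $\kx$ by one and, by the shape-$3$/$4$ formula in Lemma~\ref{lemma:length_formula}, shortens the path by exactly one. This is precisely the phenomenon that forces the enlarged final-digit bound $\fne+2=3$ in the $n=2$ definition of $\Bvuw$, and it is why the admissible final coefficient $\alpha_\ell$ ranges over $\{1,2\}$ (Lemma~\ref{lemma:less_than_6}). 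The work in this sub-case is to present this digit-merge within the run-reduction framework of the lemma and to confirm the resulting vector lies in $\Bvuw$; throughout all cases the recurring care is to locate the correct first index at which $|\x|$ and $|\y|$ differ, since in the tie cases it is the lexicographic comparison, rather than the norm or length, that certifies non-minimality.
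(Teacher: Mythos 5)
Your cases (1), (2), and the opposite-sign part of (3) track the paper's argument closely: the single-basis-vector reduction for $(\delta,-\delta\alpha)$, the split of (2) into the lexicographic reduction when $\kx<\max(u,w)$ and the digit-merge $\y=\x-\delta\wf{\kx-1}$ (producing a final digit $3\delta$) when $\kx>\max(u,w)$, are all exactly what the paper does, and you correctly identify the latter as the place where the enlarged bound $\fne+2$ earns its keep. The gap is in case (3) when $\sgn(\alpha)=\sgn(\delta)$. Your uniform recipe --- reduce at the length-two run $(x_j,x_{j+1})=(\delta,\delta)$ by adding $\delta(\wf{j}+\wf{j+1})$ --- sends $x_{j+2}$ to $x_{j+2}+\delta$. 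For the patterns $(\delta,\delta,\delta)$ with $j+2<\kx$ (and also $(\delta,\delta,3\delta)$), this produces $|y_{j+2}|\in\{2,4\}$, so $\y\notin\Bvuw$, while $\Vert\y\Vert_1=\Vert\x\Vert_1$ and the path length is unchanged. A lexicographic improvement by a vector \emph{outside} $\Bvuw$ does not certify non-minimality: minimality means being the $\ords$-minimum of $\Bvuw$, and Lemma~\ref{lemma:abs_lex_unique} only makes $\ords$ a total order there. So in these sub-cases your $\y$ proves nothing without a further (and nontrivial) step pushing $\y$ back into the box while preserving strictness.

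The paper is forced into genuinely different reductions here. For $\alpha=\delta$ it takes the \emph{maximal} block $(x_j,\dots,x_\ell)$ of digits $\delta$ (necessarily of length at least $3$) and adds $\delta\sum_{i=j}^{\ell}\wi$ or $\delta\sum_{i=j}^{\ell-1}\wi$ depending on whether $\kx=\ell$, obtaining either a strict drop in path length of $\ell-j+1$ or an in-box lexicographic decrease; for the terminal pattern $(\delta,\delta,3\delta)$ it uses the bespoke combination $\wf{\kx-2}+\wf{\kx-1}+2\wf{\kx}$, which drops $\Vert\cdot\Vert_1$ by $2$ at the cost of lengthening the vector by one, and wins on the lexicographic tiebreak after the shape-$3$/$4$ formula shows the path lengths tie. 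Neither of these appears in your proposal, and the two-digit run you propose cannot be repaired by a local modification, since it is exactly the third digit's escape from the box that breaks the argument. You should add these sub-cases explicitly.
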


It follows from Lemma~\ref{lemma:n=2_110} that the
only way $\x$ can be minimal and contain the digit sequence $(1,1)$
is if $\kx = \max(u,w)$ and these digits occur at the end of $\x$.

\begin{remark}
\label{remark:digit_sequences}
Let $\x \in \Bvuw$ and $\r \in \{0,1\}^*$ be a run in $\x$.  If $\r$ contains at least
two more occurrences of the digit 1 than the digit 0,
then either $\r$ contains the sequence $(1, 1, 0)$ or $\r$ ends in $(1, 1)$.
In the first situation, $\x$ can be reduced at the run $(1, 1, 0)$.
In this second case, if $x_{\kx} \in \r$ and $\kx \neq \max(u,w)$, then by Lemma~\ref{lemma:n=2_110},
$\x$ can be reduced at the run $(1,1)$.
\end{remark}

The following lemma is the analog of Proposition~\ref{lemma:even_minimal_characterization} for the case $n=2$.
One direction of the proof is clear, and we defer the remainder of the proof to Section~\ref{sec:technical_lemmas}.
\begin{proposition}
\label{lemma:lemma318_n=2}
Let $n=2$ and $\x \in \Bvuw$.  Then $\x$ is not minimal if and only if one of the following occurs.
\begin{itemize}[itemsep=5pt]
    \item There is a run at which $\x$ can be reduced.
    \item Lemma~\ref{lemma:n=2_notminimal} applies to $\x$.
\end{itemize}
\end{proposition}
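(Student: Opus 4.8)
The plan is to mirror the proof of Proposition~\ref{lemma:even_minimal_characterization}, adapting the case analysis to the larger final-digit bound $\fne + 2 = 3$ that $\Bvuw$ permits when $n = 2$. One direction is immediate: if $\x$ can be reduced at a run, or if Lemma~\ref{lemma:n=2_notminimal} applies, then by construction there is $\y \in \Bvuw$ with $\y \ords \x$, so $\x$ is not minimal. For the converse I would fix a minimal $\y \in \Bvuw$, suppose $\x \ne \y$, and set $\z = \y - \x \in \LL_0$. Letting $j$ be the minimal index with $x_j \ne y_j$, I write $\z = \sum_{i=j}^\ell \alpha_i \wi$ with $\alpha_j \ne 0$. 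Since $z_i = 0$ for $i < j$, only $\wf{j}$ contributes at index $j$, so $z_j = -n\alpha_j = -2\alpha_j$; Corollary~\ref{corollary:L0_at_least_n} gives $|z_j| \ge 2$, and the digit bounds on $\Bvuw$ force $|z_j| = 2$ and $\alpha_j = \pm 1$. Using the sign symmetry I assume $x_j \ge 0$, so $\alpha_j = 1$, $z_j = -2$, and $y_j = x_j - 2$; since $|x_j| \le 3$ the proof splits into the four cases $x_j \in \{0, 1, 2, 3\}$, i.e. $x_j \in \{\fne - 1, \fne, \fne + 1, \fne + 2\}$.

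For $x_j \in \{2, 3\}$ the digit bounds force $j = \kx \ge \max(u,w)$, and since $\alpha_{\kx} = 1 \ne 0$, Lemma~\ref{lemma:n=2_unequal_lengths} yields $\ky > \kx$. In both subcases $|y_j| - |x_j| = -2$ while every higher coordinate of $\x$ vanishes, so $\Vert \y \Vert_1 - \Vert \x \Vert_1 = -2 + \sum_{i > j}|y_i|$ with $\sum_{i>j}|y_i| \ge 1$; applying the second length formula of Lemma~\ref{lemma:length_formula} to both vectors gives $|\eta_{u,v,w}(\y)| - |\eta_{u,v,w}(\x)| = -2 + \sum_{i>j}|y_i| + 2(\ky - j) \ge 1 > 0$, contradicting minimality, so these cases do not occur. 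For $x_j = 0$ we have $y_j = -2$, which forces $j = \ky \ge \max(u,w)$; the coordinate $z_{\ell+1} = \alpha_\ell \ne 0$ then shows $\kx > \ky$, and examining $x_{j+1}$ shows that $\x$ ends in $(0, \delta)$ with $\kx > \max(u,w)$, so Lemma~\ref{lemma:n=2_notminimal} applies. Each of these three cases is routine bookkeeping with the coefficient bounds of Lemma~\ref{lemma:less_than_6} and the length formulas.

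The remaining case $x_j = 1 = \fne$, where the leading digit flips sign to $y_j = -1$, is the analog of the involved Case~2 of Proposition~\ref{lemma:even_minimal_characterization} and is the step I expect to be the main obstacle; this is the part I would defer to Section~\ref{sec:technical_lemmas}. The goal is to exhibit a run $\r = (x_j, \dots, x_\ell)$ of $\x$ at which $\x$ can be reduced. The new difficulty relative to $n \ge 4$ is precisely the coefficient $\alpha_\ell = 2$ permitted by the final-digit bound $3$, which is why the $n = 2$ definition of reducing at a run allows $\alpha_\ell \in \{1, 2\}$. I would use Lemma~\ref{lemma:less_than_6} to bound the interior coefficients $\alpha_i$ by $1$, so that $\z$ acts on the run $\r$ as a sum $\epsilon_\r[\sum_{i=j}^{\ell-1}\wf{i} + \alpha_\ell \wf{\ell}]$, and then argue that the resulting $\y$ either decreases $\Vert \x \Vert_1$ or produces a lexicographic decrease in the first affected coordinate. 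The explicit digit patterns of Lemma~\ref{lemma:n=2_110} together with Remark~\ref{remark:digit_sequences} identify the precise run at which the reduction occurs, completing the converse.
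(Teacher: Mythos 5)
Your overall strategy---mirroring Proposition~\ref{lemma:even_minimal_characterization}, splitting on the leading changed digit $x_j$, and deferring the hard case $x_j=\fne$ to Section~\ref{sec:technical_lemmas}---is indeed the paper's strategy, but your initial reduction contains a genuine error that causes several real cases to be skipped. You claim that ``the digit bounds on $\Bvuw$ force $|z_j|=2$ and $\alpha_j=\pm 1$.'' This is false for $n=2$: the final digit of a vector in $\Bvuw$ may have absolute value $3$, so for instance $x_j=1$ and $y_j=-3$ (with $j=\ky\ge\max(u,w)$) gives $z_j=-4$ and $\alpha_j=2$, which is consistent with Lemma~\ref{lemma:less_than_6}. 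The paper's proof devotes an entire branch to $\alpha_j=\pm 2$; in the subcase $x_j=1$, $\alpha_j=2$ one finds $\z=(-4,2)$ or $(-4,0,1)$ or $(-4,0,-1,\dots,-1,1)$, leading respectively to a run reduction at $(x_j)$, to Lemma~\ref{lemma:n=2_notminimal}, or to a run reduction at $(x_{j+2},\dots,x_\ell)$. None of these is reachable in your scheme. Relatedly, your inference ``$x_j\ge 0$, so $\alpha_j=1$'' is exactly the step that works for $n\ge 4$ (where $z_j=+n$ would push $y_j$ past the digit bound) but fails for $n=2$: with $x_j\in\{0,1\}$ and $\alpha_j=-1$ one gets $y_j\in\{2,3\}$, which is permitted when $j=\ky\ge\max(u,w)$, and this branch again produces genuine instances of both bullet points of the proposition.

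A smaller but still real problem: in your case $x_j=0$ you assert that $\x$ must end in $(0,\delta)$ so that Lemma~\ref{lemma:n=2_notminimal} applies. In fact $\z$ can be $(-2,-1,\dots,-1,1)$, in which case $\x$ ends in $(0,1,1,\dots,1,-1)$ and the correct conclusion is that $(x_{j+1},\dots,x_\ell)=(1,\dots,1)$ is a run at which $\x$ can be reduced, not that Lemma~\ref{lemma:n=2_notminimal} applies. The paper's converse is organized as a double case split over $\alpha_j\in\{\pm 1,\pm 2\}$ and $x_j\in\{0,1,2,3\}$, and essentially every admissible cell of that table needs its own argument; collapsing it to the four values of $x_j$ with $\alpha_j=1$ loses precisely the cases that make $n=2$ special. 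Your computation for $x_j\in\{2,3\}$ with $\alpha_j=1$ is fine, and your instinct that the coefficient $2$ matters only at the top of the sum is half right---but it can occur at the \emph{leading} index $j$ as well as at $\ell$, and that is where the proof as proposed breaks.
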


Note that the conclusion of Lemma~\ref{lemma:n=2_adjacent_digits} is
identical to that of Proposition~\ref{lemma:adjacent_digits} when $n=2$.
The different analysis of the case $n=2$ leads us to separate the propositions.
It follows from Proposition~\ref{lemma:n=2_adjacent_digits} that to determine whether $\x \in \Bvuw$ is minimal, where $\x$ corresponds to a geodesic of strict shape 1, it is sufficient to consider adjacent pairs of coordinates, and rule out two specific patterns.

\begin{proposition}
\label{lemma:n=2_adjacent_digits}
Let $n=2$ and $\x \in \Bvuw$ and $\kx < \max(u,w)$.
Then $\x$ is not minimal if and only if
$\x$ contains a digit sequence of the form  $(\delta, \delta)$
or $(\delta, -\delta)$, for $\delta \in \{\pm 1\}$. 
\end{proposition}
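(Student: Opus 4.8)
The plan is to read off the result from the characterization of non-minimal vectors in Proposition~\ref{lemma:lemma318_n=2}. Since $\kx < \max(u,w)$, conditions (1) and (2) defining $\Bvuw$ force every digit of $\x$ to satisfy $|x_i| \le \fne = 1$, so each $x_i \in \{-1,0,1\}$; in particular the two forbidden patterns $(\delta,\delta)$ and $(\delta,-\delta)$ together amount to the single statement that $\x$ has two adjacent nonzero digits. Because $\kx < \max(u,w)$, Lemma~\ref{lemma:n=2_notminimal} (which requires $\kx > \max(u,w)$) can never apply, so Proposition~\ref{lemma:lemma318_n=2} reduces the claim to: $\x$ admits a run at which it can be reduced if and only if $\x$ contains two adjacent nonzero digits. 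Throughout I will use that both $\kx$ and $\ky \le \kx + 1$ are $\le \max(u,w)$, so by Lemma~\ref{lemma:length_formula} both $|\eta_{u,v,w}(\x)|$ and $|\eta_{u,v,w}(\y)|$ are computed with the first (shape 1 and 2) formula, and word-length comparisons are exactly comparisons of $\ell^1$ norms.

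For the easy direction, I would produce the required run directly from Lemma~\ref{lemma:n=2_110}. If $\x$ contains $(\delta,-\delta)$ this is case (1) of that lemma with $\alpha = 1$. If $\x$ contains an interior pair $(\delta,\delta)$ followed by some digit, then case (3) applies, and if the pair $(\delta,\delta)$ occurs at the very end of $\x$, then, using $\kx \ne \max(u,w)$, case (2) applies. In every case there is a run at which $\x$ can be reduced, so $\x$ is not minimal.

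The substantive direction is the contrapositive: assuming $\x$ has no two adjacent nonzero digits, I must show that no run admits a reduction. Fix a run $\r = (x_j, \dots, x_\ell)$ with sign $\epsilon_\r = \delta$. First I would note that the final coefficient $\alpha_\ell$ in the reduction must equal $1$: the alternative $\alpha_\ell = 2$ permitted by Lemma~\ref{lemma:less_than_6} would force $|y_\ell| \ge 2$, which in strict shape 1 (where $\ell \le \kx < \max(u,w)$) is incompatible with the digit bounds of $\Bvuw$. Thus the only candidate reduction is $\y = \x + \delta\sum_{i=j}^\ell \wf{i}$, which sends $x_j = \delta$ to $-\delta$ (no change in absolute value), sends each $x_i$ with $j < i \le \ell$ to $x_i - \delta$, and sends $x_{\ell+1}$ to $x_{\ell+1} + \delta$. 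Writing $p$ and $q$ for the number of indices $i$ with $j < i \le \ell$ satisfying $x_i = \delta$, respectively $x_i = 0$ (the only two possibilities inside a run), the computation gives
\[
\Vert \y \Vert_1 - \Vert \x \Vert_1 = (q - p) + \bigl(|x_{\ell+1} + \delta| - |x_{\ell+1}|\bigr).
\]

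The proof then splits on the boundary digit $x_{\ell+1}$, and this bookkeeping is the main obstacle, since the $\ell^1$ norm can in principle stay constant and force a lexicographic comparison. Since $x_j = \delta \neq 0$, the no-adjacent-nonzeros hypothesis forces $x_{j+1} = 0$, so the string $x_{j+1}, \dots, x_\ell$ starts with $0$ and has no two adjacent nonzeros, whence $q \ge p$. If $x_{\ell+1} \in \{0, \delta\}$, the second term is $+1$ and $\Vert \y \Vert_1 - \Vert \x \Vert_1 \ge 1 > 0$, so $\y$ is strictly longer and $\y \not\ords \x$. If instead $x_{\ell+1} = -\delta$, then the no-adjacent-nonzeros hypothesis forces $x_\ell = 0$, so $x_{j+1}, \dots, x_\ell$ both begins and ends with $0$, improving the count to $q \ge p+1$; hence $\Vert \y \Vert_1 - \Vert \x \Vert_1 \ge 0$. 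When this difference is positive we are done as before, and when it is zero I would invoke the lexicographic tie-breaker: the first index at which $|\y|$ and $|\x|$ differ is $j+1$, where $|y_{j+1}| = 1 > 0 = |x_{j+1}|$, so $|\x| <_{\lex} |\y|$ and again $\y \not\ords \x$. As $\r$ was arbitrary, $\x$ is minimal, completing the proof.
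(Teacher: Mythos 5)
Your proof is correct and follows essentially the same route as the paper: both directions funnel through Proposition~\ref{lemma:lemma318_n=2} (with Lemma~\ref{lemma:n=2_110} supplying the easy direction), and the hard direction rests on the same observation that in strict shape 1 the word-length comparison reduces to the $\ell^1$-norm change of a run reduction plus the lexicographic tiebreak. The only cosmetic differences are that you argue the converse in contrapositive form, counting a surplus of zeros forced by the no-adjacent-nonzeros hypothesis where the paper counts the surplus of ones forced by reducibility, and that you dismiss the coefficient $\alpha_\ell=2$ up front via the digit bounds on $\Bvuw$ rather than carrying it through the weight inequality as the paper does.
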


\section{Growth and Regular Languages}
\label{sec:growth}
Given $u,w$, and $\x$ with $\kx <\max(u,w)$, 
Lemmas~\ref{lemma:odd_box},~\ref{lemma:adjacent_digits} and~\ref{lemma:n=2_adjacent_digits} provide a straightforward
way to determine whether $\x \in \Bvuw$ is minimal,
that is, whether $\eta_{u,v,w}(\x)$ is a geodesic, by examining the
digits of $\x$.  
Recall that if $\kx < \max(u,w)$,
we say that $\eta_{u,v,w}(\x)$ has \emph{strict shape 1}.

In \cite{TW_growth} we prove that the set of vectors
$\x$ for which there are $u,w$ so that $\eta_{u,v,w}(\x)$
is geodesic and has strict shape 1 forms a regular language, denoted ${\mathcal D}_n$.
This language is not a language of geodesic paths, merely of vectors which yield geodesic paths of strict shape $1$ with a choice of $u$ and $w$. 
Let $\OO_n$ denote the corresponding language of geodesic paths of strict shape $1$.
In \cite{TW_growth} we show that $\OO_n$ is also a regular language, exhibiting finite state automata which accept these two languages.

The finite state automaton accepting ${\mathcal D}_n$ has a finite number of states, regardless of the value of $n$.
We use it to produce a finite state automaton accepting $\OO_n$ by performing a ``digit expansion" procedure which produces an automaton where the number of states does depend on $n$.  
The salient piece of information about this machine is that it has one strongly connected component which determines its growth rate.  
We refer the reader to \cite{TW_growth} for additional details of this procedure.
Figure~\ref{fig:fsa_n=2} depicts the finite state automaton accepting ${\mathcal O}_2$.
While the analogous automaton for $n>2$ is more complex, it shares the feature that there is one strongly connected component, and one additional component containing the state $s_{\ti}$, which accounts for the initial string of the letter $\ti$ at the start of an accepted word.
This fact will be referred to below in the proof of Lemma~\ref{lemma:middle_vector}.

\begin{figure}[ht!]
\tikzset{every state/.style={minimum size=2em}} 
\begin{center}
\resizebox {\columnwidth} {!} {
\begin{tikzpicture}[->,>=stealth',shorten >=1pt,auto, node distance=5cm,
semithick]

\node[state,accepting] (A) {$s_{0,0}$};
\node[state] (A1) [above right of =A, yshift=-0.5cm] {$s_{0,1}$};
\node[state] (A_1) [above left of =A, yshift=-0.5cm] {$s_{0,-1}$};

\node[state] (T) [above of =A] {$s_{t^{-1}}$};
\node[state] (start) [above of =T,yshift=-1.5cm] {$\texttt{start}$};

\node[state,accepting] (B) [right of =A] {$s_{1,0}$};
\node[state, draw=none] (BB) [right of=B] {};

\node[state,accepting] (C) [left of = A] {$s_{2,0}$};
\node[state, draw=none] (CC) [left of=C] {};

\path(start)
edge node[left] {$t^{-1}$} (T)
edge [bend left] node[right, pos=0.25] {$t$} (A)
edge [bend left] node[left] {$a$} (A1)
edge [bend right] node[right] {$a^{-1}$} (A_1);

\path(T)
edge [loop below] node[below] {$t^{-1}$} (T)
edge node[above] {$a$} (A1)
edge node[above] {$a^{-1}$} (A_1);

\path(A)
edge [loop below] node[below] {$t$} (A)
edge node[right]{$a$} (A1)
edge node[right]{$a^{-1}$} (A_1);

\path(A1)
edge node[above] {$t$} (B);

\path(A_1)
edge node[above] {$t$} (C);

\path(B)
edge node[above]{$t$} (A)
edge [shorten >=75pt, dashed] node[above] {$a$} (BB)
edge [shorten >=75pt, dashed] node[below] {$a^{-1}$} (BB);

\path(C)
edge node[above] {$t$} (A)
edge [shorten >=75pt, dashed] node[above] {$a$} (CC)
edge [shorten >=75pt,dashed] node[below] {$a^{-1}$} (CC);

\end{tikzpicture}
}
\end{center}
\caption{The finite state automaton $\OO_2$
accepting the language $\OO_2$ of geodesics of strict shape 1 in $BS(1,2)$.
Accept states are indicated with a double circle.}
\label{fig:fsa_n=2}
\end{figure}
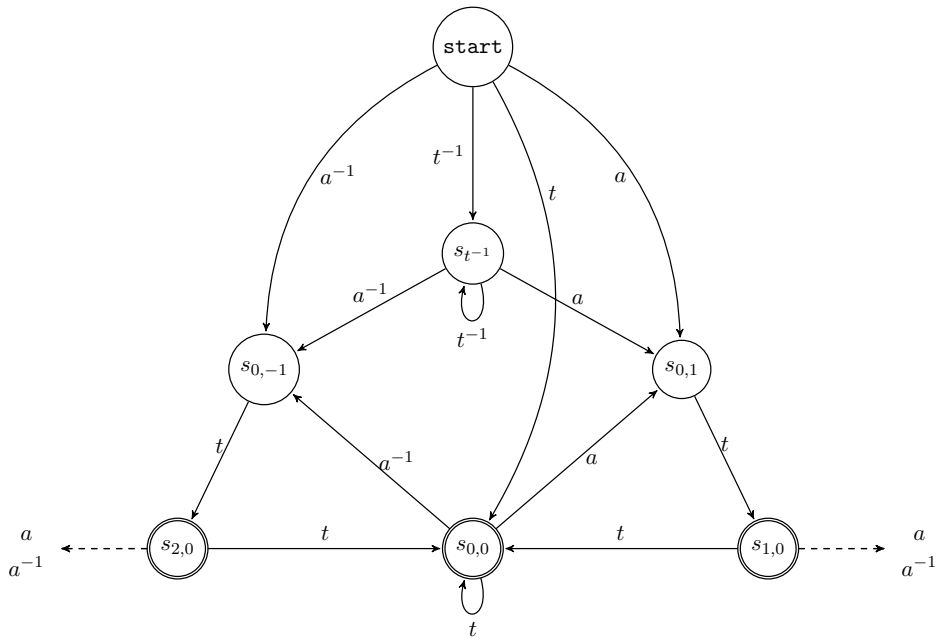

Recall that the growth rate of a sequence $\{f(N)\}_{N=1}^\infty$ is $\lambda$ if
\[
\lim_{N\to\infty} \frac{\log f(N)}{N\log\lambda} = 1.
\]
Equivalently, we write $f(N) = \Theta(\lambda^N)$; that is,
there are constants $A,B>0$ such that $$A \lambda^N \le f(N) \le B \lambda^N$$
for sufficiently large $N$.

For any set ${\mathcal A} \subset BS(1,n)$, we use the notation ${\mathcal A}(N)$ to denote all elements of the set with word length $N$ with respect to the generating set $\{a,t\}$.
A main result of \cite{TW_growth} is the following theorem, which shows that to understand the growth rate of $BS(1,n)$ it is sufficient to understand the growth rate of the sequence $\OOn$.
Let $S_n(N)$ denote the sphere
of radius $N$ in $BS(1,n)$.  The growth rate of $BS(1,n)$, or any finitely generated group,
is defined to be the growth rate of the sequence $\{|S_n(N)|\}_{n \in \N}$.

We say that ${\mathcal A}$ has positive density in $BS(1,n)$ if there is some $\epsilon >0$ so that for all sufficiently large $N$,
\[
\epsilon < \frac{|{\mathcal A}(N)|}{|S_n(N)|} < 1-\epsilon.
\]
If ${\mathcal A}$ has the same growth rate as $BS(1,n)$ it follows immediately that ${\mathcal A}$ has positive density in $BS(1,n)$.

\begin{theorem}[\cite{TW_growth}, Corollary 5.4]
\label{corollary:shapes_injection}
In the notation above, we have
\[
|\OO_n(N)| \le |S_n(N)| \le 20|\OO_n(N+3)|.
\]
Consequently,
the growth rates of the sequences $\OOn$ and $\{|S_n(N)|\}_{n \in \N}$ are identical.
\end{theorem}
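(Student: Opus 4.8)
The plan is to prove the two displayed inequalities separately and then read off the growth-rate statement as a formal consequence. For that last step, recall from \cite{TW_growth} that $\OO_n$ is a regular language, so its growth function satisfies $|\OO_n(N)| = \Theta(\lambda^N)$ for some $\lambda$; write $A\lambda^N \le |\OO_n(N)| \le B\lambda^N$ for large $N$. The sandwich $|\OO_n(N)| \le |S_n(N)| \le 20|\OO_n(N+3)|$ then gives $A\lambda^N \le |S_n(N)| \le 20B\lambda^{N+3} = (20B\lambda^3)\lambda^N$, so $|S_n(N)| = \Theta(\lambda^N)$ and the two sequences share the growth rate $\lambda$. Thus everything reduces to the double inequality.

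For the lower bound I would exhibit an injection $\OO_n(N) \hookrightarrow S_n(N)$. Each word in $\OO_n(N)$ is by definition a geodesic of length $N$, hence represents an element at distance exactly $N$ from the identity, which is the map. For injectivity, suppose two strict-shape-$1$ geodesics represent the same $g$. A path $\eta_{u,v,w}(\x)$ evaluates to $t^{-u}a^vt^w$ with $v = \Sigma(\x)$, so by uniqueness of the normal form both paths carry the same $(u,v,w)$; and both underlying vectors are minimal in $\Bvuw$ (being geodesic), so they coincide by uniqueness of the minimal vector under the total order $\ord$ (Lemmas~\ref{lemma:abs_lex_unique} and~\ref{lemma:odd_box}). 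Hence $|\OO_n(N)| \le |S_n(N)|$.

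The upper bound is the substantial direction. Every $g \in S_n(N)$ has a unique minimal vector $\x \in \Bvuw$ and so lies in exactly one shape class according to the position of $\kx$ relative to $u,w$: strict shape $1$ or $2$ when $\kx < \max(u,w)$, and shape $3$ or $4$ when $\kx \ge \max(u,w)$. The isometry $g \mapsto g^{-1}$ sends $t^{-u}a^vt^w$ to $t^{-w}a^{-v}t^{u}$, swapping $u \leftrightarrow w$ and negating $v$; it interchanges shapes $1 \leftrightarrow 2$ and $3 \leftrightarrow 4$ and preserves word length, so it suffices to bound the strict-shape-$1$ and shape-$3$ elements of $S_n(N)$. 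Strict-shape-$1$ elements inject into $\OO_n(N+3)$ by prepending three copies of $\ti$: this increases $u$ by $3$, leaves the minimal vector unchanged (for $\kx < \max(u,w)$ the box $\Bvuw$ and the order $\ord$ on it are unaffected by the common shift in $u$), keeps the word strict shape $1$, and raises the length to exactly $N+3$. For a shape-$3$ element, where $\kx > w \ge u$, I would reflect the terminal descent $t^{w-\kx}$ into an ascent, sending $g = t^{-u}a^vt^w$ to $g' = t^{-u}a^vt^{2\kx-w}$. Since $\kx < 2\kx - w = \max(u,2\kx-w)$, the word $\eta_{u,v,2\kx-w}(\x)$ is strict shape $1$, and the formulas of Lemma~\ref{lemma:length_formula} give $|g'| = |g| = N$ up to a bounded correction, arising because the top-digit bound of $\Bvuw$ drops from $\fn+1$ to $\fn$ when passing to strict shape $1$, which can force a bounded change in $\x$. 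The exponent $2\kx - w$ together with the recovered value of $\kx$ determines $w$, so the map is boundedly-many-to-one, and composing with the $\ti$-padding lands the image in $\OO_n(N+3)$.

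Assembling the four shape classes, each contributes a boundedly-many-to-one map into $\OO_n(N+3)$, and a careful count of these multiplicities together with the bounded length corrections yields the overall constant $20$ and shift $3$. I expect the main obstacle to be precisely this accounting for shapes $3$ and $4$: verifying that the reflection map composed with the top-digit adjustment sends geodesics to geodesics, controlling the resulting change in length so that it can be absorbed by padding to length exactly $N+3$, handling the normal-form edge cases that occur when $n \mid v$, and confirming that the number of elements of $S_n(N)$ mapping to a single strict-shape-$1$ geodesic of length $N+3$ never exceeds $20$.
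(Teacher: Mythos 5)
This theorem is imported verbatim from the companion paper (\cite{TW_growth}, Corollary~5.4); the present paper contains no proof of it, so there is nothing internal to compare your argument against. Judged on its own terms, your reduction of the growth-rate claim to the sandwich inequality is correct, and your lower bound is essentially complete: the word-to-element map $\OO_n(N)\to S_n(N)$ is well defined because the words are geodesics, and injective because a strict shape~1 word determines $(u,\x,w)$, hence the normal form $(u,\Sigma(\x),w)$, and the minimal vector in $\Bvuw$ is unique by the total-order lemmas.

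The upper bound, however, is where all the content of the theorem lives, and there you have written a plan rather than a proof. Three things are asserted but not established. First, the shape~3/4 step: you need the minimal vector $\x\in\Bvuw$ (whose top digit may be $\fn+1$, or $\fn+2$ when $n=2$, precisely because $\kx\ge\max(u,w)$) to yield, after a ``bounded correction,'' a vector that is \emph{minimal} in the strict shape~1 box $\BB_v^{u,2\kx-w}$; minimality is not preserved by an arbitrary small perturbation, and Lemma~\ref{lemma:reduce_to_box} only guarantees existence of \emph{some} minimal vector, not one whose associated length you control to within the shift $3$. Without that, the image words need not be geodesic and need not have length exactly $N+3$. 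Second, the padding map itself fails in an edge case you only gesture at: if $n\mid v$ and $u=0$ then $x_0=0$, so $\eta_{u+3,v,w}(\x)$ begins $t^{-3}a^0t\cdots$, which is not freely reduced and hence not geodesic; these elements need a separate (re-indexed) treatment. Third, the constants $20$ and $3$ are the actual statement being proved, and your argument never produces them --- you defer ``a careful count of these multiplicities'' to future work. Since you yourself identify these as ``the main obstacle,'' the proposal should be read as a plausible outline whose decisive steps remain unverified, not as a proof of the inequality $|S_n(N)|\le 20|\OO_n(N+3)|$.
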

This growth rate is computed explicitly in \cite{TW_growth}.

The following lemma allows us to effectively compute the growth rate of the
function which counts the number of accepted paths of a given length
in a finite state automaton.

\begin{lemma}[\cite{TW_growth}, Lemma 4.2]
\label{lemma:fsa_growth}
Let $F$ be a finite state automaton with state set $S$.
Let $f(N)$ denote the number of accepted paths in $F$ of length $N$,
and for each $s \in S$, let $f_s(N)$ denote the number of accepted
paths in $F$ beginning at state $s$.
Let $S_1, \dots, S_c$ be the strongly connected components in $F$.
\begin{enumerate}
    \item For each $i$, the growth rate of $f_s$ is constant over all $s \in S_i$.
    \item The growth rate of $f$ is the maximum of the growth rates of the $S_i$.
\end{enumerate}
\end{lemma}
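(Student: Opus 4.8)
The plan is to realize $F$ as a finite directed multigraph $\Gamma$ on the vertex set $S$, with one edge for each transition, so that a path of length $N$ in $\Gamma$ is exactly a run of $F$ reading a word of length $N$. Writing $A \subseteq S$ for the set of accept states, $f_s(N)$ is the number of length-$N$ paths in $\Gamma$ from $s$ into $A$, and $f = f_{s_0}$ for the start state $s_0$. Throughout I work with the exponential growth rate $\lambda_s = \limsup_{N} f_s(N)^{1/N}$; the key feature is that this quantity is insensitive to polynomial prefactors, which is all that the chaining arguments below introduce. I also use that $F$ is \emph{trim}, i.e.\ every state lies on some accepting run from $s_0$; this holds for the automata of \cite{TW_growth}, and in general one simply deletes the states not reachable from $s_0$ or unable to reach $A$, since these contribute nothing to any $f_s$.

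For part (1), fix a strongly connected component $S_i$ and two states $s, s' \in S_i$. Because $S_i$ is strongly connected there is a path from $s$ to $s'$ of some fixed length $p$; prepending it to any accepting length-$N$ path from $s'$ yields a distinct accepting length-$(N+p)$ path from $s$, so $f_s(N+p) \ge f_{s'}(N)$. As $p$ is constant this gives $\lambda_s \ge \lambda_{s'}$, and the symmetric path from $s'$ to $s$ gives the reverse inequality. Hence $\lambda_s = \lambda_{s'}$, and we write $\lambda_i$ for this common value.

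For part (2), set $\lambda = \max_i \lambda_i$. The lower bound is again the prepending trick: choose $i^*$ with $\lambda_{i^*} = \lambda$ and a state $s^* \in S_{i^*}$; by trimness there is a length-$p$ path from $s_0$ to $s^*$, so $f_{s_0}(N+p) \ge f_{s^*}(N)$ and the growth rate of $f$ is at least $\lambda$. For the upper bound I pass to the condensation DAG, whose vertices are the components $S_1, \dots, S_c$. Any accepting path from $s_0$ determines a sequence of components $S_{j_1} \to \cdots \to S_{j_r}$ with $r \le c$ (no component repeats, as the condensation is acyclic), a choice of one crossing edge between consecutive components, and a partition $N = N_1 + \cdots + N_r + (r-1)$ recording the steps spent inside each component. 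The number of component-sequences with crossing edges is bounded by a constant depending only on $F$, while the number of admissible partitions is $O(N^{c-1})$. For each such datum the count factors as a product over $m$ of the number of length-$N_m$ paths internal to $S_{j_m}$ between prescribed entry and exit states. Since an internal path of $S_{j_m}$ from $s$ to $s'$ can be completed to an accepting path from $s$ by appending a fixed suffix (trimness), each internal count is bounded by $f_s(N_m + q)$ for a constant $q$, hence has exponential growth rate at most $\lambda_{j_m} \le \lambda$. Fixing $\epsilon > 0$, each factor is thus at most $C_\epsilon(\lambda+\epsilon)^{N_m}$ for large $N_m$, so each product is at most $C'_\epsilon(\lambda+\epsilon)^{N_1 + \cdots + N_r} \le C'_\epsilon(\lambda+\epsilon)^{N}$; summing over the $O(N^{c-1})$ partitions and constantly many skeletons yields $f(N) \le O(N^{c-1})\,C''_\epsilon(\lambda+\epsilon)^N$, whence $\limsup_N f(N)^{1/N} \le \lambda + \epsilon$. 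Letting $\epsilon \to 0$ gives the upper bound.

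The main obstacle is the upper bound in part (2): one must verify that cutting an accepting path at the component boundaries genuinely decouples the count into a product of internal-path counts, and that the internal growth of each $S_{j_m}$ is governed by $\lambda_{j_m} \le \lambda$ rather than by the full $f_s$, which also counts paths leaving the component. The completion-by-fixed-suffix observation is exactly what tames this, reducing internal counts to values of $f_s$; the bounded number of boundaries ($r \le c$) is what keeps the bookkeeping polynomial and lets the polynomial prefactors wash out in the exponential growth rate.
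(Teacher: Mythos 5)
Your argument is correct, but note that this paper does not actually prove the statement: it is imported verbatim from the companion paper \cite{TW_growth} (Lemma~4.2), so there is no in-document proof to compare against. On the merits, your part (1) and the lower bound in part (2) are the standard prepending arguments, and you rightly flag that the lemma is false as literally stated without a trimness hypothesis (a strongly connected component that is not reachable from the start state but has large $f_s$ would break part (2)); that hypothesis does hold for the automata the paper applies this to, and identifying it is a genuine point in your favor.

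One observation on economy versus scope. Under the literal reading in which ``the growth rate of $S_i$'' means the common growth rate of $f_s$ for $s \in S_i$ established in part (1), the upper bound in part (2) is immediate: $f = f_{s_0}$, the start state $s_0$ lies in some component $S_{i_0}$, and hence the growth rate of $f$ equals $\lambda_{i_0} \le \max_i \lambda_i$ with no condensation argument needed; the entire content of part (2) is then the lower bound via reachability. Your condensation-DAG decomposition -- cutting an accepting path at component boundaries, bounding the $O(N^{c-1})$ partitions and the constantly many skeletons, and taming each internal count by completing it to an accepted path with a fixed suffix -- is therefore more machinery than the literal statement requires. However, it is exactly the argument one needs for the stronger reading in which each component is measured by its internal path growth, and that stronger version is what the paper actually uses later (in Lemma~\ref{lemma:middle_vector} the accept states are modified outside the main strongly connected component and the growth rate is asserted to be unchanged because that component's edges are untouched). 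So the extra work is not wasted; it proves the version of the lemma that the application depends on. The only loose ends are cosmetic: the paper's definition of growth rate is the two-sided bound $f(N) = \Theta(\lambda^N)$ rather than $\limsup_N f(N)^{1/N}$, and for the last component of a skeleton the ``exit state'' is an accept state rather than the tail of a crossing edge, which contributes one more bounded factor to your count.
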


To compute the growth rate of $\OOn$, we must account for the fact that the number of states in the finite state automaton accepting $\OO_n$ depends on $n$, while the number of states in the finite state automaton accepting ${\mathcal D}_n$ is constant.
We do this via a matrix equation of fixed size, where the entries are growth series for paths beginning, respectively, in each state of the automaton accepting $\OO_n$.  
That is, we trade a computation
with arbitrarily large matrices over the integers (computing
an eigenvalue) for a computation with fixed size matrices with
entries which are infinite series.

The following basic fact about exponential growth will be referred to frequently in Sections~\ref{sec:curvature} and ~\ref{sec:n=2_curvature}.  A proof is included in \cite{TW_growth}.

\begin{lemma}[\cite{TW_growth},Lemma 4.1]
\label{lemma:exp_growth}
Suppose that $f(N) = \Theta(\lambda^N)$ with $\lambda > 1$.
\begin{enumerate}[itemsep=5pt]
\item Both $f(N+k)$ and $\sum_{i=1}^Nf(i)$ are $\Theta(\lambda^N)$.
\item If $f(N)$ and $g(N)$ are $ \Theta(\lambda^N)$, there are $N_0,d>0$ so that $f(N)/g(N) > d$ for $N > N_0$.
\end{enumerate}
\end{lemma}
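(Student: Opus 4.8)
The plan is to argue directly from the definition of $\Theta(\lambda^N)$ recalled just above the statement: there exist constants $A,B>0$ and a threshold $N_1$ such that $A\lambda^N \le f(N) \le B\lambda^N$ for all $N \ge N_1$. Each assertion then amounts to producing new multiplicative constants from these, together with a single use of the hypothesis $\lambda>1$. The entire lemma is elementary bookkeeping, so the goal is simply to organize the constant-chasing cleanly.

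For the first part of (1), I would substitute $N+k$ into the two-sided bound: for $N \ge N_1-k$ one has $(A\lambda^k)\lambda^N \le f(N+k) \le (B\lambda^k)\lambda^N$, so the rescaled constants $A\lambda^k$ and $B\lambda^k$ witness $f(N+k)=\Theta(\lambda^N)$. For the partial sum $S(N)=\sum_{i=1}^N f(i)$, the lower bound is immediate from the single term $f(N)\ge A\lambda^N$, using that all $f(i)$ are nonnegative (they are path counts). The upper bound is the only step needing any care: I split the sum at $N_1$, bound the fixed prefix $\sum_{i=1}^{N_1-1}f(i)$ by a constant, and bound the tail by the geometric series $\sum_{i=N_1}^{N} B\lambda^i \le \frac{B\lambda}{\lambda-1}\lambda^N$, where $\lambda>1$ controls the ratio. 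Since a constant is eventually dominated by $\lambda^N$, it can be absorbed into a slightly larger multiplicative constant for all large $N$, giving $S(N)=\Theta(\lambda^N)$.

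For part (2), I would combine the lower bound for $f$ with the upper bound for $g$: past a common threshold, $f(N)\ge A_f\lambda^N$ and $0 < g(N) \le B_g\lambda^N$, so $f(N)/g(N) \ge A_f/B_g$, and taking $d = A_f/B_g > 0$ completes the argument. I expect no genuine obstacle; the only place demanding attention is the geometric-series estimate in the partial-sum upper bound, together with the observation that the fixed prefix contributes a term negligible relative to $\lambda^N$. Everything else is a direct rescaling of the constants supplied by the definition of $\Theta(\lambda^N)$.
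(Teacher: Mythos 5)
Your argument is correct and is exactly the elementary constant-chasing one would expect; the paper itself defers the proof of this lemma to \cite{TW_growth}, but there is essentially only one route here and you have taken it, including the two points that actually need care (nonnegativity of $f$ for the lower bound on the partial sum, and the geometric-series estimate using $\lambda>1$ for the upper bound).
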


\section{Conjugation curvature in $BS(1,n)$}
\label{sec:curvature}

We begin this section with several results about minimal vectors which follow
from the technology developed in
Section~\ref{section:min_rep}.  We combine this
with our understanding of growth rates from Section~\ref{sec:growth} to study the density of elements whose conjugation curvature is, respectively, positive, negative and zero.
Recall that the conjugation curvature $\kappa_r(h)$ is  defined to be
\[
\kappa_r(h) = \frac{l(h) - \frac{1}{|S_n(r)|} \sum_{w \in S_n(r)} l(h^w)}{l(h)}
\]
that is, the difference between the word length of $h$ and the average word length of the conjugates of $h$ by all $w$ in the sphere $S_n(r)$ of radius $r$ in the Cayley graph $\Gamma(G,S)$,
scaled by the word length of $h$.
We show that $BS(1,n)$ has a positive density of elements with
$\kappa_r(g)<0$ and $\kappa_r(g)=0$,  where $r$ is allowed to
assume a finite range of values.  Additionally, when $r=1$ we show that
$BS(1,n)$ has a positive density of elements with $\kappa_1(g)>0$.

\subsection{Conjugation curvature when $r=1$.}
\label{sec:r=1}

When computing $\kappa_r(g)$ for $g = t^{-u}a^vt^w$ we must be able to evaluate
$l(g^p)$ where $p=s_1s_2 \cdots s_r$, and each
$s_i \in \{a^{\pm 1},t^{\pm 1}\}$.  
We begin by understanding how $u$, $v$, and $w$ change under conjugation
by a single generator of $BS(1,n)$.  This enables us to characterize when $\x$ is a minimal vector for both $g$ and $g^s$, for $s \in \{a^{\pm 1},t^{\pm 1}\}$, which in turn allows us to compute
the change in word length and thus $\kappa_1(g)$.

We outline this idea with the simplifying assumption that 
$n \nmid v$ and $uw>0$.
Let $g= t^{-u}a^vt^w$. With these assumptions, the four conjugates
of $g$ by the generators are as follows.
\begin{enumerate}[itemsep=5pt]
\item $g^t=t(t^{-u}a^vt^w)\ti = t^{-(u-1)}a^vt^{w-1}$
\item $g^{\ti}=\ti(t^{-u}a^vt^w)t = t^{-(u+1)}a^vt^{w+1}$
\item $g^{a}=a( t^{-u}a^vt^w)a^{-1} = t^{-u}a^{n^u+v-n^w}t^w$
\item $g^{\ai}=a^{-1}( t^{-u}a^vt^w)a = t^{-u}a^{-n^u+v+n^w}t^w$
\end{enumerate}
When $uw=0$, we obtain $g^t=t^{-u}a^{nv}t^w$, and the remaining
conjugates are unchanged.
If $n|v$ then $uw=0$ and we obtain $g^{\ti} = t^{-u} a^{\frac{v}{n}} t^w$;
the remaining conjugates are unchanged. Observe that the
formulas above demonstrate how $u,v,w$ change under conjugation.

In order to determine the geodesic lengths of $g^{a^{\pm 1}}$ and
$g^{t^{\pm 1}}$, we begin with a minimal vector
$\x \in \Bvuw$, so $\eta_{u,v,w}(\x)$ is a geodesic representing $g$.
We must then find a minimal vector in one of
$\BB_v^{u-1,w-1}$,
$\BB_v^{u+1,w+1}$, $\BB_{n^u+v-n^w}^{u,w}$,
and $\BB_{-n^u+v+n^w}^{u,w}$ in order to calculate the word length of the appropriate conjugate of $g$.
Sometimes this is straightforward, for example, in 
Lemma~\ref{lemma:zero_curv} with the assumption that $w=u$.  
We begin with some convenient corollaries
of the results in Sections~\ref{section:geodesics_n_odd} 
and~\ref{section:geodesics_n_even} which will allow us to recognize minimal vectors under specific conditions.

\begin{lemma}\label{lemma:still_minimal}
If $\x$ is a minimal vector in $\Bvuw$ then $\x$ is a minimal vector in $\BB_v^{u',w'}$ for any pair $u',w'$
so that $\max(u,v)$ and $\max(u',w')$ have the same ordinal relationship to $\kx$.
\end{lemma}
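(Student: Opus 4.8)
The plan is to collapse the dependence on the pair $(u,w)$ down to the single parameter $M=\max(u,w)$ and then feed this into the minimality characterizations already established. First I would rewrite Lemma~\ref{lemma:length_formula}, using $u+w=M+\min(u,w)$ and $|u-w|=M-\min(u,w)$, in the form
\[
|\eta_{u,v,w}(\z)| = \Vert\z\Vert_1 + M + 2\max\!\left(0,\K{\z}-M\right) + \min(u,w),
\]
so that the term $\min(u,w)$ is independent of $\z$ and cancels in every comparison, while the defining inequalities of $\Bvuw$ depend on $(u,w)$ only through $M$. Hence both the box and the relation $\ord$ depend only on $M$, and with $M'=\max(u',w')$ it suffices to compare minimality in the boxes determined by $M$ and $M'$. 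Membership of $\x$ in the second box is then immediate: the only parameter-dependent digit bound is the one on the top coordinate $x_{\kx}$, which is $\fn$ when $\kx<M$ and the larger value when $\kx\ge M$; since $\kx$ has the same ordinal relationship to $M$ and to $M'$, the two bounds agree and $\x$ lies in both boxes.

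I would then split on the ordinal relationship of $\kx$ to $M$. If $\kx=M$ the hypothesis forces $M=M'$ and there is nothing to prove. If $\kx<M$ (so also $\kx<M'$), then $\eta_{u,v,w}(\x)$ has strict shape $1$ for both parameter choices, and minimality is governed by the purely digit-based criteria of Lemma~\ref{lemma:odd_box}(1), Proposition~\ref{lemma:adjacent_digits}, or Proposition~\ref{lemma:n=2_adjacent_digits}; since these make no reference to $M$, minimality transfers verbatim. The substantive case is $\kx>M$ (so $\kx>M'$), where I would apply the full characterizations, namely Lemma~\ref{lemma:odd_box}(2), Proposition~\ref{lemma:even_minimal_characterization}, or Proposition~\ref{lemma:lemma318_n=2}. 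These describe non-minimality through two phenomena: an ``end'' pattern (Lemma~\ref{lemma:even_reduction_end}, Lemma~\ref{lemma:n=2_notminimal}, or the odd final-digit condition), whose hypothesis is ``$\kx>M$'' together with fixed digit patterns and which therefore transfers because ``$\kx>M$'' and ``$\kx>M'$'' are equivalent; and the existence of a run at which $\x$ reduces.

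For the run-reduction condition (present only for even $n$) I observe that a run $\r$ and its reduced vector $\y=\x+\epsilon_\r\sum_{i=j}^\ell\wi$ are determined by $\x$ alone, so I only need ``$\y\ords\x$ and $\y$ lies in the box'' to have the same truth value for $M$ and for $M'$. By Lemma~\ref{lemma:length_formula_change} the length change depends on the parameter only through $\max(\kx,M)$ and $\max(\ky,M)$; whenever $\kx$ and $\ky$ lie on the same side of both $M$ and $M'$, this change reduces to $\Vert\y\Vert_1-\Vert\x\Vert_1+2(\ky-\kx)$, independent of the parameter, and the lexicographic comparison and the digit bounds on $\y$ are parameter-free as well. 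So the crux is to control $\ky$. For even $n\ge 4$ this is automatic: a reduction alters the length by at most one, so $\ky\ge\kx-1\ge\max(M,M')$ because $\kx>\max(M,M')$, and the run-reduction condition transfers, finishing this case (the odd case has no run-reduction clause and is already complete). The main obstacle is $n=2$: here a reduction may use a final coefficient $\alpha_\ell=2$ or cancel several top digits at once, so $\ky$ can drop well below $\kx$, possibly below $\min(M,M')$, which would break the parameter-independence of the comparison. Resolving this is the heart of the matter and requires the finer $n=2$ bookkeeping of Lemmas~\ref{lemma:n=2_unequal_lengths},~\ref{lemma:less_than_6}, and~\ref{lemma:n=2_110} to guarantee that a reduction valid for one parameter remains valid for the other — equivalently, that a minimal long vector cannot be undercut by a far shorter competitor.
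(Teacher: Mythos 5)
Your proposal takes the same route as the paper: the paper's entire proof is the two-sentence observation that the minimality characterizations (Lemma~\ref{lemma:odd_box}, Proposition~\ref{lemma:even_minimal_characterization}, Proposition~\ref{lemma:lemma318_n=2}) ``depend only on the ordinal relationship'' between $\max(u,w)$ and $\kx$, so minimality transfers. You reach the same conclusion but supply the justification the paper omits, and in doing so you correctly isolate the one clause of those characterizations that is \emph{not} obviously parameter-free, namely ``there is a run at which $\x$ can be reduced,'' since the relation $\ord$ and the length formula both reference $(u,w)$. Your handling of the odd case and of even $n\ge 4$ (where a run reduction changes the vector length by at most one, so $\ky\ge\kx-1\ge\max(M,M')$ and Lemma~\ref{lemma:length_formula_change} makes the comparison parameter-independent) is sound and is genuinely more rigorous than what the paper records.

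The one place your argument is incomplete is the place you yourself flag: for $n=2$ a run reduction can zero out several top digits at once, so $\ky$ may fall strictly below $\max(u',w')$ even when $\kx$ exceeds both maxima, and then the term $2\max(0,\kx-\max(\ky,u,w))$ in Lemma~\ref{lemma:length_formula_change} genuinely differs between the two parameter pairs. You name the tools that should close this (Lemmas~\ref{lemma:n=2_unequal_lengths}, \ref{lemma:less_than_6}, \ref{lemma:n=2_110}) but do not carry out the verification, so as written the $n=2$ case is a sketch rather than a proof. To be fair, the paper's own proof does not address this either; it simply asserts the transfer. If you complete that bookkeeping your write-up would be a strictly more careful version of the paper's argument.
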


\begin{proof}
The hypotheses for Lemma~\ref{lemma:odd_box} when $n$ is odd, Proposition~\ref{lemma:even_minimal_characterization} when $n>2$ is even, and Proposition~\ref{lemma:lemma318_n=2} when $n=2$ depend only on the ordinal relationship between $\max(u,v), \ \max(u',w')$ and $\kx$.  
Thus it follows from the appropriate lemma that $\x$ is minimal in $\BB_v^{u',w'}$.
\end{proof}

Given $\kx > \max(u,w)$, Lemma~\ref{lemma:still_minimal_2} extends the conclusion of Lemma~\ref{lemma:still_minimal} by relaxing the condition that $\kx > \max(u',w')$ to allow $\kx \geq \max(u',w')$.

\begin{lemma}\label{lemma:still_minimal_2}
If $\x$ is a minimal vector in $\Bvuw$ and $\kx > \max(u,w)$,
then $\x$ is a minimal vector in $\BB_v^{u',w'}$ for any pair
$u',w'$ with $\kx \ge \max(u',w')$.
\end{lemma}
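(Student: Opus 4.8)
The plan is to reduce to a single new case and then pull a reduction in the box $\BB_v^{u',w'}$ back to the box $\Bvuw$. If $\kx > \max(u',w')$, then $\max(u,w)$ and $\max(u',w')$ have the same (strictly smaller) ordinal relationship to $\kx$, so Lemma~\ref{lemma:still_minimal} applies directly and there is nothing to prove. Thus I would assume $\kx = \max(u',w')$ and argue by contradiction, supposing $\x$ is not minimal in $\BB_v^{u',w'}$. I would invoke the relevant minimality characterization: Lemma~\ref{lemma:odd_box} for odd $n$, Proposition~\ref{lemma:even_minimal_characterization} for even $n>2$, and Proposition~\ref{lemma:lemma318_n=2} for $n=2$. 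The first point is that every ``endpoint'' source of non-minimality---the $(\delta\fn,-\delta)$ ending in Lemma~\ref{lemma:odd_box}, together with Lemma~\ref{lemma:even_reduction_end} and Lemma~\ref{lemma:n=2_notminimal}---explicitly requires $\kx > \max(u',w')$, which fails here. For odd $n$ this already settles the claim, since Lemma~\ref{lemma:odd_box} offers no other route to non-minimality. For even $n$ the only remaining possibility is that $\x$ can be reduced at some run $\r = (x_j,\dots,x_\ell)$, yielding $\y = \x+\z \in \BB_v^{u',w'}$ with $\z \in \LL_0$ and $\y \ords \x$ computed in the $(u',w')$ context.

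Next I would establish box containment: $\y \in \BB_v^{u',w'}$ forces $\y \in \Bvuw$. Both boxes bound every non-leading digit by $\fn$, so only the leading digit $y_{\ky}$ needs attention. If $\ky \ge \kx$, then $\ky \ge \max(u',w')$ and $\ky > \max(u,w)$, so both boxes permit $\fn+1$ and the bound is identical; if $\ky < \kx = \max(u',w')$, then the $\BB_v^{u',w'}$ bound is the stricter value $\fn$, which certainly satisfies the $\Bvuw$ bound. Hence membership transfers. This step uses only $\z \in \LL_0$ and the value of $\ky$, so it applies verbatim to the $n=2$ reductions whose leading coefficient may be $2$.

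Finally I would compare word lengths in the two contexts using Lemma~\ref{lemma:length_formula_change} (with a direct check from Lemma~\ref{lemma:length_formula} when $\ky=\kx$). When $\ky \ge \kx$, the shape $3$/$4$ length formula governs all four quantities, using its agreement with the shape $1$/$2$ formula at the boundary $\kx=\max(u',w')$, so
\[
|\eta_{u,v,w}(\y)| - |\eta_{u,v,w}(\x)| = \Vert\y\Vert_1 - \Vert\x\Vert_1 + 2(\ky - \kx) = |\eta_{u',v,w'}(\y)| - |\eta_{u',v,w'}(\x)|,
\]
and since the absolute lexicographic order does not depend on $u,w$, the relation $\y \ords \x$ holds verbatim in the $(u,w)$ context. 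When $\ky < \kx$ the $(u',w')$ difference equals $\Vert\y\Vert_1 - \Vert\x\Vert_1$ because $\max(\ky,u',w') = \kx$, whereas the $(u,w)$ difference is $\Vert\y\Vert_1 - \Vert\x\Vert_1 - 2(\kx - \max(\ky,u,w))$ with $\max(\ky,u,w) < \kx$; thus the $(u,w)$ difference is strictly smaller and $\y \ords \x$ again holds in the $(u,w)$ context. In either case $\y \in \Bvuw$ with $\y \ords \x$ contradicts the minimality of $\x$ in $\Bvuw$, which proves the lemma.

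The main obstacle I anticipate is the bookkeeping in this last comparison: one must track which formula of Lemma~\ref{lemma:length_formula} governs each length, take care at the boundary $\kx = \max(u',w')$ where the two formulas coincide, and split the subcase $\ky < \kx$ according to whether $\ky \ge \max(u,w)$. The computation is routine once these cases are fixed, but it is where sign errors are most likely, and it is the one place where I would explicitly confirm that the $n=2$ run reductions supplied by Lemma~\ref{lemma:less_than_6} behave exactly as in the $n>2$ case.
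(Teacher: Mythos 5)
Your proof is correct, and for the substantive boundary case $\kx = \max(u',w')$ it takes a genuinely different route from the paper's. The paper argues directly: it takes an arbitrary competitor $\y \in \BB_v^{u',w'}$, observes that $\y \in \Bvuw$ as well, invokes the minimality of $\x$ there to get $\x \ordseq \y$, and then transfers that relation to the $(u',w')$ context by stepping $u$ and $w$ one unit at a time (Lemmas~\ref{lemma:still_minimal_2_claim_1} and~\ref{lemma:still_minimal_2_claim_2}) and bounding the resulting increment $\Delta$ for $\x$ against the corresponding increment for $\y$, splitting on $\ky \ge \kx$ versus $\ky < \kx$. You instead argue by contradiction: you use the minimality characterizations (Lemma~\ref{lemma:odd_box}, Proposition~\ref{lemma:even_minimal_characterization}, Proposition~\ref{lemma:lemma318_n=2}) to force any failure of minimality in $\BB_v^{u',w'}$ to come from a run reduction --- correctly noting that every endpoint obstruction (Lemma~\ref{lemma:even_reduction_end}, Lemma~\ref{lemma:n=2_notminimal}, and the final clause of Lemma~\ref{lemma:odd_box}) requires $\kx > \max(u',w')$, which fails at the boundary --- and then pull the single witness $\y$ back into $\Bvuw$ and compare the two length differences via Lemma~\ref{lemma:length_formula_change}. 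Your version buys a cleaner case analysis (when $\ky < \kx$ the $(u,w)$-difference is strictly smaller than the $(u',w')$-difference, so the contradiction is immediate) and avoids the incremental stepping lemmas entirely, at the cost of invoking the heavier structural propositions, which the paper's proof of this particular lemma does not need; the paper's version treats all competitors uniformly rather than a single witness. Both arguments ultimately rest on the same two observations: box membership transfers between $\Bvuw$ and $\BB_v^{u',w'}$ because only the leading-digit bound distinguishes them, and the lexicographic tiebreak is independent of $u$ and $w$.
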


Before proving Lemma~\ref{lemma:still_minimal_2}, we prove the following two lemmas which describe the change in word length as $u$ and $w$ are, respectively, decremented and incremented while all other parameters are unchanged.

\begin{lemma}\label{lemma:still_minimal_2_claim_1}
For any $\x \in \Lv$
\[
|\eta_{u,v,w+1}(\x)| = 
|\eta_{u,v,w}(\x)| + 
\left\{\begin{array}{ll}
\phantom{-}1  & \textnormal{if $\max(u,w) \ge \kx$} \\
\phantom{-}1  & \textnormal{if $\max(u,w) < \kx$ and $u > w$} \\
-1 & \textnormal{if $\max(u,w) < \kx$ and $u \le w$}
\end{array}\right.
\]
An identical equality holds if we exchange the roles of $u$ and $w$.
\end{lemma}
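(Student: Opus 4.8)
The plan is to prove the identity by direct computation from the two-case word length formula of Lemma~\ref{lemma:length_formula}, exploiting the fact that neither $\Vert \x \Vert_1$ nor $\kx$ depends on $u$ or $w$. Thus the only source of change in passing from $|\eta_{u,v,w}(\x)|$ to $|\eta_{u,v,w+1}(\x)|$ is the explicit $u,w$-dependence of the two formulas, together with a possible change in which of the two formulas applies. I would organize the argument around the ordinal relationship between $\max(u,w)$ and $\kx$, since this is exactly the data that selects the formula.

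First I would handle the case $\max(u,w) \ge \kx$. Here incrementing $w$ only increases the maximum, so $\max(u,w+1) \ge \kx$ as well, and both lengths are computed with the first formula $\Vert \x \Vert_1 + u + w$. The difference is then visibly $+1$, matching the first line of the statement. Next I would treat $\max(u,w) < \kx$, where $|\eta_{u,v,w}(\x)|$ uses the second formula $\Vert \x \Vert_1 + 2\kx - |u-w|$, and split on the sign of $u-w$. If $u > w$, then $\max(u,w)=u$ is unchanged by incrementing $w$ and remains strictly below $\kx$, so the second formula applies to both lengths; the change reduces to $|u-w| - |u-(w+1)| = (u-w) - (u-w-1) = 1$. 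If $u \le w$, then $\max(u,w)=w$ and incrementing gives $\max(u,w+1) = w+1 \le \kx$; when $w+1 < \kx$ the second formula again applies to both, and the change is $|u-w| - |u-(w+1)| = (w-u) - (w+1-u) = -1$.

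The one genuinely delicate point, and the step I expect to require the most care, is the boundary subcase $u \le w$ with $w+1 = \kx$: here $\max(u,w+1)=\kx$, so $|\eta_{u,v,w+1}(\x)|$ must be computed with the \emph{first} formula while $|\eta_{u,v,w}(\x)|$ still uses the second. I would resolve this exactly as in the remark following Lemma~\ref{lemma:length_formula}, which records that the two formulas agree when $\kx = \max(u,w)$: substituting $\kx = w+1$ into the second formula for $|\eta_{u,v,w}(\x)|$ and using $u \le w$ yields $\Vert \x \Vert_1 + u + w + 2$, while the first formula gives $|\eta_{u,v,w+1}(\x)| = \Vert \x \Vert_1 + u + w + 1$, so the change is again $-1$, consistent with the $w+1<\kx$ computation. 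Finally, since the length formula of Lemma~\ref{lemma:length_formula} is symmetric in $u$ and $w$, the same argument with the roles of $u$ and $w$ exchanged gives the stated identity for incrementing $u$, completing the proof.
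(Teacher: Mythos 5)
Your proof is correct and follows essentially the same route as the paper's: split on the ordinal relationship between $\max(u,w)$ and $\kx$, apply the appropriate formula from Lemma~\ref{lemma:length_formula} in each case, and dispose of the boundary subcase via the observation that the two formulas agree when $\kx = \max(u,w)$. Your treatment of the boundary case $u \le w$, $w+1 = \kx$ is more explicit than the paper's one-line remark, but the argument is the same.
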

\begin{proof}
The situation is symmetric in $u$ and $w$, so it suffices to
consider only $w$.  If $\max(u,w) \ge \kx$, then we can use the first
length formula in Lemma~\ref{lemma:length_formula} to compute both $|\eta_{u,v,w}(\x)|$ and $|\eta_{u,v,w+1}(\x)|$,
and the lemma is immediate.  If $\max(u,w) < \kx$, then we can use
the second length formula in Lemma~\ref{lemma:length_formula} to compute the lengths of both paths, so the sign of the change in length
depends on the order of $u$ and $w$ as given.  Note that we are using the fact that if
$\max(u,w) = \kx$ then the formulas in Lemma~\ref{lemma:length_formula} agree.
\end{proof}

\begin{lemma}\label{lemma:still_minimal_2_claim_2}
For any $\x \in \Lv$
\[
|\eta_{u-1,v,w}(\x)| = 
|\eta_{u,v,w}(\x)| + 
\left\{\begin{array}{ll}
-1  & \textnormal{if $\max(u,w) \ge \kx$} \\
\phantom{-}1  & \textnormal{if $\max(u,w) < \kx$ and $u > w$} \\
-1 & \textnormal{if $\max(u,w) < \kx$ and $u \le w$}
\end{array}\right.
\]
An identical equality holds if we exchange the roles of $u$ and $w$.
\end{lemma}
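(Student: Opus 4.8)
The plan is to proceed exactly as in the proof of Lemma~\ref{lemma:still_minimal_2_claim_1}: compute both $|\eta_{u,v,w}(\x)|$ and $|\eta_{u-1,v,w}(\x)|$ directly from Lemma~\ref{lemma:length_formula} and subtract. Since the vector $\x$ is unchanged on both sides, $\Vert \x \Vert_1$ and $\kx$ are the same for both quantities, so the difference depends only on how the $t$-exponent contributions $u+w$ or $2\kx-|u-w|$ change when $u$ is decremented. Because the statement is symmetric in $u$ and $w$, I would argue only for the decrement of $u$ and obtain the parenthetical claim by interchanging $u$ and $w$ throughout. The case division is governed by the ordinal relationship of $\kx$ to $\max(u,w)$, with the added wrinkle that decrementing $u$ may also alter the relationship of $\kx$ to $\max(u-1,w)$; it is precisely this possible change of regime that the three cases are designed to track.

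I would first dispose of the two regimes in which no change of length formula occurs. If $\kx > \max(u,w)$, then a fortiori $\kx > \max(u-1,w)$, so both lengths use the second formula of Lemma~\ref{lemma:length_formula}, and the difference is $|u-w| - |(u-1)-w|$. When $u>w$ we have $u-1\ge w$, so this equals $(u-w)-(u-1-w)=+1$; when $u\le w$ it equals $(w-u)-(w-u+1)=-1$. These give the second and third cases. If instead $w \ge \kx$, then both $\max(u,w)\ge\kx$ and $\max(u-1,w)\ge w \ge \kx$, so both lengths are computed with the first formula $\Vert \x \Vert_1 + u + w$; replacing $u$ by $u-1$ changes the length by $-1$, which is the first case.

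The remaining, delicate situation is $w < \kx \le u$, where $\max(u,w)=u\ge\kx$ but decrementing $u$ may pull $\max(u-1,w)$ below $\kx$. When $u>\kx$ we still have $u-1\ge\kx$, both lengths use the first formula, and the change is again $-1$. The genuine boundary is $u=\kx$, so that $\max(u,w)=\kx$ exactly: here $\max(u-1,w)=\kx-1<\kx$, so $|\eta_{u-1,v,w}(\x)|$ must be evaluated with the second formula while $|\eta_{u,v,w}(\x)|$ sits precisely at the boundary $\kx=\max(u,w)$ where the two formulas of Lemma~\ref{lemma:length_formula} coincide. Handling this threshold-crossing carefully, by invoking the agreement of the two formulas at the boundary noted after Lemma~\ref{lemma:length_formula}, is the main obstacle of the argument; it is the only place where the two length formulas are genuinely mixed, and hence the only place where one must take care to attribute the resulting change in length to the correct case. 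Once this boundary is settled, the symmetric statement for decrementing $w$ follows immediately.
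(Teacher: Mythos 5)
Your overall strategy---evaluating both sides directly from Lemma~\ref{lemma:length_formula} and tracking whether decrementing $u$ changes which formula applies---is exactly the paper's approach (the paper's own proof is a one-line ``analogous to Lemma~\ref{lemma:still_minimal_2_claim_1}''), and your handling of the regimes $w \ge \kx$, $u > \kx$, and $\max(u,w) < \kx$ is correct. The problem is that you defer the one genuinely delicate case, $w < u = \kx$, with a promise to ``handle the threshold-crossing carefully'' rather than a computation, and that computation does not come out the way you (or the stated lemma) need. At $u = \kx > w$ the quantity $|\eta_{u,v,w}(\x)|$ sits at the boundary where both formulas give $\Vert\x\Vert_1 + u + w = \Vert\x\Vert_1 + 2\kx - (u-w)$, but $\max(u-1,w) = \kx - 1 < \kx$ forces the second formula for $|\eta_{u-1,v,w}(\x)|$, namely $\Vert\x\Vert_1 + 2\kx - (u-1-w)$; the difference is $+1$, not the $-1$ asserted by the first case of the statement. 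Concretely, for $n=3$, $\x = (1,0,1)$, $u=2$, $w=0$ one gets $|\eta_{2,10,0}(\x)| = 4$ (shape 2, the word $a\ti\ti a$) while $|\eta_{1,10,0}(\x)| = 5$ (shape 4, the word $ta\ti\ti a$).

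So ``invoking the agreement of the two formulas at the boundary'' cannot rescue the claimed $-1$: the agreement holds for $|\eta_{u,v,w}(\x)|$ but not for $|\eta_{u-1,v,w}(\x)|$, whose parameters lie strictly below the boundary. The first case of the lemma as written fails when $w < u = \kx$ (the paper's terse proof glosses over the same point; the lemma is only ever invoked with $u \notin \{\kx-1,\kx,\kx+1\}$, as in Lemma~\ref{lemma:zero_curv}, so nothing downstream breaks). As a proof of the statement as given, however, your argument has a hole precisely at the case you yourself identify as the main obstacle, and an honest completion of that case refutes rather than establishes the displayed formula.
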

\begin{proof}
The proof is analogous to Lemma~\ref{lemma:still_minimal_2_claim_1};
we observe the effect of subtracting $1$ from $u$ in both length
formulas in Lemma~\ref{lemma:length_formula}.
\end{proof}

We now prove Lemma~\ref{lemma:still_minimal_2}

\begin{proof}[Proof of Lemma~\ref{lemma:still_minimal_2}]
If $\kx > \max(u',w')$, the conclusion follows directly from Lemma~\ref{lemma:still_minimal}.  
We address the case when $\kx = \max(u',w')$.  Let
$\y \in \BB_v^{u',w'}$.  First note that because $\max(u,w) < \max(u',w')$, we also have $\y \in \Bvuw$.
As $\x \in \Bvuw$ is minimal, we know that $\x \ords \y$.  We will show that
$\x <_{u',w'} \y$, which proves the lemma.

If $\ky \ge \kx$, then we use
the same length formula to compute all the lengths in the next equation, whether we consider $\x$ and $\y$ in $\Bvuw$ or $\BB_v^{u',w'}$.  It follows that
\[
|\eta_{u,v,w}(\x)| - |\eta_{u',v,w'}(\x)| = |\eta_{u,v,w}(\y)| - |\eta_{u',v,w'}(\y)|.
\]
That is, the effect on the path length by changing $u$ and $w$ to $u'$ and $w'$ is the
same for $\x$ and $\y$. Thus $\x <_{u,w} \y$ if and only if
$\x <_{u',w'} \y$.

For the remainder of the proof, we assume that $\ky < \kx$; in this case we may need different length formulas from Lemma~\ref{lemma:length_formula} to compute $|\eta_{u,v,w}(\x)|$ and $|\eta_{u,v,w}(\y)|$, as well as $|\eta_{u',v,w'}(\x)|$ and $|\eta_{u',v,w'}(\y)|$.
In this situation, we assume without loss of generality
that $\max(u',w') = w' \ge u'$.  
Since $\kx = \max(u',w')>\max(u,w)$,it follows that $w'>w$.
View this increase in value as repeated additions of the number $1$,
and apply
Lemma~\ref{lemma:still_minimal_2_claim_1}  to conclude that
\[
|\eta_{u,v,w'}(\x)| - |\eta_{u,v,w}(\x)| = \Delta,
\]
where $|\Delta| \le w'-w$.  

To compute the analogous difference for $\y$, let $w'' = w+\epsilon$ for some $\epsilon \leq w'-w$.
If $\max(u,w'') \geq \ky$ then $|\eta_{u,v,w''+1}(\y)| = |\eta_{u,v,w''}(\y)| + 1$.
If $\max(u,w'') < \ky$ then the change in path length depends on the ordinal relationship between $u$ and $w''$, in which case we have
\[
|\eta_{u,v,w''+1}(\y)| - |\eta_{u,v,w''}(\y)| = |\eta_{u,v,w''+1}(\x)| - |\eta_{u,v,w''}(\x)|.
\]
Combining these two possibilities yields
\[
|\eta_{u,v,w'}(\y)| - |\eta_{u,v,w}(\y)| \ge \Delta.  
\]

To analyze the analogous change in path length as the $u$ coordinate is varied, we are hampered by the fact that we do not know the ordinal relationship between $u$ and $u'$.
However, we do know that $\ky < \kx = \max(u',w')$, and hence we use the first length formula in Lemma~\ref{lemma:length_formula} to compute both
 $|\eta_{u,v,w}(\x)|$ and $|\eta_{u,v,w}(\y)|$, as well as $|\eta_{u',v,w'}(\x)|$ and $|\eta_{u',v,w'}(\y)|$.
Thus
\[
|\eta_{u',v,w'}(\x)| - |\eta_{u,v,w'}(\x)| = 
|\eta_{u',v,w'}(\y)| - |\eta_{u,v,w'}(\y)| = u'-u.
\]

Combining our analysis, we have
\[
|\eta_{u',v,w'}(\x)| - |\eta_{u,v,w}(\x)| \le
|\eta_{u',v,w'}(\y)| - |\eta_{u,v,w}(\y)|,
\]
or equivalently,
\[
|\eta_{u',v,w'}(\x)| - |\eta_{u',v,w'}(\y)| \le
|\eta_{u,v,w}(\x)| - |\eta_{u,v,w}(\y)| \le 0,
\]
where the right inequality follows from the fact that $\x \ords \y$.
If the inequality is strict, it follows that $\x <_{u',w'} \y$.
If there is equality, then there must be a lexicographic reduction from $\y$ to $\x$.  
Since the digits of $\x$ and $\y$ do not change whether we consider them in $\Bvuw$ or $\BB_{v}^{u',w'}$, the same lexicographic reduction allows us to conclude that $\x <_{u',w'} \y$
Thus in either case, $\x$ is a minimal vector in $\BB_v^{u',w'}$.
\end{proof}

The next lemma covers the special case when $n|v$, and thus if $\x \in \Bvuw$ is a minimal vector, we know that $x_0 = 0$.  

\begin{lemma}
\label{lemma:change_in_v}
Let $g = t^{-u}a^vt^w$ where $n|v$, and $\x \in \Bvuw$ is a minimal vector with $\kx > \max(u,w)$.  Then
$\y$ is a minimal vector in $\BB_{v'}^{u,w}$ where $y_i = x_{i+1}$ for $0 \leq i \leq \kx-1$ and $v' = \frac{v}{n}$.
\end{lemma}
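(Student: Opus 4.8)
The plan is to exploit that, since $n\mid v$ and $\kx>\max(u,w)\ge 0$, the leading digit $x_0$ must vanish, so that $\y$ is literally $\x$ with its (zero) initial coordinate deleted and all indices shifted down by one. First I would establish $x_0=0$: because $\Sigma(\x)=v$ is divisible by $n$ while every term $x_in^i$ with $i\ge 1$ is divisible by $n$, we get $x_0\equiv 0\pmod n$; since $\kx>\max(u,w)\ge 0$ the coordinate $x_0$ is not the final digit of $\x$, so condition (1) in the definition of $\Bvuw$ forces $|x_0|\le\fn<n$, whence $x_0=0$. Consequently $\Sigma(\y)=v/n=v'$, $\Vert\y\Vert_1=\Vert\x\Vert_1$, and $\ky=\kx-1$. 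As $\kx>\max(u,w)$ we have $\ky=\kx-1\ge\max(u,w)$, so the interior bound $|y_i|=|x_{i+1}|\le\fn$ (from condition (1) for $\x$) together with the final-digit bound $|y_{\ky}|=|x_\kx|\le\fn+1$ (or $\fn+2$ when $n=2$, from condition (3) for $\x$) shows $\y\in\BB_{v'}^{u,w}$.

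Next I would record a length relationship for shifting. For any $\z$ with $z_0=0$, write $\z^-=(z_1,z_2,\dots)$, so that $\Vert\z^-\Vert_1=\Vert\z\Vert_1$ and $k_{\z^-}=k_\z-1$; comparing the two formulas of Lemma~\ref{lemma:length_formula} then gives $|\eta_{u,\Sigma(\z)/n,w}(\z^-)|=|\eta_{u,\Sigma(\z),w}(\z)|-2$ when $k_{\z^-}\ge\max(u,w)$ (both lengths use shapes $3$ and $4$), and $|\eta_{u,\Sigma(\z)/n,w}(\z^-)|=|\eta_{u,\Sigma(\z),w}(\z)|$ when $k_{\z^-}<\max(u,w)$ (then $k_\z\le\max(u,w)$ as well, since no integer lies strictly between $k_{\z^-}$ and $k_{\z^-}+1$, and both lengths use shapes $1$ and $2$). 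Applied to our $\y=\x^-$, where $\ky\ge\max(u,w)$, this yields
\[
|\eta_{u,v',w}(\y)| \;=\; \Vert\x\Vert_1 + 2(\kx-1) - |u-w| \;=\; |\eta_{u,v,w}(\x)| - 2 .
\]

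The core of the argument is a lifting step. Given any competitor $\y'\in\BB_{v'}^{u,w}$, form $\x'=(0,y_0',y_1',\dots)$, the unique vector with $x_0'=0$ and $(\x')^-=\y'$; since the normal-form convention ($n\mid v$ forces $uw=0$) permits a leading zero for these $u,w$, and the digit bounds transfer coordinate-by-coordinate exactly as for $\y$, we have $\x'\in\Bvuw$. The shift relation above (with $\z=\x'$) gives $|\eta_{u,v,w}(\x')|=|\eta_{u,v',w}(\y')|+2$ when $k_{\y'}\ge\max(u,w)$, and $|\eta_{u,v,w}(\x')|=|\eta_{u,v',w}(\y')|$ otherwise. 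Because $\x$ is minimal in $\Bvuw$ we have $|\eta_{u,v,w}(\x')|\ge|\eta_{u,v,w}(\x)|$; substituting this together with $|\eta_{u,v,w}(\x)|=|\eta_{u,v',w}(\y)|+2$ shows $|\eta_{u,v',w}(\y')|\ge|\eta_{u,v',w}(\y)|$ in both cases. For $n$ odd, where minimality is governed by length alone, this already proves $\y$ is minimal.

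For $n$ even (including $n=2$) I must also handle the lexicographic tie-break, which is exactly where the case split does the work: the second case gives the strict inequality $|\eta_{u,v',w}(\y')|>|\eta_{u,v',w}(\y)|$, so equality of lengths can occur only in the first case, where it forces $|\eta_{u,v,w}(\x')|=|\eta_{u,v,w}(\x)|$. Minimality of $\x$ then gives $|\x|\le|\x'|$ in the absolute lexicographic order; since $x_0=x_0'=0$ and the remaining coordinates of $\x,\x'$ are precisely those of $\y,\y'$ shifted by one index, this is equivalent to $|\y|\le|\y'|$. Hence $\y\ord\y'$ for every $\y'\in\BB_{v'}^{u,w}$, so $\y$ is the minimal vector. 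The main obstacle is precisely this lexicographic bookkeeping for even $n$: one must check that the tie-breaking comparison survives the lifting, which hinges on the matching leading zeros and on the strict inequality from the second case ruling out any ``shape-$1$'' tie.
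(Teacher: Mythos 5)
Your proof is correct and follows essentially the same route as the paper's: both lift a competitor (equivalently, a reducing vector) from $\BB_{v'}^{u,w}$ to $\Bvuw$ by prepending a zero digit, then invoke the minimality of $\x$, tracking the shift of $2$ in path length and the preservation of the lexicographic comparison. The only cosmetic difference is that you argue directly over competitors $\y'$ using a shift identity derived from Lemma~\ref{lemma:length_formula}, while the paper argues by contradiction via a reducing vector $\z \in \LL_0$ and Lemma~\ref{lemma:length_formula_change}.
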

\begin{proof}
Suppose that $\y \in {\mathcal B}_{v'}^{u,w}$ is not minimal.  Then there is $\z \in \LL_0$
such that $\y + \z \ords \y$.  Define $\z' \in \LL_0$ by prepending a digit $0$ to $\z$,
so $z'_0 = 0$ and $z'_i = z_{i-1}$ for $i>0$.  
As the digits of $\x$ and $\y$ are the identical but
simply shifted by one index, we have $\Vert \x \Vert_1 = \Vert \y \Vert_1$
and $\Vert \x + \z' \Vert_1 = \Vert \y + \z \Vert_1$.
It follows as well that $\ky = \kx-1 \geq \max(u,w)$.
Additionally, the change in vector
lengths is the same, so $\kx - k_{\x + \z'} = \ky - k_{\y + \z}$, and any relevant lexicographic
change occurs in both pairs of vectors.
If $k_{\y + \z} \ge \ky\geq \max(u,w)$, then we use the second length formula in Lemma~\ref{lemma:length_formula} to conclude that
\[
\eta_{u,v,w}(\x + \z') - \eta_{u,v,w}(\x) = \eta_{u,v,w}(\y + \z) - \eta_{u,v,w}(\y).
\]
As any relevant lexicographic change occurs in both pairs of vectors, and we know that $\y + \z \ords \y$, it follows that $\x + \z' \ords \x$, a contradiction.

If $k_{\y + \z} < \ky$ we do not know the ordinal relationship between $k_{\x + \z'}$, respectively
$k_{\y + \z}$, and $\max(u,w)$.  However, we can apply Lemma~\ref{lemma:length_formula_change} to compute
\begin{align*}
|\eta_{u,v,w}(\y + \z)| - |\eta_{u,v,w}(\y)| &= 
\Vert \y + \z \Vert_1 - \Vert \y \Vert_1 - 2\max(0, \ky - \max(k_{\y+\z},u,w)) \\
|\eta_{u,v,w}(\x + \z')| - |\eta_{u,v,w}(\x)| &= 
\Vert \x + \z' \Vert_1 - \Vert \x \Vert_1 - 2\max(0, \kx - \max(k_{\x+\z'},u,w)). \\
\end{align*}
Recall that $\Vert \x \Vert_1 = \Vert \y \Vert_1$
and $\Vert \x + \z' \Vert_1 = \Vert \y + \z \Vert_1$, 
and $y_i = x_{i+1}$ for $0 \leq i \leq \ky - \kx-1$.
It follows that $\ky - \max(k_{\y+\z},u,w) \le  \kx - \max(k_{\x+\z'},u,w)$.
Thus
\[
|\eta_{u,v,w}(\y + \z)| - |\eta_{u,v,w}(\y)| \ge |\eta_{u,v,w}(\x + \z')| - |\eta_{u,v,w}(\x)|.
\]
As $\y + \z \ords \y$, it follows that $\x + \x' \ords \x$, a contradiction.
\end{proof}

Let $g = t^{-u}a^vt^w$ and $\x \in \Bvuw$.
When computing $\kappa_1(g)$ it is often more straightforward to determine $l(g^{t^{\pm 1}})$ than $l(g^{a^{\pm 1}})$.  
We introduce a restriction which will allow us to easily determine when the vectors corresponding to $l(g^a)$ and $l(g^{\ai})$ are minimal in the appropriate $\Bvuw$.
This restriction is not meant
to be exhaustive; rather it gives us control over a broad range
of vectors $\x$ for which $\eta_{u,v,w}(\x)$ has shape $3$ or $4$.

Let $g=t^{-u}a^vt^w$ with 
$
v_+=n^u+v-n^w \text{ and } v_-= -n^u+v+n^w.
$
Recall that $g^a = t^{-u}a^{v_+}t^w$ and $g^{\ai} = t^{-u}a^{v_-}t^w$.
Beginning with a vector $\x \in \LL_v$,
\begin{itemize}[itemsep=5pt]
    \item to obtain a vector in $\LL_{v_+}$, one can add the digit $1$ to $x_u$ and subtract $1$ from $x_w$.  Denote the resulting vector by $\rho_{u,-w}(\x)$, and
    \item to obtain a vector in $\LL_{v_-}$, one can subtract the digit $1$ from $x_u$ and add $1$ to $x_w$.  Denote the resulting vector by $\rho_{-u,w}(\x)$.
\end{itemize}
Note that $\eta_{u,v_+,w}(\rho_{u,-w}) = g^{a}$ and $\eta_{u,v_-,w}(\rho_{-u,w}) = g^{\ai}$.
It is possible that the length of $\rho_{u,-w}(\x)$ or $\rho_{-u,w}(\x)$ will differ from the length of $\x$.
If $u > \kx$ or $w > \kx$, forming $\rho_{u,-w}(\x)$ and $\rho_{-u,w}(\x)$ will change digits with indices greater than $\kx$, creating a longer vector.
If, on the other hand, $w<u = \kx$ and $x_u = 1$, the length of $\rho_{-u,w}(\x)$ will be less than the length of $\x$.

We say that $\x \in \Bvuw$ is {\em strongly minimal} if
both $\rho_{u,-w}(\x) \in \BB_{v_+}^{u,w}$ and
$\rho_{-u,w}(\x) \in \BB_{v_-}^{u,w}$ are minimal.  
When $n$ is odd, $\x$ will be strongly minimal if we restrict $1<|x_{u}|,|x_{w}| < \fn$.  When $n$ is even, $\x$ will be strongly minimal if we restrict $1<|x_{i}|,|x_{w}| < \frac{n}{2}-3$.    While these are not the only conditions which guarantee that an element is strongly minimal, they are easily met for odd $n$ and even $n>10$.  The notion of a strongly minimal element allows for a clean statement of later theorems.

The following lemma gives a simple example of of a family of elements $g \in BS(1,n)$ whose conjugation curvature satisfies $\kappa_1(g) = 0$.

\begin{lemma}\label{lemma:zero_curv}
Let $g= t^{-u}a^vt^u$ for $u \in \N$ and $v \in \Z$ and
let $\x \in \Bvuw$ minimal. If $u \notin \{\kx-1,\kx,
\kx+1\}$ then
$\kappa_1(g) =0$.
\end{lemma}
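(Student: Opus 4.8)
The plan is to evaluate $\kappa_1(g)$ directly from the definition. The sphere of radius one is $S_n(1) = \{a, \ai, t, \ti\}$, so $|S_n(1)| = 4$ and it suffices to compare $l(g)$ with the four quantities $l(g^a), l(g^{\ai}), l(g^t), l(g^{\ti})$. I will work in the generic regime $u \ge 1$, so that $uw = u^2 > 0$ and the normal form forces $n \nmid v$; then the four conjugation formulas displayed before the lemma apply. Because $w = u$, the horizontal shifts cancel: $g^a = t^{-u}a^{n^u + v - n^u}t^u = g$ and $g^{\ai} = t^{-u}a^{-n^u + v + n^u}t^u = g$, since the terms $n^w = n^u$ cancel. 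Hence $l(g^a) = l(g^{\ai}) = l(g)$, and proving $\kappa_1(g) = 0$ reduces to establishing $l(g^t) + l(g^{\ti}) = 2\,l(g)$.

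Next I would identify the two remaining conjugates as $g^t = t^{-(u-1)}a^v t^{u-1}$ and $g^{\ti} = t^{-(u+1)}a^v t^{u+1}$. These share the same value of $v$ as $g$, so the same digit vector $\x$ lies in the common lattice $\LL_v$; only the ambient box changes, from $\BB_v^{u,u}$ to $\BB_v^{u-1,u-1}$ and $\BB_v^{u+1,u+1}$. The crucial step is to show that the given minimal $\x$ remains minimal in both shifted boxes, and this is precisely where the hypothesis $u \notin \{\kx - 1, \kx, \kx + 1\}$ is used: it guarantees that either $u \ge \kx + 2$, so that $\max(u-1,u-1)$, $\max(u,u)$ and $\max(u+1,u+1)$ all strictly exceed $\kx$, or $u \le \kx - 2$, so that all three are strictly less than $\kx$. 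In either case the three maxima share a single ordinal relationship to $\kx$, so Lemma~\ref{lemma:still_minimal} applies and certifies that $\x$ is minimal in $\BB_v^{u-1,u-1}$ and $\BB_v^{u+1,u+1}$; thus $l(g^t) = |\eta_{u-1,v,u-1}(\x)|$ and $l(g^{\ti}) = |\eta_{u+1,v,u+1}(\x)|$.

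Finally I would read the three lengths off Lemma~\ref{lemma:length_formula}. If $u \ge \kx + 2$, then $\kx \le \max$ in all three cases and the first formula gives $l(g^t) = \Vert \x \Vert_1 + 2u - 2$, $l(g) = \Vert \x \Vert_1 + 2u$ and $l(g^{\ti}) = \Vert \x \Vert_1 + 2u + 2$, so that $l(g^t) + l(g^{\ti}) = 2\Vert \x \Vert_1 + 4u = 2\,l(g)$. If $u \le \kx - 2$, then $\kx > \max$ in all three cases and the second formula gives $l(g^t) = l(g) = l(g^{\ti}) = \Vert \x \Vert_1 + 2\kx$, the term $|u'-w'|$ vanishing because the two $t$-exponents are equal; again $l(g^t) + l(g^{\ti}) = 2\,l(g)$. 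Substituting $l(g^a) = l(g^{\ai}) = l(g)$ together with $l(g^t) + l(g^{\ti}) = 2\,l(g)$ into the definition yields $\kappa_1(g) = \bigl(l(g) - \tfrac14 \cdot 4\,l(g)\bigr)/l(g) = 0$.

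The main obstacle is the middle step: transferring minimality of $\x$ from $\BB_v^{u,u}$ to the perturbed boxes $\BB_v^{u\pm 1, u \pm 1}$. The minimality criteria (Lemma~\ref{lemma:odd_box}, Proposition~\ref{lemma:even_minimal_characterization} and Proposition~\ref{lemma:lemma318_n=2}, packaged in Lemma~\ref{lemma:still_minimal}) depend only on the ordinal position of $\max(u,w)$ relative to $\kx$, and the excluded values $\{\kx-1, \kx, \kx+1\}$ are exactly those for which decrementing or incrementing $u$ moves $\max(u,u)$ onto or across $\kx$. Ruling these out ensures the perturbation never changes the governing length formula, so the two symmetric $\pm 1$ changes in $u$ cancel and the curvature vanishes.
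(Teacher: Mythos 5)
Your proposal is correct and follows essentially the same route as the paper: transfer minimality of $\x$ to $\BB_v^{u\pm 1,u\pm 1}$ via Lemma~\ref{lemma:still_minimal} (using the excluded values to keep $\max(u\pm1,u\pm1)$ on the same side of $\kx$), observe $g^a=g^{\ai}=g$ from $w=u$, and check that the two $t$-conjugates contribute $2l(g)$. The only cosmetic difference is that you read the lengths directly off Lemma~\ref{lemma:length_formula} where the paper cites Lemmas~\ref{lemma:still_minimal_2_claim_1} and~\ref{lemma:still_minimal_2_claim_2} (which are themselves just that computation), and your explicit restriction to $u\ge 1$ matches what the paper's proof implicitly assumes.
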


\begin{proof}
As $u \notin \{\kx-1,\kx,\kx+1\}$, it follows from 
Lemma~\ref{lemma:still_minimal} that $\x$ is also minimal in both
$\BB_v^{u-1,u-1}$ and $\BB_v^{u+1,u+1}$.   
The assumption that $w=u$ immediately implies that $g = g^a = g^{a^{-1}}$. 
The restriction on the values of $u$ allows us to use the same length formula from  Lemma~\ref{lemma:length_formula}  to compute both $l(g), \ l(g^t)$ and $l(g^{\ti})$, and thus
\begin{align*}
\kappa_1(g)l(g)
&= l(g) - \frac{1}{4}\left(l(g^t) + l(g^\ti) + l(g^a) + l(g^\ai)\right) \\
& = l(g) - \frac{1}{4}\left(|\eta_{u-1,v,u-1}(\x)| + |\eta_{u+1,v,u+1}(\x)| + 2l(g)\right) \\
& = l(g) - \frac{1}{4}\left( l(g) + l(g) + 2l(g)\right),
\end{align*}
where the last equality follows from Lemmas~\ref{lemma:still_minimal_2_claim_1} and~\ref{lemma:still_minimal_2_claim_2}.
 As $l(g) \geq 1$, this simplifies
to $\kappa_1(g) = 0$.
\end{proof}

When $g = t^{-u}a^vt^w$ is represented by a geodesic of shape 3 or 4, Theorems~\ref{thm:type1curv} and~\ref{thm:type2curv} provide broad conditions on when $\kappa_1(g)$ is negative or zero.  
Later theorems in Section~\ref{sec:curvature} allow for analogous conclusions about $\kappa_r(g)$ for a range of values of $r$.
In order to express a variety of conditions in a concise way,
it will be helpful to introduce the notation $\delta_C$, where $C$ is a logical
expression and $\delta_C = 1$ if $C$ is satisfied and $0$ otherwise.

\begin{theorem}\label{thm:type1curv}
Let $g = t^{-u} a^vt^w \neq e$ and let $\x \in \Bvuw$ be strongly
minimal with $\kx> \max(u,w)$.  The conjugation curvature
$\kappa_1(g)$ then satisfies:
\begin{enumerate}
    \item $\kappa_1(g) = 0$ iff
    $\delta_{u\ne w}(\delta_{x_u=0} + \delta_{x_w=0}) = 0$ and either 
    $uw > 0$ or $n|v$.
    \item $\kappa_1(g) < 0$ otherwise.
\end{enumerate}
\end{theorem}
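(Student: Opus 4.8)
The plan is to evaluate the four terms $l(g^s)$ for $s \in \{t,\ti,a,\ai\}$ (so that $|S_n(1)|=4$) and substitute into
\[
\kappa_1(g)\,l(g) \;=\; l(g) - \tfrac14\sum_{s}l(g^s) \;=\; -\tfrac14\sum_s\bigl(l(g^s)-l(g)\bigr),
\]
so that the sign of $\kappa_1(g)$ is governed by the total change of length under conjugation by the four generators. Since $g\neq e$ forces $l(g)>0$, and the hypothesis $\kx>\max(u,w)$ forces $v\neq 0$ (the zero vector has no final nonzero digit), the theorem reduces to showing that $\sum_s(l(g^s)-l(g))\ge 0$, with equality in exactly the stated cases.

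First I would handle the $a$-conjugates. Using the maps $\rho_{u,-w}$ and $\rho_{-u,w}$ together with strong minimality, $\rho_{u,-w}(\x)$ and $\rho_{-u,w}(\x)$ are minimal for $g^a$ and $g^{\ai}$; since $\kx>\max(u,w)$ both still have final index $\kx$, so the shape 3/4 formula of Lemma~\ref{lemma:length_formula} gives $l(g^a)-l(g)=\Vert\rho_{u,-w}(\x)\Vert_1-\Vert\x\Vert_1$ and likewise for $g^{\ai}$. When $u\neq w$ the indices $u,w$ are distinct and the elementary identity $|c+1|+|c-1|-2|c|=2\delta_{c=0}$ yields
\[
\bigl(l(g^a)-l(g)\bigr)+\bigl(l(g^{\ai})-l(g)\bigr)=2\bigl(\delta_{x_u=0}+\delta_{x_w=0}\bigr),
\]
while when $u=w$ both $\rho$ maps return $\x$ (indeed $g^a=g^{\ai}=g$), so the sum is $0$. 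In every case this equals $2\,\delta_{u\neq w}(\delta_{x_u=0}+\delta_{x_w=0})$.

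Next I would treat the $t$-conjugates, splitting on whether $uw>0$. If $uw>0$, then $g^t=t^{-(u-1)}a^vt^{w-1}$ and $g^{\ti}=t^{-(u+1)}a^vt^{w+1}$ merely shift $(u,w)$; since $\kx>\max(u,w)$, Lemmas~\ref{lemma:still_minimal} and~\ref{lemma:still_minimal_2} keep $\x$ minimal in the new boxes, and the shape 3/4 value $\Vert\x\Vert_1+2\kx-|u-w|$ is invariant under $(u,w)\mapsto(u\pm1,w\pm1)$, so $l(g^t)=l(g^{\ti})=l(g)$ and the contribution is $0$. If $uw=0$, one $t$-conjugate instead rescales $v$: conjugation by $t$ produces $t^{-u}a^{nv}t^w$, and conjugation by $\ti$ produces $t^{-(u+1)}a^vt^{w+1}$ when $n\nmid v$ but $t^{-u}a^{v/n}t^w$ when $n\mid v$. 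The increment-only conjugate again has length $l(g)$. For the rescalings I would use a shift relation: Lemma~\ref{lemma:change_in_v} applied directly to $g$ gives $l(t^{-u}a^{v/n}t^w)=l(g)-2$, while the converse of Lemma~\ref{lemma:change_in_v} (that the shift of a minimal vector is minimal for $nv$) gives $l(t^{-u}a^{nv}t^w)=l(g)+2$. Hence the $t$-contribution is $0$ when $uw>0$, is $+2$ when $uw=0$ and $n\nmid v$, and is $(+2)+(-2)=0$ when $uw=0$ and $n\mid v$.

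Finally I would assemble: $\sum_s(l(g^s)-l(g))$ equals the $t$-contribution plus $2\,\delta_{u\neq w}(\delta_{x_u=0}+\delta_{x_w=0})$, which is always nonnegative, so $\kappa_1(g)\le 0$ and vanishes exactly when the sum does. When $uw>0$, or $uw=0$ and $n\mid v$, the $t$-contribution is $0$, so the sum vanishes iff $\delta_{u\neq w}(\delta_{x_u=0}+\delta_{x_w=0})=0$; when $uw=0$ and $n\nmid v$ the sum is at least $2>0$, matching the failure of the condition ``$uw>0$ or $n\mid v$.'' This is precisely the claimed dichotomy. I expect the main obstacle to be the rescaling case $uw=0$: proving $l(g^t)=l(g)+2$ needs the converse direction of Lemma~\ref{lemma:change_in_v}, and one must track the normal-form rewriting of the conjugates carefully, since the $uw=0$ rule for $g^t$ and the $n\mid v$ rule for $g^{\ti}$ can apply simultaneously.
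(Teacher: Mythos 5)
Your proposal is correct and follows essentially the same route as the paper: the same four-way decomposition over the generators, the same use of strong minimality and the index-$u$, index-$w$ digit identity for the $a^{\pm1}$-conjugates, Lemmas~\ref{lemma:still_minimal} and~\ref{lemma:still_minimal_2} for the $t^{\pm1}$-conjugates when only $(u,w)$ shifts, and the shift argument via Lemma~\ref{lemma:change_in_v} for the rescaling cases, assembled into the identical case split on $uw>0$ versus $uw=0$ with $n\mid v$ or $n\nmid v$. Your remark that the bound $l(g^t)=l(g)+2$ when $uw=0$ requires the converse direction of Lemma~\ref{lemma:change_in_v} (that the right-shift of a minimal vector is minimal for $nv$) is exactly the step the paper invokes there, so this is a point of agreement rather than a divergence.
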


Any $g \in BS(1,n)$ to which Theorem~\ref{thm:type1curv}
applies can be represented by a geodesic path of shape $3$ or $4$. 
When $g$ is represented by a geodesic path of shape $1$ or $2$
we obtain an analogous theorem,  stated below for completeness. 
Its proof involves checking many cases nearly identical to those
in the proof of Theorem~\ref{thm:type1curv}.  As we do not need
Theorem~\ref{thm:type2curv} in later results, we include its
statement but leave its proof to the interested reader.

\begin{theorem}\label{thm:type2curv}
Let $g = t^{-u} a^vt^w \neq e$  and let $\x \in \Bvuw$ be strongly
minimal with $\kx \le \max(u,w)$.
The conjugation curvature $\kappa_1(g)$ then satisfies:
\begin{enumerate}[itemsep=5pt]
\item $\kappa_1(g)=0$ iff $u,w>\kx$ with $u = w$.
\item $\kappa_1(g)<0$ otherwise.\qed
\end{enumerate}
\end{theorem}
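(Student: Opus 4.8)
The plan is to follow the template of Theorem~\ref{thm:type1curv}, replacing the shape $3$/$4$ length formula by the shape $1$/$2$ formula $|\eta_{u,v,w}(\x)| = \Vert\x\Vert_1 + u + w$ of Lemma~\ref{lemma:length_formula}, which is the relevant one since $\kx \le \max(u,w)$. As $g \ne e$ forces $l(g)\ge 1$, the sign of the curvature is controlled by $\kappa_1(g)\,l(g) = -\frac14\sum_{s\in\{a^{\pm1},t^{\pm1}\}}\bigl(l(g^s)-l(g)\bigr)$, so I would evaluate the four differences and show their sum is $\ge 0$, vanishing exactly when $u=w>\kx$. I would split along the same three regimes as Theorem~\ref{thm:type1curv}: (A) $uw>0$; (B) $uw=0$ and $n\nmid v$; (C) $n\mid v$ (forcing $uw=0$). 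A guiding point is that the two formulas of Lemma~\ref{lemma:length_formula} agree at the threshold $\kx=\max(u,w)$, so crossing it is ``free.''

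For the $t$-conjugates, in regime (A) conjugation fixes $v$ and replaces $(u,w)$ by $(u-1,w-1)$ or $(u+1,w+1)$; reading the length change off Lemmas~\ref{lemma:still_minimal_2_claim_1} and~\ref{lemma:still_minimal_2_claim_2} (and certifying minimality with Lemmas~\ref{lemma:still_minimal},~\ref{lemma:still_minimal_2}), the increment contributes $+2$, while the decrement contributes $-2$ when $\kx<\max(u,w)$ but only $0$ when $\kx=\max(u,w)$, because there a geodesic crosses into shape $3$/$4$ and the two unit steps cancel. Hence the combined $t$-contribution $\bigl(l(g^t)-l(g)\bigr)+\bigl(l(g^{\ti})-l(g)\bigr)$ is $0$ when $\kx<\max(u,w)$ and $+2$ when $\kx=\max(u,w)$. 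In regimes (B) and (C), where $uw=0$, conjugation by $t$ instead scales $v$ to $nv$ and (when $n\mid v$) conjugation by $\ti$ scales $v$ to $v/n$; I would handle these through the prepend/shift correspondence of Lemma~\ref{lemma:change_in_v}. The key simplification relative to Theorem~\ref{thm:type1curv} is that in the shape $1$/$2$ setting only the easy inequality $l(g^t)\ge l(g)$ is needed, and it follows directly by shifting a minimal vector of $g^t$ down to a representative of $g$ whose length is no larger (the length drops by $0$ or $2$ according to the shape of the shifted vector).

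For the $a$-conjugates I would invoke strong minimality, so $\rho_{u,-w}(\x)$ and $\rho_{-u,w}(\x)$ are minimal for $v_+$ and $v_-$. When $u=w$ these act at a single index and cancel, giving $a$-contribution $0$; combined with the $t$-analysis this yields total $0$ exactly when additionally $\kx<\max(u,w)$, i.e.\ $u=w>\kx$, the asserted zero-curvature locus. When $u\ne w$, put $w=\max(u,w)\ge\kx$. If $w>\kx$ then $x_w=0$, so each $\rho$ extends $\x$ by a fresh $\pm1$ at index $w$, lengthening both conjugates and giving $a$-contribution $2\delta_{x_u=0}+2\ge 2$ (here the $t$-contribution is $0$, as $\kx<\max(u,w)$); if $w=\kx$ there is no extension and the identity $|m+1|+|m-1|-2|m|=2\delta_{m=0}$ gives $a$-contribution $2\delta_{x_u=0}+2\delta_{x_w=0}=2\delta_{x_u=0}$, but then the $t$-contribution is $+2$. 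Regimes (B) and (C) run identically, with the value-scaling of the previous paragraph replacing the $(u,w)$-shift and the digit $x_0$ (which is $0$ when $n\mid v$) feeding the $a$-contribution. In every configuration outside $u=w>\kx$ a strictly positive term survives, so the sum is positive and $\kappa_1(g)<0$, while the diagonal boundary case $u=w=\kx$ has $a$-contribution $0$ and $t$-contribution $+2$, confirming that zero curvature forces the strict inequalities $u,w>\kx$.

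The main obstacle is not a single hard estimate but the proliferation of sub-cases created by two effects absent from Theorem~\ref{thm:type1curv}: a $t$-conjugate can flip a geodesic between shapes $1$/$2$ and $3$/$4$ at the threshold $\kx=\max(u,w)$, and the $\rho$-operations can enlarge the final index whenever $u$ or $w$ exceeds $\kx$. The real work is to decide, for each of the up-to-four conjugates in each regime, which formula of Lemma~\ref{lemma:length_formula} applies, and to verify that the relevant vector remains minimal in its new box — in particular that $\x$ stays minimal after the decrement crosses the boundary into shape $3$/$4$, where Lemma~\ref{lemma:still_minimal} does not apply and one must fall back on the characterizations of Sections~\ref{section:geodesics_n_odd} and~\ref{section:geodesics_n_even}. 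This is precisely the ``nearly identical'' casework the authors allude to. The one delicate ingredient shared with Theorem~\ref{thm:type1curv}, the scaling $v\mapsto nv$ under $t$-conjugation when $uw=0$, is mild here because only the weak bound $l(g^t)\ge l(g)$ is required, so the heavier uniqueness and run/weight machinery needed in Theorem~\ref{thm:type1curv} can be avoided.
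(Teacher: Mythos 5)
The paper gives no proof of Theorem~\ref{thm:type2curv} --- the authors explicitly leave it to the reader as casework ``nearly identical'' to that of Theorem~\ref{thm:type1curv} --- so there is no argument to compare against, but your plan is exactly the one they prescribe: conjugate by each generator, certify minimality of the resulting vector, and read the length change off Lemma~\ref{lemma:length_formula}, here using the shape~$1$/$2$ formula. Your accounting of the contributions (the $t$-pair giving $0$ or $+2$ according to whether $\kx<\max(u,w)$ or $\kx=\max(u,w)$; the $a$-pair giving $0$ when $u=w$, and $2\delta_{x_u=0}+2$ or $2\delta_{x_u=0}$ when $u\ne w$ according to whether $\max(u,w)>\kx$ or $\max(u,w)=\kx$) is consistent with the claimed dichotomy, and the only substance you defer --- verifying that $\x$ and the shifted or $\rho$-modified vectors remain minimal when the ordinal relation between $\kx$ and $\max(u,w)$ flips --- is precisely the casework the authors also decline to write out.
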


\begin{proof}[Proof of Theorem~\ref{thm:type1curv}]
We compute the effect on word length of conjugation by each
of the generators of $BS(1,n)$.  Since $\kx>\max(u,w)$, the second length formula from Lemma~\ref{lemma:length_formula} is always used to compute $l(g)$.

{\bf Case 1: conjugation by $t$.} If  $uw>0$, then as computed above
    $g^t = t^{-(u-1)}a^vt^{w-1}$ and $\kx  > \max(u-1,w-1)$,
    so it follows from Lemma~\ref{lemma:still_minimal} that $\x$ is minimal in $\BB_v^{u-1,w-1}$ and we use the second length formula from Lemma~\ref{lemma:length_formula} to compute $l(g^t)$.
    It follows that
    $l(g^t) = l(g)$.  
    
    If $uw=0$, then $g^t = t^{-u}a^{vn}t^{w}$.
    Let $\y = (y_0,y_1, \cdots ,y_{\ky})$ be obtained from $\x$ by defining $y_0=0$ and $y_i = x_{i-1}$ for $1 \leq i \leq \ky$. 
    Then $\y$ is the vector in ${\mathcal B}_{vn}^{u,w}$  such that $\eta_{u,vn,w}(\y) = g^t$.
    Observe that $\ky = \kx + 1$, so $\max(u,w) < \kx < \ky$.  
    If $\y$ were not minimal, it would follow from Lemma~\ref{lemma:change_in_v} that $\x$ was not minimal, a contradiction.
    As $\max(u,w) < \kx < \ky$ we use the second length formula in Lemma~\ref{lemma:length_formula} to compute $l(g^t)$, and thus $l(g^t) = |\eta_{u,nv,w}(\y)| = |\eta_{u,v,w}(\x)|+2 = l(g)+2.$

    In summary:
    \[
    l(g^t) = \left\{\begin{array}{ll} l(g) & \textnormal{if $uw > 0$} \\
                                      l(g)+2 & \textnormal{if $uw=0$.}
                                      \end{array}\right.
    \]
    
{\bf Case 2: conjugation by $\ti$.} If $n \nmid v$, then
    $g^\ti = t^{-(u+1)}a^vt^{w+1}$, 
    and $ \kx \ge \max(u+1,w+1)$.   
    Lemma~\ref{lemma:still_minimal_2} guarantees that
    $\x \in \BB_v^{u+1,w+1}$ is minimal.
    As $\kx \ge \max(u+1,w+1)$, we use the
    second length formula from Lemma~\ref{lemma:length_formula} to compute $l(g^{\ti})$, noting that the two formulas agree when 
    $\kx = \max(u,w)$.  It follows that
    $l(g^\ti) = l(g)$.
    
    If $n|v$, then $uw=0$ and $g^{\ti}=t^{-u}a^{\frac{v}{n}}t^w$.
    Note that since $n|v$, the least significant digit of $\x \in \Lv$ is $0$.
    Let $\y = (y_0,y_1, \cdots ,y_{\ky})$ be obtained from $\x$ by defining  $y_i = x_{i+1}$ for $0 \leq i \leq \ky = \kx-1$, that is, each entry of $\x$ is
    shifted left by one position to create $\y$.
    Then $\y$ is the vector in ${\mathcal B}_{\frac{v}{n}}^{u,w}$ corresponding to $g^t$.
    It follows from Lemma~\ref{lemma:change_in_v} that $\y \in \BB_{\frac{v}{n}}^{u,w}$ is minimal.
    As $\ky \geq \max(u,w)$, we use the second length formula in Lemma~\ref{lemma:length_formula} to compute $l(g^{\ti})$ and see that
    $l(g^{\ti}) = |\eta_{u,\frac{v}{n},w}(\y)| = |\eta_{u,v,w}(\x)|-2 = l(g)-2$
    In summary,
    \[
    l(g^t) = \left\{\begin{array}{ll} 
             l(g)   & \textnormal{if $n\nmid v$} \\
             l(g)-2 & \textnormal{if $n|v$.}
        \end{array}\right.
    \]
    
{\bf Case 3: conjugation by $a^{\pm 1}$.} Let $v_+ = n^u+v-n^w$ and $\x_+ = \rho_{u,-w}(\x)$
    and $v_- = -n^u+v+n^w$ and $\x_- = \rho_{-u,w}(\x)$.
    Since $\x$ is strongly minimal, we have that $\x_+ \in \BB_{v_+}^{u,w}$
    and $\x_- \in \BB_{v_-}^{u,w}$ are minimal,
    so computing $l(g^a)$ and $l(g^\ai)$ reduces to applying the length
    formula to compute $|\eta_{u,v_+,w}(\x_+)|$ and $|\eta_{u,v_-,w}(\x_-)|$.
    
    The assumptions that $\x$ is strongly minimal and $\kx > \max(u,w)$ ensure that 
    changing the digits at indices $u$ and $w$ does not alter the length of $\eta_{u,v,w}(\x)$;
    hence $k_{\x_+} = k_{\x_-} = \kx$, and the change in length between $\eta_{u,v,w}(\x)$ and $\eta_{u,v_+,w}(\x_+)$, respectively $\eta_{u,v,w}(\x)$ and $\eta_{u,v_-,w}(\x_-)$,
    reduces to the change in absolute value between the coordinates with indices $u$ and $w$.
    
    If $u=w$, then the changes to $x_u=x_w$ sum to zero, and we have $l(g^a) = l(g^\ai) = l(g)$.
    Otherwise,
    \begin{align*}
    l(g^a) &= |\eta_{u,v_+,w}(\x_+)|
            = l(g) + |x_u + 1| - |x_u| + |x_w - 1| - |x_w| \\
    l(g^\ai) &= |\eta_{u,v_-,w}(\x_-)| 
              = l(g) + |x_u - 1| - |x_u| + |x_w + 1| - |x_w|
    \end{align*}
    Observe that if $x_u \ne 0$, then $|x_u + 1| + |x_u - 1| - 2|x_u| = 0$, 
    and if $x_u = 0$, then $|x_u + 1| + |x_u - 1| - 2|x_u| = 2$; analogous statements hold for $x_w$.
    Thus we have shown that
    \[
    l(g^a) + l(g^\ai) = 2l(g) + 2\delta_{u \neq w}(\delta_{x_u=0} + \delta_{x_w=0}).
    \]
Combining the above computations with those for $l(g^{t^{\pm 1}})$,  note that
if $n|v$, then $uw=0$, and hence if $uw>0$, then $n\nmid v$, and hence
\[
l(g^t) + l(g^\ti) + l(g^a) + l(g^\ai)
=
\left\{\begin{array}{ll} 
4l(g) + 2\delta_{u\ne w}(\delta_{x_u=0} + \delta_{x_w=0})
& \textnormal{if $uw>0$ or $n|v$} \\
4l(g) + 2 + 2\delta_{u\ne w}(\delta_{x_u=0} + \delta_{x_w=0})
& \textnormal{if $uw=0$ and $n \nmid v$}
\end{array}\right.
\]
The theorem follows immediately from this formula.
\end{proof}

The following two facts arise in the proof of
Theorem~\ref{thm:type1curv}, and we state them below in Lemma~\ref{lemma:type1_facts} for easy
reference, noting that the second is true in greater generality
than the context of Theorem~\ref{thm:type1curv}.
\begin{lemma}\label{lemma:type1_facts}
Let $g = t^{-u}a^vt^w \in BS(1,n)$ where $\x \in \Bvuw$ is minimal and $0 < \max(u,w) \leq \kx-1$.
\begin{enumerate}[itemsep=5pt]
\item If $uw>0$ and $n \nmid v$ then $l(g) = l(g^t) = l(g^{t^{-1}})$.
\item If $\x$ is strongly minimal and $x_ux_w>0$  then 
$l(g) = l(g^a) = l(g^\ai)$.\qed
\end{enumerate}
\end{lemma}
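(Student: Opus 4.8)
The plan is to extract both facts directly from the case analysis already carried out in the proof of Theorem~\ref{thm:type1curv}, since the hypothesis $0 < \max(u,w) \le \kx - 1$ places us squarely in the regime $\kx > \max(u,w)$ treated there, with $l(g)$ always computed by the second length formula of Lemma~\ref{lemma:length_formula}.

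For the first fact, I would invoke Case~1 and Case~2 of that proof separately. Since $uw > 0$, Case~1 gives $g^t = t^{-(u-1)}a^vt^{w-1}$ with $\kx > \max(u-1,w-1)$; Lemma~\ref{lemma:still_minimal} keeps $\x$ minimal in $\BB_v^{u-1,w-1}$, and both lengths are read off the second length formula, yielding $l(g^t) = l(g)$. Since $n \nmid v$, Case~2 gives $g^{\ti} = t^{-(u+1)}a^vt^{w+1}$ with $\kx \ge \max(u+1,w+1)$; Lemma~\ref{lemma:still_minimal_2} keeps $\x$ minimal in $\BB_v^{u+1,w+1}$, again giving $l(g^{\ti}) = l(g)$. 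Chaining the two equalities proves $l(g) = l(g^t) = l(g^{\ti})$. I note that $uw>0$ already forces $n \nmid v$ by the normal form convention, so the two hypotheses are not independent; invoking each case under its own hypothesis simply keeps the argument transparent.

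For the second fact, I would use Case~3. Strong minimality guarantees that $\x_+ = \rho_{u,-w}(\x) \in \BB_{v_+}^{u,w}$ and $\x_- = \rho_{-u,w}(\x) \in \BB_{v_-}^{u,w}$ are minimal, and because $u, w \le \kx - 1 < \kx$ the modified digits are interior, so $k_{\x_+} = k_{\x_-} = \kx$ and the length change is exactly the change in $\ell^1$ norm at the two indices:
\begin{align*}
l(g^a) - l(g) &= (|x_u + 1| - |x_u|) + (|x_w - 1| - |x_w|), \\
l(g^\ai) - l(g) &= (|x_u - 1| - |x_u|) + (|x_w + 1| - |x_w|).
\end{align*}
The remaining step is the sign bookkeeping under the extra hypothesis $x_u x_w > 0$, which forces $x_u$ and $x_w$ to be nonzero of a common sign. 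In each display one summand then contributes $+1$ and the other $-1$: if both digits are positive, $|x_u+1|-|x_u| = 1$ while $|x_w-1|-|x_w| = -1$, and symmetrically for $g^\ai$, so both differences vanish; the both-negative case is identical after flipping signs. Thus $l(g^a) = l(g^\ai) = l(g)$.

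There is no genuine obstacle here: both statements are corollaries of computations already displayed in the proof of Theorem~\ref{thm:type1curv}, and the only care required is to verify that the hypotheses of Lemmas~\ref{lemma:still_minimal} and~\ref{lemma:still_minimal_2} hold so that $\x$ (and $\x_\pm$) remain minimal in the shifted boxes, and to complete both sign cases for the second fact. The parenthetical generality remark is accounted for by the observation that the second fact uses only strong minimality together with the interiority of the indices $u$ and $w$, and never the precise value of $\max(u,w)$ relative to the other parameters.
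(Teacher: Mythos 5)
Your proposal is correct and follows the paper's own route exactly: the paper gives no separate proof of this lemma, instead noting that both facts ``arise in the proof of Theorem~\ref{thm:type1curv},'' and your extraction of fact (1) from Cases~1--2 (via Lemmas~\ref{lemma:still_minimal} and~\ref{lemma:still_minimal_2}) and fact (2) from Case~3 (via strong minimality and the sign bookkeeping at the interior indices $u,w$) is precisely that argument made explicit. The only cosmetic caveat is that your displayed formulas for $l(g^{a^{\pm 1}})-l(g)$ presume $u\ne w$; when $u=w$ one has $g^a=g^{\ai}=g$ and the conclusion is immediate, as the paper's Case~3 also notes.
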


\subsection{Sets of positive density in $BS(1,n)$}
\label{sec:sets_of_pos_density}

Theorem~\ref{corollary:shapes_injection}
states that the growth rate of the sequence $\OOn$
is the same as the growth rate of $BS(1,n)$.  In order to show
that a subset of elements of $BS(1,n)$ has positive density, we subdivide this subset according to word length, which is always computed with respect to the generating set $\{a,t\}$ for $BS(1,n)$.
Let $f(N)$ be the function which counts the number of elements in this subset of a given word length $N$.
We will show that the growth rate of $\{f(N)\}_{N \in \N}$ is identical to that of $\OOn$.

Let ${\mathcal Q}_n \subset \OO_n$ be the subset of geodesic words which do not begin with $\ti$ and end with a single $t$, omitting the word $t$, and let ${\mathcal Q}_n(N)$ be the set of such words with word length $N$; we denote the size of ${\mathcal Q}_n(N)$ by $q_n(N)$.
\begin{lemma}
\label{lemma:middle_vector}
The growth rate of $\{q_n(N)\}_{N \in \N}$ is the same as the growth rate of $\OO_n$, that is, $q_n(N) = \Theta(\lambda_n^N)$.
\end{lemma}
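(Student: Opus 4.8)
The plan is to sandwich $q_n(N)$ between two multiples of $\lambda_n^N$ using the finite state automaton $F$ accepting $\OO_n$ together with Lemmas~\ref{lemma:fsa_growth} and~\ref{lemma:exp_growth}. Write $\lambda = \lambda_n$ and recall the structural fact emphasized above: $F$ has a single nontrivial strongly connected component $\mathcal{C}$, whose growth rate is $\lambda$, a component consisting of the single state $s_{\ti}$ (carrying only a self-loop reading $\ti$, which records the initial block $t^{-u}$ and therefore has growth rate $1$), and a transient start state. The upper bound is immediate: since $\QQ_n \subseteq \OO_n$ we have $q_n(N) \le |\OO_n(N)| = \Theta(\lambda^N)$ by Theorem~\ref{corollary:shapes_injection}, so $q_n(N) \le B\lambda^N$ for large $N$.

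For the lower bound I would encode the two defining restrictions of $\QQ_n$ directly into the automaton. Deleting $s_{\ti}$ from $F$ forces accepted words not to begin with $\ti$, and leaves $\mathcal{C}$ and all of its internal transitions untouched; this is harmless precisely because the discarded component has growth rate $1 < \lambda$, while the start state retains edges directly into $\mathcal{C}$ that do not read $\ti$. To encode ``ends in a single $t$'', I would single out the accept states reached by a final edge reading $t$ whose predecessor reads $a^{\pm1}$ — in Figure~\ref{fig:fsa_n=2} these are $s_{1,0}$ and $s_{2,0}$ — and observe that an accepted word ends in exactly one $t$ if and only if it terminates at such a state, whereas words terminating at $s_{0,0}$ end in two or more $t$'s or equal the excluded word $t$. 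Let $F_2$ be $F$ with $s_{\ti}$ deleted and with accept set restricted to these single-$t$-ending states. Then the language of $F_2$ is exactly $\QQ_n$, so $q_n(N) = f^{F_2}(N)$ in the notation of Lemma~\ref{lemma:fsa_growth}.

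Applying Lemma~\ref{lemma:fsa_growth} to $F_2$, and using that $\mathcal{C}$ survives in $F_2$ and is reachable from the start state, the lower bound reduces to the single claim that $f_s^{F_2}$ has growth rate $\lambda$ for $s \in \mathcal{C}$; then the maximum in Lemma~\ref{lemma:fsa_growth}(2) is at least $\lambda$, and the upper bound gives the reverse inequality. The hard part is exactly this claim, since a priori shrinking the accept set could lower the growth rate of $f_s$. I expect this to be the main obstacle, and I would resolve it with a bounded-suffix injection inside $\mathcal{C}$. Fix one single-$t$-ending accept state $b \in \mathcal{C}$. Because $\mathcal{C}$ is strongly connected, from each of the finitely many accept states $a$ of $F$ lying in $\mathcal{C}$ there is a path $\rho_a$ inside $\mathcal{C}$ from $a$ to $b$ of length at most some constant $c$; by the structure of the incoming edges at $b$, every such path automatically ends in a single $t$.

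Now take any $F$-accepted path from a fixed $s \in \mathcal{C}$ of length $M$; since it ends at an accept state $a$ and the condensation is a DAG, the whole path stays in $\mathcal{C}$ and is thus a legal path of $F_2$. Appending $\rho_a$ produces an $F_2$-accepted path from $s$ to $b$ of length $M + |\rho_a|$. Grouping these by the value $j = |\rho_a| \in \{0,\dots,c\}$ makes the assignment injective (the length-$M$ prefix recovers the original path), so $f_s^{F}(M) \le \sum_{j=0}^{c} f_s^{F_2}(M+j)$. Since $s \in \mathcal{C}$ is in the growth-determining component we have $f_s^{F}(M) = \Theta(\lambda^M)$, while each $f_s^{F_2} \le f_s^{F} = \Theta(\lambda^{\cdot})$; Lemma~\ref{lemma:exp_growth} then forces $f_s^{F_2}$ to have growth rate exactly $\lambda$. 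Combined with the upper bound, this yields $q_n(N) = \Theta(\lambda^N)$. For general $n$ the automaton is larger, but the same three ingredients — one growth-determining component $\mathcal{C}$, a bounded $\ti$-free entry from the start state, and a bounded single-$t$ exit within $\mathcal{C}$ — carry the argument verbatim.
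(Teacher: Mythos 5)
Your proof is correct and follows essentially the same route as the paper: delete $s_{\ti}$, restrict the accept set to the states reached by a single terminal $t$, and invoke Lemma~\ref{lemma:fsa_growth} together with the fact that the main strongly connected component is untouched. The only difference is that you supply the bounded-suffix injection to justify that shrinking the accept set cannot lower the growth rate of $f_s$ on that component --- a step the paper asserts without proof --- so this is added rigor rather than a different argument.
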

\begin{proof}
This follows from Lemma~\ref{lemma:fsa_growth} and the
structure of the finite automata $\OO_n$.  There are two strongly
connected components of $\OO_n$: the one containing only the state
$s_{\ti}$ and the one containing the digit expansions
of the states $s_i$ of $\DD_n$.  This latter component determines
the growth rate of $\OO_n$, so as long as we do not affect this
strongly connected component, we leave the growth rate unchanged.
Modify the finite state automata by:
\begin{itemize}
    \item Removing $s_\ti$
    \item Removing the state which accepts a string of the form $t^n$ from the set of accept states; this state is labeled as $s_{0,0}$ in Figure~\ref{fig:fsa_n=2}.
\end{itemize}
The resulting finite state automata accepts exactly those
paths in $\OO_n$ which do not begin with any power of $t^{-1}$
and which end with exactly one $t$.  That is, it accepts
exactly the geodesic words in $\mathcal{Q}_n$.  Although we have changed the accept
states, the set of edges in the main strongly connected component
is unchanged.  Thus
$\{q_n(N)\}_{N \in \N}$ and $\OOn$ have the same growth rate.  That is,
$q_n(N) = \Theta(\lambda_n^N)$.
\end{proof}

\subsection{Detecting minimal vectors}
To show that $BS(1,n)$ has a positive density of elements of positive, zero and negative conjugation curvature, we construct in each case a family of words by concatenating a prefix, ``middle'' and suffix.
The middle segment of each word is always chosen to be an element of ${\mathcal Q}_n$, so it is accepted by the finite state automaton adapted from $\OO_n$ described in Section~\ref{sec:sets_of_pos_density} and corresponds to a minimal vector. 
In the next sections, we will vary the prefix and suffix in order to construct
examples of elements with the desired conjugation curvature.
The following lemma will be useful to certify that
the growth rate of these special geodesics is comparable to
that of the whole group.
Recall that for any set ${\mathcal A}$ of elements of $BS(1,n)$, we will always use the notation $\mathcal{A}(N)$ for $N \in \N$ to denote the elements of ${\mathcal A}$ whose word length with respect to the generating set $\{a,t\}$ of $BS(1,n)$ is $N$.
\begin{lemma}\label{lemma:prefix_suffix_growth_rate}
Let $\mathcal{A} \subseteq BS(1,n)$ be a set of geodesic words of the form
$p\xi s$, where $p,s$ are constant and $\xi$ may be any word in $\mathcal{Q}_n$.
Then the growth rate of $\{|\mathcal{A}(N)|\}_{N \in \N}$ is the same as
the growth rate of $BS(1,n)$ and thus $\mathcal{A}$ has positive density in
$BS(1,n)$.
\end{lemma}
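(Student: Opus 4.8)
The plan is to count $|\mathcal{A}(N)|$ by reducing it directly to the middle-word count $q_n$ and then to invoke Lemma~\ref{lemma:middle_vector}. Write $c = |p| + |s|$ for the combined length of the fixed prefix and suffix. Since every word $p\xi s$ with $\xi \in \mathcal{Q}_n$ is geodesic by hypothesis, the word length of the element it represents is exactly $|p| + |\xi| + |s| = |\xi| + c$. Hence $[p\xi s]$ lies in $\mathcal{A}(N)$ precisely when $\xi \in \mathcal{Q}_n$ has length $N - c$, and conversely every element of $\mathcal{A}(N)$ arises this way. This already gives a surjection from $\{\xi \in \mathcal{Q}_n : |\xi| = N - c\}$ onto $\mathcal{A}(N)$, so that $|\mathcal{A}(N)| \le q_n(N-c)$.

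First I would establish the reverse inequality by showing the assignment $\xi \mapsto [p\xi s]$ is injective. If $[p\xi_1 s] = [p\xi_2 s]$ in $BS(1,n)$, then cancelling the fixed prefix $p$ on the left and suffix $s$ on the right yields $[\xi_1] = [\xi_2]$, so it suffices to show that distinct words of $\mathcal{Q}_n \subseteq \mathcal{O}_n$ represent distinct group elements. This is exactly the injectivity of the map $\mathcal{O}_n \to BS(1,n)$ that underlies the left inequality $|\mathcal{O}_n(N)| \le |S_n(N)|$ of Theorem~\ref{corollary:shapes_injection}: each accepted word is a geodesic of strict shape $1$ whose exponents $u,w$ and digit sequence are read directly off the word, and the vector so recovered is the \emph{unique} minimal vector for its element, unique by Lemma~\ref{lemma:odd_box} when $n$ is odd and by Lemma~\ref{lemma:abs_lex_unique} when $n$ is even. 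Granting this, the map $\xi \mapsto [p\xi s]$ is injective, and combined with the surjection above we obtain $|\mathcal{A}(N)| = q_n(N-c)$.

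With this equality in hand the growth-rate conclusion is immediate. By Lemma~\ref{lemma:middle_vector}, $q_n(N) = \Theta(\lambda_n^N)$, where $\lambda_n$ is the growth rate of $\mathcal{O}_n$; therefore $|\mathcal{A}(N)| = q_n(N-c) = \Theta(\lambda_n^{N-c}) = \Theta(\lambda_n^N)$, the last step holding because $\lambda_n^{-c}$ is a positive constant (an instance of Lemma~\ref{lemma:exp_growth}(1) applied with a constant shift of the argument). By Theorem~\ref{corollary:shapes_injection} the growth rate of $\mathcal{O}_n$ coincides with that of $BS(1,n)$, so $\{|\mathcal{A}(N)|\}_{N \in \N}$ has the same growth rate as $BS(1,n)$. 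The positive density of $\mathcal{A}$ then follows from the principle, recorded just before the lemma, that any subset sharing the ambient group's growth rate has positive density.

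I expect the main obstacle to be the injectivity step: one must be certain that $\mathcal{Q}_n$ selects a single geodesic word per group element rather than a redundant collection of geodesics for the same elements, since otherwise the surjection in the first paragraph would not upgrade to a bijection and the lower bound on $|\mathcal{A}(N)|$ could fail. This is the one place where the detailed structure of the automaton for $\mathcal{O}_n$ and the uniqueness of minimal vectors are genuinely needed; the remainder of the argument is bookkeeping of word lengths and a constant shift in an exponential estimate.
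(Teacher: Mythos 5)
Your proposal is correct and follows essentially the same route as the paper: identify $|\mathcal{A}(N)|$ with $q_n(N-|p|-|s|)$, then transfer the growth rate via Lemma~\ref{lemma:middle_vector}, Theorem~\ref{corollary:shapes_injection}, and the constant-shift part of Lemma~\ref{lemma:exp_growth}. The only difference is that you spell out the injectivity of $\xi \mapsto p\xi s$ (via uniqueness of minimal vectors), a point the paper compresses into ``by construction''; this is a reasonable piece of extra care rather than a different argument.
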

\begin{proof}
It follows from Theorem~\ref{corollary:shapes_injection} and Lemma~\ref{lemma:middle_vector} that
the growth rate of $\{q_n(N)\}_{N \in \N} = \{|\mathcal{Q}_n(N)|\}_{N \in \N}$
is the same as that of $BS(1,n)$.  By construction, we have
$|\mathcal{A}(N+|p|+|s|)| = |\mathcal{Q}_n(N)|$, so by
Lemma~\ref{lemma:exp_growth} the growth rates of
$\{|\mathcal{A}(N)|\}_{N \in \N}$ and $|\mathcal{Q}_n(N)|$ are the same,
and the lemma follows.
\end{proof}

The next series of lemmas show that words constructed in this way are geodesic, that is, when we look at the corresponding vector of consecutive exponents of the generator $a$ in each word, this vector is minimal.

Suppose $g = t^{-u}a^vt^w \in BS(1,n)$ is constructed as in Lemma~\ref{lemma:prefix_suffix_growth_rate}, for some choice of nonempty prefix, suffix and middle word $\xi \in {\mathcal Q}_n$.  Let $\x \in \Bvuw$ denote the associated vector of consecutive exponents of the generator $a$ in $g$, and let $\x'$ be the vector of consecutive exponents of the generator $a$ in $\xi$.
Note that $\x'$ is minimal
because $\xi \in {\mathcal Q}_n$.
The following series of lemmas presents simple criteria which allow us to conclude that $\x$ is minimal by relating the minimality of $\x \in \Bvuw$ to the minimality of $\x' \in {\mathcal B}_{v'}^{u',w'}$, for a choice of $u',v',w'$ specified below.

We make the following convention with regard to indexing $\x$ and $\x'$.  Let $\x = (x_0, \cdots ,x_\kx)$ and $\x' = (x_m, \cdots ,x_\ell)$ where $0 < m \leq \ell < \kx$.
When we consider $\x'$ as an independent vector, we will continue to write it as $\x' = (x_m, \cdots ,x_\ell)$ rather than shifting the indices so that they begin at $0$. 
We make this choice to retain the context of $\x' \subseteq \x$.
When we want to add a linear combination of $\LL_0$ basis vectors to $\x'$, we must index them accordingly and write $\x' + \sum_{i=m}^{\ell-1} \alpha_i \wi$ to obtain the vector $(x'_m-\alpha_mn, \cdots ,x'_\ell+\alpha_{\ell-1})$.
When we compute $\Sigma(\x')$, we evaluate the sum $\Sigma(\x') = \sum_{i=m}^\ell x'_in^{i-m}$.

Lemma~\ref{lemma:run_in_the_middle} shows that if $\x$ constructed in this way can be reduced at a run $\r \subseteq \x' \subseteq \x$ which does not contain the final digit of $\x'$, then $\x'$ can also be reduced at $\r$.  By ``$\subseteq$'' here we mean a subsequence of consecutive digits.  In what follows, we will use $v'$ to denote $\Sigma(\x')$, where the function $\Sigma $ is defined in Section~\ref{section:min_rep}.

\begin{lemma}
\label{lemma:run_in_the_middle}
Let $\x \in \Bvuw$ be constructed as above with $\kx > \max(u,w)$ and $\x' = (x_m, \cdots ,x_s) \subseteq \x$, where $\x' \in {\mathcal B}_{v'}^{0,s-m+2}$.
If $\r = (x_j, \cdots ,x_{\ell}) \subset \x'$ with $m \leq j \leq l<s < \kx$ is a run at which $\x$ can be reduced, then $\x'$ can be reduced at $\r$.
\end{lemma}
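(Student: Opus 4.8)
The plan is to exploit the fact that reducing at $\r$ is a \emph{local} operation: it alters only the coordinates indexed by $j, \dots, \ell+1$, and by hypothesis $\ell < s$, so all of these indices lie in the common range $[m,s]$ on which $\x$ and $\x'$ carry identical digits. I would show that this operation produces exactly the same change in $\ell^1$-norm and the same lexicographic comparison of absolute values whether it is applied inside $\x$ (with parameters $u,w$) or inside $\x'$ (with parameters $0,\,s-m+2$). Since the relation $\ords$ is decided first by word length and then, in case of a tie, by the lexicographic order on absolute values, identical local data forces the statement ``can be reduced at $\r$'' to transfer from $\x$ to $\x'$.

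The key numerical input is that $\ell+1 \le s < \kx$. Let $\y$ and $\y'$ denote the two reductions, obtained from $\x$ and $\x'$ respectively by adding $\epsilon_\r\sum_{i=j}^{\ell}\wi$ (or its $n=2$ variant). Because the largest index affected is $\ell+1 < \kx$, the top coordinate $x_\kx$ is untouched, so $k_\y = \kx > \max(u,w)$ and both $|\eta_{u,v,w}(\y)|$ and $|\eta_{u,v,w}(\x)|$ are computed with the second formula of Lemma~\ref{lemma:length_formula}; its tail term $2\kx - |u-w|$ is unchanged, so the length difference equals $\Vert \y \Vert_1 - \Vert \x \Vert_1$. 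On the other side, $\x' \in \mathcal{B}_{v'}^{0,s-m+2}$ has strict shape $1$, and since the reduction does not raise its top index above $s$, so does $\y'$; hence both $|\eta_{0,v',s-m+2}(\y')|$ and $|\eta_{0,v',s-m+2}(\x')|$ use the first formula of Lemma~\ref{lemma:length_formula}, whose tail term $0+(s-m+2)$ is unchanged, and that length difference equals $\Vert \y' \Vert_1 - \Vert \x' \Vert_1$. Finally, Lemma~\ref{lemma:change_formula} (and a direct computation when $n=2$) evaluates both norm differences as $-\wt(\r) + |x_{\ell+1}+\epsilon_\r| - |x_{\ell+1}|$; since the digits on $[j,\ell+1]$ agree in $\x$ and $\x'$, the two length differences are \emph{equal}.

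For the lexicographic tie-breaker I would note that $\y$ and $\x$ agree below index $j$, as do $\y'$ and $\x'$, while on $[j,\ell+1]$ the coordinatewise changes are produced by adding the same vector to the same digits. Thus the first index at which $|\y|$ and $|\x|$ differ, and the sign of that difference, coincide with those for $|\y'|$ and $|\x'|$, so $|\y| < |\x|$ lexicographically if and only if $|\y'| < |\x'|$. Combining this with the previous paragraph, $\y \ords \x$ holds if and only if $\y' <_{0,\,s-m+2} \x'$; a quick check that $\y \in \Bvuw$ forces $|y_{\ell+1}| \le \fne$, whence $\y' \in \mathcal{B}_{v'}^{0,s-m+2}$ and the reduction of $\x'$ is legitimate. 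As $\x$ can be reduced at $\r$ by hypothesis, so can $\x'$.

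The step that needs the most care, and the main obstacle, is the bookkeeping guaranteeing that the ``tail'' contributions to the two different word-length formulas genuinely cancel. Concretely, one must verify that the reduction neither lengthens $\x$ (keeping it in shape $3$ or $4$ with $k_\y=\kx$) nor pushes $\x'$ out of strict shape $1$; both are consequences of the single inequality $\ell+1 \le s < \kx$, which is precisely the hypothesis that $\r$ avoids the final digit of $\x'$ and that $\x'$ sits strictly below the top of $\x$.
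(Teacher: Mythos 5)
Your proposal is correct and takes essentially the same route as the paper's proof: both exploit the locality of the reduction (only indices $j,\dots,\ell+1 \le s < \kx$ are touched), observe that the second length formula applies to $\x$ and its reduction while the first applies to $\x'$ and its reduction (with the same tail terms on each side), reduce the comparison to the identical $\ell^1$-norm change, and transfer the lexicographic tie-breaker. Your explicit check that $\y'$ stays in $\mathcal{B}_{v'}^{0,s-m+2}$ and your appeal to Lemma~\ref{lemma:change_formula} are slightly more detailed than the paper's version but change nothing of substance.
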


\begin{proof}
Let $\x,\x',u,v,w$ and $\r$ be as in the statement of the lemma, with $u'=0$ and $w' = s-m+2$.
As $\x$ can be reduced at $\r$, when $n \geq 3$ we have
$$\x + \delta \sum_{i=j}^{\ell} \wi \ords \x$$ for a choice of $\delta \in \{\pm 1\}$.
When $n=2$ the same inequality holds, where we can choose the coefficients of the $\wi$ to be identically $\delta$ because the run $r$ does not contain the final digit of $\x$, as discussed in Section~\ref{section:geodesics_n_even_2}.

Note that $\eta_{u',v',w'}(\x')$ is a geodesic of strict shape $1$ and $\eta_{u,v,w}(\x)$ is a geodesic of shape $3$, so we use different formulas to compute their length.  
As the suffix is nonempty, and $\r \subset \x'$ does not contain the final digit of $\x'$, the vectors the vectors $\x$ and $\x+\delta \sum_{i=j}^{\ell} \wi$ are the same length.
Thus we use the second length formula in Lemma~\ref{lemma:length_formula} to compute both $|\eta_{u,v,w}(\x)|$ and $|\eta_{u,v,w}(\x+\delta \sum_{i=j}^{\ell} \wi)|$.

If $\x'$ and $\x'+\delta \sum_{i=j}^{\ell} \wi$ have the same length, then both  $|\eta_{u',v',w'}(\x')|$ and $|\eta_{u',v',w'}(\x'+\delta \sum_{i=j}^{\ell} \wi)|$
are computed using the first length  formula in Lemma~\ref{lemma:length_formula}.
It might be the case that the length of $\x'+\delta \sum_{i=j}^{\ell} \wi$ is one less than then length of $\x'$.  
As the condition for the first length formula is that $\K{x'} \leq \max(u',w')$, we see that if the length of the vector decreases but $u'$ and $w'$ are unchanged, we use the same length formula from Lemma~\ref{lemma:length_formula} to compute $|\eta_{u',v',w'}(\x'+\delta \sum_{i=j}^{\ell} \wi)|$.
This ensures that the change in word length in either case reflects only the change in $\ell^1$ norm between the vectors.
Thus
\begin{align*}
|\eta_{u,v,w}(\x+\delta \sum_{i=j}^{\ell} \wi)| - |\eta_{u,v,w}(\x)|
&=
|\eta_{u',v',w'}(\x'+\delta \sum_{i=j}^{\ell} \wi)| - |\eta_{u',v',w'}(\x')| \\
&=
\Vert \x'+\delta \sum_{i=j}^{\ell} \wi \Vert_1 - \Vert \x' \Vert_1 \leq 0.
\end{align*}
If the final inequality is strict, it is clear that $\x'$ can be reduced at $\r$.  If there is equality, then the lexicographic reduction which occurs between $\x$ and $\x+\delta \sum_{i=j}^{\ell} \wi$  will also occur between $\x'$ and $\x'+\delta \sum_{i=j}^{\ell} \wi$ and hence $\x'$ can be reduced at $\r$, that is,
\[
\x'+ \delta \sum_{i=j}^{\ell} \wi <_{u',w'} \x'.
\]
\end{proof}

Lemma~\ref{lemma:next_digit_0} extends Lemma~\ref{lemma:run_in_the_middle} when $n$ is even to conclude that if $\x$ can be reduced at a run $\r \subseteq \x'\subset \x$ which contains the final digit $x_{\ell}$ of $\x'$ and $x_{\ell+1} = 0$, then $\x'$ can also be reduced at $\r$.

\begin{lemma}
\label{lemma:next_digit_0}
Let $n$ be even and $\x \in \Bvuw$ be constructed as in Lemma~\ref{lemma:run_in_the_middle} with $\kx > \max(u,w)$ and $\x' = (x_m, \cdots ,x_{\ell}) \subseteq \x$ with $\x' \in {\mathcal B}_{v'}^{0,l-m+2}$ and $v' = \Sigma(\x')$.
If $\r = (x_j, \cdots ,x_{\ell}) \subseteq \x'$ is a run at which $\x$ can be reduced and $x_{\ell+1}=0$, then $\x'$ can be reduced at $\r$.
\end{lemma}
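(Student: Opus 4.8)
The plan is to follow the same template as Lemma~\ref{lemma:run_in_the_middle}, now using the hypothesis $x_{\ell+1}=0$ to neutralize the one coordinate that the run reduction disturbs past the end of $\x'$. Set $u'=0$ and $w'=\ell-m+2$, so that, exactly as in Lemma~\ref{lemma:run_in_the_middle}, $\eta_{u',v',w'}(\x')$ is a geodesic of strict shape $1$ (the shifted final index of $\x'$ is $\ell-m$, which lies strictly below $w'$). Because $\x$ can be reduced at $\r$, I first want to pin down the reducing vector. Since $x_{\ell+1}=0$ while $x_\kx\ne 0$, we have $\ell+1<\kx$, so the coordinate that any run reduction creates at index $\ell+1$ is interior to $\x$ and hence must have absolute value at most $\fne$. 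For even $n\ge 4$ the reducing vector is $\epsilon_\r\sum_{i=j}^\ell\wi$ by definition, and for $n=2$ this interior constraint forces the final coefficient to be $1$ rather than $2$; so in every even case the reducing vector is $\z:=\epsilon_\r\sum_{i=j}^\ell\wi$, and $\z$ touches only the indices $j,\dots,\ell+1$.

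The key observation is that $\x$ and $\x'$ agree on exactly these indices: they share the digits $x_j,\dots,x_\ell$ of the run, and both carry a zero at index $\ell+1$ (for $\x$ by hypothesis, for $\x'$ because $\ell$ is its last index). Hence adding $\z$ produces the same change in $\ell^1$ norm on both sides,
\[
\Vert \x+\z\Vert_1-\Vert\x\Vert_1 = \Vert\x'+\z\Vert_1-\Vert\x'\Vert_1=:\Delta .
\]
Next I would handle the length-formula bookkeeping. On the $\x$ side, the entry $x_{\ell+1}=0$ becomes $\pm1$ but $x_\kx$ is untouched, so $k_{\x+\z}=\kx$ and both $\x$ and $\x+\z$ use the second formula of Lemma~\ref{lemma:length_formula}; thus $|\eta_{u,v,w}(\x+\z)|-|\eta_{u,v,w}(\x)|=\Delta$. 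On the $\x'$ side, $\z$ raises the shifted final index of $\x'$ by one, from $\ell-m$ to $\ell-m+1$, which is still strictly below $w'$, so both $\x'$ and $\x'+\z$ lie in $\mathcal{B}_{v'}^{0,w'}$ and have strict shape $1$; the first formula of Lemma~\ref{lemma:length_formula} governs both, giving $|\eta_{u',v',w'}(\x'+\z)|-|\eta_{u',v',w'}(\x')|=\Delta$ as well.

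Finally I would transfer the reduction. Since $\x$ can be reduced at $\r$ we have $\Delta\le 0$. If $\Delta<0$, then $\x'+\z$ is strictly shorter than $\x'$ and $\x'$ reduces at $\r$. If $\Delta=0$, then $\x+\z\ords\x$ must come from a lexicographic decrease; because $\z$ leaves $|x_j|$ fixed and first lowers an absolute value at index $j+1$, and because $\x$ and $\x'$ carry identical digits at every index touched by $\z$, the very same lexicographic decrease occurs between $\x'$ and $\x'+\z$, so $\x'+\z\ords\x'$. The main obstacle is precisely the length-formula bookkeeping: I must confirm that appending one coordinate to $\x'$ keeps it in $\mathcal{B}_{v'}^{0,w'}$ and in strict shape $1$, so that the first length formula applies to both $\x'$ and $\x'+\z$, and that the reducing vector's final coefficient is $1$ even when $n=2$. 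Once these match the $\x$ side, the two length changes are forced to be equal to the common value $\Delta$ and the conclusion is immediate.
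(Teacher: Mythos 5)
Your proposal is correct and follows essentially the same route as the paper's proof: it pins down the reducing vector (forcing the final coefficient to be $\epsilon_\r$ even when $n=2$, since index $\ell+1$ is interior to $\x$), observes that $\x$ and $\x'$ carry identical digits on every index the reduction touches (using $x_{\ell+1}=0$), checks that the second length formula governs both $\x$ and $\x+\z$ while the first governs both $\x'$ and $\x'+\z$, and transfers the strict inequality or the lexicographic tie-break accordingly. The only divergence is a harmless off-by-one in where $k_{\x'+\z}$ sits relative to $w'$ (you place it strictly below, the paper at the boundary where the two formulas agree), which does not affect the argument.
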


\begin{proof}
As $\x$ can be reduced at $\r$, when $n \geq 4$ it follows immediately that
\[
\x+\epsilon_{\r} \sum_{i=j}^{\ell} \wi \ords \x.
\]
When $n=2$ the same inequality holds, where we can choose the coefficients of the $\wi$ to be identically $\epsilon_{\r}$ because the run $r$ does not contain the final digit of $\x$, as discussed in Section~\ref{section:geodesics_n_even_2}.

By construction, $\s$ must have at least two digits, as $x_{\ell+1} = 0$ and $\x$ cannot end with the digit $0$.
This ensures that $\r$ does not contain the final two digits of $\x$ and hence the vectors $\x$ and $\x+\epsilon_{\r} \sum_{i=j}^{\ell} \wi$ have the same length.
As $\kx > \max(u,w)$, the  the second length formula in Lemma~\ref{lemma:length_formula} is used to compute both  $|\eta_{u,v,w}(\x)|$ and $|\eta_{u,v,w}(\x + \epsilon_{\r}\sum_{i=j}^{\ell} \wi)|$.  

When considering $\x'$, take $u'=0$ and $w'=l-m+2$.  
We now show that the lengths of $\eta_{u',v',w'}(\x')$ and $\eta_{u,v,w}(\x'+\epsilon_{\r}\sum_{i=j}^{\ell} \wi)$  are computed using the same word length formula by noting the ordinal relationship between $w'$ and the length of the vector in each case.
\begin{itemize}[itemsep=5pt]
    \item As $\x' \in {\mathcal B}_{v'}^{0,l-m+2}$, we have $w'=l-m+2 > l-m+1 = \K{\x'}$.
    \item If $\y = \x'+\epsilon_{\r} \sum_{i=j}^{\ell} \wi$, then $\K{\y} = \K{\x'}+1 = l-m+2$, so $w'=l-m+2 = \K{\y}$.
\end{itemize}
As the two length formulas in Lemma~\ref{lemma:length_formula} agree when $\max(u',w') = \ky$, we see that the lengths of both $\eta_{u',v',w'}(\x')$ and $\eta_{u,v,w}(\x'+\epsilon_{\r} \sum_{i=j}^{\ell} \wi)$ are computed using the first length formula in Lemma~\ref{lemma:length_formula}, which does not rely on the length of $\x'$ or $\y$.
Thus in both cases the difference in word length between the geodesics arising from each pair of vectors  is exactly the difference in $\ell^1$ norm between the vectors.

We then compute
\begin{align*}
0 \leq  |\eta_{u,v,w}(\x)|-|\eta_{u,v,w}(\x+\epsilon_{\r} \sum_{i=j}^{\ell} \wi)| 
&=
\wt(\r) + |x_{\ell+1}| - |x_{\ell+1} + \epsilon_{\r}| \\
&= \wt(\r) + |\epsilon_{\r}| \\
&= |\eta_{u',v',w'}(\x')| - |\eta_{u',v',w'}(\x'+\epsilon_{\r} \sum_{i=j}^{\ell} \wi)|
\end{align*}
where $\wt(\r)$ is defined in Section~\ref{section:geodesics_n_even}. The equality in the first line is proven in Lemma~\ref{lemma:change_formula} for $n \geq 4$ and is easily checked for $n=2$.  The transition from the first line to the middle line relies on the fact that $x_{\ell+1} =0$. The transition from the last line to the middle line relies of the fact that $\x'+\epsilon_{\r} \sum_{i=j}^{\ell} \wi$ has a leading coefficient of $1$ not present in $\x'$.

If the initial inequality is strict, it is clear that $\x'$ can be reduced at $\r$.  If there is equality, then the lexicographic reduction which occurs between $\x$ and $\x+\epsilon_{\r} \sum_{i=j}^{\ell} \wi$  will also occur between $\x'$ and $\x'+\epsilon_{\r}\sum_{i=j}^{\ell} \wi$, as $\x' \subseteq \x$ and has highest index equal to $\ell$.  Thus $\x'$ can be reduced at $\r$, that is,
\[
\x'+ \epsilon_{\r} \sum_{i=j}^{\ell} \wi <_{u',w'} \x'.
\]
\end{proof}

Combining the previous two lemmas allows us to show that if if $n$ is even and $g = t^{-u}a^vt^w$ is constructed from a prefix, suffix and $\xi \in {\mathcal Q}_n$ with a bound on the absolute value of the exponents in $p$ and $s$, then the resulting vector $\x$ of consecutive exponents of the generator $a$ is minimal in $\Bvuw$.  We prove Lemma~\ref{lemma:prefix-suffix} only for even $n \geq 4$. If $n$ is odd and $\x \in \Bvuw$ is as above, the minimality of $\x$ is determined solely by inspection of the final two digits of $\x$, as described in Lemma~\ref{lemma:odd_box}.

\begin{lemma}
\label{lemma:prefix-suffix}
Let $n \geq 4$ be even and suppose $g = p \xi s$ where $\xi \in {\mathcal Q}_n$ and $p$ and $s$ are as follows:
\begin{itemize}[itemsep=5pt]
    \item $p = t^{-u} a^{p_0}ta^{p_1} \cdots ta^{p_m}ta^0t$ for $m \geq 0$ , where $\p = (p_0,p_1, \cdots ,p_m,0)$ is the vector of consecutive exponents of the generator $a$ in $p$ and $u \geq 0$,
    \item $s = a^0ta^{s_0}ta^{s_1}t \cdots ta^{s_{m'}}t^{-h}$ for $m' \geq 0$, where $\s = (0,s_0,s_1, \cdots  s_{m'})$ is the vector of consecutive exponents of the generator $a$ in $s$ and $0 \leq h \leq m'$, and
    \item $(s_{m'-1},s_{m'})$ is not equal to either $(\delta(\fne-1),-\delta)$ or $(\delta \fne,-\delta)$, for $\delta\in \{\pm 1\}$.
\end{itemize}
Further assume that for all $i$, $|p_i| \leq \fne-1$ and $|s_i| < \fne-1$.  

Let $\x'$ denote the vector of consecutive exponents of the generator $a$ in $\xi$, and $\x = \p \x' \s$ the analogous vector for $g$.  Then $\x \in \Bvuw$, where $v = \Sigma(\x) = \sum_{i=0}^{\kx} x_i n^i$, $w = \kx-h$ and $\x$ is minimal.
\end{lemma}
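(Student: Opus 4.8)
The plan is to verify first that $\x \in \Bvuw$ and then to establish minimality by applying Proposition~\ref{lemma:even_minimal_characterization}, using the two preceding lemmas to transport any reduction of $\x$ down into the middle vector $\x'$, where it would contradict the minimality of $\x'$.

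For membership, I would observe that every digit of $\x$ except the last falls into one of four groups: a prefix digit (with $|p_i| \le \fne-1$), a digit of the middle block $\x'$ (bounded by $\fne$, since $\xi \in {\mathcal Q}_n$ yields a geodesic of strict shape~$1$), one of the two separating zeros contributed by the blocks $a^0t$, namely $x_{m-1}=x_{\ell+1}=0$, or a suffix digit (with $|s_i| < \fne-1$). In all cases the absolute value is at most $\fne$, and the final digit is trivially at most $\fne+1$, so $\x \in \Bvuw$. Writing $g = p\xi s$ out as a word shows it is exactly $\eta_{u,v,w}(\x)$ in shape~$3$ with $w = \kx-h$, so $\kx > \max(u,w)$ and we are in the regime required by Lemmas~\ref{lemma:run_in_the_middle}, \ref{lemma:next_digit_0}, and~\ref{lemma:even_reduction_end}.

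Next I would reduce the minimality claim to ruling out reducible runs. The final two digits of $\x$ are $(s_{m'-1}, s_{m'})$, which by hypothesis equal neither $(\delta(\fne-1),-\delta)$ nor $(\delta\fne,-\delta)$; hence Lemma~\ref{lemma:even_reduction_end} does not apply to $\x$. By Proposition~\ref{lemma:even_minimal_characterization} it therefore suffices to show that there is no run at which $\x$ can be reduced. Suppose for contradiction that $\r = (x_j, \dots, x_k)$ is such a run. Since a run must begin with a digit of absolute value exactly $\fne$, and every prefix digit satisfies $|p_i| \le \fne-1$ while every suffix digit satisfies $|s_i| < \fne-1$, the starting index must lie in the middle block, $m \le j \le \ell$. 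Moreover the separating zero $x_{\ell+1}=0$ cannot belong to a run, as $0 \notin \{\fne-1,\fne,\fne+1\}$ when $n \ge 4$, so the run cannot extend past index $\ell$; thus $\r \subseteq \x'$ with $m \le j \le k \le \ell$. I would also record here that $\x' \in {\mathcal B}_{v'}^{0,\ell-m+2}$ is minimal: because $\xi \in {\mathcal Q}_n$ gives a minimal vector of strict shape~$1$ and $\K{\x'} = \ell-m < \ell-m+2$, Lemma~\ref{lemma:still_minimal} transfers minimality to this box.

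The argument then closes by a two-case split on the right endpoint of the run. If $k < \ell$, the run does not contain the final digit of $\x'$, and Lemma~\ref{lemma:run_in_the_middle} shows $\x'$ can be reduced at $\r$; if $k = \ell$, then $x_{\ell+1}=0$ and Lemma~\ref{lemma:next_digit_0} shows $\x'$ can be reduced at $\r$. Either way this contradicts the minimality of $\x'$, so no reducible run exists and $\x$ is minimal. I expect the main obstacle to be the bookkeeping in the localization step: one must check carefully that the imposed bounds $|p_i| \le \fne-1$ and $|s_i| < \fne-1$, together with the two inserted zeros, genuinely prevent any run from starting in the prefix or suffix and from straddling the boundary of $\x'$, and that the two boundary cases ($k<\ell$ versus $k=\ell$) match precisely the hypotheses of Lemmas~\ref{lemma:run_in_the_middle} and~\ref{lemma:next_digit_0}. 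The degenerate situation where the suffix digit bound forces trivial suffix digits (e.g.\ $n=4$) should be noted but causes no difficulty, as it only shortens the region in which a run could occur.
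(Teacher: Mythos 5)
Your proposal is correct and follows essentially the same route as the paper's proof: verify membership in $\Bvuw$ from the digit bounds, rule out Lemma~\ref{lemma:even_reduction_end} via the hypothesis on the final two suffix digits, invoke Proposition~\ref{lemma:even_minimal_characterization} to produce a reducible run, localize that run to $\x'$ using the prefix/suffix digit bounds and the separating zeros, and then derive a contradiction with the minimality of $\x'$ via Lemma~\ref{lemma:run_in_the_middle} or Lemma~\ref{lemma:next_digit_0} according to whether the run contains the last digit of $\x'$. The only differences are expository (you spell out the membership check and the impossibility of a run containing the digit $0$ in slightly more detail).
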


Using the notation in the statement of Lemma~\ref{lemma:prefix-suffix}, if $\r \subset \x$ denotes a run, we write $\r \cap \p$ to denote any common digits of $\x$ contained in both $\r$ and $\p$, with the analogous definition for $\s \cap \r$.

\begin{proof}
Note that by construction,  $\x \in \Bvuw$, where $v = \Sigma(\x) = \sum_{i=0}^{\kx} x_i n^i$ and we choose $w = \kx -h$ so that $g$ is a geodesic of shape $3$. We must show that $\x$ is minimal.
The definition of ${\mathcal Q}_n$ ensures that $\x' \in {\mathcal B}_{v'}^{0,\K{\x'}+1}$, where $v' = \Sigma(\x')$, and $\x'$ is minimal.

Suppose $\x$ is not minimal.  Inspecting the final two digits of $\x$, that is, the final two digits of $\s$, we see that Lemma~\ref{lemma:even_reduction_end} does not apply, and it follows from Proposition~\ref{lemma:even_minimal_characterization} that $\x$ contains a run $\r$ at which it can be reduced.  

The final exponent of $t$ in the definition of $\s$ implies that $\max(u,w) < \kx$.
The digit restrictions on $\p$ force $\r \cap \p = \emptyset$, and hence when $n \geq 4$, no run has its first digit in $\p$.  The fact that the first digit of $\s$ is $0$ implies that $\r \cap \s = \emptyset$, and hence no run with first digit in $\x'$ can be continued into $\s$.  Thus we conclude that $\r \subset \x'$.

If $\r$ does not contain the last digit of $\x'$, it follows from Lemma~\ref{lemma:run_in_the_middle} that $\x'$ can be reduced at $\r$, that is, $\x'$ is not minimal, a contradiction.  
Assuming that $\r$ contains the last digit of $\x'$, as the initial digit of $\s$ is $0$, it follows from Lemma~\ref{lemma:next_digit_0} that  $\x'$ can be reduced at $\r$, that is, $\x'$ is not minimal, a contradiction.  
Thus we conclude that $\x$ is minimal.
\end{proof}

The following remark codifies the changes we consider when $g \in BS(1,n)$ is conjugated by a single generator as well as a string of generators.  It will be referenced repeatedly throughout the following sections. 

\begin{remark}
\label{remark:conjugation}
Let $g = t^{-u}a^vt^w$ with $\x \in \Bvuw$ a minimal vector. 
For any $q \in BS(1,n)$ define $u(q),v(q)$ and $w(q)$ so that $g^q = t^{-u(q)}a^{v(q)}t^{w(q)}$ in normal form.
We make the following observations about $g^c$ for $c \in \{t^{\pm 1},a^{\pm 1}\}$.
\begin{itemize}[itemsep=5pt]
\item When $c = t^{\pm 1}$, we have $|u-w| = |u(c)-w(c)|$ and $v(c) = v$.  Moreover, $\x$ can be viewed as an element of ${\mathcal B}^{u\mp1,w\mp1}_{v}$, and it is again minimal.
\item When $c = a^{\pm 1}$ we have $u(c) = u$ and $w(c) = w$, so it is again true that $|u-w| = |u(c)-w(c)|$.  As discussed earlier, $v(c) = v_+ = n^u+v-n^w$ when $c = a$ and  $v(c) = v_-= -n^u+v+n^w$ when $c = \ai$.  
Observe that $\rho_{-u,w}(\x)$ is a vector in ${\mathcal B}^{u(c),w(c)}_{v_+}$ in the former case, and $\rho_{u,-w}(\x)$ is a vector in ${\mathcal B}^{u(c),w(c)}_{v_-}$; in either case we denote this vector as $\x(c)$.
\end{itemize}
When conjugating $g$ by $q_1q_2 \cdots q_n$, following the above steps for each successive conjugation by $q_i$ creates a vector $\x(q) \in {\mathcal B}^{u(q),w(q)}_{v(q)}$.  
In the remainder of this paper, given $g,q$ and $\x$, the notation $\x(q)$ denotes the vector created in this way, which may or may not be minimal.
Note that $\eta_{u(q),v(q),w(q)}(\x(q)) = g^q$.

Moreover, the above two conditions imply that $|u-w| = |u(q)-w(q)|$.  This fact will be useful when $\x(q)$ is minimal and $|\eta_{u,v,w}(\x)|$ and $|\eta_{u(q),v(q),w(q)}(\x(q))|$ are both computed using the second length formula in Lemma~\ref{lemma:length_formula}.
\end{remark}

\subsection{A positive density set of elements with positive
conjugation curvature}
When considering positive conjugation curvature, we require
$r=1$ and provide slightly different examples depending on
the parity of $n$.

\begin{theorem}
\label{thm:positive_kappa}
The group $BS(1,n)$ has a positive density of elements $g$ with $\kappa_1(g) >0$, for $n \geq 3$.
\end{theorem}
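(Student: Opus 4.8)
The plan is to exhibit the family of Theorem~\ref{thm:positive_kappa} through the prefix/suffix construction, so that positive density is automatic from Lemma~\ref{lemma:prefix_suffix_growth_rate}, and to engineer the prefix and suffix so that conjugation by the four generators shortens $g$ on average. Since $S_n(1) = \{t^{\pm 1}, a^{\pm 1}\}$ has exactly four elements, the condition $\kappa_1(g) > 0$ is equivalent to
\[
l(g^t) + l(g^{\ti}) + l(g^a) + l(g^{\ai}) < 4\,l(g).
\]
First I would fix an integer $u \geq 1$ and build words $g = p\,\xi\,s$ with $\xi \in {\mathcal Q}_n$, where $p$ begins with $t^{-u}$ and $s$ ends in a power of $\ti$, chosen so that the associated vector $\x \in \Bvuw$ is minimal, $\eta_{u,v,w}(\x)$ has shape $3$, and $u < w = \kx - h < \kx$. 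Because $u \geq 1$ and $w \geq 1$ force $uw > 0$ --- and hence $n \nmid v$ by the normal form convention --- every member of the family automatically satisfies the hypotheses needed below. Lemma~\ref{lemma:prefix_suffix_growth_rate} then gives that the family has positive density in $BS(1,n)$, so the task reduces to verifying the displayed inequality for each $g$.

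The first move is to eliminate the $t$-conjugates. Since $\x$ is minimal, $0 < \max(u,w) \leq \kx - 1$, $uw > 0$, and $n \nmid v$, part (1) of Lemma~\ref{lemma:type1_facts} applies verbatim and yields $l(g) = l(g^t) = l(g^{\ti})$. The target inequality therefore collapses to
\[
l(g^a) + l(g^{\ai}) < 2\,l(g),
\]
and the whole problem becomes a question about the two $a$-conjugates, which by the discussion preceding Theorem~\ref{thm:type1curv} are computed from $\rho_{u,-w}(\x)$ and $\rho_{-u,w}(\x)$, the vectors obtained by changing $\x$ by $+1$ and $-1$ at the two indices $u$ and $w$. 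The content of Theorem~\ref{thm:type1curv} is precisely that a strongly minimal $\x$ makes these two conjugates contribute a nonnegative total to the length sum; so positive curvature must be forced by a vector that is minimal but \emph{not} strongly minimal, in which one of the $\rho$-operations triggers a genuine reduction.

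The key idea is to place the boundary digit $\fn$ at index $u$ in the prefix. Conjugation by $a$ then raises $x_u = \fn$ to $\fn + 1$, which violates the interior digit bound of $\Bvuw$ (as $u < \kx$) and forces a reduction at the run beginning at index $u$; conjugation by $\ai$ instead lowers $x_u$ to $\fn - 1$ and leaves a minimal vector. Using Lemma~\ref{lemma:change_formula} and the shape-$3$ formula of Lemma~\ref{lemma:length_formula}, I would compute the exact effect, noting that the reduction absorbs the cost of the overflow for $g^a$ while $g^{\ai}$ still enjoys a saving at index $u$, breaking the symmetry responsible for the nonnegative total. The arithmetic differs by parity: for even $n \geq 4$ the reduction sends $\fn + 1 \mapsto \fn + 1 - n = -(\fn - 1)$, lowering $|x_u|$, so that with $x_{u+1} \geq 0$ and a nonzero suffix digit $x_w$ one obtains $l(g^a) + l(g^{\ai}) = 2\,l(g) - 1$; for odd $n \geq 3$ the reduction sends $\fn + 1 \mapsto \fn + 1 - n = -\fn$, which preserves $|x_u|$, so one instead takes $x_{u+1} < 0$ so that the forced carry lowers it, giving $l(g^a) + l(g^{\ai}) = 2\,l(g) - 2$. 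In either case the sum is strictly below $2\,l(g)$, which is the inequality we need.

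The main obstacle, and the bulk of the work, is twofold: confirming that $\x$ is genuinely minimal even though it carries $\fn$ in its prefix (which falls outside the clean hypotheses of Lemma~\ref{lemma:prefix-suffix}), and computing each $a$-conjugate length exactly, including that the reduced vector admits no further reduction. For minimality I would argue, following the scheme of Lemma~\ref{lemma:prefix-suffix}, that any reducible run of $\x$ must lie inside $\xi$, contradicting the minimality of the middle word; the one extra point is the isolated $\fn$ at index $u$, which is handled by the defining criteria: for odd $n$ an interior $\fn$ can never be reduced (since $\fn - n = -(\fn + 1)$ leaves the box, so Lemma~\ref{lemma:odd_box} applies), while for even $n$ the neighbor condition $x_{u+1} \geq 0$ keeps the single $\fn$ from beginning a reducible run (Proposition~\ref{lemma:even_minimal_characterization}). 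The length bookkeeping then reduces to tracking the single carry against $x_{u+1}$ and the sign-cancellation contributed by the nonzero suffix digit $x_w$, carried out along the two parity cases.
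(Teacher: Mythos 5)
Your proposal is correct and follows essentially the same route as the paper: the paper's set $\mathcal{P}_n$ is built exactly as you describe, with prefix vector $(1,\fn,(-1)^n,0)$ (so the boundary digit $\fn$ sits at index $u=1$ and the sign of $x_{u+1}$ flips with the parity of $n$), suffix vector $(0,1,0,1)$ giving a small nonzero $x_w$, the $t^{\pm1}$-conjugates killed by Lemma~\ref{lemma:type1_facts}, and the same final counts $l(g^a)+l(g^{\ai})=2l(g)-1$ (even $n$) and $2l(g)-2$ (odd $n$). The only details you gloss over that the paper must handle explicitly are the small-$n$ edge cases (the runs $(2)$ and $(2,1)$ when $n=4$, and the overflow $x_w+1=\fn+1$ when $n=3$), but these do not change the argument.
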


\begin{proof}
For $n \geq 3$, let ${\mathcal P}_n$ denote the set of words of the form $ g=p \xi s$ where $\xi \in {\mathcal Q}_n$ and 
\begin{itemize}[itemsep=5pt]
    \item $p = t^{-1}ata^{\fn}ta^{(-1)^n}ta^0t$ with vector of consecutive exponents of the generator $a$ given by  $\p = (1, \fn,(-1)^n,0)$, and 
    \item $s = a^0tata^0tat^{-2}$ with vector of consecutive exponents of the generator $a$ given by $\s=(0,1,0,1)$.
\end{itemize}
Note that by construction, $u =1$ and $w = \kx-2$.

Let $\x'$ denote the vector of consecutive exponents of the generator $a$ in $\xi$.  As $p$ ends with the generator $t$ and $\xi$ ends with the generator $t$, we can write $\x = \p \x' \s$ as the vector of consecutive exponents of the generator $a$ in $g$.  Let $\x = (x_0, \cdots ,x_{\kx})$ and $\x' = (x_m, \cdots ,x_s)$ for $0 < m \leq s < \kx$.

We first show that $\x \in {\mathcal B}_v^{1,\kx-2}$ is minimal, where $v = \sum_{i=0}^{\kx} x_in^i$.  The definition of ${\mathcal Q}_n$ ensures that $\x' \in {\mathcal B}_{v'}^{0,\K{\x'}+1}$ is minimal, with $u'=0$ and $w' = \K{\x'}+1$ and $v' = \Sigma(\x')$.  By construction, $\x \in {\mathcal B}_v^{1,\kx-2}$.  If $n$ is odd, it follows from Lemma~\ref{lemma:odd_box} that $\x$ is minimal.  

If $n \geq 4$ is even, it follows from Proposition~\ref{lemma:even_minimal_characterization} and inspection of $(x_{\kx-1},x_{\kx}) = (0,1)$ that $\x$ contains a run $\r = (x_j, \cdots ,x_{\ell})$ at which it can be reduced.  
If $\r \cap \p$ is nonempty and $n>4$, then $r = (\fne)$ and it is clear by inspection that $\x$ cannot be reduced at $\r$.  
If $n=4$, then $\p$ contains runs of the form $(2)$ and $(2,1)$, neither of which is a run at which $\x$ can be reduced.
Since the first digit in $\r$ must be $\pm \fne$, and any remaining digits either $\pm \fne$ or $\pm(\fne -1)$, we see that $r \subseteq \x'$.

If $\r$ does not contain the final digit of $\x'$, it follows from Lemma~\ref{lemma:run_in_the_middle} that $\x'$ can be reduced at $\r$, contradicting the fact that $\x' \in {\mathcal B}_{v'}^{0,\K{\x'}+1}$ is minimal.   
Thus it must be the case that $l=s$.  
As the first digit of $s$ is $x_{\ell+1}$ which equals $0$, 
it follows immediately from Lemma~\ref{lemma:next_digit_0} that $\x'$ can be reduced at $\r$, a contradiction.  
Thus we conclude that $\x \in {\mathcal B}_v^{1,\kx-2}$ is minimal.  
That is, every word in $\mathcal{P}_n$ is geodesic,
and it follows from Lemma~\ref{lemma:prefix_suffix_growth_rate} that $\mathcal{P}_n$
has positive density in $BS(1,n)$.  
It remains to show that every $g \in \mathcal{P}_n$ has $\kappa_1(g) > 0$, which requires us to compute $l(g^{t^{\pm 1}})$ and $l(g^{a^{\pm 1}})$.

First consider $l(g^t)$ and $l(g^{\ti})$.  
Given the normal form for $g^t$ and $g^{\ti}$, we see that $\x \in \Lv$ is a vector so that $\eta_{u \pm 1,v,w\pm 1}(\x) = g^{t^{\mp 1}}$ and $\kx > \max(u+1,w+1)$, it follows from Lemma~\ref{lemma:type1_facts} that $l(g^t) = l(g^{\ti}) = l(g)$. 

We next compute $l(g^a)$ and $l(g^{\ai})$. 
Recall that $g^a = t^{-u} a^{v_+}t^w$ where $u=1, \ w - \kx-2$
and $v_+ = n^u+v-n^w$.  
Consider $\rho_{u,-w}(\x) = \p' \x \s' \in \LL_{v_+}$, where $\rho_{-u,w}(\x)$ and $\rho_{u,-w}(\x)$ are defined in Section~\ref{sec:r=1}.
Here we have $\p' = (1, \fn+1,(-1)^n,0)$ and $\s' = (0,0,0,1)$
As written, $\rho_{u,-w}(\x)$ is not minimal.  However, adding the basis vector ${\bf w}^{(1)}$ 
and reusing the notation yields $\rho_{u,-w}(\x) = \p' \x \s'$, where
\[
\p' = 
\left\{\begin{array}{ll}
(1, -\fne+1, 2, 0) & \textnormal{if $n$ is even}\\
(1, -\fn, 0, 0) & \textnormal{if $n$ is odd}
\end{array}
\right.
\]
and $\s' = (0,0,0,1)$ is unchanged. We assess below whether this new form of $\rho_{u,-w}(\x)$ is minimal.

Recall that  $g^{\ai} = t^{-u}a^{v_-}t^w$ where $u=1, \ w = \kx-2$ and $v_- = -n^u+v+n^w$.  
Consider $\rho_{-u,w}(\x) = \p'' \x \s'' \in \LL_{v_-}$ where $\p'' = (1, \fn-1,(-1)^n,0)$ and $\s'' = (0,2,0,1)$.

If $n=3$ then $s''$ contains $2=\fn+1$ before the final digit.  However, adding the basis vector ${\bf w}^{(\kx-2)}$ yields the new suffix vector $\s'' = (0,-1,1,1)$.

If $n$ is odd it follows from Lemma~\ref{lemma:odd_box} that $\rho_{-u,w}(\x)$ and $\rho_{u,-w}(\x)$ are minimal.  If $n \geq 4$ is even, we assess whether $\rho_{-u,w}(\x)$ and $\rho_{u,-w}(\x)$ are minimal in two cases.
\begin{itemize}[itemsep=5pt]
    \item If $n=4$ it is possible that either the prefix or suffix vector in $\rho_{-u,w}(\x)$ or $\rho_{u,-w}(\x)$ contains a run of the form $(2)$.  It is easily checked that neither vector can be reduced at such a run.
    \item If $\rho_{-u,w}(\x)$  or $\rho_{u,-w}(\x)$ is not minimal, as the final two digits of $\s$ are unchanged from those of $\x$, Lemma~\ref{lemma:even_reduction_end} does not apply, and it follows from Proposition~\ref{lemma:even_minimal_characterization} that $\x$ contains a run $\r$ at which it can be reduced.  
   By inspection of the digits in the prefix and suffix, we see that $\r \subseteq \x'$.  It follows from Lemma~\ref{lemma:run_in_the_middle} or~\ref{lemma:next_digit_0} that $\x'$ is not minimal, a contradiction.
\end{itemize}
We conclude that $\rho_{u,-w}(\x)$ and $\rho_{-u,w}(\x)$ are minimal in ${\mathcal B}_{v_+}^{u,w}$ and ${\mathcal B}_{v_-}^{u,w}$, respectively.  

In all cases, we see that vectors $\x$, $\rho_{u,-w}(\x)$ and $\rho_{-u,w}(\x)$ all have the same length, and additionally 
the values of $u$ and $w$ are not altered when $g$ is conjugated by $a$ or $a^{-1}$.
Thus any changes between the word length of the corresponding elements arises from a change in the $\ell^1$ norm of the respective vectors.  
As $\Vert \rho_{-u,w}(\x) \Vert_1 = \Vert \x \Vert_1$ it follows that $l(g^{\ai}) = l(g)$.

Comparing the $\ell^1$ norms of $\x$ and $\rho_{u,-w}(\x)$, it follows that $l(g^a) = l(g) - 1$ if $n$ is even and $l(g^a) = l(g)-2$ is $n$ is odd. 
In all cases, these computations show that $\kappa_1(g) > 0$.
\end{proof}

\subsection{A positive density set of elements with zero conjugation curvature}
\label{sec:zero}

In this section we construct a family of elements $g \in BS(1,n)$ for $n \geq 3$ with $\kappa_r(g) =0$, where $r$ is allowed to assume any value between 1 and $\max(1,\lfloor \frac{n}{4} \rfloor-1)$. 
For any choice of $r$ in this range, the set of elements constructed has positive density in $BS(1,n)$.

\begin{theorem}\label{prop:zero_kappa_r}
The group $BS(1,n)$ for  $n \geq 3$ has a positive density of elements $g$ with
$\kappa_r(g) =0$ for $r \leq \max(1,\lfloor \frac{n}{4} \rfloor-1)$.
\end{theorem}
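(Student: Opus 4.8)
The plan is to mimic the prefix--middle--suffix construction used in Theorem~\ref{thm:positive_kappa}, but to engineer the prefix and suffix so that conjugation by \emph{every} element of $S_n(r)$ leaves the word length unchanged, whence $\kappa_r(g)=0$ directly from the definition. Fix $R=\max\left(1,\left\lfloor\frac{n}{4}\right\rfloor-1\right)$. For each $\xi\in\mathcal{Q}_n$ I would form $g=p\xi s$, where $p$ and $s$ are fixed words chosen so that a block of identical positive interior digits $c$ occupies every index of the ``$u$-window'' $[u-R,u+R]$ (with $u\ge R+1$ and a separating $0$ just before $\xi$) and the same constant $c$ occupies every index of the ``$w$-window'' $[w-R,w+R]$ (with $h>R$ and a separating $0$ just after $\xi$). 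The constant is chosen in the range $R+1\le c\le \fne-1-R$, which is nonempty precisely when $2R+2\le\fne$; this inequality is exactly what produces the threshold $r\le\left\lfloor\frac{n}{4}\right\rfloor-1$. By Lemma~\ref{lemma:prefix-suffix} the associated vector $\x=\p\x'\s$ is minimal, so every such $g$ is geodesic, and Lemma~\ref{lemma:prefix_suffix_growth_rate} then shows this family has positive density in $BS(1,n)$.

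The heart of the argument is to prove $l(g^q)=l(g)$ for every $q\in S_n(r)$ with $r\le R$. Following Remark~\ref{remark:conjugation}, I process a geodesic word for $q$ one generator at a time while tracking $\x(q)$. Conjugation by $t^{\pm1}$ shifts both $u$ and $w$ by $1$ and changes no digit, while conjugation by $a^{\pm1}$ adds $\pm1$ to the digit at the current index $u$ and subtracts $\pm1$ from the digit at the current index $w$. Since $q$ has length at most $R$, the current value of $u$ stays in $[u-R,u+R]$ and that of $w$ in $[w-R,w+R]$, so only window digits are ever altered; moreover $|u-w|$ and $\kx$ are preserved and, because $u\ge R+1$ and $h>R$, we keep $0<u(q)<w(q)<\kx$, so $\eta_{u(q),v(q),w(q)}(\x(q))$ remains a path of shape $3$. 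The key cancellation is that the current indices $u$ and $w$ always differ by the constant $w-u$, so the net change accumulated at each $u$-window index equals the negative of the net change at the matching $w$-window index; as both windows start at the common positive value $c$ and at most $R$ letters $a^{\pm1}$ occur, every affected digit stays in $[1,\fne-1]$ and hence remains positive. Therefore $\Vert\x(q)\Vert_1=\Vert\x\Vert_1$, and the second length formula of Lemma~\ref{lemma:length_formula} gives $l(g^q)=\Vert\x(q)\Vert_1+2\kx-|u(q)-w(q)|=l(g)$.

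To conclude equality of lengths rather than merely a bound, I must verify that $\x(q)$ is itself minimal. Here the strict interior condition $c+R\le\fne-1$ pays off: no window digit ever reaches absolute value $\fne$, so no reducible run can begin in either window; the separating zeros prevent any run of $\x'$ from extending into $p$ or $s$; and $\x'$ is untouched and minimal because $\xi\in\mathcal{Q}_n$. Thus $\x(q)$ satisfies the hypotheses of Lemma~\ref{lemma:prefix-suffix} for even $n\ge4$, of Lemma~\ref{lemma:odd_box} for odd $n$ (where minimality is decided by the fixed final two digits of $s$), and of Proposition~\ref{lemma:lemma318_n=2} for $n=2$, so $\x(q)$ is minimal and $l(g^q)=l(g)$. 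Averaging over $w\in S_n(r)$ then yields $\kappa_r(g)=0$ for all $r\le R$ simultaneously; for the small values of $n$ where $\left\lfloor\frac{n}{4}\right\rfloor-1\le0$ only $r=1$ is claimed, and that case is already covered by the single-generator computations recorded in Lemma~\ref{lemma:type1_facts}.

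The step I expect to be the main obstacle is the \emph{uniform} minimality of $\x(q)$ over the entire sphere $S_n(r)$: the conjugating element ranges over all elements of length $r$, with arbitrary normal forms, so one must rule out that any length-$r$ sequence of window shifts and $\pm1$ digit changes drives a window digit to $\pm\fne$ or introduces a sign change that creates a reducible configuration. This is precisely where the room inequality $2R+2\le\fne$ is required, and carrying it out cleanly across all four regimes (odd $n$, even $n\ge4$, $n=2$, and the small-$n$ fallback to $r=1$) is the most delicate bookkeeping in the proof.
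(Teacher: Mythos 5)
Your construction for $n\ge 8$ is essentially the paper's: the paper takes $b=\lfloor\frac{n}{4}\rfloor$ and pads the indices around $u=r+1$ and $w=\kx-(r+4)$ with blocks of the constant digit $b$ (prefix vector $(b,\dots,b,0)$ of length $2r+3$, suffix vector $(0,b,\dots,b)$ of length $2r+6$), so that conjugation by any $q$ of length $r$ only perturbs window digits, which stay in $[1,2\lfloor\frac{n}{4}\rfloor-1]\subseteq[1,\fne-1]$; positivity of every affected digit gives $\Vert\x(q)\Vert_1=\Vert\x\Vert_1$, and minimality of $\x(q)$ is preserved because no run can start at a digit of absolute value below $\fne$ and the separating zeros confine any run to $\x'$. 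Your room inequality $2R+2\le\fne$ is exactly the paper's threshold. One small technical point you should tighten: when a suffix-window digit reaches exactly $\fne-1$ (which happens for $q=a^{\pm r}$), Lemma~\ref{lemma:prefix-suffix} no longer applies verbatim because it requires $|s_i|<\fne-1$ strictly; the paper handles this by falling back on Proposition~\ref{lemma:even_minimal_characterization} and Lemmas~\ref{lemma:run_in_the_middle} and~\ref{lemma:next_digit_0} directly.

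The genuine gap is your treatment of $3\le n\le 7$, where $\lfloor\frac{n}{4}\rfloor-1\le 0$ and only $r=1$ is claimed. You dismiss this case by citing Lemma~\ref{lemma:type1_facts}, but part (2) of that lemma requires $\x$ to be \emph{strongly minimal}, and the checkable sufficient conditions for strong minimality ($1<|x_u|,|x_w|<\fn$ for odd $n$, $1<|x_u|,|x_w|<\frac{n}{2}-3$ for even $n$) are vacuous for every $n$ in this range: for $n=3,5$ there is no integer strictly between $1$ and $\fn$, and for $n=4,6$ the upper bound is nonpositive. Worse, for $n=3$ the vector $\x(a)$ obtained by incrementing $x_u$ acquires the digit $2=\fn+1$ at a non-final index, so it is not even an element of $\BB_{v_+}^{u,w}$; the paper must first add ${\bf w}^{(2)}+{\bf w}^{(3)}$ before minimality can be assessed. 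The paper's proof devotes a substantial separate argument to $3\le n\le 7$, verifying minimality of $\x(a)$ and $\x(\ai)$ case by case (Lemma~\ref{lemma:odd_box} for $n\in\{5,7\}$, Lemma~\ref{lemma:prefix-suffix} for $n=6$, an explicit run analysis for $n=4$, and the basis-vector correction for $n=3$) and only then concluding $l(g^{a^{\pm 1}})=l(g)$. Without an argument of this kind your proof establishes the theorem only for $n\ge 8$.
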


\begin{proof}
First suppose that $n \geq 8$, so that $\lfloor \frac{n}{4} \rfloor>1$.
Let $b=\lfloor \frac{n}{4} \rfloor$ and take $1 \leq r\leq b-1$.
Construct a set of elements ${\mathcal Z}_n$ so that $g \in {\mathcal Z}_n$ is the concatenation $p \xi s$, where $p$ and $s$ are words specified below and $\xi \in {\mathcal Q}_n$.  Namely,
\begin{itemize}[itemsep=5pt]
    \item $p = t^{-(r+1)}a^{b}ta^{b}t \cdots a^{b}ta^0t$ with $2r+2$ repetitions of $a^{b}$, and corresponding vector of consecutive exponents of $a$ given by $\p = (b,b, \cdots ,b,0)$ of length $2r+3$. 
    \item $s=a^0ta^{b}ta^{b} \cdots ta^{b}t^{-(r+4)}$ with $2r+5$ repetitions of $a^{b}$, and corresponding vector of consecutive exponents of $a$ given by $\s = (0,b,b, \cdots ,b)$ of length $2r+6$.
\end{itemize}
Observe that $u = r+1$ and $w = \kx - (r+4)$.
Let $\x'$ be the vector of consecutive exponents of the generator $a$ in $\xi$. According to the definition of ${\mathcal Q}_n$, we have $\x' \in {\mathcal B}_{v'}^{0,\K{\x'}+1}$, where $v' = \Sigma(\x')$, and by assumption $\x'$ is minimal.
Inspection of the initial and final letters of $p,\xi$ and $s$ allows us to write $\x = \p \x' \s$ as the vector of consecutive exponents of the generator $a$ in $g=p \xi s$.  By construction, $\x \in {\mathcal B}_{v}^{r+1,\kx-(r+4)}$.  We now show that $\x$ is minimal.

As $n \geq 8$, we know that $\fn-\lfloor \frac{n}{4} \rfloor > 1$, so $1 \leq r  < \lfloor \frac{n}{4}\rfloor -1$.  
Note that the final two digits of $\x$ are $(b,b)$; if $n$ is odd, it follows immediately from Lemma~\ref{lemma:odd_box} that $\x$ is minimal.  
When $n \geq 4$ is even, as $|p_i| \leq \fn-1$ and $|s_i| < \fn-1$ it follows from Lemma~\ref{lemma:prefix-suffix} that $\x \in \Bvuw$ is minimal.
This shows that when $n \geq 8$, every word in $\mathcal{Z}_n$ is geodesic
and it follows from Lemma~\ref{lemma:prefix_suffix_growth_rate} that $\mathcal{Z}_n$
has positive density in $BS(1,n)$.
It remains to show that for every $g \in \mathcal{Z}_n$ we have $\kappa_r(g) = 0$.
For later reference, as $\eta_{u,v,w}(\x)$ has shape $3$, we compute $l(g) = \Vert \x \Vert_1 + 2\kx - |u-w|$.

Let $q = q_1\dots q_r$.  In order to show $\kappa_r(g) = 0$, we will first
show that $\x(q)$ is minimal for all such $q$, where $\x(q)$ is defined in Remark~\ref{remark:conjugation}.  Since $\x$ is also minimal, we can then
use the appropriate length formula to compute first $l(g^q)$ and then $\kappa_r(g)$.

To prove the minimality of $\x(q)$, let $q'=q_1\dots q_j$ for $0 \le j \le r-1$.
For any generator $c \in \{a^{\pm 1}, t^{\pm 1}\}$, we will iteratively construct $\x(q'c)$
from $\x(q')$. 
Following the notation in Remark~\ref{remark:conjugation}, recall that
\begin{enumerate}[itemsep=5pt]
\item when $c=t^{\pm 1}$ we have 
\begin{enumerate}[itemsep=5pt]
    \item $u(q'c) = u(q')\pm 1$ and $w(q'c) = w(q') \pm 1$,
    \item $|u(q')-w(q')| = |u(q'c)-w(q'c)|$, and
    \item $\x(q'c) = \x(q')$.
\end{enumerate}
\item when $c = a^{\pm 1}$, we have $u(q'c) = u(q')$ and $w(q'c) = w(q')$, and
we construct $\x(q'c)$ from $\x(q')$ by altering two coordinates.
Namely, when $c = a$ we have $$x(q'c)_{u(q')} =x(q'c)_{u(q'c)}= x(q')_{u(q')}+1$$ and
$$x(q'c)_{w(q')}=x(q'c)_{w(q'c)} = x(q')_{w(q')}-1,$$ with the signs reversed when $c = a^{-1}$.
\end{enumerate}
In other words, to obtain $u(q'c), w(q'c)$ and  $\x(q'c)$ from $u(q'), w(q')$ and  $\x(q')$, we either alter
$u$ and $w$ by $1$, or we alter the digits $x_{u(q')}$ and $x_{w(q')}$ by $1$.

The vector $\x$ has the following form:
\[
\x = 
(\overset{\p}{\overbrace{\underset{0}{b},\dots, \underset{u=r+1}{b}, \dots, b,\underset{2r+2}{0}}}, \x',
\overset{\s}{\overbrace{\underset{\kx-(2r+5)}{0}, b, \dots, \underset{w=\kx-(r+4)}{b}, \dots, \underset{\kx}{b}}}),
\]
where we have indicated relevant indices below the vector.  As $\x(q)$ is obtained from $\x$ by
applying steps (1) or (2) above a total of $r$ times, once for each generator in $q$, we see that $\x(q)$ must have two properties:
\begin{itemize}[itemsep=5pt]
    \item $\x'$ is unchanged between $\x$ and $\x(q)$, 
    \item the final two digits of $\x$ and $\x(q)$ are identical, and
    \item there are upper and lower bounds on the digits in $\p(q)$ and $\s(q)$.
Since $b = \lfloor \frac{n}{4} \rfloor$ and $r \leq \max(1,\lfloor \frac{n}{4} \rfloor-1)$,
we have $1 \leq x(q)_i \le 2\lfloor\frac{n}{4}\rfloor -1$ for any digit $x(q)_i$ in $\p(q)$ or $\s(q)$.
\end{itemize}

If $n$ is odd, it follows from Lemma~\ref{lemma:odd_box} and inspection of the final two digits of
$\s(q)$ that $\x(q)$ is minimal.  
If $n \geq 4$ is even, then all digits in $\p(q)$ and $\s(q)$ satisfy $|x(q)_i| \le \fne -1$.

If  $|x(q)_i| < \fne -1$ for all digits in  $\s(q)$,
the minimality of $\x(q)$ follows from Lemma~\ref{lemma:prefix-suffix}.
If there is some $i \geq \kx - (2r+5)$ with
$|x(q)_i| = \fne -1$,  then we must have $q = a^{\pm r}$ and $i = w$.
In this case, since $\x(q)$ does not satisfy Lemma~\ref{lemma:even_reduction_end}, 
it follows from Lemma~\ref{lemma:even_minimal_characterization} that there is a run $\r$ in $\x(q)$
at which it can be reduced.  As $\r$ begins with a digit $x_j$ satisfying $|x_j| = \fne$, $\r \cap \p(q) = \emptyset$.  As the first digit of $\s(q)$ is $0$,and when $n \geq 4$ is even the digit $0$ cannot be part of a run, we have $\r \cap \s(q) = \emptyset$ as well, so 
$\r \subseteq \x'$.

If $\r$ does not contain the final digit of $\x'$, it follows from Lemma~\ref{lemma:run_in_the_middle} that $\x'$ can be reduced at $\r$.  Otherwise, as the first digit of $\s(q)$ is $0$, it follows from Lemma~\ref{lemma:next_digit_0} that $\x'$ can be reduced at $\r$.  In either case, this contradicts the fact that $\x'$ is minimal.  Thus $\x(q)$ is minimal for all $q$ as above with $|q| = r$.

It remains to compute $l(g^q)$.
As $k_{\x(q)} = \kx < \max(u,w)$,
we use the second length formula in Lemma~\ref{lemma:length_formula} to 
compute $l(g^q)$.  
We have
\begin{align*}
l(g^q) &= \Vert \x(q) \Vert_1 + 2k_{\x(q)} - |u(q) - w(q)| \\
       &= \Vert \x(q) \Vert_1 + 2\kx - |u - w|\\
       &= l(g) + \Vert \x(q) \Vert_1 - \Vert \x \Vert_1
\end{align*}
where the second line follows from the first by the properties in Remark~\ref{remark:conjugation}, namely that $\kx = k_{\x(q)}$ and $|u(q) - w(q)| = |u-w|$.
Inspecting the final equality above, we note that with each successive conjugation by a letter in $q$, we add and subtract $1$ in different coordinates in our vector.  
As $b = \lfloor \frac{n}{4} \rfloor$ and $r<b$, any digit which differs between $\x$ and $\x(q)$is positive.  
Thus the net change to the $\ell^1$ norm of the vector after each successive conjugation is $0$, so $\Vert \x(q) \Vert_1 = \Vert \x \Vert_1$ and thus $l(g^q) = l(g)$.
As $q$ was any string of length $r$, it follows that $\kappa_r(g) = 0$ when $n \ge 8$.

We now verify the result when $3 \leq n \leq 7$, in which case $b=r=1$.  
Let $g \in {\mathcal Z}_n$ be constructed as above; we first show that the corresponding vector $\x$ of consecutive exponents of the generator $a$ is minimal.
\begin{itemize}[itemsep=5pt]
\item If $n$ is odd, it follows from inspection of the final two digits $(b,b)$ of $\x$ and Lemma~\ref{lemma:odd_box} that $\x$ is minimal.
\item When $n=6$ it follows from Lemma~\ref{lemma:prefix-suffix} that $\x$ is minimal, as $|s_i| < \fn-1 = 2$ for all $s_i \in \s$.
\item When $n=4$, Lemma~\ref{lemma:prefix-suffix} does not apply to $\x$.  
Suppose that $n=4$ and $\x$ is not minimal.
Inspection of the final two digits of $\x$ shows that Lemma~\ref{lemma:even_reduction_end} does not apply, and hence it follows from Proposition~\ref{lemma:even_minimal_characterization} that $\x$ contains a run $\r$ at which it can be reduced.   
However, when $n=4$ any run must begin with $\pm 2$, and hence $\r \subseteq \x'$.
It then follows from Lemma~\ref{lemma:run_in_the_middle} or~\ref{lemma:next_digit_0} that $\x'$ is not minimal, a contradiction.  Thus $\x$ is minimal.
\end{itemize}
As $\x$ is minimal, every word in $\mathcal{Z}_n$ for $3 \leq n \leq 7$ is geodesic.
It then follows from from Lemma~\ref{lemma:prefix_suffix_growth_rate} that $\mathcal{Z}_n$ has positive density in $BS(1,n)$ for $3 \leq n \leq 7$.  
It remains to show that any $g$ in one of these sets has $\kappa_1(g) = 0$.

For all $3 \leq n \leq 7$ it follows immediately from Lemma~\ref{lemma:type1_facts} that $l(g^t) = l(g^{\ti}) = l(g)$.  

Consider the prefix and suffix vectors for $g^a$ and $g^{a^{-1}}$.  That is, $g^a$ has
\begin{itemize}
    \item prefix vector $\p(a) = (1,1,2,1,0)$ and $u(a)=2$, and
    \item suffix vector $\s(a) = (0,1,0,1,1,1,1,1)$ where the second $0$ has index $w(a) = k-5$.
\end{itemize}
In the corresponding vectors for $g^{\ai}$ we have $x(\ai)_{u(\ai)} = 0$ and $x(\ai)_{w(\ai)}=2$.
As the argument is completely analogous, we consider only the case of $g^a$.

If $n \in \{5,7\}$, it follows directly from Lemma~\ref{lemma:odd_box} that $\x(a)$ is minimal.  If $n=6$ the same conclusion follows from Lemma~\ref{lemma:prefix-suffix}.  
If $n=4$, inspection of the final two digits of $\s(a)$ shows that Lemma~\ref{lemma:even_reduction_end} does not apply, and if $\x(a)$ was not minimal, it would follow from Proposition~\ref{lemma:even_minimal_characterization} that $\x(a)$ contained a run $\r$ at which it could be reduced. 
It is easily checked that if $\r \subset \p(a)$ then $\r = (2)$ or $\r = (2,1)$ and $\x(a)$ cannot be reduced at $\r$.
We conclude that $\r \subseteq \x'$, and then it follows from Lemma~\ref{lemma:run_in_the_middle}  or~\ref{lemma:next_digit_0} that $\x'$ is not minimal, a contradiction.  Thus $\x(a)$ is minimal when $n=4$.

When $n=3$ we must first replace $\x(a)$ with $\x(a)+{\bf w}^{(2)}+{\bf w}^{(3)}$.  Note that both vectors have the same $\ell^1$ norm, but $\x(a)+{\bf w}^{(2)}+{\bf w}^{(3)} \in {\mathcal B}_{v(a)}^{u(a),w(a)}$.  Keeping the same notation for this vector, it then follows from Lemma~\ref{lemma:odd_box} that $\x(a)$ is minimal.

For all $3 \leq n \leq 7$, as $k_{\x(a)} = \kx$ and $\max(u(a),w(a)) < k_{\x(a)}$, the same length formula from Lemma~\ref{lemma:length_formula} is used to compute both $l(g)$ and $l(g^a)$. 
It follows that  $l(g^a) = l(g) +1-1 = l(g)$.  The analogous argument applies to $l(g^{\ai})$ and we conclude that for $3 \leq n \leq 7$ every $g \in {\mathcal Z}_n$ satisfies $\kappa_1(g) = 0$.
\end{proof}

\subsection{A positive density set of elements with negative conjugation curvature}
\label{section:negative}

Our most robust result for conjugation curvature is Theorem~\ref{thm:neg_kappa_r}, which finds a positive density of elements $g \in BS(1,n)$ with $\kappa_r(g)<0$ for the widest range of $r$.

\begin{theorem}\label{thm:neg_kappa_r}
For $n \geq 3$, the group $BS(1,n)$ has a positive density of elements with $\kappa_r(g)<0$, for any  $1 \leq r \leq  \max(1,\lfloor \frac{n}{2} \rfloor-2)$.
\end{theorem}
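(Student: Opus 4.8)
The plan is to follow the architecture of Theorems~\ref{thm:positive_kappa} and~\ref{prop:zero_kappa_r}. I would fix a prefix $p$ and a suffix $s$, let $\xi$ range over $\mathcal{Q}_n$, and set $\mathcal{N}_n=\{g=p\xi s\}$, so that Lemma~\ref{lemma:prefix_suffix_growth_rate} makes $\mathcal{N}_n$ a positive-density family once every such word is shown to be geodesic. The crucial design choice is to arrange the associated vector $\x$ so that it \emph{vanishes} on the two blocks of indices $[u-r,u+r]$ and $[w-r,w+r]$ that conjugation by a word of length $r$ can reach, while $\x$ has shape $3$ with $\kx$ large, $u,w\ge r+1$, $x_0\neq0$, and final two digits avoiding the patterns of Lemma~\ref{lemma:even_reduction_end}. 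Placing both zero-blocks inside the fixed prefix and suffix, separated from $\xi$ by the boundary zeros inherent in Lemma~\ref{lemma:prefix-suffix} and in the definition of $\mathcal{Q}_n$, I would take all remaining buffer digits small enough that Lemma~\ref{lemma:prefix-suffix} (for even $n\ge4$) or Lemma~\ref{lemma:odd_box} (for odd $n$) certifies $\x$ minimal, so every element of $\mathcal{N}_n$ is geodesic.

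Given $g\in\mathcal{N}_n$ and any $w\in S_n(r)$, I would choose a geodesic word $q=q_1\cdots q_r$ for $w$ and track the triple $(u_i,w_i,\x_i)$ produced by successive conjugation via Remark~\ref{remark:conjugation}. Since $u,w\ge r+1$ and $x_0\neq0$, every intermediate element keeps $u_iw_i>0$ and $n\nmid v_i$, so each step is governed by the simple rules: $t^{\pm1}$ only shifts $u_i,w_i$ and fixes $\x_i$, while $a^{\pm1}$ fixes $u_i,w_i$ and alters only the coordinates at the current indices $u_{i-1},w_{i-1}$. Consequently every modified index lies in $[u-r,u+r]\cup[w-r,w+r]$, where $\x$ vanishes, and each modified coordinate of $\x(q)$ has absolute value at most $r$. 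The hypothesis $r\le\fn-2$ keeps these coordinates strictly below $\fne$, so none can begin a run; the runs inside $\xi$ are untouched and still see the same boundary zeros, and the top two digits of $\x(q)$ coincide with those of $\x$. Thus Proposition~\ref{lemma:even_minimal_characterization} (even $n$), with Lemma~\ref{lemma:even_reduction_end} inapplicable, or Lemma~\ref{lemma:odd_box} (odd $n$), shows that $\x(q)$ is minimal, of shape $3$, with $k_{\x(q)}=\kx$.

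With minimality in hand, Remark~\ref{remark:conjugation} gives $|u(q)-w(q)|=|u-w|$ and $k_{\x(q)}=\kx$, so the shape-$3$ formula of Lemma~\ref{lemma:length_formula} yields $l(g^w)-l(g)=\Vert\x(q)\Vert_1-\Vert\x\Vert_1$. Because $\x$ vanishes on every modified index, this difference equals $\sum_p|\x(q)_p|\ge0$, summed over the modified indices, so $l(g^w)\ge l(g)$ for every $w\in S_n(r)$. Taking $w=a^r$, which lies in $S_n(r)$ since $l(a^r)=r$ in this range, raises $x_u$ to $r$ and $x_w$ to $-r$, giving $l(g^{a^r})-l(g)=2r>0$. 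Hence $\sum_{w\in S_n(r)}l(g^w)>|S_n(r)|\,l(g)$, which is precisely $\kappa_r(g)<0$ for every $g\in\mathcal{N}_n$; combined with the positive density of $\mathcal{N}_n$ this proves the theorem for the constructed family.

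The principal difficulty is not the final arithmetic but establishing the minimality of $\x(q)$ \emph{uniformly} over the whole sphere: I must guarantee that for every geodesic representative of every element of $S_n(r)$ the conjugation never leaves the ``nice'' regime — no doubling or halving of $v$, no newly reducible run, and no change of shape — and this is exactly what the buffer of boundary zeros together with the bound $r\le\fn-2$ are engineered to provide. The cases with $r=1$ (in particular $n\in\{3,4,5\}$, where $\fn-2<1$) fall outside this bound; there I would instead apply Theorem~\ref{thm:type1curv} to a positive-density family of strongly minimal shape-$3$ elements with $u\neq w$ and $x_u=0$, for which $\delta_{u\ne w}(\delta_{x_u=0}+\delta_{x_w=0})>0$ forces $\kappa_1(g)<0$.
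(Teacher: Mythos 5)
Your proposal follows essentially the same route as the paper's proof: the paper takes $p = t^{-(r+1)}at^{2r+3}$ and $s = t^{2r+5}at^{-(r+4)}$, so that $\x$ has a single nonzero digit at index $0$ and at index $\kx$ with zero blocks of radius $r$ around $u=r+1$ and $w=\kx-(r+4)$, certifies minimality of $\x$ and of every $\x(q)$ via Lemma~\ref{lemma:prefix-suffix} (or Lemma~\ref{lemma:odd_box} for odd $n$), and concludes $l(g^q)-l(g)=\Vert\x(q)\Vert_1-\Vert\x\Vert_1\ge 0$ with strictness at $q=a^{\pm r}$, exactly as you describe. The only divergence is your proposed fallback to Theorem~\ref{thm:type1curv} for the case $\fn-2<1$; the paper instead runs the same construction with $r=1$ there, so no separate argument is introduced.
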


\begin{proof}
Choose $r$ so that $1 \leq r \leq \max(1,\lfloor \frac{n}{2} \rfloor-2)$.  
Construct a set of words ${\mathcal N}_n$ so that $g \in {\mathcal N}_n$ is formed by $p \xi s$ where $p$ and $s$ are words specified below, and $\xi \in {\mathcal Q}_n$.  Namely,
\begin{itemize}[itemsep=5pt]
    \item $p=t^{-(r+1)}at^{2r+3}$, with corresponding vector $\p = (1,0,0,\cdots ,0)$ of consecutive exponents of the generator $a$, of length $2r+3$.
    \item $s=t^{2r+5}at^{-(r+4)}$, with corresponding vector $\s = (0,0,\cdots ,0,1)$ of consecutive exponents of the generator $a$, of length $2r+6$.
\end{itemize}

Let $\x'$ be the vector of consecutive exponents of the generator $a$ in $\xi$.  According to the definition of ${\mathcal Q}_n$, we know that $\x' \in {\mathcal B}_{v'}^{0,\K{\x'}+1}$ where $v' = \Sigma(\x')$, and $\x'$ is minimal.
Inspection of the initial and final letters of $p,\xi$ and $s$ allows us to write $\x = \p \x' \s$ as the vector of consecutive exponents of the generator $a$ in $g$.    By construction, $\x \in {\mathcal B}_v^{r+1,\kx-(r+4)}$.

When $n$ is odd, as the final two digits of $\x$ are $(0,1)$, it follows from Lemma~\ref{lemma:odd_box} that $\x$ is minimal.
When $n$ is even, it follows immediately from Lemma~\ref{lemma:prefix-suffix} that $\x$ is minimal.
The minimality of $\x$ ensures that 
every word in $\mathcal{N}_n$ is geodesic,
and it follows from Lemma~\ref{lemma:prefix_suffix_growth_rate} that $\mathcal{N}_n$
has positive density in $BS(1,n)$.
It remains to show that every $g \in \mathcal{N}_n$ satisfies $\kappa_r(g) < 0$.

Let $q = q_1\dots q_r$.  In order to show that $\kappa_r(g) < 0$, we will first
show that $\x(q)$ is minimal for all such $q$, where $\x(q)$ is defined in Remark~\ref{remark:conjugation}.  Since $\x$ is also minimal, we can then
use the appropriate length formula to compute first $l(g^q)$ and then $\kappa_r(g)$.

The proof now follows the outline of the proof of Theorem~\ref{prop:zero_kappa_r}; recall the rules specified there for obtaining $u(q),w(q)$ and $\x(q)$ from $u,w$ and $\x$, which follow from Remark~\ref{remark:conjugation}.
As  the initial prefix and suffix vectors $\p$ and $\s$ are different than those in Theorem~\ref{prop:zero_kappa_r}, we draw the following conclusions in this case from the application of those rules.  In particular, any digit $x_i$ which differs in $\x$ and $\x(q)$
\begin{itemize}[itemsep=5pt]
    \item has $1 \leq i \leq 2r+1$ or $\kx - (2r+4) \leq i \leq \kx -4$.  It follows that $k_{\x(q)} = \kx$, and that $\x$ and $\x(q)$ share the same final two digits.
     \item is $0$ in $\x$ and, as $1 \leq r \leq\max(1,\fn-2)$, satisfies $|x(q)_j| < \fne -1$.  
\end{itemize}
It follows from these two facts that $\x(q)$ satisfies the conditions of Lemma~\ref{lemma:prefix-suffix} and hence is minimal.

As $k_{\x(q)} = \kx$ and the upper bound on $r$ ensures that $u(q)<w(q) < k_{\x(q)}$, to compute both $l(g)$ and $l(g^q)$ we use the second formula in Lemma~\ref{lemma:length_formula}.  
It follows from Remark~\ref{remark:conjugation} that $|u-w| = |u(q)-w(q)|$.
Thus the difference between $l(g^q)$ and $l(g)$ is exactly the difference in the respective $\ell^1$ norms of $\x(q)$ and $\x$.
Moreover, any digit which differs between $\x$ and $\x(q)$ is $0$ in $\x$ and nonzero in $\x(q)$.  Thus,
\[
\Vert \x(q) \Vert_1 - \Vert \x \Vert_1 = \sum_{i \in I} |x(q)_i| \geq 0,
\]
so $l(g^q) \geq l(g)$.
When $q=a^{\pm r}$, the same analysis shows that $l(g^{a^r}) = l(g^{a^{-r}}) = l(g) +2r$ and hence the above inequality is sometimes strict.

Thus we conclude that
\[
\sum_{p \in S_n(r)}l(g^p) > |S_n(r)|l(g),
\]
and it follows that all $g \in {\mathcal N}_n$ have $\kappa_r(g)<0$ for $r \leq \max(1,\fn -2)$.
\end{proof}

\section{Conjugation curvature in $BS(1,2)$}
\label{sec:n=2_curvature}

When $n=2$ we again find that $BS(1,2)$ contains a positive density of elements $g$ with  positive, negative and zero conjugation curvature $\kappa_1(g)$.

When $n=2$ and $\x \in \Bvuw$, we note that a run is a string in $\{0,1\}^*$ or $\{0,-1\}^*$
which begins with $\pm 1$.  
We redefine the weight of a run $\r$ when $n=2$ to account for the slight differences between the cases $n=2$ and $n>2$.
Given $\x \in \Bvuw$, define the weight of a run $\r = (x_j, \dots, x_\ell)$  to be
\[
\wt(\r) = \left(\#\left\{1\right\}-1\right)-\#\left\{ 0\right\}
\]
where $\#\left\{1\right\}$ denotes the number of occurrences of the digit $\epsilon_\r=\sgn(x_j)$ in $\x$ and $\#\left\{0\right\}$ is defined analogously.
Note that unlike the case of the weight of a run for $n>2$, digits
with absolute value greater than $1$ are not considered
when computing weight.

If ${\bf s} = (s_i, \cdots ,s_{\ell}) \subseteq \x$ is a string of digits from either $\{0,1\}^*$ or $\{0,-1\}^*$, define $\beta({\bf s})$ to be the difference between the number of zeros in ${\bf s}$ and the number of ones in ${\bf s}$.

Lemma~\ref{lemma:n=2runs} demonstrates that if $\x$ can be reduced at a run $\r$ of the form
$(\lambda,0, \cdots ,0,\lambda, r_1, \cdots ,r_{\ell})$, where $\lambda \in \{ \pm 1\}$, then there is a shorter run at which $\x$ can be reduced.

\begin{lemma}
\label{lemma:n=2runs}
Let $n=2$ and $\x \in \Bvuw$ with 
\[
\r = (x_j, \cdots ,x_{\ell}) = (\lambda,0, \cdots ,0,\lambda, r_1, \cdots ,r_q) \subseteq \x
\]
a run where $x_j = x_m = \lambda$ and $\ell \leq \kx-2$, for $\lambda \in \{\pm 1\}$.
Then
\[
\x + \lambda \sum_{i=m}^{\ell} \wi \; \ords \; \x+ \lambda \sum_{i=j}^{\ell} \wi.
\]
\end{lemma}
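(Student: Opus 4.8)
The plan is to compute the two vectors $\y = \x + \lambda\sum_{i=m}^\ell \wi$ and $\z = \x + \lambda\sum_{i=j}^\ell \wi$ coordinate-by-coordinate, compare their $\ell^1$ norms and their absolute-value vectors directly, and feed the outcome into the definition of $\ords$. First I would record the effect of each added combination: since $\wi$ contributes $-n=-2$ at index $i$ and $+1$ at index $i+1$, the sum $\lambda\sum_{i=m}^\ell\wi$ adds $-2\lambda$ at index $m$, then $-\lambda$ at each index $m+1,\dots,\ell$, and $+\lambda$ at index $\ell+1$; the sum $\lambda\sum_{i=j}^\ell\wi$ does the same but begins the block of $-\lambda$'s at index $j$, placing $-2\lambda$ at index $j$. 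The two computations are uniform in $\lambda\in\{\pm1\}$, so no case split on the sign is needed. The key structural observation is that $\y$ and $\z$ modify $\x$ identically on all indices $\ge m+1$, so they can differ only on the indices $j,\dots,m$.

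Next I would invoke the hypothesis $\ell \le \kx - 2$, so that $\ell+1 \le \kx-1 < \kx$ and neither $\y$ nor $\z$ alters the coordinate $x_\kx$ or any coordinate beyond it. Hence $\K{\y}=\K{\z}=\kx$, the same length formula of Lemma~\ref{lemma:length_formula} computes both $|\eta_{u,v,w}(\y)|$ and $|\eta_{u,v,w}(\z)|$, and the non-norm terms cancel, giving $|\eta_{u,v,w}(\y)| - |\eta_{u,v,w}(\z)| = \Vert\y\Vert_1 - \Vert\z\Vert_1$. On the block of indices $j,\dots,m$, using $x_j=x_m=\lambda$ and $x_{j+1}=\dots=x_{m-1}=0$, the coordinates of $\y$ are $(\lambda,0,\dots,0,-\lambda)$ while those of $\z$ are $(-\lambda,-\lambda,\dots,-\lambda,0)$. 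Summing absolute values over this block gives $2$ for $\y$ and $m-j$ for $\z$, so $\Vert\y\Vert_1 - \Vert\z\Vert_1 = 2 - (m-j)\le 0$, where the displayed block of zeros forces $m\ge j+2$.

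Finally I would conclude via the definition of $\ords$. Comparing $|\y|$ and $|\z|$ lexicographically (lower indices more significant), they agree at every index $<j+1$ and first differ at index $j+1$, where $|\y|_{j+1}=0 < 1 = |\z|_{j+1}$; thus $|\y| < |\z|$. When $m-j\ge 3$ the norm inequality is already strict, so $|\eta_{u,v,w}(\y)| < |\eta_{u,v,w}(\z)|$ and $\y\ords\z$; when $m-j=2$ the norms coincide and the lexicographic comparison supplies $\y\ords\z$. The only genuinely delicate point is this borderline case $m=j+2$ (a single interior zero), where the $\ell^1$ norms are equal and one must appeal to the lexicographic tiebreaker built into $\ords$ rather than to a strict length decrease; the remainder is the routine bookkeeping of tracking how the two telescoping sums of $\wi$ overlap and confirming, via $\ell\le\kx-2$, that both vectors retain the same final index.
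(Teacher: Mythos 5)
Your proof is correct and follows essentially the same route as the paper's: compare the two candidate vectors' $\ell^1$ norms, use $\ell\le\kx-2$ to see that both have final index $\kx$ so the same length formula applies and the difference in geodesic length is the difference in norms, and invoke the lexicographic tiebreaker in the borderline case $m=j+2$. The only (harmless) presentational difference is that you compare the two vectors directly on the block of indices $j,\dots,m$ where they differ, whereas the paper computes both norms relative to $\Vert\x\Vert_1$ and cancels the common $\beta(r_1,\dots,r_q)$ and $|x_{\ell+1}+\lambda|-|x_{\ell+1}|$ terms.
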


In other words, if $\x$ can be reduced at $\r$, and $\ell \le \kx-2$ 
then Lemma~\ref{lemma:less_than_6} applies, so the reduction takes the form on the right.
It follows from Lemma~\ref{lemma:n=2runs} that with respect to $\ords$ it is better to reduce $\x$ at the shorter run $(\lambda,r_1,\dots,r_q)$.

\begin{proof}
With $\r = (x_j, \cdots ,x_{\ell}) = (\lambda,0, \cdots ,0,\lambda, r_1, \cdots ,r_q) \subseteq \x$ as in the statement of the lemma, so $x_j = x_m = \lambda$, we have
\[
\Vert \x+\lambda \sum_{i=j}^{\ell} \wi \Vert_1 = \Vert \x \Vert_1 + (m-1-j)-1+\beta(r_1, \cdots ,r_q) + |x_{\ell+1}+\lambda| - |x_{\ell+1}|
\]
where $m-1-j$ is the length of the first string of zeros, and we subtract $1$ to account for the change in $x_m = \lambda$.  
By construction, $m-j \geq 2$.  
In contrast,
\[
\Vert \x+\lambda\sum_{i=m}^{\ell} \wi \Vert_1 = \Vert \x \Vert_1 + \beta(r_1, \cdots ,r_q) + |x_{\ell+1}+\lambda| - |x_{\ell+1}|.
\]
As $\r$ does not contain the final two digits of $\s$, the vectors $ \x + \lambda\sum_{i=j}^{\ell} \wi$ and $ \x+\lambda\sum_{i=m}^{\ell} \wi$ have the same length.
It follows that the same word length formula from Lemma~\ref{lemma:length_formula} is used to compute both $|\eta_{u,v,w}(\x+\lambda\sum_{i=j}^{\ell} \wi)|$ and $|\eta_{u,v,w}(\x+\lambda\sum_{i=m}^{\ell} \wi)|$.
Thus the difference in word length between the two paths is exactly the difference in $\ell^1$ norm between the vectors.
If $m-j>2$, the lemma follows from comparing the expressions for the $\ell^1$ norms of the two vectors.
If $m-j=2$, then the two vectors have the same $\ell^1$ norm.  However, as the second digit of $\r$ is $0$, we see that $\x + \lambda\sum_{i=m}^{\ell} \wi$ precedes $ \x+\lambda\sum_{i=j}^{\ell} \wi$ in the lexicographic order, and the lemma follows.
\end{proof}

When constructing examples of elements of different curvatures when $n=2$, we again create families of geodesics as the concatenation $p \xi s$ where $p$ is a prefix and $s$ is a suffix, each of a specified form, and $\xi \in {\mathcal Q}_2$.
In an effort to simplify the discussion, we will phrase everything in terms of the vectors of exponents of the generator $a$ in the prefix, suffix, and $\xi$.
Let $Q_2$ be the
set of vectors satisfying either one of the following equivalent conditions
\[
Q_2 = \{\x \, | \, \x \textnormal{ is minimal in $\BB_{\Sigma(\x)}^{0,\kx+1}$} \}
    = \{\x \, | \, \eta_{0,\Sigma(\x),\kx+1}(\x) \in \QQ_2\}.
\]
We will be interested in filtering $Q_2$ by the lengths of the associated geodesics in $\QQ_2$.
Thus we define $Q_2(N) = \{\x \in Q_2 \, | \, |\eta_{0,\Sigma(\x),\kx+1}(\x)| = N\}$.

\begin{lemma}\label{lemma:n=2_Q2_growth_rate}
The growth rate of $\{|Q_2(N)|\}_{N \in \N}$ is the same as the growth rate of $BS(1,n)$.
\end{lemma}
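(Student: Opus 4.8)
The plan is to identify $Q_2$ with $\QQ_2$ through the parametrization $\x \mapsto \eta_{0,\Sigma(\x),\kx+1}(\x)$ and then transport the growth rate statement of Lemma~\ref{lemma:middle_vector} across this identification. The second description of $Q_2$ in its definition already records that this map carries $Q_2$ into $\QQ_2$, so the real work is to verify that it is a length-preserving bijection onto $\QQ_2$; once this is done, no new growth estimate is needed.

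First I would check injectivity. For $\x\in Q_2$ we have $\kx<\max(0,\kx+1)=\kx+1$, so $\eta_{0,\Sigma(\x),\kx+1}(\x)$ has shape $1$ and equals $a^{x_0}ta^{x_1}t\cdots ta^{x_\kx}t$. Reading the powers of $a$ between consecutive occurrences of $t$ recovers each digit $x_i$ (an empty block encoding a zero digit), and since $x_\kx\neq 0$ by the definition of $\kx$, the final block is a genuine nonzero power of $a$ preceding the single terminal $t$. Thus the word determines $\x$ uniquely. Surjectivity onto $\QQ_2$ is exactly the equality $Q_2=\{\x\mid \eta_{0,\Sigma(\x),\kx+1}(\x)\in\QQ_2\}$ from the definition: any word of $\QQ_2$ does not begin with $\ti$, forcing $u=0$, and ends in a single $t$, forcing $w=\kx+1$, so it is $\eta_{0,\Sigma(\y),\ky+1}(\y)$ for the vector $\y$ whose digits are its consecutive $a$-exponents, and that vector lies in $Q_2$.

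Next I would match the gradings. Since each $\x\in Q_2$ is minimal in $\BB_{\Sigma(\x)}^{0,\kx+1}$, the word $\eta_{0,\Sigma(\x),\kx+1}(\x)$ is geodesic by Lemma~\ref{lemma:minimal_is_geodesic}, so its length as a string coincides with the word length of the element it represents; by Lemma~\ref{lemma:length_formula} this common value is $\Vert\x\Vert_1+\kx+1$. Hence the bijection above restricts to a bijection from $Q_2(N)$ onto $\QQ_2(N)$ for every $N$, giving the exact equality $|Q_2(N)|=q_2(N)$.

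Finally, Lemma~\ref{lemma:middle_vector} gives $q_2(N)=\Theta(\lambda_2^N)$, where $\lambda_2$ denotes the growth rate of $\OO_2$, and Theorem~\ref{corollary:shapes_injection} identifies this growth rate with that of $BS(1,2)$. Combining these with $|Q_2(N)|=q_2(N)$ yields $|Q_2(N)|=\Theta(\lambda_2^N)$, which is the desired conclusion. The only genuinely delicate point is the bookkeeping inside the bijection, namely confirming that the conditions ``does not begin with $\ti$'' and ``ends in a single $t$'' translate precisely into $u=0$ and $w=\kx+1$, and that distinct vectors cannot produce the same word; this is routine once one reads the shape-$1$ normal form off the string, and it is the only step requiring care.
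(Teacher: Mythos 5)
Your proof is correct and follows the same route as the paper, which simply observes that the claim is immediate from the definition of $Q_2$ together with Lemma~\ref{lemma:middle_vector}; you have merely made explicit the length-preserving bijection $\x \mapsto \eta_{0,\Sigma(\x),\kx+1}(\x)$ between $Q_2(N)$ and $\QQ_2(N)$ that the paper leaves implicit. No issues.
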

\begin{proof}
The lemma follows immediately from the definition of $Q_2$ and Lemma~\ref{lemma:middle_vector}.
\end{proof}

Lemma~\ref{lemma:n=2_prefix_suffix_growth_rate} rephrases Lemma~\ref{lemma:prefix_suffix_growth_rate} using vectors instead of geodesics.

\begin{lemma}\label{lemma:n=2_prefix_suffix_growth_rate}
Let $\mathcal{A}$ be a set of triples $(u,w,\x)$, where $\x$ is minimal in $\BB_{\Sigma(\x)}^{u,w}$.  Let
\[\mathcal{A}(N) = \{(u,w,\x) \in \mathcal{A} \, |\, |\eta_{u,\Sigma(\x),w}(\x)| = N\}.\]  Suppose that $|\mathcal{A}(N)| = |Q_2(N+c)|$ for some constant $c \in \Z$.
Then the set of geodesics 
\[ \{\eta_{u,\Sigma(\x),w}(\x) \mid (u,w,\x) \in \mathcal{A}\} \] 
has positive density in $BS(1,n)$.
\end{lemma}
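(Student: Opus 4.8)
The plan is to mirror the proof of Lemma~\ref{lemma:prefix_suffix_growth_rate}, now phrased for the triples $(u,w,\x)$, while supplying the one additional ingredient that the vector formulation requires: a bijection between the triples in $\mathcal{A}$ and the distinct group elements they determine. First I would note that because each $\x$ appearing in a triple of $\mathcal{A}$ is minimal in $\BB_{\Sigma(\x)}^{u,w}$, Lemma~\ref{lemma:reduce_to_box} shows that $\eta_{u,\Sigma(\x),w}(\x)$ is a geodesic representing $g=t^{-u}a^{\Sigma(\x)}t^w$, so the word length of $g$ equals $N=|\eta_{u,\Sigma(\x),w}(\x)|$. The crucial observation is that the map $(u,w,\x)\mapsto g$ is injective on triples with $\x$ minimal: from $g$ the standard normal form recovers $u$, $w$, and $\Sigma(\x)$, and since $n=2$ is even, Lemma~\ref{lemma:abs_lex_unique} guarantees that $\ords$ is a total order on $\BB_{\Sigma(\x)}^{u,w}$, so its minimum $\x$ is uniquely determined as well. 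Writing $\mathcal{G}$ for the set of group elements represented by the geodesics in the statement, it follows that distinct triples yield distinct elements and hence $|\mathcal{G}(N)|=|\mathcal{A}(N)|$ for every $N$.

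Next I would identify the growth rate. By Lemma~\ref{lemma:n=2_Q2_growth_rate} we have $|Q_2(N)|=\Theta(\lambda^N)$, where $\lambda$ denotes the growth rate of $BS(1,n)$. A constant shift of the index preserves this growth by Lemma~\ref{lemma:exp_growth}(1), so $|Q_2(N+c)|=\Theta(\lambda^N)$, and the hypothesis $|\mathcal{A}(N)|=|Q_2(N+c)|$ then gives $|\mathcal{A}(N)|=\Theta(\lambda^N)$. Combining this with the equality $|\mathcal{G}(N)|=|\mathcal{A}(N)|$ from the previous step yields $|\mathcal{G}(N)|=\Theta(\lambda^N)$, so $\mathcal{G}$ has the same growth rate as $BS(1,n)$.

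Finally, having matched the growth rates, $\mathcal{G}$ has positive density in $BS(1,n)$: the lower bound $\epsilon<|\mathcal{G}(N)|/|S_n(N)|$ is the content of Lemma~\ref{lemma:exp_growth}(2) applied to $|\mathcal{G}(N)|$ and $|S_n(N)|$, while the upper bound follows from the observation recorded after the definition of positive density that equal growth rates imply positive density. I expect the only genuinely delicate point to be the counting step $|\mathcal{G}(N)|=|\mathcal{A}(N)|$, that is, verifying that enumerating the triples faithfully enumerates the distinct elements they represent. This is precisely where the uniqueness of the minimal vector for $n=2$ (Lemma~\ref{lemma:abs_lex_unique}) is indispensable, since without a total order the same element could in principle arise from several minimal vectors and the lower bound on $|\mathcal{G}(N)|$ could degrade; I note that even a uniformly bounded multiplicity would suffice to preserve the $\Theta(\lambda^N)$ growth, so the full injectivity is a convenient but not strictly necessary strengthening.
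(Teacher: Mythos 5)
Your proposal is correct and follows essentially the same route as the paper, whose proof is a one-line appeal to the definition of $\mathcal{A}$ together with Lemmas~\ref{lemma:exp_growth} and~\ref{lemma:n=2_Q2_growth_rate}. You additionally spell out the counting step identifying triples with the distinct group elements they represent (via the uniqueness of the minimal vector from Lemma~\ref{lemma:abs_lex_unique}), a detail the paper leaves implicit but which, as you note, only needs bounded multiplicity in any case.
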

\begin{proof}
The lemma follows immediately from the definition of $\mathcal{A}$ and  Lemmas~\ref{lemma:exp_growth} and ~\ref{lemma:n=2_Q2_growth_rate}.
\end{proof}

To show that $BS(1,2)$ has a positive density of elements with positive, negative and zero conjugation curvature, 
our strategy is to construct a variety of sets $\mathcal{A}$
as in Lemma~\ref{lemma:n=2_prefix_suffix_growth_rate}, where the vector
$\x$ is the concatenation $\p\x'\s$ of a prefix, suffix, and vector
$\x' \in Q_2$.    We will always have $u=2$ and $w=\kx-\epsilon$
for a particular value of $\epsilon \in \{1,2\}$ chosen later.

Our prefixes have the following forms:
\begin{itemize}[itemsep=5pt]
    \item $\p_1(b) = (1,0,b,0,-1,0,1,0)$, for $b \in \{0,\pm 1\}$.
    \item $\p_2(b_1,b_2) = (1,0,b_1,b_2,0,-1,0)$ where $(b_1,b_2) \in \{(1,0),(0,1),(0,0)\}$.
\end{itemize}
Our suffixes have the following form 
\begin{itemize}
    \item $\s(c_1,c_2,c_3) = (0,0,-1,0,c_1,c_2,c_3)$, where 
    \[(c_1,c_2,c_3) \in \{(0,1,2),(0,0,2),(0,0,3),(-1,0,3),(1,0,2)\}.\]
\end{itemize}

In order to simplify subsequent proofs, we will combine a number
of prefix-suffix pairs into one large set $\VV_2$ of vectors, prove
that any vector in $\VV_2$ is minimal, and choose subsets of
$\VV_2$ whose elements have, respectively, positive, zero and negative conjugation curvature.

Define $\VV_2$ to be the union of all triples of the following forms; in all cases, $\x'$ is an arbitrary vector in $Q_2$.
\begin{itemize}
    \item $(2,w,\x)$, where $w = \kx-1$ and $\x = \p_2(b_1,b_2)\x'\s(c_1,c_2,c_3)$, for
    \[
    ((b_1,b_2),(c_1,c_2,c_3)) \in \{((1,0),(0,1,2)),\,((0,1),(0,0,2)),\,((0,0),(0,0,3))\}
    \]
    \item $(2,w,\x)$, where $w = \kx-2$ and $\x = \p_2(b_1,b_2)\x'\s(c_1,c_2,c_3)$, for
    \[
    ((b_1,b_2),(c_1,c_2,c_3)) \in \{((1,0),(0,1,2)),\,((0,1),(1,0,2)),\,((0,0),(-1,0,3))\}
    \]
    \item $(2,w,\x)$, where $w = \kx-2$ and $\x = \p_1(b)\x'\s(c_1,c_2,c_3)$, for
    \[
    (b,(c_1,c_2,c_3)) \in \{(0,(0,1,2)),\,(1,(1,0,2)),\,(-1,(-1,0,3))\}
    \]
\end{itemize}

We begin with a technical lemma required to prove 
that any vector in $\VV_2$ is minimal.
It concludes that if $\x$ is constructed as in $\VV_2$, and $\x$ can be reduced at a run $\r$, 
then there is also a run $\r'$ at which $\x$ can be reduced which does not overlap the suffix vector.
Moreover, with respect to the $\ords$ order, it is better to reduce $\x$ at $\r'$ than at $\r$.

\begin{lemma}
\label{lemma:big_suffix}
Let $(2,w,\x) \in \VV_2$, with $\x = \p \x' \s \in \BB_{\Sigma(\x)}^{u,w}$.
Let $\r = (x_j, \cdots ,x_{\ell}) \subseteq \x$ be a run at which
$\x$ can be reduced with $l \leq \kx-1$.
Suppose that $\r$ is the concatenation $\r' \r_{\s}$ where
$\r' = \r \cap \p\x' = (x_j, \cdots ,x_m) \neq \emptyset$ and
$\r_{\s} = \r \cap \s =(x_{m+1}, \cdots ,x_{\ell})\neq \emptyset$.  
Then 
\[
\x+ \epsilon_{\r} \sum_{i=j}^{m}\wi  \; \ords \; \x + \epsilon_{\r} \sum_{i=j}^{\ell}\wi  \; \ords \; \x.
\]
\end{lemma}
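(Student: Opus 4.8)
The plan is to prove the two relations in the chain separately, writing $\y = \x + \epsilon_\r\sum_{i=j}^\ell\wi$ and $\y' = \x + \epsilon_\r\sum_{i=j}^m\wi$, so that the asserted chain reads $\y'\ords\y\ords\x$. The rightmost relation $\y\ords\x$ is exactly the hypothesis that $\x$ can be reduced at $\r$: since $\ell\le\kx-1$, Lemma~\ref{lemma:less_than_6} forces the coefficient of $\wf{\ell}$ in the reduction to equal $1$, so the reduction is genuinely the addition of $\epsilon_\r\sum_{i=j}^\ell\wi$. The work is therefore concentrated in $\y'\ords\y$. First I would record the structural facts that make the comparison clean. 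Because $\r'=\r\cap\p\x'$ ends at $m$ and $\r_\s=\r\cap\s$ begins at $m+1$, the suffix $\s=(0,0,-1,0,c_1,c_2,c_3)$ occupies indices $m+1,\dots,m+7$, so $x_{m+1}=0$, $x_{m+2}=0$, $x_{m+3}=-1$, and $\kx=m+7$ with $x_{\kx}=c_3\in\{2,3\}$. Since both sums reach no further than index $\ell+1\le\kx$ and $c_3\pm1\ne0$, no addition creates a nonzero digit beyond index $\kx$ or cancels $x_{\kx}$; hence $k_\x=k_\y=k_{\y'}=\kx$. Consequently all three vectors are governed by the same case of Lemma~\ref{lemma:length_formula}, and differences in $|\eta_{u,v,w}(\cdot)|$ coincide with differences in $\ell^1$ norm.

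Next I would reduce $\y'\ords\y$ to an explicit norm-and-lexicographic comparison. The vectors $\y$ and $\y'$ agree outside indices $m+1,\dots,\ell+1$, where $\y-\y'=\epsilon_\r\sum_{i=m+1}^\ell\wi$. A digit-by-digit computation — using that every digit of $\r_\s$ lies in $\{0,\epsilon_\r\}$, since it has sign $\epsilon_\r$ or $0$ and index $<\kx$ — gives $|y_{m+1}|=|y'_{m+1}|=1$, then $|y_i|-|y'_i|=+1$ when $x_i=0$ and $-1$ when $x_i=\epsilon_\r$ for $m+1<i\le\ell$, and finally $|y_{\ell+1}|-|y'_{\ell+1}|=|x_{\ell+1}+\epsilon_\r|-|x_{\ell+1}|$. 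Summing yields
\[
\Vert\y\Vert_1-\Vert\y'\Vert_1=\beta(\r_\s)-1+\bigl(|x_{\ell+1}+\epsilon_\r|-|x_{\ell+1}|\bigr),
\]
with $\beta(\r_\s)$ the excess of zeros over $\pm1$'s in $\r_\s$. It therefore suffices to show $\beta(\r_\s)+\bigl(|x_{\ell+1}+\epsilon_\r|-|x_{\ell+1}|\bigr)\ge1$.

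The crux, and the step I expect to require the most care, is this last inequality, which I would settle by a finite check that exploits the shape of $\s$. Because $\r_\s$ is an initial segment of $\s=(0,0,-1,0,c_1,c_2,c_3)$ all of whose digits have sign $\epsilon_\r$ or $0$, and because the digit $-1$ at index $m+3$ forces $\epsilon_\r=-1$ once it is included, there are only finitely many possibilities for $\r_\s$ across the three suffix families. Tracking the running value of $\beta$ along $\s$ (which reads $1,2,1,2,\dots$ and never drops below $1$ in the admissible range), one sees that $\beta(\r_\s)\ge1$ always; that $\beta(\r_\s)\ge2$ makes the sum at least $2-1=1$ since the bracketed term is always at worst $-1$; and that in the remaining cases $\beta(\r_\s)=1$ the next digit $x_{\ell+1}$ equals $0$, so the bracket is $+1$ and the sum is $2$. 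This establishes $\Vert\y'\Vert_1\le\Vert\y\Vert_1$.

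Finally I would dispatch the equality case. If $\Vert\y'\Vert_1=\Vert\y\Vert_1$ the bound above is tight, which forces $\beta(\r_\s)=2$ and hence $\ell\ge m+2$. Then $\y$ and $\y'$ first have a chance to differ at index $m+1$, where both have absolute value $1$, and next at index $m+2$, where $y'_{m+2}=x_{m+2}=0$ (the second digit of $\s$) while $y_{m+2}=x_{m+2}-\epsilon_\r=-\epsilon_\r$ has absolute value $1$. Thus $|\y'|<|\y|$ in the absolute lexicographic order, so $\y'\ords\y$; in the strict-norm case $\y'\ords\y$ is immediate because all three word lengths use the same formula. Combining $\y'\ords\y$ with the hypothesis $\y\ords\x$ gives the claimed chain.
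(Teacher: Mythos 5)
Your proof is correct and follows essentially the same route as the paper: the same decomposition $\y' \ords \y \ords \x$, the same reduction to an $\ell^1$-norm comparison after fixing $k_{\y'}=k_\y=\kx$ so that a single length formula applies, the same finite check over initial segments of $\s$, and the same lexicographic tiebreak at index $m+2$ in the equality case. The only differences are cosmetic: you compute $\Vert\y\Vert_1-\Vert\y'\Vert_1$ digit by digit rather than via the identity $\wt(\r)=\wt(\r')+\beta(\r_\s)$, and you replace the paper's explicit table of suffix prefixes with the observation that the running value of $\beta$ along $\s$ never drops below $1$.
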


\begin{proof}
Considering the combinations of prefixes and suffixes for elements in ${\mathcal V}_2$, note that if $|\r_{\s}|>2$ then $\r$ contains the digit $-1$, and thus $\epsilon_\r = -1$.  
In particular, $\r$ cannot contain the final digit of $\s$, which in all cases is positive.

As $\r$ cannot contain the final digit of $\s$, it follows from  Lemma~\ref{lemma:less_than_6} that the result $\y$
of reducing $\x$ at $\r$ is 
\[
\y = \x + \epsilon_\r\sum_{i=j}^\ell \wi.
\]
That is, the coefficient $2\epsilon_\r$ does not appear in the above sum.
When $l \leq \kx -2$, it is clear that $\ky = \kx$.
When $l=\kx-1$, that is, $\r$ terminates at the penultimate digit of $\x$, the final digit of $\y$ is $x_{\kx} -1$ which, by inspection, in all cases is either $1$ or $2$.  
So it is again the case that $\ky = \kx$.

Let $\y' = \x + \epsilon_\r\sum_{i=j}^m \wi$.  
As the length of $\s$ is $7$, it is immediate that $\r'$ does not contain the final two digits of $\x$ and hence $k_{\y'} = \ky = \kx$.

The elements of ${\mathcal V}_2$ are constructed so that 
$\kx > \max(u,w)$; as $k_{\y'} = \ky = \kx$  we use the second word length formula in Lemma~\ref{lemma:length_formula} to compute  $|\eta_{u,\Sigma(\x),w}(\x)|$, $|\eta_{u,\Sigma(\y),w}(\y)|$ and $|\eta_{u,\Sigma(\y'),w}(\y')|$.
Thus any change in word length arises from a change in $\ell^1$ norm between the vectors.
Consider
\begin{align*}
\Vert \x \Vert_1 - \Vert \y \Vert_1 &= \wt(\r) + |x_{\ell+1}| - |x_{\ell+1} + \epsilon_\r|   \\
\Vert \x \Vert_1 - \Vert \y' \Vert_1 &= \wt(\r') + |x_{m+1}| - |x_{m+1} + \epsilon_\r|.
\end{align*}
As $\y \ords \x$ we know that the first difference is nonnegative.  To prove the lemma, we must show that 
\[
\left( \Vert \x \Vert_1 - \Vert \y' \Vert_1 \right) - \left( \Vert \x \Vert_1 - \Vert \y \Vert_1 \right) = \Vert \y \Vert_1 - \Vert \y' \Vert_1 \geq 0,
\]
and when there is equality, that there is a lexicographic reduction from $\y$ to $\y'$.
For a string of digits $(x_s, \cdots ,x_t)$ in either $\{0,1\}^*$ or $\{0,-1,\}^*$, let $\beta(x_s, \cdots ,x_t)$ denote the difference between the number of nonzero entries and the number of zero entries.
With this notation, we can write $\wt(\r) = \wt(\r') + \beta(\r_{\s})$.
It follows that
\begin{align*}
\Vert \y \Vert_1 - \Vert \y' \Vert_1  &= -\beta(\r_{\s}) + |x_{m+1}| - |x_{m+1} + \epsilon_\r| - |x_{\ell+1}| + |x_{\ell+1}+ \epsilon_\r| \\
&=  -\beta(\r_{\s})-1 - |x_{\ell+1}| + |x_{\ell+1}+ \epsilon_\r|,
\end{align*}
recalling that $x_{m+1} = 0$.

We know that $\r_{\s}$ is a prefix of $\s$.  When $|\r_{\s}| >2$ we must have $\epsilon_\r = -1$, for shorter words it is possible that $\epsilon_\r = 1$. 
As $|\s| =7$ it is straightforward to list the possible prefixes of $\s$ which could constitute a run and compute the quantity above.  
We do not consider any prefix of $\s$ which contains a $-1$ and a $+1$, as these digits cannot both occur in a run.
These values are compiled in Table~\ref{table:suffix}, and we see in all cases that they are non-negative.

\begin{table}[ht!]
\begin{center}
\begin{tabular}{|l|c|c|c|c|}
\hline
$\r_{\s}$ &  $x_{\ell+1}$ & $\epsilon_\r$ & $\beta(\r_{\s})$ & $-\beta(\r_{\s})-1 - |x_{\ell+1}| + |x_{\ell+1}+ \epsilon_\r|$\\
\hline
$(0)$ & 0 & 1 & -1 & 1 \\
$(0,0)$ & -1 & 1 & -2 & 0 \\
\hline
$(0)$ & 0 & -1 & -1 & 1 \\
$(0,0)$ & -1 & -1 & -2 & 2 \\
$(0,0,-1)$ & 0 & -1 & -1 & 1 \\
$(0,0,-1,0)$ & 0 & -1 & -2 & 2 \\
$(0,0,-1,0)$ & -1 & -1 & -2 & 2 \\
$(0,0,-1,0)$ & 1 & -1 & -2 & 0 \\
$(0,0,-1,0,0)$ & 1 & -1 & -3 & 1 \\
$(0,0,-1,0,0)$ & 0 & -1 & -3 & 3 \\
$(0,0,-1,0,-1)$ & 0 & -1 & -1 & 1 \\
$(0,0,-1,0,0,0)$ & 2 & -1 & -4 & 2 \\
$(0,0,-1,0,0,0)$ & 3 & -1 & -4 & 2 \\
$(0,0,-1,0,-1,0)$ & 3 & -1 & -2 & 0 \\
\hline
\end{tabular}
\label{table:suffix}
\caption{For each possible $\r_{\s} \subset \s$ and  value of $\epsilon_\r$, we compute the quantity  $\Vert \y \Vert_1 - \Vert \y' \Vert_1 = -\beta(\r_{\s})-1 - |x_{\ell+1}| + |x_{\ell+1}+ \epsilon_\r|$.}
\end{center}
\end{table}

When Table~\ref{table:suffix} shows that $-\beta(\r_{\s})-1 - |x_{\ell+1}| + |x_{\ell+1}+ \epsilon_\r| > 0$, the lemma follows immediately.  
When $-\beta(\r_{\s})-1 - |x_{\ell+1}| + |x_{\ell+1}+ \epsilon_\r| = 0$,
we show that there must be a lexicographic reduction from $\y$ to $\y'$.  
Consider the digit in each vector with index $m+2$.
Since $\s$ begins with $(0,0)$, we see that $|y'_{m+1}| = 1$ and $|y'_{m+2}| = 0$.
Comparing to $|y_{m+1}| = 1$ and $|y_{m+2}| = 1$ we note the lexicographic reduction and conclude that $\y' \ords \y$ in this case as well.
\end{proof}

We now show that every element of $\VV_2$ is minimal.

\begin{lemma}
\label{lemma:n=2_prefix_suffix}
For all $(2,w,\x) \in \VV_2$, we have that $\x \in \BB_{\Sigma(\x)}^{2,w}$ is minimal.
\end{lemma}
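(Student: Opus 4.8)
The plan is to assume some $\x=\p\x'\s$ with $(2,w,\x)\in\VV_2$ fails to be minimal and derive a contradiction from the machinery of Section~\ref{section:geodesics_n_even_2}. First I would record the structural facts common to every element of $\VV_2$: by construction $\x\in\BB_{\Sigma(\x)}^{2,w}$ with $u=2$ and $w=\kx-\epsilon$ for some $\epsilon\in\{1,2\}$, and since $\p,\s$ have fixed positive length, $\kx$ exceeds $\max(u,w)=w=\kx-\epsilon$; hence $\eta_{2,\Sigma(\x),w}(\x)$ has shape $3$. By Proposition~\ref{lemma:lemma318_n=2}, non-minimality forces either that Lemma~\ref{lemma:n=2_notminimal} applies to $\x$, or that there is a run $\r$ at which $\x$ can be reduced. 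The first alternative is eliminated by inspecting the final two digits of the admissible suffixes $\s(c_1,c_2,c_3)$: in every listed case these end in $\pm2$ or $\pm3$, never in $(0,\pm1)$, so Lemma~\ref{lemma:n=2_notminimal} cannot apply. Thus it suffices to show that no run $\r$ reduces $\x$.

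Next I would localize $\r$. If $\r$ straddles both $\p\x'$ and $\s$, then Lemma~\ref{lemma:big_suffix} supplies a run $\r'\subseteq\p\x'$ with $\x+\epsilon_{\r}\sum_{i=j}^{m}\wi\ords\x$, so $\x$ can still be reduced at a run contained in $\p\x'$; replacing $\r$ by $\r'$, we may assume that $\r$ either lies entirely in $\s$ or entirely in $\p\x'$. The case $\r\subseteq\s$ is a finite verification over the five suffix patterns: for each candidate starting digit $\pm1$ I would compute the $\ell^1$ change $\wt(\r)+|x_{\ell+1}|-|x_{\ell+1}+\epsilon_{\r}|$, allowing the terminal coefficient $\alpha_\ell\in\{1,2\}$ from the $n=2$ reduction when $\r$ reaches index $\kx$, and check either that it is negative, or that it vanishes while the induced lexicographic comparison on $|\x|$ places $\x$ ahead of the reduced vector. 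The suffixes are chosen precisely so that these $\ell^1$-preserving ties resolve against reduction.

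It remains to treat $\r\subseteq\p\x'$, where $\r$ avoids the length-$7$ suffix, so $\ell\le\kx-7\le\kx-2$. If $\r\subseteq\x'$, then Lemma~\ref{lemma:run_in_the_middle} (when $\r$ omits the last digit of $\x'$) or Lemma~\ref{lemma:next_digit_0} (when $\r$ ends at the last digit of $\x'$, using that the first digit of $\s$ is $0$) shows that $\x'$ can be reduced at $\r$, contradicting $\x'\in Q_2$. If instead $\r$ begins in $\p$, I would apply Lemma~\ref{lemma:n=2runs} repeatedly (its hypothesis $\ell\le\kx-2$ holds here) to strip leading blocks of the form $(\lambda,0,\dots,0,\lambda)$, using transitivity of $\ord$ to keep a genuine reducing run. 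This either shifts the start of the run into $\x'$, returning us to the middle case, or leaves a run of shape $(\lambda,0,\dots,0)$ whose weight is too negative for the $\pm1$ change at $x_{\ell+1}$ to permit a reduction. The finitely many residual prefix runs arising from $\p_1(b)$ and $\p_2(b_1,b_2)$ are then checked directly against the reduction inequality.

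The main obstacle will be exactly this prefix analysis. For $n\ge4$ the proof of Lemma~\ref{lemma:prefix-suffix} forces $\r\cap\p=\emptyset$ using the digit bound $|p_i|\le\fne-1$; but for $n=2$ we have $\fne=1$, so the $\pm1$ entries deliberately built into $\p_1$ and $\p_2$ are legitimate run-starting digits and cannot be excluded on size grounds. Handling them cleanly is what forces the shifting argument of Lemma~\ref{lemma:n=2runs} and the lexicographic bookkeeping in the borderline cases, and this is where the exact placement of the prefixes' $\pm1$'s relative to $u=2$ does the real work.
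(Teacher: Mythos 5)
Your proposal is correct and follows essentially the same route as the paper's proof: eliminate Lemma~\ref{lemma:n=2_notminimal} by inspecting the suffix's final digits, invoke Proposition~\ref{lemma:lemma318_n=2} to get a reducing run, use Lemma~\ref{lemma:big_suffix} to push the run out of $\s$, use Lemma~\ref{lemma:n=2runs} plus a direct $\ell^1$ computation to push it out of $\p$, and then contradict the minimality of $\x'$ via Lemmas~\ref{lemma:run_in_the_middle} and~\ref{lemma:next_digit_0}. The only cosmetic difference is that you propose an explicit finite check for runs lying entirely in $\s$, where the paper disposes of the two runs $(1,2)$ and $(1,0,2)$ directly and folds the rest into the Lemma~\ref{lemma:big_suffix} step.
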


\begin{proof}
Write $\x = \p\x'\s$ as in the definition of $\VV_2$.
By construction, $\x' \in {\mathcal B}_{\Sigma(\x)}^{0,\K{\x'}+1}$ is minimal.
Suppose that $\x$ is not minimal.  
Inspection of the final two digits of $\s$ shows that Lemma~\ref{lemma:n=2_notminimal} does not apply to $\x$, and thus
it follows from Proposition~\ref{lemma:lemma318_n=2} that there is a run $\r = (x_j, \cdots ,x_{\ell}) \subseteq \x$ at which $\x$ can be reduced.

As the sign of all the elements in a run is either identical or 0, notice from 
the form of the possible suffixes that 
either $\r$ does not contain the final digit of the suffix, or $\r = (1,2)$ or $(1,0,2)$. 
In the latter cases, it is easily checked that $\x$ can not be reduced at $\r$.
In the former case, we may apply Lemma~\ref{lemma:big_suffix}.  Thus in all cases we may assume that $\r \cap \s = \emptyset$. 
It then follows from Lemma~\ref{lemma:less_than_6} that we can write
\[
\y = \x + \epsilon_\r\sum_{i=j}^\ell \wi
\]
to denote the result of reducing $\x$ at the run $\r$.  This is, no coefficient in the linear combination of basis vectors above is $2\epsilon_\r$.

We now show that $\r \cap \p = \emptyset$.  
It is easily checked that if $\r \subseteq \p$ then $\x$ cannot be reduced at $\r$.  
Suppose that $\r \cap \p$ is nonempty.  For prefix $\p_i$, set $\lambda = (-1)^{i+1}$. Then $\r \cap \p$
must begin $(\lambda,0)$.
If there is a subsequent digit of $\lambda$ in $\r \cap \x'$, consider the first occurrence of $\lambda$ in $\r \cap \x'$.
It follows from Lemma~\ref{lemma:n=2runs} that it is at least as effective to reduce $\x$ at the run $\r'\subset \x'$, where $\r' \subseteq \r$ is the run beginning with the first occurrence of $\lambda$ in $\r \cap \x'$.

If there is no digit $\lambda$ in $\r \cap \x'$, then $\r \cap \x'$
has no nonzero digits.
That is, $\r \cap \p = (\lambda,0)$  and $\r \cap \x' = (0,0, \cdots,0)$ consists of $d$ zeros, for some $d \geq 1$.  
Then we have
\[
\Vert \x + \epsilon_\r \sum_{i=j}^{\ell} \wi \Vert_1 = \Vert \x \Vert_1 + (d+1) + |x_{\ell+1}+\epsilon_\r| - |x_{\ell+1}|  > \Vert \x \Vert_1
\]
where the inequality follows from the facts that  $|x_{\ell+1}+\epsilon_\r| - |x_{\ell+1}| \in \{\pm 1\}$ and $d \geq 1$.
We conclude that $\x$ cannot be reduced at $\r$, a contradiction.  Thus we may assume that $\r \subseteq \x'$.

If $\r$ does not contain the final digit of $\x'$, it follows from Lemma~\ref{lemma:run_in_the_middle} that $\x'$ can be reduced at $\r$, a contradiction.  
If $\r$ does contain the final digit of $\x'$, as the first digit of each suffix is $0$, it follows from Lemma~\ref{lemma:next_digit_0} that $\x'$ can be reduced at $\r$, a contradiction.  Thus we conclude that $\x\in {\mathcal B}_{\Sigma(\x)}^{2,w}$ is minimal.
\end{proof}

We now prove that $BS(1,2)$ contains a positive density of elements with positive, negative and zero conjugation curvature.

\begin{theorem}
\label{thm:n=2_examples}
$BS(1,2)$ contains a positive density of elements $g$ with positive, negative and zero conjugation curvature $\kappa_1(g)$.
\end{theorem}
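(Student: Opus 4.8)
The plan is to produce three explicit subfamilies of $\VV_2$, one for each sign of curvature, each of the form "fixed prefix, arbitrary middle $\x' \in Q_2$, fixed suffix," so that positive density is immediate. Concretely I would take, with $\x'$ ranging over all of $Q_2$,
\begin{itemize}[itemsep=3pt]
\item $\mathcal{P}_2 = \{(2,\kx-1,\ \p_2(1,0)\,\x'\,\s(0,1,2))\}$,
\item $\mathcal{Z}_2 = \{(2,\kx-2,\ \p_2(1,0)\,\x'\,\s(0,1,2))\}$,
\item $\mathcal{N}_2 = \{(2,\kx-2,\ \p_1(0)\,\x'\,\s(0,1,2))\}$.
\end{itemize}
Each of these is a subset of $\VV_2$ (they are, respectively, the first listed prefix/suffix pair of the first, second, and third bullets in the definition of $\VV_2$), so Lemma~\ref{lemma:n=2_prefix_suffix} shows the associated vector $\x$ is minimal and hence $\eta_{u,\Sigma(\x),w}(\x)$ is geodesic. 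Since each family has exactly the shape required by Lemma~\ref{lemma:n=2_prefix_suffix_growth_rate}, each has positive density in $BS(1,2)$. Note that $\mathcal{P}_2$ and $\mathcal{Z}_2$ use the \emph{same} prefix and suffix and differ only in $w$; the point is that $w=\kx-1$ places the digit at index $w$ at the suffix entry $c_2=1$, while $w=\kx-2$ places it at $c_1=0$, which is precisely what will flip the sign of the curvature.

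Next I reduce the curvature computation to the $a$-conjugates. For every $g$ in these families one has $u=2>0$ and $w\in\{\kx-1,\kx-2\}$, so $uw>0$; the first digit of every prefix is $1$, so $2\nmid v$; and $\kx>\max(u,w)$ in all cases. Hence Lemma~\ref{lemma:type1_facts}(1) gives $l(g^t)=l(g^{\ti})=l(g)$, and therefore
\[
\kappa_1(g)\,l(g)=\tfrac14\bigl(2l(g)-l(g^a)-l(g^{\ai})\bigr),
\]
so the sign of $\kappa_1(g)$ is the sign of $2l(g)-l(g^a)-l(g^{\ai})$. It remains to compute $l(g^a)$ and $l(g^{\ai})$. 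Following Remark~\ref{remark:conjugation} I form $\x(a)=\rho_{u,-w}(\x)$ and $\x(\ai)=\rho_{-u,w}(\x)$, add the appropriate $\LL_0$ basis vectors to push the result back inside $\Bvuw$, and identify each reduced vector as another member of $\VV_2$ (so minimality is again supplied by Lemma~\ref{lemma:n=2_prefix_suffix}). Because $u,w$ and $\kx$ are unchanged by these moves, each length difference equals the corresponding change in $\ell^1$ norm, which I read off the reductions.

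Carrying this out, conjugation and the ensuing reductions keep each family inside its own bullet of $\VV_2$: for $\mathcal{P}_2$, $\x(a)$ becomes $\p_2(0,1)\x'\s(0,0,2)$ and $\x(\ai)$ becomes $\p_2(0,0)\x'\s(0,0,3)$, each shortening the norm by $1$, so $l(g^a)=l(g^{\ai})=l(g)-1$ and $\kappa_1(g)>0$; for $\mathcal{Z}_2$, $\x(a)$ becomes $\p_2(0,1)\x'\s(1,0,2)$ and $\x(\ai)$ becomes $\p_2(0,0)\x'\s(-1,0,3)$, each leaving the norm unchanged, so $l(g^a)=l(g^{\ai})=l(g)$ and $\kappa_1(g)=0$; and for $\mathcal{N}_2$, $\x(a)$ becomes $\p_1(1)\x'\s(1,0,2)$ while $\x(\ai)$ becomes $\p_1(-1)\x'\s(-1,0,3)$, raising the norm by $1$ and $2$ respectively, so $l(g^a)=l(g)+1$, $l(g^{\ai})=l(g)+2$, and $\kappa_1(g)<0$. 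In each case $l(g^t)=l(g^{\ti})=l(g)$ completes the averaging.

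The main obstacle is the reduction bookkeeping for $l(g^{a^{\pm1}})$. Because $\fne=1$ when $n=2$, adding $1$ to a digit that already equals $\pm1$ forces it out of the box and triggers a cascade of reductions governed by Lemmas~\ref{lemma:n=2_110},~\ref{lemma:less_than_6}, and~\ref{lemma:n=2runs}; one must verify that these cascades terminate cleanly at another element of $\VV_2$ and correctly tally the net norm change (including the lexicographic check when the norm is unchanged). The whole reason $\VV_2$ was assembled as a single large set and proved minimal uniformly in Lemma~\ref{lemma:n=2_prefix_suffix} is precisely that each of its three bullets is closed under this conjugation-plus-reduction operation, so the conjugate vectors are automatically known to be minimal and the only remaining work is the finite $\ell^1$ accounting summarized above.
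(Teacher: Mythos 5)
Your proposal is correct and follows essentially the same route as the paper's proof: the same three families $\mathcal{P}_2,\mathcal{Z}_2,\mathcal{N}_2$ built from the same prefix/suffix pairs in $\VV_2$, minimality and density via Lemmas~\ref{lemma:n=2_prefix_suffix} and~\ref{lemma:n=2_prefix_suffix_growth_rate}, the $t$-conjugates handled by Lemma~\ref{lemma:type1_facts}, and the $a^{\pm 1}$-conjugates reduced back into $\VV_2$ with the same basis-vector corrections and the same resulting $\ell^1$ tallies ($-1,-1$; $0,0$; $+1,+2$).
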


\begin{proof}
Define three sets of triples:
\begin{enumerate}
    \item $\mathcal{P}_2 = \{(2,w,\x) \, | \, \x = \p_2(1,0)\x'\s(0,1,2), \, \x' \in Q_2, \, w = \kx-1\}$.
    \item $\mathcal{Z}_2 = \{(2,w,\x) \, | \, \x = \p_2(1,0)\x'\s(0,1,2), \, \x' \in Q_2, \, w = \kx-2\}$.
    \item $\mathcal{N}_2 = \{(2,w,\x) \, | \, \x = \p_1(0)\x'\s(0,1,2), \, \x' \in Q_2, \, w = \kx-2\}$.
\end{enumerate}
Each of these is a subset of $\VV_2$, so it follows from Lemma~\ref{lemma:n=2_prefix_suffix}
in each case that $\x \in B_{\Sigma(\x)}^{u,w}$ is minimal.  
In addition, the prefix, suffix, and size of $w$ relative to the vector length
are all fixed within each set.  
It follows that $|\mathcal{A}(N)| = |Q_2(N+c)|$
for $\mathcal{A} = \mathcal{P}_2,\mathcal{Z}_2,\mathcal{N}_2$.  The value of $c$ varies between the subsets, but is constant for each one.
There is a map which realizes the bijection defined by taking
a triple $(2,w,\x)$ to the subvector $\x' \subseteq \x$. 
Hence by Lemma~\ref{lemma:n=2_prefix_suffix_growth_rate}, 
the set of geodesics
$\{\eta_{2,\Sigma(\x),w}(\x) \mid (2,w,\x) \in \mathcal{A}\}$
has positive density in $BS(1,n)$
for $\mathcal{A} = \mathcal{P}_2,\mathcal{Z}_2,\mathcal{N}_2$

It remains to shows that $\kappa(\eta_{u,\Sigma(\x),w}(\x))$
is positive, zero, and negative, for $(2,w,\x) \in \mathcal{P}_2,\mathcal{Z}_2$, and $\mathcal{N}_2$, respectively.
For an arbitrary such triple, let $g = \eta_{2,\Sigma(\x),w}(\x)$.
We must compare $l(g)$ to 
\[
\frac{1}{4}\left[l(g^t)+l(g^{\ti})+l(g^a)+l(g^{\ai})\right].
\]

As the first digit of $\p$ in all cases is nonzero, we know that $n$ does not divide $\Sigma(\x)$. 
As $\max(u,w) \leq \kx-1$, it follows from the first statement of Lemma~\ref{lemma:type1_facts} that $l(g) = l(g^t)=l(g^{\ti})$.

Following Remark~\ref{remark:conjugation}, $\x(a) = \rho_{u,-w}(\x)$, the vector 
in which we replace the digits $x_u$ and $x_w$, respectively, with $x_u+1$ and $x_w-1$.  Note that these digits always lie in $\p$ and $\s$, respectively.  For the remainder of this proof, we will write $u$, even though we always have $u=2$, for consistency.
Similarly, $\x(\ai) = \rho_{-u,w}(\x)$, the vector 
in which we replace the digits $x_u$ and $x_w$, respectively, with $x_u-1$ and $x_w+1$.

We consider the three possible cases: $(2,w,\x)$ in ${\mathcal P}_2$, ${\mathcal Z}_2$, and ${\mathcal N}_2$.  
Write $\rho_{u,-w}(\x) = \p_+ \x' \s_-$ and $\rho_{-u,w}(\x) = \p_- \x' \s_+$.  
Write $v = \Sigma(\x)$, and let $v_+ = n^u+v-n^w$ and $v_- = -n^u+v+n^w$.
It is not always true that 
$\rho_{u,-w}(\x)$ and $\rho_{-u,w}(\x)$ are minimal, or even elements of  ${\mathcal B}_{v_{\pm}}^{u,w}$.
However, in all cases, we add a small linear combination of basis
vectors $\wi$ in order to modify these vectors so that the triple $(u,w,\rho_{-u,w}(\x))$ is in $\VV_2$.
In order to avoid a plethora of notation, we will retain the same notation for the modified vectors.

Let $(2,w,\x) \in {\mathcal P}_2$.
\begin{enumerate}[itemsep=5pt]
    \item We have $\p_+ =(1,0,2,0,0,-1,0)$ and $\s_- = (0,0,-1,0,0,0,2)$, but $(u,w,\p_+ \x' \s_-)$ is not in $\VV_2$.
    Adding the basis vector ${\bf w}^{(2)}$ modifies the prefix to $\p_+ =(1,0,0,1,0,-1,0)$, and now  $(2,w,\p_+ \x' \s_-) \in {\mathcal V}_2$.
    \item We have $\p_- =(1,0,0,0,0,-1,0)$ and $\s_+ =(0,0,-1,0,0,2,2)$, but $(u,w,\p_-\x' \s_+)$ is not in $\VV_2$. 
    Adding the basis vector ${\bf w}^{(\kx-1)}$ modifies the suffix to $\s_+ =(0,0,-1,0,0,0,3)$, and now 
    $(2,w,\p_- \x' \s_+) \in {\mathcal V}_2$.
\end{enumerate}

Let $(2,w,\x) \in {\mathcal Z}_2$.
\begin{enumerate}[itemsep=5pt]
    \item We have $\p_+ =(1,0,2,0,0,-1,0)$ and $\s_- = (0,0,-1,0,-1,1,2)$, but $(u,w,\p_+ \x' \s_-)$ is not in $\VV_2$. 
    Adding the linear combination ${\bf w}^{(2)}+{\bf w}^{(\kx-2)}$ modifies the prefix  to $\p_+ =(1,0,0,1,0,-1,0)$ and the suffix to $\s_- = (0,0,-1,0,1,0,2)$.  It is now the case that $(2,w,\p_+ \x' \s_-) \in \VV_2$.
    \item We have $\p_- =(1,0,0,0,0,-1,0)$ and $\s_+ =(0,0,-1,0,1,1,2)$, but $(2,w,\p_- \x' \s_+)$ is not in ${\mathcal V}_2$.
    Adding the sum ${\bf w}^{(\kx-2)} + {\bf w}^{(\kx-1)}$ modifies the suffix to $\s_+ =(0,0,-1,0,-1,0,3)$, and
    now $(2,w,\p_- \x' \s_+) \in {\mathcal V}_2$.
\end{enumerate}

Let $(2,w,\x) \in {\mathcal N}_2$.
\begin{enumerate}[itemsep=5pt]
    \item We have $\p_+ =(1,0,1,0,-1,0,1,0)$ and $\s_- = (0,0,-1,0,-1,1,2)$, but  $(u,w,\p_+ \x' \s_-)$ is not in $\VV2$.  Adding ${\bf w}^{(\kx-2)}$ modifies the suffix to $\s_-=(0,0,-1,0,1,0,2)$, and now $(2,w,\p_+ \x' \s_-) \in \VV_2$.
    \item We have $\p_- =(1,0-1,0,-1,0,1,0)$ and $\s_+ =(0,0,-1,0,1,1,2)$, but  $(u,w,\p_- \x' \s_+)$ is not in $\VV_2$.  Adding ${\bf w}^{(\kx-2)} + {\bf w}^{(\kx-1)}$ modifies the suffix to $\s_+ =(0,0,-1,0,-1,0,3)$, and now $(2,w,\p_- \x' \s_+) \in \VV_2$.
\end{enumerate}

Notice that in all three cases, after suitable modification we
have $(2,w,\p_+ \x' \s_-), (2,w,\p_- \x' \s_+) \in \VV_2$,
so these vectors correspond to geodesic paths representing $g^a$ and $g^\ai$.
As $\max(u,w)<\kx$ for $\x$ and for these two vectors, we use the same word
length formula to compute $l(g), \ l(g^a)$ and $l(g^{\ai})$, so any difference in word length arises from a difference in $\ell^1$ norm between the corresponding vectors  $\x, \ \rho_{u,-w}(\x)$ and   $\rho_{-u,w}(\x)$.

When $(2,w,\x) \in {\mathcal P}_2$, 
inspection shows that $\Vert \rho_{u,-w}(\x) \Vert_1 = \Vert \x \Vert_1-1$ and  $\Vert \rho_{-u,w}(\x) \Vert_1 = \Vert \x \Vert_1-1$.
It follows that $l(g^a) = l(g^{\ai}) = l(g)-1$.
Together we see that $l(g^t)+l(g^{\ti})+l(g^a)+l(g^{\ai}) = 4l(g)-2$ and thus $\kappa_1(g)>0$.

When $(2,w,\x) \in {\mathcal Z}_2$, inspection shows that $\Vert \rho_{u,-w}(\x) \Vert_1 = \Vert \x \Vert_1$ and  $\Vert \rho_{-u,w}(\x) \Vert_1 = \Vert \x \Vert_1$.
It follows that $l(g^a) = l(g^{\ai}) = l(g)$.
Together we see that $l(g^t)+l(g^{\ti})+l(g^a)+l(g^{\ai}) = 4l(g)$ and thus $\kappa_1(g)=0$.

When $(2,w,\x) \in {\mathcal N}_2$, 
inspection shows that $\Vert \rho_{u,-w}(\x) \Vert_1 = \Vert \x \Vert_1 + 1$ and  $\Vert \rho_{-u,w}(\x) \Vert_1 = \Vert \x \Vert_1+2$.
It follows that $l(g^a) = l(g)+1$ and $l(g^{\ai}) = l(g)+2$.
Together we see that $l(g^t)+l(g^{\ti})+l(g^a)+l(g^{\ai}) > 4l(g)$ and thus $\kappa_1(g)<0$.
\end{proof}

\section{Technical Lemmas}
\label{sec:technical_lemmas}
In this section we include the proofs of the major propositions stated in Sections~\ref{section:geodesics_n_even} and ~\ref{section:geodesics_n_even_2}.

The first lemma completes the remaining case of Proposition~\ref{lemma:even_minimal_characterization}.

\begin{lemma}
\label{lemma:lastcase_lemma3.20}
Let $n \geq 4$ be even and $\x, \ \y \in \Bvuw$  with $\x$ not minimal and $\y$ minimal, so
$
\y = \x + \sum_{i=j}^\ell \alpha_i \wi.
$
Let $x_j = \fne$.  Then either
\begin{itemize}
    \item there is a run in $\x$ at which $\x$ can be reduced, or
    \item Proposition~\ref{lemma:adjacent_digits} applies to $\x$.
\end{itemize}
\end{lemma}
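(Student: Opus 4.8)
The plan is to organize the proof around the maximal run beginning at the distinguished digit $x_j$. Recall from the proof of Proposition~\ref{lemma:even_minimal_characterization} that in this case $z_j = -n$ forces $\alpha_j = 1$, so $y_j = x_j - n = -\fne$; in particular $x_j = \fne$ is a legitimate starting digit for a run. I would therefore let $\r = (x_j, \dots, x_m)$ denote the maximal run in $\x$ with $\epsilon_\r = \sgn(x_j) = 1$, and set $\y' = \x + \sum_{i=j}^m \wi$. The goal is to show that either $\x$ can be reduced at $\r$ (or at a shorter run contained in it), or else one of the two digit patterns of Proposition~\ref{lemma:adjacent_digits} is present in $\x$.

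First I would apply Lemma~\ref{lemma:single_run_from_multiple_runs} to dispose of any disconnection in the support of $\y-\x$: if the index $\ell+1$ is omitted from the linear combination expressing $\y$, then the lower block already yields a vector preceding $\x$ in $\ords$, so it suffices to analyze a reduction whose support is an initial block of indices. The defining bounds of $\Bvuw$ then force the affected coordinates $x_j, \dots, x_m$ to lie in $\{\fne-1, \fne, \fne+1\}$ with positive sign, which is exactly the condition that they constitute the run $\r$. This identifies the reduction with a run reduction and lets me compute with Lemma~\ref{lemma:change_formula}, which gives
\[
\Vert \y' \Vert_1 = \Vert \x \Vert_1 - \wt(\r) + |x_{m+1} + 1| - |x_{m+1}|,
\]
where $|x_{m+1}+1| - |x_{m+1}| \in \{\pm 1\}$ and, by maximality of $\r$, the digit $x_{m+1}$ fails the run condition (it is zero, too small, or oppositely signed) or else $m = \kx$.

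The argument then splits on the ordinal relationship of $\kx$ to $\max(u,w)$, which governs which formula of Lemma~\ref{lemma:length_formula} is in force. When $\kx \ge \max(u,w)$, one has $\K{\y'} \le \kx$ (or the boundary case where the two formulas of Lemma~\ref{lemma:length_formula} agree), so a nonnegative value of $\wt(\r) - (|x_{m+1}+1| - |x_{m+1}|)$, together with the lexicographic tiebreak when equality holds, certifies $\y' \ords \x$ and exhibits the run reduction directly; the subcase where $\r$ terminates in a digit of absolute value $\fne+1$, necessarily at index $\kx$, I would dispatch via Lemma~\ref{lemma:even_reduction_end} or a direct length computation. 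When $\kx < \max(u,w)$, only the first formula of Lemma~\ref{lemma:length_formula} is ever used, so word length tracks $\ell^1$-norm exactly and the entire analysis collapses to the digit bookkeeping of Proposition~\ref{lemma:adjacent_digits}: a positive weight yields a pair of adjacent $\fne$ digits through Lemma~\ref{lemma:adjacent_digits_from_weight}, while a short run terminating against an oppositely-signed digit yields the second listed pattern.

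I expect the main obstacle to be the boundary bookkeeping, namely keeping the length-formula switches of Lemma~\ref{lemma:length_formula} consistent with the simultaneous changes in $\ell^1$-norm and lexicographic order, especially in the subcases where $\r$ reaches the final coordinate or contains the exceptional digit $\fne+1$ so that the vector lengthens. A secondary delicate point is using the minimality of $\y$ to exclude any configuration in which the direct run reduction fails yet neither pattern of Proposition~\ref{lemma:adjacent_digits} is present, so that the dichotomy asserted in the statement is genuinely exhaustive.
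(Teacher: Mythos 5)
Your proposal inverts the paper's strategy: you start from the maximal run in $\x$ beginning at $x_j$ and ask whether reducing at it succeeds, whereas the paper starts from the known minimal $\y$ and determines, digit by digit, what the coefficients $\alpha_i$ in $\z=\y-\x$ must be. That inversion is where the gap lies. After invoking Lemma~\ref{lemma:single_run_from_multiple_runs} you assert that the digit bounds on $\Bvuw$ force the affected coordinates to form a run with all coefficients equal to $1$, but this requires the inductive step (each $\alpha_{i+1}\in\{0,1\}$ so long as $|x_{i+1}|,|y_{i+1}|\le\fne$), and it breaks down precisely when a digit of absolute value $\fne+1$ appears at index $\kx$ or $\ky$, where $\alpha_{\ell+1}=-1$ becomes possible. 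Those boundary configurations are the substance of the paper's cases (B)--(D), and your plan does not engage them. In case (D) the forced configuration is that $\x$ ends in the digits $(-\fne,1)$ with $\kx>\max(u,w)$; there the run $\r$ starting at $x_j$ can have negative weight (e.g.\ $(\fne,\fne-1,\dots,\fne-1)$ followed by $x_{\ell+1}=-\fne$), so reducing at $\r$ increases $\Vert\cdot\Vert_1$ and fails, and since $\kx>\max(u,w)$ the hypotheses of Proposition~\ref{lemma:adjacent_digits} fail as well. The conclusion the paper actually needs and proves there is that Lemma~\ref{lemma:even_reduction_end} applies to $\x$ (equivalently, one must reduce at the singleton run $(-\fne)$ sitting at index $\kx-1$, far from $x_j$); your setup, organized entirely around the run at $x_j$, has no mechanism to produce this.

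A second concrete failure occurs in the paper's case (C), where $y_{\ell+1}=-(\fne+1)$ and hence $\ky<\kx$: the word-length comparison picks up the term $2(\kx-\ky)$ and yields only $\wt(\r)\ge 0$. When $\wt(\r)=0$, reducing at the maximal run starting at $x_j$ gives $\Vert\y'\Vert_1=\Vert\x\Vert_1+1$ and fails; the paper instead extracts a lexicographic decrease to locate two adjacent digits $\fne$ and reduces at a \emph{different} run via Lemma~\ref{lemma:even_reduction_adjacent_digits}. You flag the exhaustiveness of your dichotomy as a ``delicate point'' to be checked later, but that exhaustiveness is exactly the content of the lemma, and in the paper it is obtained only by pinning down $\z$ completely and treating the four terminal subcases separately. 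As written, your proposal defers the essential case analysis rather than carrying it out.
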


\begin{proof}
As $x_j = \fne$ and $\y \in \Bvuw$, the digit constraints on $\Bvuw$ force $\alpha_j = 1$ and thus $y_j = -\fne$.
We proceed to consider the subsequent digits of $\xx$ and $\y$.
Note that $y_{j+1} = x_{j+1} + 1 - \alpha_{j+1}n$.
For now, suppose that $|x_{j+1}|,|y_{j+1}| \le \fne$, so
\[
\left| 1 - \alpha_{j+1} n\right| \le n,
\]
and hence $\alpha_{j+1} \in \{0, 1\}$.  Suppose that
$\alpha_{j+1} = 1$; then we can make the same argument to conclude
that $x_{j+1} \in \{\fne,\fne-1\}$ and $\alpha_{j+2} \in \{0,1\}$.  We can continue in this way until
one of the following occurs, for some minimal index $l \geq j$:
\begin{enumerate}[itemsep=5pt]
\item[(a)] $\alpha_{\ell+1} = 0$, or
\item[(b)] $|x_{\ell+1}|,|y_{\ell+1}| > \fne$.
\end{enumerate}

First suppose that $\alpha_{\ell+1}=0$, which implies that $\alpha_i = 1$ for $j \leq i \leq l$ and thus
\[
\z = \sum_{i=j}^\ell \wi +  \sum_{i>\ell+1}\alpha_i \wi,
\]
so we have
\[
(z_j, z_{j+1}, \dots, z_{\ell}, z_{\ell+1}) = (-n, -(n-1), \dots, -(n-1), 1).
\]

The digit bounds on $\Bvuw$ force $x_i \in \{\fne, \fne-1\}$ for $j\le i \le \ell$.
That is, we have identified a run
$\r = (x_j, \dots, x_\ell)$ in $\x$ with $\epsilon_\r = 1$ such that
for the minimal vector $\y \in \Bvuw$, we have 
\[
\y = \x + \epsilon_\r\sum_{i=j}^\ell\wi
+ \sum_{i>\ell+1}\alpha_i \wi.
\]
It follows immediately from Lemma~\ref{lemma:single_run_from_multiple_runs} that $\x$ can be reduced at $\r$.

Next suppose that
$|x_{\ell+1}| = \fne + 1$ or $|y_{\ell+1}| = \fne + 1$.  These
imply, respectively, that $\kx=\ell+1$ or $\ky=\ell+1$.  
We have $y_{\ell+1} -  x_{\ell+1} = 1 - \alpha_{\ell+1}n$, so in fact it cannot
be that both $|x_{\ell+1}| = \fne + 1$ and $|y_{\ell+1}| = \fne + 1$,
since then the difference would be even.  Thus we have
\[
|1 - \alpha_{\ell+1}n| \le n + 1,
\]
so $\alpha_{\ell+1} \in \{-1,0,1\}$.  If $\alpha_{\ell+1} = 0$,
then the argument from case (a) applies.  Otherwise, we are in one of
the following cases, which we indicate with letters and
address below:
\begin{center}
\begin{tabular}{c|c|c}
&  $|x_{\ell+1}| = \fne + 1$  &  $|y_{\ell+1}| = \fne + 1$ \\
\hline
$\alpha_{\ell+1} = 1$   &   (A)       &  (C)      \\
$\alpha_{\ell+1} = -1$   &  (B)      &   (D)    \\
\end{tabular}
\end{center}
\begin{itemize}[itemsep=5pt]
\item[(A)]  Since $\alpha_{\ell+1} =1$, we must have $x_{\ell+1} = \fne + 1$, that is, this digit is positive and $\kx = l+1$.
Thus we can write
\[ \y = \xx + \sum_{i=j}^{\ell+1} w^{(i)} \]
and as $\y$ is minimal we have found a run at which $\x$ can be reduced.

\item[(B)] Since $\alpha_{\ell+1} = -1$, we conclude that $x_{\ell+1} = -(\fne+1)$ and $\kx = \ell+1 \geq \max(u,w)$
and $y_{\ell+1} = \fne$ and $\ky = \kx + 1$.
Thus the final two digits of $\y$ are $(\fne,-1)$.  
As $\ky > \kx \ge \max(u,w)$,
Lemma~\ref{lemma:even_reduction_end} applies to $\y$, contradicting its
minimality.  Thus this case does not occur.

\item[(C)]
Since $\alpha_{\ell+1} = 1$, we conclude $y_{\ell+1} = -(\fne + 1)$ and $\ky = \ell+1 \geq \max(u,w)$.
The digits $x_{\ell+1}$ and $x_{\ell+2}$ are then determined, namely $(x_{\ell+1},x_{\ell+2}) = (\fne-2,-1)$.  
Set $\r = (x_j, \dots, x_\ell)$; note that $\r$ is a run and we have
\[
\Vert \x \Vert_1 - \Vert \y \Vert_1 = \wt(\r) -2.
\]
As $\kx > \ky \ge \max(u,w)$, we use the second length formula in Lemma~\ref{lemma:length_formula} to compute the difference
\[
|\eta_{u,v,w}(\x)| - |\eta_{u,v,w}(\y)| = \wt(\r) -2 + 2(\kx - \ky) \ge \wt(\r).
\]

As $\y \ords \x$, we must have either $\wt(\r) > 0$ or $\wt(\r) = 0$
and $\y$ lexicographically smaller than $\xx$.  
If $\wt(\r) > 0$, then by
Lemma~\ref{lemma:adjacent_digits_from_weight} there must be a pair
of adjacent digits $\fne$ in $\x$.
Extending this pair to a maximal sequence of digits of the form $\fne$, we see that it must terminate before the final
digit in $\x$.
Moreover, the digit immediately following this sequence has absolute value less than $\fne$
because the sequence is maximal and $x_{\ell+1} = \fne -2$.
It follows immediately from
Lemma~\ref{lemma:even_reduction_adjacent_digits} that there is a run at which $\x$ can be reduced.

\smallskip
\noindent

If $\wt(\r) = 0$
but $\y$ precedes $\x$ in the lexicographic order, we must have $x_{j+1} = \fne$.  In particular $j+1 \le \ell$.
We have therefore found two adjacent digits $\fne$ and can repeat the argument
above and apply Lemma~\ref{lemma:even_reduction_adjacent_digits} to conclude that 
$\x$ contains a run at which it can be reduced.

\item[(D)] Since $\alpha_{\ell+1} = -1$, it is easily verified that $z_{\ell+1} = n+1$ and we must have $x_{\ell+1} = -\fne$.  It follows that $y_{\ell+1} = \fne+ 1$ and thus $\ky = \ell+1$.
As $y_{\ell+2}=0$, we must have $x_{\ell+2} = 1$, so $\kx \ge \ky + 1 \ge \max(u,w) + 1$.
If $\kx > \ell+2$, then $\y$ would have nonzero digits with index greater than $\ell+1$, contradicting $\ky = \ell+1$.  Thus the final two digits of $\x$ are $(-\fne,1)$, and Lemma~\ref{lemma:even_reduction_end} applies to $\x$.
\end{itemize}
These cases prove Lemma~\ref{lemma:lastcase_lemma3.20} and complete the proof of Proposition~\ref{lemma:even_minimal_characterization}.
\end{proof}

\subsection{The special case $n=2$.}
\label{sec:technical_n=2}

When $n=2$ the digit bounds on $\Bvuw$ allow for the possibility that if $\x \in \Bvuw$, then $x_{\kx} = \fne+2 = 3$.
As this case does not occur when $n \geq 4$ is even, we have additional cases and slight variations in approach when $n=2$.

We restate the lemma and propositions from Section~\ref{section:geodesics_n_even_2} which we prove below for easy reference.

\noindent
{\bf Lemma~\ref{lemma:n=2_110}.} \emph{ Let $n=2$ and suppose that $x \in \Bvuw$ and $\delta \in \{\pm 1\}$.
If any of the following occur, then there is a run at which
$\x$ can be reduced, and hence $\x$ is not minimal.
\begin{enumerate}[itemsep=5pt]
    \item $\x$ contains the digits $(\delta, -\delta\alpha)$ for $\alpha > 0$.
    \item $\kx \ne \max(u,w)$ and $\x$ ends in the digits $(\delta, \delta)$.
    \item $\x$ contains the digits $(\delta, \delta, \alpha)$ for any $\alpha$.
\end{enumerate} }
\begin{proof}
Let $j$ be the index in $\x$ of the first digit $\delta$ in any case above.

In case (1), note that in $\y=\x + \delta \wf{j}$  the digits $(\delta, -\delta\alpha)$ have been replaced by $(-\delta,-\delta(\alpha-1))$.  
Since  $\alpha \geq 1$, we see that $\Vert \x + \delta \wf{j} \Vert_1 =\Vert \x \Vert_1 -1$.
That is, $\x$ is not minimal.

If these digits are not the final digits of $\x$, or if they are but $\alpha >1$, then $\ky = \kx$.
Thus, regardless of the length formula used to compute $|\eta_{u,v,w}(\x)|$ and $|\eta_{u,v,w}(\x + \delta \wf{j})|$, the change in word length is 
determined by the change in $\ell^1$ norm, and thus $ \x + \delta \wf{j} \ords \x$.  

If these are the final digits of $\x$, and $\alpha = 1$, writing $\y = \x + \delta \wf{j}$, we have $\ky = \kx-1$.  
If $\kx \le \max(u,w)$ then any change in word length between  $|\eta_{u,v,w}(\x)|$ and $|\eta_{u,v,w}(\x + \delta \wf{j})|$ is determined by the change in $\ell^1$ norm, and hence $ \x + \delta \wf{j} \ords \x$.  
If $\kx > \max(u,w)$, then the fact that $\ky = \kx-1$ implies 
\[
|\eta_{u,v,w}(\x)| - |\eta_{u,v,w}(\x + \delta \wf{j})| = \Vert \x \Vert_1 - \Vert \x + \delta \wf{j}) \Vert_1 + 2(\kx - \ky) > \Vert \x \Vert_1 - \Vert \x + \delta \wf{j}) \Vert_1 >0.
\]
We conclude that $ \x + \delta \wf{j} \ords \x$, so $\x$ is not minimal.

In case (2) with $\kx > \max(u,w)$ consider $\y=\x - \delta\wf{j}$.  
We see that $\ky = \kx-1 \geq \max(u,w)$ and thus we use the second word length formula in Lemma~\ref{lemma:length_formula} to compute both $|\eta_{u,v,w}(\x)|$ and $|\eta_{u,v,w}(\y)|$.  
It is easily checked that $\Vert \y \Vert_1 = \Vert \x \Vert_1 +1$ and it follows that $|\eta_{u,v,w}(\y)| < |\eta_{u,v,w}(\x)|$, so $\x$ is not minimal.

In case (2) with $\kx < \max(u,w)$, let $\y = \x + \delta(\wf{j} + \wf{j+1})$.
It is easily checked that $\Vert \y \Vert_1 = \Vert \x \Vert_1$.  
Also note that there is a lexicographic decrease between $x_{j+1} = \delta$ and $y_{j+1} =0$.
As both geodesic lengths are computed using the first word length formula in Lemma~\ref{lemma:length_formula}, which does not depend on the length of the vector, we see that $\y \ords \x$, so $\x$ is not minimal.

In case (3), if $\sgn(\alpha) = -\sgn(\delta)$ then this reduces to case (1).
Without loss of generality, assume that $\delta = 1$ and $\sgn(\alpha) = \sgn(\delta)$.
\begin{itemize}[itemsep=5pt]
\item If $\alpha\in \{0,2\}$, let $\y = \x+\delta({\bf w}^{(j)}+{\bf w}^{(j+1)})$.
It is easily checked that $\Vert \y \Vert_1 = \Vert \x \Vert_1$
and that $|x_{j+1}| = 1$ while $|y_{j+1}| = 0$, demonstrating a lexicographic reduction from $\x$ to $\y$.
Note that $\kx=\ky$ for either value of $\alpha$ because the last digit of $\x$ cannot be $0$,
so regardless of the length formula used to compute both $|\eta_{u,v,w}(\y)|$ and  $|\eta_{u,v,w}(\x)|$, it follows that $\y \ords \x$.

\item If $\alpha = 3$, then $\alpha$ is the final digit in $\x$.  
Let $\y = \x + {\bf w}^{(\kx-2)}+{\bf w}^{(\kx-1)}+2{\bf w}^{(\kx)}$.  It is easily seen that $\y$ ends in the digits $(-1,0,0,2)$ and $\ky = \kx+1$.
As $x_{\kx}=3$ we must have $\kx \geq \max(u,w)$ and thus use the second formula in Lemma~\ref{lemma:length_formula} to compute both $|\eta_{u,v,w}(\y)|$ and  $|\eta_{u,v,w}(\x)|$.
As $\Vert \y \Vert_1 = \Vert \x \Vert_1 -2$, we see that
\[
|\eta_{u,v,w}(\x)| - |\eta_{u,v,w}(\y)| = 2+2(\kx - \ky) = 2-2=0.
\]
As $|x_{\kx-2}| = |y_{\kx-2}|$ while $x_{\kx-1}=1$ and $y_{\kx-1}=0$, there is a lexicographic reduction and hence $\y \ords \x$.

\item If $\alpha = 1$, then let $(x_j, \dots, x_\ell)$ be a maximal sequence consisting entirely of the digit $1$.  Note that $\ell-j \ge 2$.
First suppose that $\kx < \max(u,w)$.  Set $\y = \x + \sum_{i=j}^\ell \wi$.
Because $x_{\ell+1} \ne 1$ and $\kx < \max(u,w)$, we must have $x_{\ell+1} = 0$.

\smallskip

We can then compute the digits 
\[(x_j,x_{j+1}, \cdots ,x_{\ell+1}) = (\delta, \delta, \cdots ,\delta,0)
\]
of $\x$ and 
\[ (y_j,y_{j+1}, \cdots ,y_{\ell+1}) = (-\delta,0,\cdots ,0,1)\] of $\y$.
If $\kx = \ell$ then $\ky = \kx+1$.  If $\kx >\ell$ then $\ky = \kx$.  In either case, $\ky \le \kx +1$, and we use the first length formula to compute both $|\eta_{u,v,w}(\x)|$ and $|\eta_{u,v,w}(\y)|$, and conclude that
\[
|\eta_{u,v,w}(\x)| - |\eta_{u,v,w}(\y)| = \ell - j + 1 > 0.
\]
Therefore $\y \ords \x$, and $(x_j, \dots, x_\ell)$ is a run at which $\x$ can be reduced.

\smallskip

If $\kx \ge \max(u,w)$, then consider $x_{\ell+1}$. If $x_{\ell+1} \in \{-3,-2,-1,2,3\}$,
then this situation is covered by previous cases.  As we are assuming that we have a maximal
subsequence of digits $1$, we know $x_{\ell+1} \ne 1$.  Thus $x_{\ell+1} = 0$.
We now consider the cases $\kx = \ell$ and $\kx > \ell$.

\smallskip

If $\kx = \ell$, then set $\y = \x + \sum_{i=j}^{\ell-1} \wi$ and note that $\ky = \kx$.  Thus we use the second
length formula in Lemma~\ref{lemma:length_formula} to compute both $|\eta_{u,v,w}(\x)|$
and $|\eta_{u,v,w}(\y)|$, and so the difference between the geodesic lengths comes from the difference between the $\ell^1$ norms of the vectors.  As $\Vert \y \Vert_1 = \Vert \x \Vert_1$, we note that
$x_{j+1} = 1$ and $y_{j+1}=0$, so there is a lexicographic reduction from $\x$ to $\y$.
Thus $(x_j,\dots, x_{\ell-1})$ is a run at which $\x$ can be reduced.

\smallskip

If $\kx > \ell$, then set $\y = \x + \sum_{i=j}^{\ell} \wi$.  
The digits $(x_j, \cdots ,x_\ell)$ and $(y_j, \cdots ,y_\ell)$ are as computed above.  
As $\kx > \ell$ we know that $\ky = \kx$ and thus 
we use the second length formula in  Lemma~\ref{lemma:length_formula} to
compute both $|\eta_{u,v,w}(\x)|$ and $|\eta_{u,v,w}(\y)|$.  
It follows that $|\eta_{u,v,w}(\x)|-|\eta_{u,v,w}(\y)|>0$, so 
$(x_j,\dots, x_\ell)$ is a run at which $\x$ can be reduced.
\end{itemize}
\end{proof}

{\bf Proposition~\ref{lemma:lemma318_n=2}.} {\em Let $n=2$ and $\x \in \Bvuw$.  Then $\x$ is not minimal if and only if one of the following occurs.
\begin{itemize}[itemsep=5pt]
    \item There is a run at which $\x$ can be reduced.
    \item Lemma~\ref{lemma:n=2_notminimal} applies to $\x$.
\end{itemize}}

\begin{proof}
It is clear that if either condition applies to $\x \in \Bvuw$, then $\x$ is not minimal.  We must show the converse.

Let $\xx \in \Bvuw$ and $\y = \x+\z \in \Bvuw$ be minimal, for some 
$\z = \sum_{i=j}^\ell \alpha_i \wi \in \LL_0 $, where $j$ is the minimal index
such that $x_j \ne y_j$.  It follows from Lemma~\ref{lemma:less_than_6} that
$|\alpha_j| \in \{1,2\}$, equivalently that $|z_j| \in \{2,4\}$.

Without loss of generality, we assume that in the sum defining $\z$ all $\alpha_i \neq 0$ for $j \leq i \leq \ell$.
If this is not the case, so $j+1 \leq m < \ell$ is the minimal index with $\alpha_m = 0$, then let $\z' = \sum_{i=j}^{m-1} \alpha_i \wi$.  It follows from Lemma~\ref{lemma:single_run_from_multiple_runs} that $\x + \z' \ords \x$. Moreover, if $\kx \leq \ell$ then $\y$ is minimal as initially written. 
In the arguments below, unless  $\kx \leq \ell$, we use only the fact that $\y \ords \x$.  This allows us to assume for the remainder of the proof that $\alpha_i \neq 0$ for all $j \leq i \leq \ell$.

Assume without loss of generality that $x_j \ge 0$.  We consider the possible values of $\alpha_j \in \{\pm 1, \pm 2\}$.

If $\alpha_j =-2$ then $z_j=4$, hence $y_j\geq 4$.  So this case does not occur. 

If $\alpha_j = 2$ then $z_j=-4$.  We consider $x_j \in \{0,1,2,3\}$.
\begin{enumerate}[itemsep=5pt]
\item If $x_j = 0$ then $y_j=-4$, violating the digit bounds on $\Bvuw$.  So this case does not occur.

\item If $x_j \in \{2,3\}$ then $j = \kx \ge \max(u,w)$, and $\alpha_\kx \neq 0$.  It follows from Lemma~\ref{lemma:n=2_unequal_lengths} that $\ky >\kx$.
Thus when computing $|\eta_{u,v,w}(\x)|$ and $|\eta_{u,v,w}(\y)|$ we use the second length formula in Lemma~\ref{lemma:length_formula}.

\smallskip
\noindent
As $j=\kx$, for $j<i\leq \ky$ we have $y_i = z_i$, moreover for $j<i<\ky$ we have $|y_i| \leq 1$.  
Writing $\z = 2{\bf w}^{(\kx)} + \sum_{i=\kx+1}^{\ky-1} \alpha_i \wi$ we see that the conditions $y_i = z_i$ and $|y_i| \leq 1$ force $\alpha_i=1 $ for all $i$.  
So $\z$ has one of the following two forms:
\begin{itemize}
    \item $(-4,2)$, or
    \item $(-4,0,-1,-1, \cdots ,-1,1)$.
\end{itemize}

\smallskip
\noindent
When $x_j = 2$, so $y_j = -2$, it is immediate that $\Vert \y \Vert_1 > \Vert \x \Vert_1$.
When $x_j = 3$,  so $y_j = -1$, it is easily checked that $\Vert \y \Vert_1 \geq \Vert \x \Vert_1-1$.
As $\ky \geq \kx+1$, it follows from the second length formula in Lemma~\ref{lemma:length_formula} that  $|\eta_{u,v,w}(\x)|<|\eta_{u,v,w}(\y)|$, contradicting our assumption that $\y \ords \x$.  So these cases do not occur.

\item If $x_j=1$, then $y_j = -3$ and we conclude that $\ky = j$. 
Thus for $j<m \leq \ell+1$ we must have $x_m = -z_m$ and $|x_m| \leq 1$ for $m<\ell+1$.  
As in the previous case, we conclude that $\alpha_m = 1$ for $j+1 \leq m \leq \ell$, and hence $\z = (-4,2)$ or $\z = (-4,0,-1,-1, \cdots ,-1,1)$.

\smallskip

\begin{itemize}[itemsep=5pt]
    \item If $\z = (-4,2)$ then $\x$ ends in $(x_j,x_{j+1})=(1,-2)$.
    As $\y \ords \x$ we conclude that $(x_j) = (1)$ is a run at which $\x$ can be reduced.
    \item If $\z=(-4,0,1)$ then $\x$ ends in $(1,0,-1)$ in which case Lemma~\ref{lemma:n=2_notminimal} applies to $\x$.
    \item If $\z = (-4,0,-1,-1, \cdots ,-1,1)$ then $\x$ ends in $(x_j, \cdots ,x_{\ell+1}) = (1,0,1,1, \cdots ,1,-1)$.
    We show that $\r=(x_{j+2}, \cdots ,x_\ell) = (1,1, \cdots ,1)$ is a run at which $\x$ can be reduced.  
    Let ${\bf q} = \x+\sum_{i=j+2}^{\ell} \wi$.  Then by construction $k_{\bf q}<\kx$ and $\Vert {\bf q} \Vert_1 < \Vert \x \Vert_1$.   
    As $\max(u,w) \leq \ky < k_{\bf q}<\kx$, it then follows from the second length formula in Lemma~\ref{lemma:length_formula} that ${\bf q} \ords \x$, that is, $\r$ is a run at which $\x$ can be reduced.
\end{itemize}
\end{enumerate} 

If $\alpha_j = -1$ then $z_j = 2$.  We consider $x_j \in \{0,1,2,3\}$.

\begin{enumerate}[itemsep=5pt]
\item If $x_j \in  \{2,3\}$ then $|y_j| >3$, hence these cases do not occur.

\item If $x_j \in \{0,1\}$ then $y_j \in \{2,3\}$, so $j = \ky \geq \max(u,w)$.
Moreover, for $j+1 \leq m \leq \ell+1$ we have $x_m = -z_m$.
For $j+1 \leq m \leq \ell$ it is also true that $|x_m| \leq 1$, from which it follows that in the definition of $\z$ we have $\alpha_m =1$ for $j+1 \leq m \leq \ell$.
Thus $\z = (2,-1)$ or $\z = (2,1,1, \cdots ,1,-1)$.
\begin{itemize}[itemsep=5pt]
\item 
If $\z = (2,-1)$ it follows that $\x$ ends in either $(x_j,x_{j+1}) = (0,1)$, so  Lemma~\ref{lemma:n=2_notminimal} applies to $\x$, or $(1,-1)$, 
in which case $(x_j) = (1)$ is a run at which $\x$ can be reduced.
\item If $\z = (2,1,1, \cdots ,1,-1)$ it follows that $\x$ ends in  $(x_j,-1,-1, \cdots ,-1,1)$.
Then $(x_{j+1}, \cdots ,x_\ell) = (-1,-1, \cdots ,-1)$ is a run at which $\x$ can be reduced.
The proof is identical to case (3) when $\alpha_j = 2$ with a change of sign.
\end{itemize}
\end{enumerate}

If $\alpha_j = 1$ then $z_j = -2$.  We first make three observations about the form of $\z$.
First, for $j<m < \min(\kx,\ky)$ we have $z_m = \alpha_{m-1}-\alpha_m n$ and $|x_m| \leq 1$, which requires 
 $\alpha_m \in \{\pm 1\}$.  
If $\alpha_m \alpha_{m+1} = -1$ for $j+2\leq m+1 < \min(\kx, \ky)$ then $|z_{m+1}| = 3$, which requires either $\kx = m+1$ or $\ky = m+1$. However, $m+1 < \min(\kx, \ky)$ so we conclude that the sign of $\alpha_i$ is constant for $j+1 \leq i < \min(\kx, \ky)$.
Our assumption that $\alpha_j = 1$ allows us to write 
\[ \z = \sum_{i=j}^{\min(\kx,\ky)-1} \wi + \sum_{i=\min(\kx,\ky)}^{\ell} \alpha_i \wi.\]

The second observation is that for $\min(\kx,\ky)<m\leq\ell+1$ we have either $|x_m| = |z_m|$ or $|y_m| = |z_m|$.  
The digit bounds on $\Bvuw$ again force $|\alpha_m| = 1$
for $\min(\kx,\ky) \leq m \leq \ell$.
If $\alpha_m \alpha_{m+1} = -1$ for $m$ in this range, then $|z_{m+1}| = 3$ and hence either $|x_{m+1}| = 3$ or  $|y_{m+1}| = 3$.
In either case the digit bounds on $\Bvuw$ are violated and so this does not occur.
This allows us to write 
\[ \z = \sum_{i=j}^{\ell-1} \wi + \alpha_\ell {\bf w}^{(\ell)}.\]

Third, recall that we are not assuming that $\y$ is necessarily minimal, but simply that $\y \ords \x$.
However, if $\kx \le \ell$, then we can assume $\y$ is minimal.

We consider $x_j \in \{0,1,2,3\}$.
\begin{enumerate}[itemsep=5pt]
\item If $x_j = 0$ then $y_j = -2$ so $j = \ky \geq \max(u,w)$.
It follows that for $j+1 \leq m \leq \ell+1$ we have $x_m = -z_m$ and for $j+1 \leq m \leq \ell$ we have $|x_m| \leq 1$. 
The latter condition forces $\alpha_m = 1$ in the definition of $\z$ for $j+1 \leq m \leq \ell$.
Thus $\z = (-2,1)$ or $\z = (-2,-1, \cdots ,-1,1)$.

\smallskip

\begin{itemize}[itemsep=5pt]
\item If $\z = (-2,1)$ then $\x$ ends in $(0,-1)$ and Lemma~\ref{lemma:n=2_notminimal} applies to $\x$.
\item If $\z = (-2,-1, \cdots ,-1,1)$, then $(x_{j+1}, \cdots ,x_\ell) = (1,1, \cdots ,1)$ is a run at which $\x$ can be reduced.  
The proof of this is identical to the proof of case (3) when $\alpha_j=2$ with a change of sign.
\end{itemize}

\item If $x_j \in \{2,3\}$ then $y_j \in \{0,1\}$ and $j = \kx \geq \max(u,w)$.  

\smallskip

As $\alpha_{\kx} \neq 0$ it follows from Lemma~\ref{lemma:n=2_unequal_lengths} that $\ky > \kx \geq \max(u,w)$ and thus we use the second length formula in Lemma~\ref{lemma:length_formula} to compute both
$|\eta_{u,v,w}(\x)|$ and $|\eta_{u,v,w}(\y)|$.

\smallskip

As $j = \kx$, for $j+1 \leq m \leq \ky$ we have $y_m = z_m$, and for $j+1 \leq m < \ky$ we have $|y_m| \leq 1$.  
These conditions force $\alpha_m=1$ for all $j+1 \leq m \leq l$ in the definition of $\z$, and thus $\z = (-2,1)$ or $\z = (-2,-1, \cdots ,-1,1)$.

\smallskip

\begin{itemize}[itemsep=5pt]
\item If $\z = (-2,1)$ then $\y$ ends ether in $(0,1)$ or $(1,1)$.  We see that $\Vert \y \Vert_1 = \Vert \x \Vert_1 - 1$.
\item  If $\z = (-2,-1, \cdots ,-1,1)$ then $\y$ ends in $(x_j-2,-1,-1, \cdots ,-1,1)$ where the first digit is $0$ when $x_j = 2$ and $1$ when $x_j = 3$.
We see that $\Vert \y \Vert_1 \geq  \Vert \x \Vert_1$.
\end{itemize}

\smallskip

As $\ky > \kx \geq \max(u,w)$ it then follows from the second word length formula in Lemma~\ref{lemma:length_formula} that $\x \ords \y$, contradicting our assumption that $\y \ords \x$. 
So this case does not occur.

\item If $x_j = 1$, we consider the possible values for $\alpha_\ell \in \{\pm 1, \pm 2\}$.
In all cases, the digit bounds on $\Bvuw$ imply that  for $j \leq i \leq \ell-1$ we have $x_i \in \{0,1\}$.

\smallskip

\begin{itemize}[itemsep=5pt]
\item If $\alpha_\ell = 1$ and $x_\ell \ge 0$, then  by definition $(x_j, \cdots ,x_{\ell})$ is a run at which $\x$ can be reduced.
\item If $\alpha_\ell = 1$ and $x_\ell < 0$, we have $x_\ell \in \{-1,-2,-3\}$.
For all possible values of $x_\ell$, note that $y_\ell < -1$, so $\ky = \ell \ge \max(u,w)$.  Since $y_{\ell+1} =0$, we have $x_{\ell+1} = -1$.  Thus $x_\ell \notin \{-2,-3\}$.
If $x_\ell = -1$, then $\x$ ends with the digits $(-1,-1)$, and it follows from Lemma~\ref{lemma:n=2_110} that there is a run at which $\x$ can be reduced.
\item If $\alpha_{\ell} = 2$, then $z_\ell = -3$.
In order for $\y$ to satisfy the digit bounds on $\Bvuw$, 
we must have $x_\ell \ge 0$, and hence then $(x_j, \cdots ,x_{\ell})$ is a run at which $\x$ can be reduced.  
\item If $\alpha_\ell = -2$ then $(z_{\ell},z_{\ell+1}) = (5,-2)$.  
Hence $x_\ell \in \{-3,-2\}$, and $\kx = \ky =\ell$.  It follows that $x_{\ell+1} = y_{\ell+1} = 0$ which is impossible if $z_{\ell+1} = 2$.  So this case does not occur.
\item If $\alpha_\ell = -1$, the same reasoning as in the above cases shows that for $j \leq i \leq \ell-1$ we have $x_i \in \{0,1\}$.
As $\alpha_{\ell} = -1$, the final digits of $\z$ are $(z_\ell,z_{\ell+1}) = (3,-1)$ and either $\kx = \ell$ or $\ky = \ell$.

\smallskip

If $\ky = \ell$, then it follows from the second observation above that $\kx = \ky+1$ and $(x_{\ell},x_{\ell+1}) = (0,1)$ or $(-1,1)$.  In the first case Lemma~\ref{lemma:n=2_notminimal} applies to $\x$.  
In the second case, $(x_\ell) = (-1)$ is a run at which $\x$ can be reduced.  
It is straightforward to verify that regardless of which length formula from Lemma~\ref{lemma:length_formula} is used, $\x - {\bf w}^{(\ell)} \ords \x$.

\smallskip

If $\kx = \ell$, then $\ky = \kx+1$ and $(y_{\ell},y_{\ell+1})=(x_{\ell}+3, -1)$.
As $y_{\ell+1} \neq 0$ we must have $|y_\ell| \leq 1$, so $\x_\kx \in \{-3,-2\}$ and $\kx \geq \max(u,w)$.
As $\kx = \ell$, recall that we can assume $\y$ is minimal.
As $\ky > \kx  \geq \max(u,w)$, we use the second length formula in Lemma~\ref{lemma:length_formula} to compute both $|\eta_{u,v,w}(\x)|$ and $|\eta_{u,v,w}(\y)|$.
As $\y$ is minimal, we have
\begin{align*}
0 & \le |\eta_{u,v,w}(\x)| - |\eta_{u,v,w}(\y)|  \\
  & = \Vert \x \Vert_1 - \Vert \y \Vert_1 + 2(\kx -\ky) \\
  & = \wt(x_j,\dots,x_{\ell-1}) + |x_{\ell}| - |x_{\ell} +3| - 1 - 2.
\end{align*}
Hence $\wt(x_j,\dots,x_{\ell-1}) \ge 0$.  

\smallskip

Let $\r = (x_j,\dots, x_{\ell-1})$; note that we do not include $x_\ell$ in $\r$ as $x_\ell<-1$.  We show that $\y' = \x + \sum_{i=j}^{\ell-1}{\bf w}^{(i)} \ord \x$, that is, $\x$ can be reduced at $\r$.
First note that as $\ky = \kx+1 = \ell+1$ the fact that the maximal index in $\r$ is $\ell-1$ ensures that $k_{\y'} = \kx$.
Consequently, we use the same length formula to compute $|\eta_{u,v,w}(\x)|$ and $|\eta_{u,v,w}(\y')|$, so any difference between the lengths of the geodesics arises from the change in $\ell^1$ norm between the two vectors.
We compute
\begin{align*}
|\eta_{u,v,w}(\x)| - |\eta_{u,v,w}(\y')| &= \wt(x_j,\dots,x_{\ell-1}) + |x_{\ell}| - |x_{\ell} +1| \\
                                         &> \wt(x_j,\dots,x_{\ell-1}) + |x_{\ell}| - |x_{\ell} +3| - 1 - 2 \\
                                         &\ge 0,
\end{align*}
where the second, strict, inequality holds because $x_{\kx} = x_{\ell} \in \{-3,-2\}$.
Thus $\x$ can be reduced at $\r$.
\end{itemize}
\end{enumerate}
Considering all possible combinations of values of $\alpha_j$ and $x_j$, we have either shown that the combination does not arise,  Lemma~\ref{lemma:n=2_notminimal} applies to $\x$, or $\x$ contains a run at which it can be reduced, proving the lemma.
\end{proof}

{\bf Proposition~\ref{lemma:n=2_adjacent_digits}.} {\em Let $n=2$ and $\x \in \Bvuw$ and $\kx < \max(u,w)$.
Then $\x$ is not minimal if and only if
$\x$ contains the digits  $(\delta, \delta)$
or $(\delta, -\delta)$, for $\delta \in \{\pm 1\}$. }

\begin{proof}
Suppose that $\x$ contains one of the above sequences of digits. As $\kx < \max(u,w)$, it follows from Lemma~\ref{lemma:n=2_110} that $\x$ is not minimal.

It remains to show the converse.
Suppose that $\x$ is not minimal.  Since $\kx < \max(u,w)$, it follows from 
Proposition~\ref{lemma:lemma318_n=2}  that Lemma~\ref{lemma:n=2_notminimal} does not apply to $\x$, so there must be a run $\r = (x_j, \dots, x_\ell)$
at which $\x$ can be reduced.  
Without loss of
generality, we assume that $\epsilon_\r = 1$.  That is, there
is $\z = \sum_{i=j}^{\ell-1}\wf{i} + \alpha_{\ell}\wf{\ell}$ with
$\alpha_\ell \in \{1,2\}$ so that $\y = \x + \z \ords \x$.
As $\kx < \max(u,w)$ and $\ky \leq \kx+1 \leq \max(u,w)$, we use the first length formula in Lemma~\ref{lemma:length_formula} to compute both 
$|\eta_{u,v,w}(\x)|$ and $|\eta_{u,v,w}(\y)|$.
This formula does not depend on the the values of $\kx$ and $\ky$.

As $\y \ords \x$, we have
\begin{align*}
|\eta_{u,v,w}(\x)| - |\eta_{u,v,w}(\y)|  &= \wt(\r) + |x_{\ell+1}| - |y_{\ell+1}| \ge 0 .\\
\end{align*}
Moreover, if 
the above difference is zero,
there must be a lexicographic reduction from $\x$ to $\y$.
Recall that $y_{\ell+1} = x_{\ell+1} + \alpha_\ell$ and $\alpha_\ell \in \{1,2\}$.  As $\kx < \max(u,w)$ we know that $|x_{l+1}| \leq 1$.

First suppose that $x_{\ell+1}\ge 0$.
If $\alpha_\ell = 2$, then $ |x_{\ell+1}| - |y_{\ell+1}| = -2$, so $\wt(\r) - 2 \ge 0$.
Recalling the definition of $\wt(\r)$, we see that $\r$ must contain at least three more $1$'s than $0$'s.
If $\alpha_\ell = 1$, then $ |x_{\ell+1}|-|y_{\ell+1}| = -1$,
and $\r$ must contain at least two more $1$'s than $0$'s. 
In either case, it follows from Remark~\ref{remark:digit_sequences} that $\r$ contains either the digits $(1,1,0)$ or ends in $(1,1)$.
In either case, $\r$ contains $(1,1)$.

Next suppose that $x_{\ell+1} < 0$, which implies $x_{\ell+1}=-1$.  We consider the possibilities for $x_l$.
\begin{enumerate}[itemsep=5pt]
        \item If $x_\ell = 1$, then $\x$ contains the digits $(1,-1)$.
        \item If $x_\ell = 0$ and $\alpha_\ell = 2$, then $\z$ ends
        with the digits $(-3,2)$ forcing $\y$ to contain the digits $(-3,1)$, contradicting the digit restrictions on $\Bvuw$.  So this case does not occur.
        \item If $x_\ell = 0$ with $\alpha_\ell = 1$, then $|x_{\ell+1}|-|y_{\ell+1}| = 1$ and we have $\wt(\r) +1 \geq 0$.  
        
        \smallskip
        
        \begin{enumerate}[itemsep=5pt]
        \item If $\wt(\r) = -1$, so $|\eta_{u,v,w}(\x)| = |\eta_{u,v,w}(\y)|$,
        there must be a lexicographical reduction from $\x$ to $\y$.  
        Since $x_\ell=0$ and the first digit of $\r$  is $x_j = 1$, we conclude that $\r$ has length at least 2. 
        To ensure the lexicographic reduction requires $x_{j+1} = 1$ and thus $\r$
        begins with the digits $(1,1)$. 
        \item If $\wt(\r) +1 > 0$, there must be at least one more occurrence of the digit 1 than the digit 0 in $\r$.  Since we know that $x_j = 1$ and $x_\ell = 0$, this condition forces $\r$ to contain the digits $(1,1)$.
        \end{enumerate}
\end{enumerate}
\end{proof}

\bibliographystyle{plain}
\bibliography{curvature}

\begin{thebibliography}{10}

\bibitem{BDK}
A.~{Bar-Natan}, M.~{Duchin}, and R.~{Kropholler}.
\newblock {Conjugation curvature for Cayley graphs}.
\newblock {\em Journal of Topology and Analysis}, to appear.

\bibitem{BE}
Jos\'{e} Burillo and Murray Elder.
\newblock Metric properties of {B}aumslag-{S}olitar groups.
\newblock {\em Internat. J. Algebra Comput.}, 25(5):799--811, 2015.

\bibitem{CEG}
D.~J. Collins, M.~Edjvet, and C.~P. Gill.
\newblock Growth series for the group {$\langle x,y|\ x^{-1}yx=y^l\rangle$}.
\newblock {\em Arch. Math. (Basel)}, 62(1):1--11, 1994.

\bibitem{DL}
Volker Diekert and J\"{u}rn Laun.
\newblock On computing geodesics in {B}aumslag-{S}olitar groups.
\newblock {\em Internat. J. Algebra Comput.}, 21(1-2):119--145, 2011.

\bibitem{Elder}
Murray Elder.
\newblock A linear-time algorithm to compute geodesics in solvable
  {B}aumslag-{S}olitar groups.
\newblock {\em Illinois J. Math.}, 54(1):109--128, 2010.

\bibitem{EH}
Murray Elder and Susan Hermiller.
\newblock Minimal almost convexity.
\newblock {\em J. Group Theory}, 8(2):239--266, 2005.

\bibitem{FM1}
Benson Farb and Lee Mosher.
\newblock A rigidity theorem for the solvable {B}aumslag-{S}olitar groups.
\newblock {\em Invent. Math.}, 131(2):419--451, 1998.
\newblock With an appendix by Daryl Cooper.

\bibitem{freden}
Eric~M. Freden, Teresa Knudson, and Jennifer Schofield.
\newblock Growth in {B}aumslag-{S}olitar groups {I}: subgroups and rationality.
\newblock {\em LMS J. Comput. Math.}, 14:34--71, 2011.

\bibitem{MRUV}
A.~Myasnikov, V.~Roman'kov, A.~Ushakov, and A.~Vershik.
\newblock The word and geodesic problems in free solvable groups.
\newblock {\em Trans. Amer. Math. Soc.}, 362(9):4655--4682, 2010.

\bibitem{YO1}
Yann Ollivier.
\newblock Ricci curvature of metric spaces.
\newblock {\em C. R. Math. Acad. Sci. Paris}, 345(11):643--646, 2007.

\bibitem{YO2}
Yann Ollivier.
\newblock Ricci curvature of {M}arkov chains on metric spaces.
\newblock {\em J. Funct. Anal.}, 256(3):810--864, 2009.

\bibitem{YO3}
Yann Ollivier.
\newblock A survey of {R}icci curvature for metric spaces and {M}arkov chains.
\newblock In {\em Probabilistic approach to geometry}, volume~57 of {\em Adv.
  Stud. Pure Math.}, pages 343--381. Math. Soc. Japan, Tokyo, 2010.

\bibitem{YO4}
Yann Ollivier.
\newblock A visual introduction to {R}iemannian curvatures and some discrete
  generalizations.
\newblock In {\em Analysis and geometry of metric measure spaces}, volume~56 of
  {\em CRM Proc. Lecture Notes}, pages 197--220. Amer. Math. Soc., Providence,
  RI, 2013.

\bibitem{TW_growth}
Jennifer Taback and Alden Walker.
\newblock A new proof of the growth rate of the solvable {B}aumslag-{S}olitar
  groups, 2020.
\newblock In preparation.

\end{thebibliography}

\end{document}